\renewcommand\star{\ast}
\tikzset{-<-/.style={decoration={ 
  markings,
  mark=at position .5 with {\arrowreversed[line width = 0.5pt]{angle 90}}},postaction={decorate}}}
\tikzset{->-/.style={decoration={ 
  markings,
  mark=at position .5 with {\arrow[line width = 0.5pt]{angle 90}}},postaction={decorate}}}
\newtheorem{theorem}{Theorem}[section]
\newtheorem{conjecture}[theorem]{Conjecture}
\newtheorem{lemma}[theorem]{Lemma}
\newtheorem{proposition}[theorem]{Proposition}
\newtheorem{corollary}[theorem]{Corollary}
\newtheorem{case}{Case}
\theoremstyle{definition}
\newtheorem{definition}[theorem]{Definition}
\theoremstyle{remark}
\newtheorem{remark}[theorem]{Remark}
\numberwithin{equation}{section}
\newcommand\reallywidehat[1]{%
\savestack{\tmpbox}{\stretchto{%
  \scaleto{%
    \scalerel*[\widthof{\ensuremath{#1}}]{\kern-.6pt\bigwedge\kern-.6pt}%
    {\rule[-\textheight/2]{1ex}{\textheight}}
  }{\textheight}%
}{0.5ex}}%
\stackon[1pt]{#1}{\tmpbox}%
}
\newcommand{\Ad}{\text{Ad}} 
\newcommand{\Bcs}{B_{\mathbf{c},\mathbf{s}}} 
\newcommand{\barinv}{\overline{\phantom{m}}^U} 
\newcommand{\Bbarinv}{\overline{\phantom{m}}^B} 
\newcommand{\Bbarinvo}{\overline{\phantom{m}}^{B_{\bc,\bo}}} 
\newcommand{\B}[1]{\mathbf{#1}} 
\newcommand{\bc}{{\mathbf c}} 
\newcommand{\bs}{{\mathbf s}} 
\newcommand{\bo}{{\mathbf 0}} 
\newcommand{\Bco}{B_{\bc,\bo}} 
\newcommand{\cM}{\mathcal{M}} 
\newcommand{\cS}{\mathcal{S}} 
\newcommand{\field}{\mathbb{K}} 
\newcommand{\ir}[2]{{}_{#1}r(#2)} 
\newcommand{\lie}[1]{\mathfrak{#1}} 
\newcommand{\N}{\mathbb{N}} 
\newcommand{\os}{\widetilde{s}} 
\newcommand{\q}{(q-q^{-1})} 
\newcommand{\Q}[1]{(q^{#1} - q^{-#1})} 
\newcommand{\Qkm}{\mathfrak{X}} 
\newcommand{\ri}[2]{r_{#1}(#2)} 
\newcommand{\rank}{\mathrm{rank}} 
\newcommand{\rb}[1]{\raisebox{-\totalheight}{#1}} 
\newcommand{\satake}{(I,X,\tau)} 
\newcommand{\subsatake}{(J,X \cap J, \tau|_J)} 
\newcommand{\sA}{\mathscr{A}} 
\newcommand{\sU}{\mathscr{U}} 
\newcommand{\T}[1]{T_{#1}} 
\newcommand{\uqg}{U_q(\lie{g})} 
\newcommand{\widet}{\widetilde} 
\newcommand{\Z}{\mathbb{Z}} 
\title{Factorisation of quasi $K$-matrices for quantum symmetric pairs}
\author{Liam Dobson}
\address{Liam Dobson and Stefan Kolb, School of Mathematics, Statistics and Physics, Newcastle University, Herschel Building, Newcastle upon Tyne NE1 7RU, UK}
\email{l.dobson1@newcastle.ac.uk}
\email{stefan.kolb@newcastle.ac.uk}
\author{Stefan Kolb}
\subjclass[2010]{17B37; 81R50}
\keywords{Quantum groups, quantum symmetric pairs, quasi $K$-matrix, restricted Weyl group, restricted root system}
\begin{document}

\begin{abstract}
The theory of quantum symmetric pairs provides a universal $K$-matrix which is an analogue of the universal $R$-matrix for quantum groups. The main ingredient in the construction of the universal $K$-matrix is a quasi $K$-matrix which has so far only been constructed recursively. In this paper we restrict to the cases where the underlying Lie algebra is $\mathfrak{sl}_n$ or the Satake diagram has no black dots. In these cases we give an explicit formula for the quasi $K$-matrix as a product of quasi $K$-matrices for Satake diagrams of rank one. This factorisation depends on the restricted Weyl group of the underlying symmetric Lie algebra in the same way as the factorisation of the quasi $R$-matrix depends on the Weyl group of the Lie algebra. We conjecture that our formula holds in general.
\end{abstract}

\maketitle

\section{Introduction}
\subsection{Background}
Let $\lie{g}$ be a complex semisimple Lie algebra and $U_q(\lie{g})$ the corresponding Drinfeld-Jimbo quantised enveloping algebra with positive and negative parts $U^+$ and $U^-$, respectively. The quasi $R$-matrix  for $\uqg$ is a canonical element in a completion of $U^-\otimes U^+$ which plays a pivotal role in many applications of quantum groups. In the theory of canonical or crystal bases for $\uqg$ developed by G.~Lusztig and M.~Kashiwara, the quasi $R$-matrix appears as an intertwiner of two bar involutions on $\Delta(\uqg)$, where $\Delta$ denotes the coproduct of $\uqg$. The quasi $R$-matrix is used to define canonical bases of tensor products of $\uqg$-modules and of the modified quantised enveloping algebra $\dot{\mathbf{U}}$, see \cite[Part IV]{b-Lusztig94}.

Moreover, the quasi $R$-matrix for $\uqg$ is used in \cite[Chapter 32]{b-Lusztig94} to construct a family of commutativity isomorphisms. These maps turn suitable categories of $\uqg$-modules into braided monoidal categories and hence have applications in low-dimensional topology, in particular the construction of invariants of knots and links, see \cite{a-RT90}. Up to completion, the commutativity isomorphisms come from a universal $R$-matrix for $\uqg$.

As in \cite{a-BK15} we denote the quasi $R$-matrix of $\uqg$ by $R$. One of the key properties of $R$ is that it admits a factorisation as a product of quasi $R$-matrices for $U_q(\lie{sl}_2)$. Let $\{\alpha_i\,|\,i\in I\}$ be the set of simple roots for $\lie{g}$. The quasi $R$-matrix corresponding to $i\in I$ is given by 
\begin{equation}\label{eq:Ri}
R_{i} = \sum_{r \geq 0} (-1)^r q_{i}^{-r(r-1)/2} \dfrac{(q_{i} - q_{i}^{-1})^r}{ [r]_{q_{i}}!} F_{i}^r \otimes E_{i}^r
\end{equation}
where $E_i, F_i\in \uqg$ are generators of the copy of $U_q(\lie{sl}_2)$ labelled by $i$, and $q_i=q^{(\alpha_i,\alpha_i)/2}$. Let $s_i$ for $i\in I$ be the generators of the Weyl group $W$ of $\lie{g}$, and let $T_i: \uqg \rightarrow \uqg$ for $i\in I$ be the corresponding Lusztig automorphisms. For any reduced expression $w_0 = s_{i_1} \dotsm s_{i_t}$ of the longest word $w_0 \in W$ define
\begin{equation*}
  	R^{[j]}=\big(T_{i_1}\dots T_{i_{j-1}}\otimes T_{i_1}\dots T_{i_{j-1}}\big)(R_{i_j}) \qquad \mbox{for $j=1, \dots,t$.}
\end{equation*}
With this notation the quasi $R$-matrix of $\uqg$ can be written as
\begin{equation}\label{eq:R-factor}
	R = R^{[t]} \cdot R^{[t-1]} \cdots R^{[2]} \cdot R^{[1]},
\end{equation}        
see \cite{a-LS90}, \cite{a-KR90}, \cite[8.30]{b-Jantzen96}.

In the present paper we aim to find a similar factorisation in the theory of quantum symmetric pairs. Let $\theta:\lie{g}\rightarrow \lie{g}$ be an involutive Lie algebra automorphism and let $\lie{k}=\{x\in \lie{g}\,|\,\theta(x)=x\}$ be the corresponding fixed Lie subalgebra. We refer to $(\lie{g},\lie{k})$ as a symmetric pair. A comprehensive theory of quantum symmetric pairs was developed by G.~Letzter in \cite{a-Letzter99a}, \cite{MSRI-Letzter}. This theory provides  families of subalgebras $B_{\bc,\bs}\subset U_q(\lie{g})$ with parameters $\bc$ and $\bs$, which are quantum group analogues of $U(\lie{k})$. Crucially, $B_{\bc,\bs}$ is a right coideal subalgebra of $U_q(\lie{g})$, that is
\begin{align*}
  \Delta(B_{\bc,\bs})\subseteq B_{\bc,\bs}\otimes \uqg.
\end{align*}  
Initially, the theory of quantum symmetric pairs was used to perform harmonic analysis on quantum group analogues of symmetric spaces, see \cite{a-Noumi96}, \cite{a-Letzter04}. Recent pioneering work by H.~Bao and W.~Wang \cite{a-BaoWang13} and by M.~Ehrig and C.~Stroppel \cite{a-ES13} has placed quantum symmetric pairs in a much broader representation theoretic context. Both papers consider a bar involution for specific quantum symmetric pair coideal subalgebras of type $AIII$/$AIV$. Moreover, Bao and Wang construct an intertwiner $\Qkm$ (denoted by $\Upsilon$ in \cite{a-BaoWang13}) between the bar involutions on $B_{\bc,\bs}$ and on $\uqg$. The intertwiner $\Qkm$ is an analogue for quantum symmetric pairs of the quasi $R$-matrix for $\uqg$. Using the intertwiner, Bao and Wang show that large parts of Lusztig's theory of canonical bases \cite[Part IV]{b-Lusztig94} extend to the setting of quantum symmetric pairs. While \cite{a-BaoWang13} restricts to the specific quantum symmetric pairs of type $AIII$/$AIV$, the more recent work \cite{a-BW16} develops a theory of canonical bases for all quantum symmetric pairs of finite type.

Following the program outlined in \cite{a-BaoWang13}, the existence of the bar involution and the intertwiner $\Qkm$ was proved for general quantum symmetric pairs in \cite{a-BK14} and \cite{a-BK15}, respectively. The intertwiner  
$\Qkm$ was then used in \cite{a-BK15} to construct a universal $K$-matrix for $B_{\bc,\bs}$ which is an analogue of the universal $R$-matrix for $\uqg$.  For this reason we call the intertwiner $\Qkm$ the \textit{quasi $K$-matrix for} $B_{\bc,\bs}$. The universal $K$-matrix gives suitable categories of $B_{\bc,\bs}$-modules the structure of  a braided module category and allows similar applications in low-dimensional topology as braided monoidal categories, see \cite{a-Kolb17p}. 

In \cite{a-BaoWang13} and \cite{a-BK15} the quasi $K$-matrix is defined recursively by the intertwiner property for the two bar involutions. It is noted at the end of \cite[Section 4.4]{a-BW16} that this recursion can be solved in principle. However, to this date no closed formula for $\Qkm$ is known.

\subsection{Results}
In the present paper we provide a general closed formula for the quasi $K$-matrix $\Qkm$ in many cases, and we conjecture that our formula holds for all quantum symmetric pairs of finite type. In doing so, we take guidance from the known factorisation \eqref{eq:R-factor} of the quasi $R$-matrix of $\uqg$. 

Recall that the involutive automorphism $\theta$ is determined up to conjugation by a Satake diagram $(I,X,\tau)$. Here, $X$ is a subset of $I$ and $\tau:I\rightarrow I$ is a diagram automorphism. The $\tau$-orbits in $I\setminus X$ correspond to rank one subdiagrams of the Satake diagram $(I,X,\tau)$.
Associated to the symmetric Lie algebra $(\lie{g},\theta)$ is a restricted root system $\Sigma$ with Weyl group $\widet{W}=W(\Sigma)$ which can be considered as a subgroup of $W$. The Coxeter generators $\widet{s_i}$ of $\widet{W}$ are parametrised by the $\tau$-orbits in $I\setminus X$. We introduce the notion of a partial quasi $K$-matrix $\Qkm_{\widet{w}}$ for any $\widet{w}\in \widet{W}$ with a reduced expression $\widet{w}=\widet{s_{i_1}}\dots \widet{s_{i_t}}$. More precisely, for $j=1, \dots, t$ let $\Qkm_i$ denote the quasi $K$-matrix corresponding to the rank one Satake subdiagram $(\{i,\tau(i)\}\cup X, X, \tau|_{\{i,\tau(i)\}\cup X})$ of $(I,X,\tau)$. The Lusztig automorphisms $T_w: \uqg \rightarrow  \uqg$ for $w\in W$ allow us to define automorphisms $\widet{T_i}:=T_{\widet{s_i}}$ for all $i\in I\setminus X$. For $j=1,\dots,t$ we set
\begin{equation*}
\Qkm_{\widet{w}}^{[j]} = \Psi \circ \widet{T_{i_1}} \dotsm  \widet{T_{i_{j-1}}} \circ \Psi^{-1} (\Qkm_{i_j})
\end{equation*}
where $\Psi$ denotes an algebra automorphism of an extension of $\widet{U}^+=\bigoplus_{\mu\in Q^+(2\Sigma)} U^+_\mu$ defined in Equation \eqref{eq:Psi}. In analogy to \eqref{eq:R-factor} we now define the partial quasi K-matrix corresponding to $\widet{w}\in \widet{W}$ by
\begin{equation}\label{eq:Qkm-def}
\Qkm_{\widet{w}} = \Qkm_{\widet{w}}^{[t]} \cdot \Qkm_{\widet{w}}^{[t-1]}  \dotsm  \Qkm_{\widet{w}}^{[2]} \cdot \Qkm_{\widet{w}}^{[1]}.
\end{equation}
The following theorem is the main result of the present paper. It gives an explicit formula for $\Qkm$ in the case that $\bs=\bo=(0,0,\dots,0)$ for certain Satake diagrams. Let $\widet{w_0}\in \widet{W} $ denote the longest element.

\medskip

\noindent{\bf Theorem A.} (Corollary \ref{cor:Qkm}) \textit{Let $\lie{g}$ be of type $A_n$ or $X = \emptyset$. Then the quasi $K$-matrix $\Qkm$ for $B_{\bc,\bo}$ is given by $\Qkm = \Qkm_{\widet{w_0}}$ for any reduced expression of the longest element $\widet{w_0} \in \widet{W}$.}

\medskip

We conjecture that Theorem A holds for all Satake diagrams of finite type, see Conjecture \ref{conj:anyType}. The proof of Theorem A proceeds in three steps. First we construct the quasi $K$-matrices corresponding to all rank one Satake diagrams of type $A_n$ in the case where $\bs=\bo$. The difficulty here is that there are many rank one cases, see Table \ref{Table:RankOne}. Secondly, we prove Theorem A in rank two by direct calculation. In rank two the longest element $\widet{w_0}\in\widet{W}$ has two reduced expressions. We show for each reduced expression that the element $\Qkm_{\widet{w_0}}$ defined by  
\eqref{eq:Qkm-def} satisfies the defining recursive relations for the quasi $K$-matrix. This involves tedious calculations which we have banned to Appendix \ref{App:RankTwo}. The calculations require explicit knowledge of the rank one quasi $K$-matrices. The restriction to type $A_n$ or $X=\emptyset$ in Theorem A hence stems from the fact that the rank one quasi $K$-matrices are only known in type $A_n$.

In the cases considered in Appendix \ref{App:RankTwo}, the calculations imply in particular that $\Qkm_{\widet{w_0}}$ is independent of the chosen reduced expression for $\widet{w_0}$. We conjecture that this is true in general.

\medskip

\noindent{\bf Conjecture B.} (Conjecture \ref{conj}) \textit{Assume that $\satake$ is a Satake diagram of rank two. Then the element $\Qkm_{\widet{w}}$ defined by \eqref{eq:Qkm-def} depends only on $\widet{w} \in \widetilde{W}$ and not on the chosen reduced expression.}

\medskip

Conjecture B is all that is needed to prove Theorem A for all Satake diagrams of finite type. Indeed, assume that Conjecture B holds for all rank two Satake subdiagrams of the given Satake diagram. Then we can use braid relations for the operators $\widet{T_i}$ to show that $\Qkm_{\widet{w}}$ is independent of the chosen reduced expression for $\widet{w}\in \widet{W}$. In the case of the longest element $\widet{w_0}\in \widet{W}$ we choose different reduced expressions for $\widet{w_0}$ to show that $\Qkm_{\widet{w_0}}$ satisfies the defining recursive relations for the quasi $K$-matrix for $B_{\bc,\bo}$. In summary, we obtain the following result in the case $\bs=\bo$.

\medskip

\noindent{\bf Theorem C.} (Theorems \ref{Thm:HigherRankPartialQkm}, \ref{Thm:LongestWordPartialQkm}) \textit{Let $\satake$ be a Satake diagram such that all rank two Satake subdiagrams satisfy Conjecture B. Then the following hold:
\begin{enumerate}
  \item The partial quasi $K$-matrix $\Qkm_{\widet{w}}$ depends only on $\widet{w} \in \widet{W}$ and not on the chosen reduced expression.
  \item The quasi $K$-matrix $\Qkm$ for $B_{\bc,\bo}$ is given by $\Qkm=\Qkm_{\widet{w_0}}$ where $\widet{w_0}\in \widet{W}$ denotes the longest element.
\end{enumerate}
}
\medskip
In the case $\bs\neq \bo$ it is harder to give an explicit formula for the quasi $K$-matrix $\Qkm_{\bc,\bs}$ of $B_{\bc,\bs}$. However, we can make use of the fact that $\Bcs$ is obtained from $B_{\bc,\bo}$ via a twist by a character $\chi_\bs$ of $B_{\bc,\bo}$. We consider the element $R^\theta_{\bc,\bs}=\Delta(\Qkm_{\bc,\bs})R(\Qkm^{-1}_{\bc,\bs}\otimes 1)$ which was introduced in \cite{a-BaoWang13} under the name quasi $R$-matrix for $\Bcs$, and which lives in a completion of $\Bcs\otimes U^+$, see also \cite[Section 3.3]{a-Kolb17p}. We show that the quasi $K$-matrix $\Qkm_{\bc,\bs}$ for $\Bcs$ satisfies the relation
\begin{equation}\label{eq:Qkmsneq0}
  \Qkm_{\bc,\bs}=(\chi_{\bs}\otimes \mathrm{id})(R^\theta_{\bc,\bo}).
\end{equation}
Hence the explicit formulas \eqref{eq:R-factor} and  \eqref{eq:Qkm-def} for $R$ and $\Qkm_{\bc,\bo}$, respectively, provide a formula for the quasi $K$-matrix of $\Bcs$ also in the case $\bs\neq \bo$. However, in this case we do not obtain a factorisation as in Equation \eqref{eq:Qkm-def}.

\subsection{Organisation}
In Section \ref{sec:WSigma} we recall background material on the restricted root system $\Sigma$ and its Weyl group $W(\Sigma)$. We show in particular how $W(\Sigma)$ can be identified with a subgroup $\widet{W}$ of the Weyl group $W$ of $\lie{g}$, see Proposition \ref{proposition:ReflGrp}. Section \ref{sec:facK} forms the heart of the paper. In Sections \ref{sec:QEAs} and \ref{sec:QSPs} we fix notation for quantised enveloping algebras and quantum symmetric pairs, respectively. In Section \ref{sec:rank1Qkms} we determine explicit closed formulas for the quasi $K$-matrices $\Qkm$ for $B_{\bc,\bo}$ in all cases where the Satake diagram is of rank one and of type $A_n$. We plan to return to the remaining rank one cases in the future.  The theory of partial quasi $K$-matrices $\Qkm_{\widet{w}}$ for $\widet{w}\in \widet{W}$ is developed in Section \ref{sec:PartialQkms}. Here the rank two case is crucial. The explicit calculations which prove Theorem A in rank two are left for Appendix \ref{App:RankTwo}. Based on the rank two results we prove in Theorem \ref{Thm:HigherRankPartialQkm} that $\Qkm_{\widet{W}}$ is independent of the chosen reduced expression for $\widet{w}$. This gives the first part of Theorem C, and the second part as well as Theorem A follow, see Theorem \ref{Thm:LongestWordPartialQkm}. The case $\bs\neq \bo$ is treated in Section \ref{sec:sGeneral} where Formula \eqref{eq:Qkmsneq0} is proved in  Proposition \ref{prop:os}.


\section{The restricted Weyl group} \label{sec:WSigma}

In this section we recall the construction of involutive automorphisms $\theta: \lie{g} \rightarrow \lie{g}$ of a semisimple Lie algebra $\lie{g}$ following \cite{a-Kolb14}. This allows us to construct a subgroup $W^{\Theta}$ of the Weyl group $W$ consisting of elements fixed under the corresponding group automorphism of $W$. Of particular importance is a subgroup $\widet{W}$ of $W^{\Theta}$ which has an interpretation as the Weyl group of the corresponding restricted root system. The results in this section do not claim originality, but are all in some form contained in \cite{a-Lu76}, \cite{a-Lu02} and also \cite{a-GI14}. 

\subsection{Involutive automorphisms of semisimple Lie algebras} \label{sec:invaut}

Let $\lie{g}$ be a finite dimensional  complex semisimple Lie algebra.
Let $\lie{h} \subset \lie{g}$ be a Cartan subalgebra and $\Phi \subset \lie{h}^{\ast}$ the corresponding root system. Choose a set of simple roots $\Pi = \{ \alpha_i \mid i \in I\}$ where $I$ is an index set labelling the nodes of the Dynkin diagram of $\lie{g}$. Let $Q=\Z\Pi$ be the root lattice and set $Q^+=\N_0\Pi$. Let $\Phi^{+}$ be the set of positive roots corresponding to $\Pi$ and set $V = \mathbb{R}\Phi$. For $i \in I$, let $s_i : V \rightarrow V$ denote the reflection at the hyperplane orthogonal to $\alpha_i$. We write $W$ to denote the Weyl group generated by the simple reflections $s_i$. We fix the $W$-invariant non-degenerate bilinear form $(-,-)$ on $V$ such that $(\alpha,\alpha)= 2$ for all short roots $\alpha \in \Phi$ in each component.
Let $\{e_i, f_i, h_i \mid i \in I\}$ be the Chevalley generators for $\lie{g}$.

Involutive automorphisms of $\lie{g}$ are classified up to conjugation by pairs $(X, \tau)$ where $X \subset I$ and $\tau: I \rightarrow I$ is a diagram automorphism.
More precisely, for any subset $X \subset I$ let $\lie{g}_X$ denote the Lie subalgebra of $\lie{g}$ generated by $\{e_i, f_i, h_i \mid i \in X\}$.
Let $Q_X$ denote the sublattice of $Q$ generated by $\{ \alpha_i \mid i \in X\}$. This is the root lattice for $\lie{g}_X$.
Let $\rho_X \in V$ and $\rho_X^{\vee} \in V^{\ast}$ denote the half sum of positive roots and positive coroots for $\lie{g}_X$.
The Weyl group $W_X$ of $\lie{g}_X$ is the parabolic subgroup of $W$ generated by $\{s_i \mid i \in X\}$.
Let $w_X \in W_X$ denote the longest element of $W_X$.

\begin{definition}{(\cite[p.~109]{a-Satake60}, see also \cite[Definition 2.3]{a-Kolb14})} \label{def:Satake}
Let $X \subset I$ and $\tau:I \rightarrow I$ a diagram automorphism such that $\tau(X) = X$. The pair $(X, \tau)$ is called a Satake diagram if it satisfies the following properties:

\begin{enumerate}[label = \arabic*)]
\item $\tau^2 = \text{id}_{I}$. \label{property:Satake1}
\item The action of $\tau$ on $X$ coincides with the action of $-w_X$, that is \begin{equation*}
  -w_X(\alpha_i) = \alpha_{\tau(i)}\quad \mbox{for all $i\in X$}.
\end{equation*}

\label{property:Satake2}
\item If $j \in I \setminus X$ and $\tau(j) = j$, then $\alpha_j(\rho_X^{\vee}) \in \mathbb{Z}$. \label{property:Satake3}
\end{enumerate}
\end{definition}

\begin{remark} \label{rem:Satake}
When we need to identify the underlying Lie algebra, we write $\satake$ to indicate the Satake diagram. With this convention, if $\satake$ is a Satake diagram and $i \in I \setminus X$ then $(\{i, \tau(i)\} \cup X, X, \tau|_{\{i,\tau(i)\}\cup X})$ is also a Satake diagram.
\end{remark}

Graphically, the ingredients of a Satake diagram are recorded in the Dynkin diagram of $\lie{g}$.
The nodes labelled by $X$ are coloured black and a double sided arrow is used to indicate the diagram automorphism. See \cite[p.32/33]{a-Araki62} for a complete list of Satake diagrams for simple $\lie{g}$.

We can now construct an involution $\theta$ corresponding to the Satake diagram $(X, \tau)$ as in \cite[Section 2.4]{a-Kolb14}.
Let $\omega: \lie{g} \rightarrow \lie{g}$ be the Chevalley involution given by
\begin{equation} \label{eq:Chevalley}
	\omega(e_i) = -f_i, \qquad \omega(h_i) = -h_i, \qquad \omega(f_i) = -e_i.
\end{equation}
The diagram automorphism $\tau$ can be lifted to a Lie algebra automorphism $\tau: \lie{g} \rightarrow \lie{g}$ denoted by the same symbol.
Recall from \cite[Lemma 56]{b-Ste67} that the action of the Weyl group $W$ on $\lie{h}$ can be lifted to an action of the corresponding braid group $Br(W)$ on $\lie{g}$.
We denote the action of $w \in Br(W)$ on $\lie{g}$ by $\Ad(w)$.
Let $s: I \rightarrow \mathbb{C}^{\times}$ be a function such that
\begin{align}
	s(i) &= 1 &  &\mbox{if $i \in X$ or $\tau(i) = i$,} \label{eq:sCond1}\\
	\dfrac{s(i)}{s(\tau(i))} &= (-1)^{\alpha_i(2\rho_X^{\vee})} & &\mbox{if $i \not \in X$ and $\tau(i) \neq i$.} \label{eq:sCond2} 
\end{align}  
This map extends to a group homomorphism $s_Q: Q \rightarrow \mathbb{C}^{\times}$ such that $s_Q(\alpha_i) = s(i)$ for each simple root $\alpha_i$.
Let $\Ad(s): \lie{g} \rightarrow \lie{g}$ denote the Lie algebra automorphism such that the restriction to any root space $\lie{g}_{\alpha}$ is given by multiplication by $s_Q(\alpha)$.

Given a Satake diagram $(X, \tau)$ we define a Lie algebra automorphism $\theta=\theta(X,\tau)$ of $\lie{g}$ by
\begin{equation} \label{eq:Theta(X,t)}
	 \theta(X, \tau) = \Ad(s) \circ \Ad(w_X) \circ \tau \circ \omega : \lie{g} \rightarrow \lie{g}. 
\end{equation}
Here, the longest element $w_X \in W_X$ is considered as an element in the braid group $Br(W)$. The Lie algebra automorphism $\theta=\theta(X,\tau)$ is involutive, that is $\theta^2=\mathrm{id}$.  Any involutive Lie algebra automorphism of $\lie{g}$ is $\text{Aut}(\lie{g})$-conjugate to an automorphism of the form $\theta(X,\tau)$, see for example \cite[Theorems 2.5, 2.7]{a-Kolb14}.


\subsection{The subgroup $W^{\Theta}$} \label{sec:WTheta}

For any Satake diagram $(X, \tau)$, the automorphism $\theta = \theta(X, \tau)$ satisfies $\theta(\lie{h}) = \lie{h}$. More explicitly, the definition \eqref{eq:Theta(X,t)} implies that $\theta|_{\lie{h}} = -w_X \circ \tau$.
The dual map $\Theta: \lie{h}^{\ast} \rightarrow \lie{h}^{\ast}$ is given by the same expression
\begin{equation} \label{eq:Theta}
	\Theta = -w_X \circ \tau
\end{equation} 
where now $w_X$ and $\tau$ act on $\lie{h}^{\ast}$.
We obtain a group automorphism 
\begin{equation} \label{eq:ThetaGpHom}
\Theta_W \colon	W \rightarrow W, \qquad
				w \mapsto \Theta \circ w \circ \Theta.
\end{equation}
Let $W^{\Theta} = \{ w \in W \mid \Theta_W(w) = w\}$ denote the subgroup of elements fixed by $\Theta_W$. In this section we recall the structure of the subgroup $W^\Theta$ following \cite{a-Lu76} and \cite{a-GI14}.
For any $i \in I$ one has
\begin{equation}\label{eq:ThetaWsi}
	\Theta_W(s_i)=(-w_X\tau) \circ s_i \circ (-w_X\tau)
				=w_Xs_{\tau(i)}w_X.
\end{equation}
This formula and property $2)$ in Definition \ref{def:Satake} imply that $W_X$ is a subgroup of $W^{\Theta}$.
Recall that for any subset $J \subset I$ we write $w_J$ to denote the longest element in the parabolic subgroup $W_J$.
For all $i \in I \setminus X$ define
\begin{equation} \label{eq:sitilde}
 \widet{s_i} = w_{\{i, \tau(i)\} \cup X} w_X^{-1}.
\end{equation}
Recall that there exists a diagram automorphism $\tau_0:I \rightarrow I$ such that the longest element $w_0 \in W$ satisfies
\begin{equation} \label{eq:w0Action}
	w_0(\alpha_i)	=	-\alpha_{\tau_0(i)}
\end{equation}
for all $i \in I$. By inspection of the list of Satake diagrams in \cite[p. 32/33]{a-Araki62} one sees that the set $X$ is $\tau_0$-invariant.

By Remark \ref{rem:Satake} the triple $(\{i, \tau(i)\} \cup X, X, \tau|_{\{i,\tau(i)\}\cup X})$ is a Satake diagram for any $i \in I \setminus X$.
In analogy to \eqref{eq:w0Action} there exists hence a diagram automorphism $\tau_{0,i} : \{i, \tau(i)\} \cup X \rightarrow \{i, \tau(i)\} \cup X$ such that $X$ is $\tau_{0,i}$-invariant and
\begin{equation} \label{eq:w0iAction}
w_{\{i, \tau(i)\} \cup X} (\alpha_j) = -\alpha_{\tau_{0,i}(j)}
\end{equation}
for all $j \in \{i,\tau(i)\} \cup X$. With this notation we get
\begin{equation} \label{eq: LongestWordComm (i,t(i))}
	w_{\{i, \tau(i)\} \cup X} s_j = s_{\tau_{0,i}(j)}w_{\{i,\tau(i)\} \cup X}
\end{equation}
and hence 
\begin{equation*}
	w_{\{i,\tau(i)\} \cup X} w_X	= \tau_{0,i}(w_X) w_{\{i,\tau(i)\} \cup X} = w_X w_{\{i,\tau(i)\} \cup X}.
\end{equation*}
This proves that $w_X$ and $w_{\{i,\tau(i)\} \cup X}$ commute in $W$ for any $i \in I \setminus X$.
Hence Equation \eqref{eq:ThetaWsi} implies that
\begin{equation} \label{eq:sitildeinW0}
	\widet{s_i} \in W^{\Theta} \qquad \mbox{for all $i \in I \setminus X$.}
\end{equation}
Let $\widet{W} \subset W^{\Theta}$ denote the subgroup of $W$ generated by all $\widet{s_i}$ for $i \in I \setminus X$. Let $l : W \rightarrow \mathbb{N}_0$ denote the length function with respect to $W$.
Let 
\begin{equation}\label{eq:W^X}
W^X = \{w \in W \mid l(s_iw) > l(w) \: \mbox{for all $i \in X$}\}
\end{equation}
be the set of minimal length left coset representatives of $W_X$. By \cite[25.1]{a-Lu02} the subset
\begin{equation}\label{eq:curlyW}
\mathcal{W} = \{ w \in W^X \mid wW_X = W_Xw\}
\end{equation}
is a subgroup of $W$. By \eqref{eq:sitilde} and \eqref{eq: LongestWordComm (i,t(i))} we have $\widet{s_i} \in \mathcal{W}$ for all $i \in I \setminus X$. Let $\mathcal{W}^{\tau} = \{ w \in \mathcal{W} \mid \tau(w) = w\}$. As $\tau(X) = X$ the definition of $\widet{s_i}$ implies that $\widet{W} \subseteq \mathcal{W}^{\tau}$. The following lemma is a generalisation of \cite[A.1(a)]{a-Lu02} and \cite[Lemma 2]{a-GI14} in the spirit of \cite[Remark 8]{a-GI14}.

\begin{lemma} \label{lem:curlyW subset Wtil}
Any $w \in \mathcal{W}^{\tau}$ may be written as $w = \widet{s_{i_1}} \dotsm \widet{s_{i_k}}$ such that $\widet{s_{i_1}}, \dotsc , \widet{s_{i_k}} \in \widet{W}$ and $l(w) = l(\widet{s_{i_1}}) + \dotsm + l(\widet{s_{i_k}})$. 
\end{lemma}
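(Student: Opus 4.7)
The proof is by induction on $l(w)$, the base case $w=e$ being the empty product. For the inductive step, assume $w \in \mathcal{W}^\tau$ with $l(w)>0$. The aim is to produce $j\in I\setminus X$ such that $w\widet{s_j}\in \mathcal{W}^\tau$ and $l(w\widet{s_j}) = l(w) - l(\widet{s_j})$; applying the induction hypothesis to $w\widet{s_j}$ then yields the desired decomposition of $w$ with additive lengths.

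Since $w\in W^X$ normalises $W_X$, the set $w^{-1}(\Pi_X)$ lies in $\Phi^+\cap \Phi_X = \Phi_X^+$ and is a basis of $V_X$, so it equals $\Pi_X$ by uniqueness of the simple system in $\Phi_X^+$. Hence $w(\Pi_X)=\Pi_X$ and $N(w)\cap \Phi_X^+ = \emptyset$, where $N(u) := \{\alpha\in \Phi^+ \mid u(\alpha)<0\}$. Because $w\neq e$, some simple root $\alpha_j$ lies in $N(w)$, and necessarily $j\in I\setminus X$; by $\tau$-invariance of $w$ also $\alpha_{\tau(j)}\in N(w)$. Set $J=\{j,\tau(j)\}\cup X$. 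The commutation $w_Jw_X = w_Xw_J$ proved just before the lemma makes $\widet{s_j} = w_Jw_X$ an involution, and by the standard Coxeter-group criterion $l(xy)=l(x)+l(y) \iff N(y)\subseteq N(xy)$, applied with $x=w\widet{s_j}$ and $y=\widet{s_j}$, the length identity $l(w\widet{s_j})=l(w)-l(\widet{s_j})$ is equivalent to $N(\widet{s_j})\subseteq N(w)$. The membership $w\widet{s_j}\in \mathcal{W}^\tau$ is automatic since $\widet{s_j}\in \widet{W}\subseteq \mathcal{W}^\tau$ and $\mathcal{W}^\tau$ is a subgroup.

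A length count identifies $N(\widet{s_j})=\Phi_J^+\setminus \Phi_X^+$: the inclusion $\subseteq$ follows from $\widet{s_j}\in W_J$ combined with $\widet{s_j}(\Pi_X)=\Pi_X$, while equality is forced by $l(\widet{s_j})=l(w_J)-l(w_X)=|\Phi_J^+|-|\Phi_X^+|$. The main obstacle is to verify $w(\alpha)<0$ for every $\alpha\in \Phi_J^+\setminus \Phi_X^+$. Writing such an $\alpha$ as $\alpha = a\alpha_j+b\alpha_{\tau(j)}+\gamma_X$ with $a,b\in\N_0$, $a+b\geq 1$ and $\gamma_X\in \N_0\Pi_X$, set $\beta_j := -w(\alpha_j)\in \Phi^+$. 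Since $w$ permutes $\Phi\setminus \Phi_X$ we have $\beta_j\notin \Phi_X$; $\tau$-invariance of $w$ gives $w(\alpha_{\tau(j)})=-\tau(\beta_j)$; and $w(\gamma_X)\in \N_0\Pi_X$. Therefore
\[
  w(\alpha) = -a\beta_j - b\tau(\beta_j) + w(\gamma_X),
\]
whose image in $V/V_X$ equals $-a\,\overline{\beta_j} - b\,\overline{\tau(\beta_j)}$, a non-positive combination of $\{\overline{\alpha_i}\mid i\in I\setminus X\}$ which is non-zero because $a+b\geq 1$ and both $\overline{\beta_j}$ and $\overline{\tau(\beta_j)}$ are non-zero non-negative combinations of the $\overline{\alpha_i}$. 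Since $w(\alpha)$ is a single root, its simple-root coefficients share one sign, so the non-$X$ coefficients being non-positive and not all zero forces all coefficients of $w(\alpha)$ to be $\leq 0$. Consequently $w(\alpha)\in \Phi^-$, and the induction is complete.
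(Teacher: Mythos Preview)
Your proof is correct. The paper does not actually supply its own proof of this lemma: it simply cites \cite[A.1(a)]{a-Lu02} and \cite[Lemma~2, Remark~8]{a-GI14} and moves on. Your argument is precisely the standard inductive one underlying those references: pick a simple root $\alpha_j$ with $w(\alpha_j)<0$, show via the description $N(\widet{s_j})=\Phi_J^+\setminus\Phi_X^+$ and the sign argument in $V/V_X$ that $N(\widet{s_j})\subseteq N(w)$, and peel off $\widet{s_j}$ on the right with additive lengths. All the auxiliary facts you invoke --- that $w(\Pi_X)=\Pi_X$ for $w\in\mathcal{W}$, that $\widet{s_j}\in\mathcal{W}^\tau$, that $l(\widet{s_j})=l(w_J)-l(w_X)$, and the criterion $l(xy)=l(x)+l(y)\Leftrightarrow N(y)\subseteq N(xy)$ --- are correct and standard. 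The only cosmetic point is that when $j=\tau(j)$ the decomposition $\alpha=a\alpha_j+b\alpha_{\tau(j)}+\gamma_X$ collapses, but the argument goes through unchanged.
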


Lemma \ref{lem:curlyW subset Wtil} implies that
 $\widet{W} = \mathcal{W}^{\tau}$.
Set $\widet{S} = \{\widet{s_i} \mid i \in I \setminus X\}$. The following theorem is stated in \cite[25.1]{a-Lu02} and proved in \cite[Theorem 5.9(i)]{a-Lu76}. A proof using only Weyl group combinatorics is indicated in \cite[Remark 8]{a-GI14}.

\begin{theorem}\label{thm: Wtilde Coxeter}
The pair $(\widet{W}, \widet{S})$ is a Coxeter system.
\end{theorem}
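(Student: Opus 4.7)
The plan is to identify $(\widet{W}, \widet{S})$ with the pair $(W(\Sigma), S(\Sigma))$, where $\Sigma$ is the restricted root system of the symmetric pair $(\lie{g}, \theta(X,\tau))$ and $S(\Sigma)$ its standard set of simple reflections, indexed by $\tau$-orbits in $I \setminus X$. Since $\Sigma$ is a finite, possibly non-reduced, crystallographic root system, $W(\Sigma)$ is automatically a Coxeter group on $S(\Sigma)$, and this structure will transport to $(\widet{W}, \widet{S})$ along the identification.

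Concretely, I would let $V_{-} = \{ v \in V \mid \Theta(v) = -v \}$ and let $\pi : V \to V_{-}$ denote the orthogonal projection with respect to the invariant form. Satake property~(2) in Definition~\ref{def:Satake} yields $\Theta(\alpha_i) = \alpha_i$ for $i \in X$, so $\pi(\alpha_i) = 0$ there, and the nonzero projections of roots form the restricted root system $\Sigma$ with simple roots $\{\bar{\alpha}_{[i]}\}$ indexed by $\tau$-orbits $[i]$ in $I \setminus X$. The heart of the proof is to show that each $\widet{s_i}$ preserves $V_{-}$ -- which is immediate from \eqref{eq:sitildeinW0}, since $\widet{s_i} \in W^{\Theta}$ commutes with $\Theta$ -- and restricts there to the orthogonal reflection $s_{\bar{\alpha}_{[i]}}$. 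For this I would expand $\widet{s_i} = w_{\{i,\tau(i)\}\cup X} w_X^{-1}$, track the action of each factor on simple roots modulo $Q_X$ using \eqref{eq:w0iAction} together with Satake property~(2), and verify that after projection to $V_{-}$ the composite sends $\bar{\alpha}_{[i]}$ to $-\bar{\alpha}_{[i]}$ and fixes $\bar{\alpha}_{[i]}^{\perp} \cap V_{-}$ pointwise.

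Restriction to $V_{-}$ then furnishes a group homomorphism $\rho : \widet{W} \to W(\Sigma)$, $\widet{s_i} \mapsto s_{\bar{\alpha}_{[i]}}$, which is surjective since the $s_{\bar{\alpha}_{[i]}}$ generate $W(\Sigma)$. For injectivity, any $\widet{w} \in \ker(\rho)$ acts trivially on $V_{-}$ and so lies in $W_X$; but $\widet{w} \in \widet{W} \subseteq \mathcal{W}$, and any non-trivial element of $W_X$ admits a left descent in $\{s_i : i \in X\}$ and so violates \eqref{eq:W^X}, giving $\mathcal{W} \cap W_X = \{e\}$ and forcing $\widet{w} = e$. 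Hence $\rho$ is an isomorphism and pulls back the Coxeter presentation of $(W(\Sigma), S(\Sigma))$ to $(\widet{W}, \widet{S})$. The main obstacle is the reflection computation in the previous paragraph: the factors $w_X$ and $w_{\{i,\tau(i)\}\cup X}$ do not act as reflections on individual simple roots but only on their classes in $V_{-}$, so one must carefully track the $Q_X$-corrections produced by \eqref{eq:w0iAction} and Satake property~(2) before projecting. An alternative, purely combinatorial route (as indicated in \cite{a-GI14}) would be to upgrade Lemma~\ref{lem:curlyW subset Wtil} into a length-additive strong exchange condition for $(\widet{W}, \widet{S})$ inherited from the strong exchange condition for the Coxeter system $(W, \{s_i\}_{i \in I})$, and then apply Matsumoto's characterisation.
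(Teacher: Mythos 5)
Your proposal is correct in outline, but it is worth noting that the paper does not actually prove Theorem \ref{thm: Wtilde Coxeter}: it quotes the result from \cite[Theorem 5.9(i)]{a-Lu76}, \cite[25.1]{a-Lu02}, with the purely combinatorial route (the one you mention at the end, via an exchange/length argument built on Lemma \ref{lem:curlyW subset Wtil}) attributed to \cite[Remark 8]{a-GI14}. Your route instead derives the Coxeter property from the reflection realization $\rho:\widet{W}\rightarrow W(\Sigma)$, $\widet{s_i}\mapsto s_{\widet{\alpha_i}}$, plus the classical fact that a finite reflection group with a simple system is a Coxeter system (\cite[Theorems 1.5, 1.9]{b-Hum90}, after passing to indivisible restricted roots in the non-reduced $BC$ cases). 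This is essentially the content of the paper's Proposition \ref{proposition:ReflGrp} and Lemma \ref{lem:faithful}, which are proved there independently of Theorem \ref{thm: Wtilde Coxeter}, so no circularity arises; your reflection computation for $\widet{s_i}|_{V_{-1}}$ is exactly the one carried out in the proof of Proposition \ref{proposition:ReflGrp}. The one step you should not treat as immediate is the injectivity claim ``acts trivially on $V_{-}$, hence lies in $W_X$'': this needs Steinberg's theorem that the pointwise stabiliser of a subspace in a finite reflection group is generated by the reflections it contains, together with the identification of the roots orthogonal to $V_{-1}$ (equivalently, with vanishing restriction) as exactly $\Phi_X$ — an easy argument using the linear independence of $\widet{\Pi}$, but an argument nonetheless. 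The paper's Lemma \ref{lem:faithful} avoids this by proving faithfulness directly through a positivity argument, and your reduction $\mathcal{W}\cap W_X=\{e\}$ via \eqref{eq:W^X} is fine. What your approach buys is a self-contained proof of the Coxeter property from data the paper needs anyway for Corollary \ref{cor:ReflGrp}; what the cited approach buys in addition is the length compatibility of Proposition \ref{prop: length comparison}, which does not follow from your isomorphism alone.
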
 
Let $\lambda : \widet{W} \rightarrow \mathbb{N}_0$ denote the length function with respect to $(\widet{W}, \widet{S})$. The following proposition, proved in \cite[Theorem 5.9(iii)]{a-Lu76} and indicated in \cite[Corollary 6]{a-GI14} implies that reduced expressions in $\widet{W}$ are also reduced in $W$.

\begin{proposition} \label{prop: length comparison}
Let $w, w^{\prime} \in \widet{W}$. Then $l(ww^{\prime}) = l(w) + l(w^{\prime})$ if and only if $\lambda(ww^{\prime}) = \lambda(w) + \lambda(w^{\prime})$.
\end{proposition}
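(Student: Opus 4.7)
The plan is to derive the proposition from a \emph{length decomposition}: for any $\widet{w} \in \widet{W}$ and any reduced expression $\widet{w} = \widet{s}_{i_1} \dotsm \widet{s}_{i_k}$ in $(\widet{W}, \widet{S})$, one has $l(\widet{w}) = \sum_{j=1}^k l(\widet{s}_{i_j})$. Given this, both implications follow cleanly from the strong exchange condition for the Coxeter system $(\widet{W}, \widet{S})$ of Theorem \ref{thm: Wtilde Coxeter}. A useful preliminary observation is that $l(\widet{s}_i) = l(w_{\{i,\tau(i)\} \cup X}) - l(w_X) > 0$ for each $i \in I \setminus X$.

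The length decomposition is most naturally proved by passing to the restricted root system $\Sigma$. Because $\widet{W} \subset \mathcal{W} \subset W^X$, every $\widet{w} \in \widet{W}$ maps $\Phi^+_X$ into itself, so only roots in $\Phi^+ \setminus \Phi_X$ contribute to $l(\widet{w})$. The restriction map $\Phi \setminus \Phi_X \to \Sigma$ partitions $\Phi^+ \setminus \Phi^+_X$ as $\bigsqcup_{\beta \in \Sigma^+} \Phi^+_\beta$, with $\Phi^+_\beta$ the fibre over $\beta$. Since $\widet{w}$ acts on $\Sigma$ through its image in $W(\Sigma)$ and positivity in $\Phi \setminus \Phi_X$ is compatible with positivity in $\Sigma$, one obtains
\[ l(\widet{w}) = \sum_{\beta \in \Sigma^+,\ \widet{w}\beta \in \Sigma^-} |\Phi^+_\beta|. \]
The multiplicity $\beta \mapsto |\Phi^+_\beta|$ is $\widet{W}$-orbit invariant, since $\widet{w}$ restricts to a bijection $\Phi^+_\beta \to \Phi^+_{\widet{w}\beta}$ whenever $\widet{w}\beta \in \Sigma^+$. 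Standard Coxeter root combinatorics applied to $(\widet{W}, \widet{S})$ acting on $\Sigma$ then identifies the inversion set of $\widet{w}$ in $\Sigma^+$ as the disjoint union over $j = 1, \dotsc, k$ of the $\widet{W}$-translates $\widet{s}_{i_k} \dotsm \widet{s}_{i_{j+1}}(c\widet{\alpha}_{i_j})$ for positive $c$ with $c\widet{\alpha}_{i_j} \in \Sigma^+$. Summing the multiplicities over each such translate yields $l(\widet{s}_{i_j})$ by $\widet{W}$-invariance, so summing over $j$ gives $l(\widet{w}) = \sum_{j=1}^k l(\widet{s}_{i_j})$.

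With the decomposition in hand, the remainder is formal. For the ``if'' direction, if $\lambda(ww') = \lambda(w) + \lambda(w')$ then concatenating reduced expressions for $w$ and $w'$ is reduced for $ww'$, and the decomposition yields $l(ww') = l(w) + l(w')$. For the converse, fix reduced expressions $w = \widet{s}_{i_1} \dotsm \widet{s}_{i_m}$ and $w' = \widet{s}_{j_1} \dotsm \widet{s}_{j_n}$. If $\lambda(ww') < m+n$ the concatenation is not reduced in $\widet{W}$, so by the strong exchange condition two letters at positions $a < b$ may be deleted without altering $ww'$. Both deletions cannot lie within the same half, as this would shorten that reduced expression, so $a \le m < b$. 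Subadditivity of $l$ on the shortened expression combined with the decomposition applied to $w$ and $w'$ then gives
\[ l(ww') \le l(w) + l(w') - l(\widet{s}_{i_a}) - l(\widet{s}_{j_{b-m}}) < l(w) + l(w'), \]
contradicting $l(ww') = l(w) + l(w')$.

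The main obstacle is establishing the length decomposition, in particular handling the case of a non-reduced restricted root system $\Sigma$, where both $\widet{\alpha}_i$ and $2\widet{\alpha}_i$ lie in $\Sigma^+$. One must group the multiplicities $|\Phi^+_{\widet{\alpha}_i}|$ and $|\Phi^+_{2\widet{\alpha}_i}|$ in tandem so that their sum matches $l(\widet{s}_i)$ correctly; once this bookkeeping is done, the proof proceeds as sketched above.
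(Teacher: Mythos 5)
Your argument is correct, but be aware that the paper itself contains no proof of this proposition: it is quoted from Lusztig (Theorem 5.9(iii) of \cite{a-Lu76}), with a purely combinatorial route indicated in \cite{a-GI14}. So what you have produced is a self-contained alternative, and it is a sound one. Your key step, the additivity $l(\widet{w})=\sum_j l(\widet{s_{i_j}})$ along any $\widet{S}$-reduced expression, is obtained from the multiplicity formula $l(\widet{w})=\sum_{\beta\in\Sigma^+,\,\widet{w}\beta\in\Sigma^-}|\Phi^+_\beta|$, and every ingredient you invoke is available in Section \ref{sec:WSigma} of the paper: $\widet{w}(\Phi^+_X)=\Phi^+_X$ follows from $\widet{W}\subseteq\mathcal{W}$ in \eqref{eq:curlyW} being closed under inversion; the equivariance of restriction is \eqref{eq:WThetaaction}; and the identification of $\lambda$ with the inversion count in $\Sigma^+$ (through the simple system $\widet{\Pi}$) rests on Theorem \ref{thm: Wtilde Coxeter}, Proposition \ref{proposition:ReflGrp} and Corollary \ref{cor:ReflGrp}. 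The one point you flag as an obstacle, the non-reduced case $\Sigma$ of type $BC$, is in fact already handled by the grouping you wrote down: each $\widet{\alpha_i}$ is indivisible in $\Sigma$ (the $\widet{\alpha_i}$-coefficients of restricted roots are integers), so the inversion set of $\widet{w}$ in $\Sigma^+$ is the disjoint union over $j$ of the proportionality classes $\{c\,\widet{s_{i_t}}\dotsm\widet{s_{i_{j+1}}}(\widet{\alpha_{i_j}})\}\cap\Sigma^+$, each of which has total multiplicity $l(\widet{s_{i_j}})$ by the $W(\Sigma)$-invariance of the fibre sizes; this is exactly the bookkeeping needed, and nothing further is missing. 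The formal deduction of both implications from the additivity (concatenation of reduced words for one direction, the deletion condition plus $l(\widet{s_i})>0$ for the other) is also correct. Compared with the paper, your route buys a proof in the elementary spirit the authors adopt for Proposition \ref{proposition:ReflGrp}, at the cost of carrying the multiplicity function on $\Sigma$, whereas the cited references either work in Lusztig's more sophisticated setting or argue purely inside $W$.
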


\begin{remark}
By \eqref{eq: LongestWordComm (i,t(i))} the group $\widet{W}$ acts on $W_X$ by conjugation. Moreover $W_X\cap \widet{W}=\{\mathrm{id}\}$. One can show that $W^{\Theta} = W_X \rtimes \widet{W}$.
\end{remark}

\subsection{The restricted root system} \label{sec:restricted}

The group $\widet{W}$ has an interpretation as the Weyl group of the restricted root system of the symmetric Lie algebra $(\lie{g},\theta)$. This fact is implicit in \cite{a-Lu76} but we feel that it is beneficial to explain this connection in some detail. As $\theta(\lie{h}) = \lie{h}$, we obtain a direct sum decomposition
\begin{equation} \label{eq:hdecomp}
	\lie{h} = \lie{h}_1 \oplus \lie{a}
\end{equation}
where $\lie{h}_1 = \{x\in \lie{h}\,|\,\theta(x)=x\}$ and $\lie{a} = \{x\in\lie{h}\,|\,\theta(x)=-x\}$.
The restricted root system $\Sigma \subset \lie{a}^{\star}$ is obtained by restriction of all roots in $\Phi$ to $\lie{a}$, that is
\begin{equation} \label{eq:Sigma}
	\Sigma	=	\Phi|_{\lie{a}} \setminus \{0\}.
\end{equation}
As $\Theta(\Phi)=\Phi$ we have $\Theta(V)=V$. Moreover, the inner product $(-,-)$ is $\Theta$-invariant. Hence we obtain an orthogonal direct sum decomposition
\begin{equation} \label{eq:Vdecomp}
	V	=	V_{+1} \oplus V_{-1}
\end{equation}
where $V_{\lambda} = \{ \alpha \in V \mid \Theta(\alpha) = \lambda\alpha \}$ for $\lambda = \pm 1$.
Indeed, any $\alpha \in V$ can be written as
\begin{equation} \label{eq:splittingroot}
	\alpha	=	\dfrac{\alpha + \Theta(\alpha)}{2} + \dfrac{\alpha - \Theta(\alpha)}{2}
\end{equation}
where $(\alpha + \lambda\Theta(\alpha))/2 \in V_\lambda$ for $\lambda = \pm 1$.
For any $\beta \in V_{-1}$ and $h \in \lie{h}_1$, we have $\beta(h) = 0$ as
	$\beta(h)	=	\beta(\Theta(h))	=	\Theta(\beta)(h)=	-\beta(h)$.
Hence we may consider $V_{-1}$ as a subspace of $\lie{a}^{\star}$ and $V_{-1}=\mathbb{R}\Sigma$.
For any $\beta \in V$ we define 
\begin{equation} \label{eq:roottilde}
	\widet{\beta} = \dfrac{\beta - \Theta(\beta)}{2}.
\end{equation}
Equation \eqref{eq:splittingroot} implies that $\Sigma = \{ \widet{\beta} \mid \beta \in \Phi, \widet{\beta} \neq 0\}$. We write $\widet{\Pi}=\{\widet{\alpha_i}\,|\, i\in I\setminus X\}$ and define $Q(\Sigma)=\Z \Sigma=\Z \widet{\Pi}$ and $Q^+(\Sigma)=\N_0\widet{\Pi}$.
For any $w \in W^{\Theta}$, we have
\begin{equation} \label{eq:WThetaaction}
\begin{split}
	w(\widet{\beta})	=	w\bigg( \dfrac{\beta - \Theta(\beta)}{2} \bigg)	
					&=	\dfrac{w(\beta) - w(\Theta(\beta))}{2}\\
					&=	\dfrac{w(\beta) - \Theta(w(\beta))}{2}
					= \widet{w(\beta)}. 
\end{split}
\end{equation}
Hence the group $W^{\Theta}$ acts on $\Sigma$.

We have an inner product on $V_{-1}$ by restriction of the inner product on $V$.
As the inner product on $V$ is $W$-invariant and $V_{-1}$ is a $W^{\Theta}$-invariant subspace, the restriction of the inner product on $V_{-1}$ is $W^{\Theta}$-invariant.
For any $i \in X$, we have $\alpha_i \in V_{+1}$. 
Hence the direct sum decomposition \eqref{eq:Vdecomp} implies that $s_i(\widet{\beta}) = \widet{\beta}$ for all $\widet{\beta} \in \Sigma, i \in X$.
The group $\widet{W}$ on the other hand can be interpreted in terms of the restricted root system $\Sigma$. We include a pedestrian proof, avoiding the more sophisticated setting in \cite{a-Lu76}.

\begin{proposition}\label{proposition:ReflGrp}
  \begin{enumerate}
\item  The reflections at the hyperplanes perpendicular to elements of $\Sigma$ generate a finite reflection group $W(\Sigma)$. \label{part:ReflGrp1}
\item There is an isomorphism of groups $\rho:\widet{W}\rightarrow W(\Sigma)$ which sends $\widet{s_i}$ to the reflection at the hyperplane orthogonal to $\widet{\alpha_i}$ for any $i\in I\setminus X$. \label{part:ReflGrp2}
\end{enumerate}
\end{proposition}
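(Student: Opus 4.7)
The plan is to define $\rho$ by restriction to $V_{-1}$ and analyse its properties directly. Since $\widetilde{W} \subseteq W^\Theta$, each $\widetilde{w} \in \widetilde{W}$ commutes with $\Theta$ and hence preserves $V_{-1}$; moreover \eqref{eq:WThetaaction} shows that the restriction respects the tilde-projection. This yields a group homomorphism $\rho : \widetilde{W} \to O(V_{-1})$ whose image preserves the finite set $\Sigma$. I would prove parts \ref{part:ReflGrp1} and \ref{part:ReflGrp2} simultaneously by establishing the identity $\rho(\widetilde{s_i}) = r_{\widetilde{\alpha_i}}$ for each $i \in I \setminus X$ and then identifying $\rho(\widetilde{W})$ with $W(\Sigma)$.

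For the central computation, fix $i \in I \setminus X$ and set $J_i = \{i, \tau(i)\} \cup X$. From the definition \eqref{eq:sitilde}, $\widetilde{s_i} = w_{J_i} w_X^{-1} \in W_{J_i}$, so $\widetilde{s_i}$ fixes $(V^{(J_i)})^\perp$ pointwise, where $V^{(J_i)} = \mathbb{R}\{\alpha_j \mid j \in J_i\}$. Because $V^{(J_i)}$ is $\Theta$-invariant, $V_{-1}$ decomposes orthogonally as $(V_{-1} \cap V^{(J_i)}) \oplus (V_{-1} \cap (V^{(J_i)})^\perp)$, and by Remark \ref{rem:Satake} the first summand corresponds to the restricted root system of a rank one Satake diagram and is one-dimensional, spanned by $\widetilde{\alpha_i}$. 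Hence $\rho(\widetilde{s_i})$ fixes $\widetilde{\alpha_i}^\perp \cap V_{-1}$ pointwise, and only its action on the line $\mathbb{R} \widetilde{\alpha_i}$ remains to compute. Satake property \ref{property:Satake2} yields $\widetilde{\alpha_j} = 0$ for $j \in X$, so $W_X$ acts trivially on $V_{-1}$; therefore
\[
\widetilde{s_i}(\widetilde{\alpha_i}) = w_{J_i}(\widetilde{\alpha_i}) = \widetilde{w_{J_i}(\alpha_i)} = -\widetilde{\alpha_{\tau_{0,i}(i)}},
\]
using \eqref{eq:w0iAction}. Since $\tau_{0,i}$ preserves $X$, one has $\tau_{0,i}(i) \in \{i, \tau(i)\}$, and a short verification in the rank one Satake subdiagram shows $\widetilde{\alpha_{\tau(i)}} = \widetilde{\alpha_i}$, whence $\widetilde{s_i}(\widetilde{\alpha_i}) = -\widetilde{\alpha_i}$. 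This proves $\rho(\widetilde{s_i}) = r_{\widetilde{\alpha_i}}$.

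To finish, I would invoke the standard fact from Satake theory that $\Sigma$ is a (possibly non-reduced) root system with $\widetilde{\Pi} = \{\widetilde{\alpha_i} \mid i \in I \setminus X\}$ as a simple system, so that $W(\Sigma)$ is generated by $\{r_{\widetilde{\alpha_i}} \mid i \in I \setminus X\}$. Combined with the previous paragraph, this identifies $\rho(\widetilde{W})$ with $W(\Sigma)$, giving surjectivity; finiteness of $W(\Sigma)$ then follows from finiteness of $\widetilde{W} \subseteq W$, establishing part \ref{part:ReflGrp1}. For injectivity of $\rho$: since $(\widetilde{W}, \widetilde{S})$ is a Coxeter system by Theorem \ref{thm: Wtilde Coxeter} and $\rho$ sends the Coxeter generators $\widetilde{s_i}$ to the reflections $r_{\widetilde{\alpha_i}}$ with respect to a linearly independent set $\widetilde{\Pi}$, the representation $\rho$ coincides with the geometric reflection representation of $(\widetilde{W}, \widetilde{S})$, which is faithful by Tits' theorem.

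The main obstacle is justifying the identity $\widetilde{\alpha_{\tau(i)}} = \widetilde{\alpha_i}$ in the rank one Satake subdiagram when $\tau(i) \neq i$ and $\tau_{0,i}(i) = \tau(i)$. Writing $\gamma = \alpha_{\tau(i)} - \alpha_i$, this reduces to verifying $w_X(\gamma) = \gamma$ for $w_X$ acting on $V^{(J_i)}$, which can either be checked by inspecting the list of rank one Satake diagrams or be derived from the principle that each $\tau$-orbit in $I \setminus X$ contributes a unique simple root to the restricted root system.
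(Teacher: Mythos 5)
Your injectivity argument is the step that does not work. From the facts that $(\widet{W},\widet{S})$ is a Coxeter system and that $\rho$ sends each $\widet{s_i}$ to the reflection in a member of the linearly independent set $\widet{\Pi}$, you conclude that $\rho$ ``coincides with the geometric reflection representation'' and is therefore faithful by Tits' theorem. This inference is invalid: a homomorphism from a Coxeter group into an orthogonal group sending the generators to reflections in linearly independent vectors need not be faithful. For instance, the dihedral Coxeter group of order $12$ (type $G_2$) maps onto the dihedral reflection group of order $6$ by sending its two generators to the reflections in two linearly independent vectors at angle $2\pi/3$: the image of the product of the generators is a rotation of order $3$, and since $3$ divides $6$ this assignment respects the Coxeter presentation, yet it has a nontrivial kernel. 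To identify $\rho$ with the geometric representation of $(\widet{W},\widet{S})$ you would need to know that the order of $\rho(\widet{s_i})\rho(\widet{s_j})$ in $O(V_{-1})$ equals the order $m_{ij}$ of $\widet{s_i}\widet{s_j}$ in $\widet{W}$, i.e.\ that the dihedral angles between the $\widet{\alpha_i}$ realise the Coxeter matrix of $(\widet{W},\widet{S})$; nothing in your argument supplies this, and it is essentially equivalent to the injectivity being proved. The paper instead proves faithfulness of the $\widet{W}$-action on $\Sigma$ directly (Lemma \ref{lem:faithful}): if $w\neq 1$ fixed every $\widet{\alpha_i}$, one chooses $i\in I\setminus X$ with $w(\alpha_i)<0$ and applies the tilde map to $w(\alpha_i-\Theta(\alpha_i))=\alpha_i-\Theta(\alpha_i)$, obtaining $2\widet{\alpha_i}\neq 0$ in $Q^+(\Sigma)\cap(-Q^+(\Sigma))$, a contradiction. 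You need this, or an equivalent argument, for part (2).

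The remainder is essentially sound but has two soft spots. Your proof that $\rho(\widet{s_i})$ fixes $\widet{\alpha_i}^{\perp}\cap V_{-1}$ pointwise rests on $\dim\big(V_{-1}\cap V^{(J_i)}\big)=1$, where $J_i=\{i,\tau(i)\}\cup X$ and $V^{(J_i)}=\mathbb{R}\{\alpha_j\mid j\in J_i\}$, and this one-dimensionality already uses the identity $\widet{\alpha_{\tau(i)}}=\widet{\alpha_i}$ which you defer to the end; moreover your suggested justification of that identity (``each $\tau$-orbit contributes a unique simple root'') is circular. Both points are easily repaired: with $\gamma=\alpha_i-\alpha_{\tau(i)}$ one has $\tau(\gamma)=-\gamma$, hence $\Theta(\gamma)=w_X(\gamma)$ and $\widet{\gamma}=\tfrac{1}{2}\big(\gamma-w_X(\gamma)\big)\in \mathbb{R}Q_X\cap V_{-1}\subseteq V_{+1}\cap V_{-1}=\{0\}$, which yields $\widet{\alpha_{\tau(i)}}=\widet{\alpha_i}$ and $w_X(\gamma)=\gamma$; since the tilde map kills $Q_X$, it then follows that $V_{-1}\cap V^{(J_i)}=\mathbb{R}\widet{\alpha_i}$. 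Note the paper avoids any dimension count here: for $\widet{\beta}\perp\widet{\alpha_i}$ it writes $\widet{s_i}(\widet{\beta})=\widet{\beta}+m_i\widet{\alpha_i}$ directly from \eqref{eq:sitilde}, pairs with $\widet{\alpha_i}$ using the $W^{\Theta}$-invariance of the form, and concludes $m_i=0$. Finally, your surjectivity step imports from the literature that $\Sigma$ is a (possibly non-reduced) root system with simple system $\widet{\Pi}$; that is legitimate but heavier machinery than the paper's route, which only adapts the argument of \cite[Theorem 1.5]{b-Hum90} to show that every reflection in an element of $\Sigma$ lies in $\rho(\widet{W})$, and the linear independence of $\widet{\Pi}$ that you invoke deserves a one-line proof (compare coefficients of $\alpha_k$, $k\in I\setminus X$, in $2\widet{\alpha_i}=\alpha_i+\alpha_{\tau(i)}+\sum_{j\in X}n_j\alpha_j$).
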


\begin{proof}
For any $i \in I \setminus X$ we have
\begin{align*}
	\widet{s_i}(\widet{\alpha_i})	&=	\big( w_X^{-1}w_{\{i,\tau(i)\} \cup X} \big)(\widet{\alpha_i})\\
									&=	w_{\{i, \tau(i)\} \cup X} (\widet{\alpha_i})\\
									&=	\big( w_{\{i,\tau(i)\} \cup X}(\alpha_i) \big)|_{\lie{a}}.			
\end{align*}
Since $w_{\{i,\tau(i)\} \cup X}(\alpha_j) \in \{-\alpha_j, -\alpha_{\tau(j)}\}$ for all $j \in \{i, \tau(i)\} \cup X$, it follows that
\begin{align} \label{eq:siai}
	\widet{s_i}(\widet{\alpha_i}) = -\widet{\alpha_i}.
\end{align}
Now suppose $\widet{\beta} \in V_{-1}$ such that $(\widet{\beta}, \widet{\alpha_i}) = 0$. 
Using the $W^{\Theta}$-invariance of the bilinear form on $V_{-1}$, we obtain
\begin{align*}
  (\widet{s_i}(\widet{\beta}), \widet{\alpha_i})&=(\widet{\beta}, \widet{s_i}(\widet{\alpha_i}) )\\
  &=-(\widet{\beta}, \widet{\alpha_i})\\
  &=0.
\end{align*}
On the other hand, by the definition of $\widet{s_i}$ we have
\begin{equation*}
	\widet{s_i}(\beta)	=	\beta + n_i\alpha_i + n_{\tau(i)}\alpha_{\tau(i)} + \sum_{j \in X}n_j\alpha_j
\end{equation*}
for some $n_j \in \mathbb{Q}$. 
From this we obtain
\begin{equation*}
	\widet{s_i}(\widet{\beta})	=	\widet{\beta} + m_i\widet{\alpha_i}
\end{equation*}
where $m_i = n_i + n_{\tau(i)}$. 
Hence,
\begin{align*}
	0	=	(\widet{s_i}(\widet{\beta}), \widet{\alpha_i})
		&=	(\widet{\beta} + m_i\widet{\alpha_i}, \widet{\alpha_i})\\
		&=	(\widet{\beta}, \widet{\alpha_i}) + m_i(\widet{\alpha_i}, \widet{\alpha_i})\\
		&=	m_i(\widet{\alpha_i}, \widet{\alpha_i}).
\end{align*}
The inner product is positive definite so it follows that $m_i = 0$.
Hence $\widet{s_i}(\widet{\beta}) = \widet{\beta}$.
This together with \eqref{eq:siai} implies that $\widet{s_i}$ is the reflection at the hyperplane orthogonal to $\widet{\alpha_i}$.
As seen above, the action of $\widet{W}$ on $V_{-1}$ gives a group homomorphism
\begin{equation*}
	\rho : \widet{W} \rightarrow W(\Sigma).
\end{equation*}
Adapting the proof of \cite[Theorem 1.5]{b-Hum90} one shows that $\rho$ is surjective which implies part \ref{part:ReflGrp1}.
To prove part \ref{part:ReflGrp2} it remains to show that $\rho$ is injective. This is a consequence of Lemma \ref{lem:faithful} below.
\end{proof}

\begin{lemma} \label{lem:faithful}
The action of $\widet{W}$ on $\Sigma$ is faithful.
\end{lemma}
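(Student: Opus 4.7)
The plan is to prove the contrapositive: if $w \in \widet{W}$ acts trivially on $\Sigma$ (equivalently, on $V_{-1} = \mathbb{R}\Sigma$), then $w = e$. Suppose for contradiction that $w \neq e$. Since $(\widet{W}, \widet{S})$ is a Coxeter system by Theorem \ref{thm: Wtilde Coxeter}, we have $\lambda(w) \geq 1$, and iterating Proposition \ref{prop: length comparison} yields $l(w) \geq 1$ in $W$. Standard Coxeter theory then produces some $i \in I$ with $w(\alpha_i) \in -\Phi^+$. As $\widet{W} \subseteq \mathcal{W} \subseteq W^X$, the description \eqref{eq:W^X} forces $i \in I \setminus X$.

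Writing $w(\alpha_i) = -\beta$ with $\beta \in \Phi^+$, the formula \eqref{eq:WThetaaction} together with the hypothesis $w(\widet{\alpha_i}) = \widet{\alpha_i}$ gives $\widet{\alpha_i} = \widet{w(\alpha_i)} = -\widet{\beta}$, so $\widet{\alpha_i + \beta} = 0$. In other words, $\alpha_i + \beta$ lies in the $\Theta$-fixed subspace $V_{+1}$.

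The heart of the argument is then the inclusion $V_{+1} \cap Q^+ \subseteq Q_X^+$. To prove this I project an element $v = \sum_k n_k \alpha_k \in V_{+1} \cap Q^+$ onto $V_{-1}$ along the decomposition \eqref{eq:Vdecomp}; using \eqref{eq:splittingroot} the image equals $\sum_k n_k \widet{\alpha_k}$ and must vanish. Now $\widet{\alpha_k} = 0$ whenever $k \in X$ (since then $\alpha_k \in V_{+1}$), while the remaining $\widet{\alpha_k}$ for $k \in I \setminus X$ represent the simple roots of $\Sigma$ (with the identification $\widet{\alpha_k} = \widet{\alpha_{\tau(k)}}$, which holds because their difference lies in $V_X \cap V_{-1} = 0$). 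Linear independence of the simple roots of $\Sigma$, combined with $n_k \geq 0$, then forces $n_k = 0$ for all $k \in I \setminus X$, giving $v \in Q_X^+$.

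Applied to $v = \alpha_i + \beta$ this yields $\alpha_i + \beta \in Q_X^+$, which is impossible: the coefficient of $\alpha_i$ in $\alpha_i + \beta$ is at least $1$, while $i \in I \setminus X$ means this coefficient must vanish in $Q_X^+$. The main point I expect to require care is the identity $V_{+1} \cap Q^+ \subseteq Q_X^+$; although the projection computation is routine, one must handle carefully the identification $\widet{\alpha_k} = \widet{\alpha_{\tau(k)}}$ and remember that the simple roots of $\Sigma$ are indexed by $\tau$-orbits in $I \setminus X$ rather than by $I \setminus X$ itself.
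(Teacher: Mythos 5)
Your argument is correct and is essentially the same as the paper's: both proofs locate an $i\in I\setminus X$ with $w(\alpha_i)<0$ and then reach a contradiction via the tilde map and positivity, and your intermediate inclusion $V_{+1}\cap Q^+\subseteq Q_X^+$ is a repackaging of the paper's sign clash between $Q^+(\Sigma)$ and $-Q^+(\Sigma)$. One step should be tightened: the defining condition \eqref{eq:W^X} says $l(s_jw)>l(w)$ for $j\in X$, i.e.\ $w^{-1}(\alpha_j)\in\Phi^+$, so membership in $W^X$ alone does not exclude $w(\alpha_j)<0$ for $j\in X$, which is what you actually need to force $i\in I\setminus X$. This does follow from what you cite, either by applying \eqref{eq:W^X} to $w^{-1}\in\widet{W}\subseteq W^X$ (giving $w(\alpha_j)>0$ for all $j\in X$ directly), or by using $wW_X=W_Xw$ from \eqref{eq:curlyW} so that $w$ is also the minimal length element of $wW_X$ and hence $l(ws_j)>l(w)$ for $j\in X$. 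Finally, the linear independence of the distinct $\widet{\alpha_k}$ is not established in the paper; you can bypass it by expanding $\sum_{k\in I\setminus X}n_k\widet{\alpha_k}$ in the basis $\Pi$: the coefficient of $\alpha_j$ for $j\in I\setminus X$ is $n_j$ if $\tau(j)=j$ and $\tfrac{1}{2}(n_j+n_{\tau(j)})$ otherwise, so nonnegativity already forces all $n_k=0$.
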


\begin{proof}
Assume that there exists $w \in \widet{W}$ such that $w \neq 1_{\widet{W}}$ and
\begin{equation*}
	w (\widet{\alpha_i})	=	\widet{\alpha_i}	\quad	\mbox{for all $i \in I$.}
\end{equation*} 
We can rewrite this formula as 
\begin{equation} \label{prop:Faithful1}
 w(\alpha_i) - w(\Theta(\alpha_i))	=	\alpha_i - \Theta(\alpha_i).
\end{equation}
For all $i \in X$ we have $w(\alpha_i) > 0$ as $l(ws_i) = l(w)+1$.
Hence there exists $i \in I \setminus X$ such that $w(\alpha_i) < 0$. 
In this case also $w(\alpha_{\tau(i)}) < 0$ since elements of $\widet{W}$ are fixed under $\tau$.
Consider Equation \eqref{prop:Faithful1} for this $i$:
The right hand side lies in $Q^+$ and is of the form 
\begin{equation} \label{prop:Faithful2}
\alpha_i- \Theta(\alpha_i)=\alpha_i + \alpha_{\tau(i)} + \sum_{j \in X} n_j\alpha_j
\end{equation}
where $n_j \in \mathbb{N}_0$ for each $j \in X$. 
We can write the left hand side as
\begin{equation} \label{prop:Faithful3}
w(\alpha_i) - w(\Theta(\alpha_i)) = w(\alpha_i) + w(\alpha_{\tau(i)}) + \sum_{j \in X} m_j w(\alpha_j)
\end{equation}
where $m_j \in \mathbb{N}_0$ for each $j \in X$.
Hence inserting \eqref{prop:Faithful2} and \eqref{prop:Faithful3} into \eqref{prop:Faithful1}, we get
\begin{equation*}
w(\alpha_i) + w(\alpha_{\tau(i)}) + \sum_{j \in X}m_jw(\alpha_j) = \alpha_i + \alpha_{\tau(i)} + \sum_{j \in X}n_j\alpha_j.
\end{equation*}
Now we apply the tilde map to the above equation.
The terms involving $\alpha_j$ for $j \in X$ vanish, because the tilde map is zero on $Q_X$ and $w$ commutes with $\Theta$. 
We get
\begin{equation*}
\widet{w(\alpha_i)} + \widet{w(\alpha_{\tau(i)})} = \widet{\alpha_i} + \widet{\alpha_{\tau(i)}}.
\end{equation*}
The right hand side lies in $Q^+(\Sigma)$. 
The left hand side lies in $-Q^+(\Sigma)$ because $w(\alpha_i)$ and $w(\alpha_{\tau(i)})$ lie in $-Q^+$.
Hence both sides of the equation must vanish.
However, this is not possible, in particular for the right hand side which is $2\widet{\alpha_i}$.
We have a contradiction.
\end{proof}

Proposition \ref{proposition:ReflGrp} has the following consequence which we note for later reference.

\begin{corollary}\label{cor:ReflGrp}
  For any $i\in I\setminus X$ and $\mu\in Q(\Sigma)$ the relation
  \begin{align*}
    \widet{s_i}(\mu) = \mu - 2\frac{(\mu,\widet{\alpha_i})}{(\widet{\alpha_i},\widet{\alpha_i})}\widet{\alpha_i} 
  \end{align*}
  holds and $\displaystyle 2\frac{(\mu,\widet{\alpha_i})}{(\widet{\alpha_i},\widet{\alpha_i})}\in \Z$.
\end{corollary}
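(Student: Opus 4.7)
The first formula is a direct consequence of Proposition \ref{proposition:ReflGrp}. That proposition identifies $\widet{s_i}$ with the orthogonal reflection on $V_{-1}$ at the hyperplane perpendicular to $\widet{\alpha_i}$. Since $V_{-1}$ is a Euclidean space with inner product inherited from $V$, and since $Q(\Sigma)\subset V_{-1}$, the action on any $\mu\in Q(\Sigma)$ is given by the standard reflection formula
\begin{equation*}
\widet{s_i}(\mu) = \mu - 2\frac{(\mu,\widet{\alpha_i})}{(\widet{\alpha_i},\widet{\alpha_i})}\widet{\alpha_i}.
\end{equation*}
This settles the first claim without further work.

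For the integrality statement, the plan is to show that $\widet{s_i}$ preserves the lattice $Q(\Sigma)$ and then to extract integrality from the fact that $\widet{\alpha_i}$ is part of a $\Z$-basis of $Q(\Sigma)$. For the first point, note that $\widet{s_i}\in W$ preserves $Q$, so for any simple root $\alpha_j$ with $j\in I\setminus X$ we can expand $\widet{s_i}(\alpha_j)=\sum_k n_k\alpha_k$ with $n_k\in\Z$. Applying the tilde map and using Equation \eqref{eq:WThetaaction} yields $\widet{s_i}(\widet{\alpha_j})=\sum_k n_k\widet{\alpha_k}\in Q(\Sigma)$. By linearity this shows $\widet{s_i}(Q(\Sigma))\subseteq Q(\Sigma)$.

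Combining this with the reflection formula gives
\begin{equation*}
\mu-\widet{s_i}(\mu)= 2\frac{(\mu,\widet{\alpha_i})}{(\widet{\alpha_i},\widet{\alpha_i})}\widet{\alpha_i}\in Q(\Sigma).
\end{equation*}
To conclude that the coefficient lies in $\Z$ one uses that $Q(\Sigma)=\Z\widet\Pi$ is a free $\Z$-module on a set of representatives of the $\tau$-orbits in $I\setminus X$: for $i,j\in I\setminus X$ in distinct $\tau$-orbits the elements $\widet{\alpha_i}$ and $\widet{\alpha_j}$ are $\R$-linearly independent in $V_{-1}$ (they are simple roots of the restricted root system $\Sigma$), while $\widet{\alpha_i}=\widet{\alpha_{\tau(i)}}$ follows from the vanishing of $\widet{\alpha_k}$ for $k\in X$ together with $Q_X\subset V_{+1}$. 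Choosing a set of $\tau$-orbit representatives containing $i$, the element $\widet{\alpha_i}$ is part of a $\Z$-basis of $Q(\Sigma)$, so $c\widet{\alpha_i}\in Q(\Sigma)$ forces $c\in\Z$.

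The only mildly subtle point is the argument that $\widet{\alpha_i}$ can be extended to a $\Z$-basis of $Q(\Sigma)$; this is really just the observation that after discarding the redundancy $\widet{\alpha_i}=\widet{\alpha_{\tau(i)}}$, one is left with the simple roots of the reduced (or non-reduced) restricted root system, which by Proposition \ref{proposition:ReflGrp} is a genuine root system in the Euclidean space $V_{-1}$. No further case analysis or computation is required.
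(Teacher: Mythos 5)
Your proof is correct. The paper records Corollary \ref{cor:ReflGrp} without an explicit proof, as a consequence noted after Proposition \ref{proposition:ReflGrp}, so there is nothing to compare line by line; your write-up supplies the missing details in the natural way. The first half (the reflection formula) is exactly the intended reading of Proposition \ref{proposition:ReflGrp}\ref{part:ReflGrp2}. For the integrality, your route --- first showing $\widet{s_i}(Q(\Sigma))\subseteq Q(\Sigma)$ by combining $\widet{s_i}\in W^\Theta$, Equation \eqref{eq:WThetaaction} and the vanishing of the tilde map on $Q_X$, and then deducing $c\in\Z$ from $c\,\widet{\alpha_i}\in Q(\Sigma)$ because a set of $\tau$-orbit representatives yields a $\Z$-basis of $Q(\Sigma)$ --- is the natural lattice-theoretic argument, and it in effect verifies that $\Sigma$ is crystallographic rather than quoting this as a known property of restricted root systems. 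The only point you assert rather than prove is the $\mathbb{R}$-linear independence of the $\widet{\alpha_j}$ for $j$ in distinct $\tau$-orbits (together with $\widet{\alpha_j}=\widet{\alpha_{\tau(j)}}$); this is immediate from $\widet{\alpha_j}\in\tfrac{1}{2}\bigl(\alpha_j+\alpha_{\tau(j)}\bigr)+\tfrac{1}{2}\mathbb{R}Q_X$, obtained from $\Theta(\alpha_j)\in-\alpha_{\tau(j)}+Q_X$, by looking at the coefficients of the simple roots outside $X$, so it is a harmless omission rather than a gap.
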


\section{Factorisation of quasi $K$-matrices} \label{sec:facK}

As explained in the introduction, the quasi $R$-matrix for $U_q(\lie{g})$ \cite[Chapter 4]{b-Lusztig94} has a deep connection to the Weyl group $W$. This was first observed by Levendorski\u{\i} and Soibelman \cite{a-LS90}, and independently by Kirillov and Reshetikhin \cite{a-KR90}. In this section, we establish a similar connection between the quasi $K$-matrix $\Qkm$ for a quantum symmetric pair and the restricted Weyl group $\widet{W}$. In particular, we will see in many cases that the quasi $K$-matrix $\Qkm$ factorises into a product of quasi $K$-matrices for Satake diagrams of rank one.

We first fix notation for quantised enveloping algebras and quantum symmetric pairs in Sections \ref{sec:QEAs} and \ref{sec:QSPs}. Recall that the construction of quantum symmetric pairs depends on an additional choice of parameters $\mathbf{c} \in \mathcal{C}$ and $\mathbf{s} \in \mathcal{S}$, see Definition \ref{Def:QSPCS}. In Sections \ref{sec:rank1Qkms} and \ref{sec:PartialQkms} we find explicit formulas for $\Qkm$ in the case $\mathbf{s} = (0, \dotsc , 0)$. Section \ref{sec:sGeneral} then deals with the case of general parameters $\mathbf{s}$. 

\subsection{Quantised enveloping algebras} \label{sec:QEAs}

In this section we fix notation for quantum groups mostly following the conventions in \cite{b-Lusztig94} and \cite{b-Jantzen96}.
Let $q$ be an indeterminate and $\mathbb{K}$ a field of characteristic zero.
Denote by $\mathbb{K}(q)$ the field of rational functions in $q$ with coefficients in $\mathbb{K}$. The quantised enveloping algebra $\uqg$ is the associative $\field(q)$-algebra generated by elements $E_i, F_i, K_i^{\pm1}$ for all $i \in I$ satisfying the relations given in \cite[3.1.1]{b-Lusztig94} or \cite[4.3]{b-Jantzen96}.
The algebra $\uqg$ has the structure of a Hopf algebra with coproduct $\Delta$, counit $\varepsilon$ and antipode $S$ given by
\begin{align}
\Delta(E_i) &= E_i \otimes 1 + K_i\otimes E_i,	&	\varepsilon(E_i) &= 0, 	&	S(E_i) &= -K_i^{-1}E_i, \\
\Delta(F_i) &= F_i \otimes K_i^{-1} + 1 \otimes F_i,	&	\varepsilon(F_i) &= 0, & S(F_i) &= -F_iK_i, \\
\Delta(K_i) &= K_i \otimes K_i,	&	\varepsilon(K_i) &= 1,	& S(K_i) &= K_i^{-1},	
\end{align}
for all $i \in I$.
Let $U^{+}, U^{-}$ and $U^0$ denote the subalgebras of $U_q(\lie{g})$ generated by $\{E_i \mid i \in I\}$, $\{F_i \mid i \in I\}$ and $\{K_i^{\pm1} \mid i \in I\}$, respectively. 
For $\lambda = \sum_{i \in I} n_i\alpha_i \in Q$ we also write
\begin{equation} \label{eq:Klambda}
	K_\lambda	=	\prod_{i \in I}K_i^{n_i}.
\end{equation}
The elements $K_\lambda$ for $\lambda\in Q$ form a vector space basis of $U^0$.
For any $U^0$-module $M$ and any $\mu \in Q$ let
\begin{equation}\label{eq:WeightSpace}
	M_{\mu} = \{m \in M \mid K_im = q^{(\mu, \alpha_i)}m \:\:	\mbox{for all $i \in I$}\}
\end{equation}
denote the corresponding weight space. In particular, both $U^+$ and $U^-$ are $U^0$-modules via the left adjoint action so we may apply the above notation.
This gives algebra gradings
\begin{align} \label{eq:WeightDecomp}
	U^+ &= \bigoplus_{\mu \in Q^+} U_{\mu}^+,	&	U^-	&=	\bigoplus_{\mu \in Q^+} U_{-\mu}^-.
\end{align}
By \cite[39.4.3]{b-Lusztig94} the braid group $Br(W)$ acts on $U_q(\lie{g})$ by algebra automorphisms analogously to the action of $Br(W)$ on $\lie{g}$.
For any $i \in I$ let $T_i$ be the algebra automorphism of $U_q(\lie{g})$ denoted by $T_{i,1}^{\prime\prime}$ in \cite[37.1]{b-Lusztig94}.
The automorphisms $T_i$ for $i \in I$ satisfy braid relations
\begin{equation}\label{eq:Tibraid}
	\underbrace{T_iT_jT_i \cdots}_{\mbox{\small{$m_{ij}$ factors}}} 	=	\underbrace{T_jT_iT_j \cdots}_{\mbox{\small{$m_{ij}$ factors}}}
\end{equation}
where $m_{ij}$ denotes the order of $s_is_j \in W$.
Hence for any $w \in W$, there is a well-defined algebra automorphism $T_w: U_q(\lie{g}) \rightarrow U_q(\lie{g})$ defined by
\begin{equation}
T_w = T_{i_1}T_{i_2} \cdots T_{i_k}
\end{equation}
where $w = s_{i_1}s_{i_2}\dots s_{i_k}$ is a reduced expression.

In Sections \ref{sec:QSPs} and \ref{sec:PartialQkms} it is necessary to consider a completion $\mathscr{U}$ of $U_q(\lie{g})$. 
We recall the construction of $\mathscr{U}$ following \cite[3.1]{a-BK15}.
Let $\mathcal{O}_{int}$ denote the category of finite dimensional $U_q(\lie{g})$-modules of type $1$, and let $\mathcal{V}ect$ denote the category of vector spaces over $\mathbb{K}(q)$. Both categories are tensor categories and the forgetful functor
$\mathcal{F}or: \mathcal{O}_{int} \rightarrow \mathcal{V}ect$
is a tensor functor.
We let $\mathscr{U} = \mathrm{End}(\mathcal{F}or)$ be the $\field(q)$-algebra of all natural transformations from the functor $\mathcal{F}or$ to itself. 
Observe that $\uqg$ and $\widehat{U^+} = \prod_{\mu \in Q^+} U_{\mu}^+$ may be considered as subalgebras of $\sU$, see \cite[Section 3.1]{a-BK15}. We usually write elements in $\widehat{U^+}$ additively as infinite sums $\sum_{\mu\in Q^+} u_\mu$ with $u_\mu\in U^+_\mu$.

By \cite[1.2.13]{b-Lusztig94} for any $i\in I$ there exist uniquely determined linear maps ${}_ir$, $r_i: U^{+} \rightarrow U^{+}$ satisfying
\begin{align}
	\ir{i}{E_j}	&=	\delta_{ij},	&	\ir{i}{xy}	&=	\ir{i}{x}y + q^{(\alpha_i,\mu)}x\ir{i}{y} \label{eq:ir}\\
	\ri{i}{E_j}	&=	\delta_{ij},	&	\ri{i}{xy}	&=	q^{(\alpha_i, \nu)}\ri{i}{x}y + x\ri{i}{y}, \label{eq:ri}
\end{align}
for all $j \in I$, $x \in U^{+}_{\mu}$ and $y \in U^{+}_{\nu}$ where $\mu,\nu\in Q^+$. We may extend the skew derivation $_{i}r: U^+ \rightarrow U^+$  to a linear map
\begin{equation}\label{eq:irExtension}
	_{i}r: \widehat{U^+} \rightarrow \widehat{U^+}, \quad \sum_{\mu\in Q^+} u_{\mu} \mapsto \sum_{\mu\in Q^+} \ir{i}{u_{\mu}}
\end{equation}
where ${}_ir(u_\mu)$ is the component in $U^+_{\mu-\alpha_i}$ for all $\mu \in Q^+$ with $\mu\ge \alpha_i$. Similarly we may extend the skew derivation $r_i: U^+ \rightarrow U^+$ to a linear map $r_i: \widehat{U^+} \rightarrow \widehat{U^+}$.

Finally, recall that the bar involution for $\uqg$ is the $\mathbb{K}$-algebra automorphism
\begin{align} \label{eq:BarInv}
  \barinv: U_q(\lie{g}) \rightarrow U_q(\lie{g}), \qquad u\mapsto \overline{u}^U
\end{align}
defined by $\overline{q}^U = q^{-1}$ and $\overline{E_i}^U = E_i$, $\overline{F_i}^U = F_i$, $\overline{K_i}^U = K_i^{-1}$ for all $i \in I$.

\subsection{Quantum symmetric pairs} \label{sec:QSPs}
We recall the definition of quantum symmetric pair coideal subalgebras $\Bcs$ as introduced by G.~Letzter in \cite{a-Letzter99a}. Here we follow the conventions in \cite{a-Kolb14}. Let $(X, \tau)$ be a Satake diagram and let $s:I \rightarrow \field^{\times}$ be a function satisfying equations \eqref{eq:sCond1} and \eqref{eq:sCond2}.
Let $\mathcal{M}_X~=~U_q(\lie{g}_X)$ be the subalgebra of $U_q(\lie{g})$ generated by $\{E_i, F_i, K_i^{\pm1} \mid i \in X \}$, and let $U_\Theta^0$ be the subalgebra of $U^0$ generated by $\{K_\lambda\,|\,\lambda\in Q, \Theta(\lambda)=\lambda\}$.
Quantum symmetric pair coideal subalgebras depend on a choice of parameters $\mathbf{c} = (c_i)_{i \in I \setminus X} \in (\mathbb{K}(q)^{\times})^{I \setminus X}$ and $\mathbf{s} = (s_i)_{i \in I \setminus X} \in (\mathbb{K}(q)^{\times})^{I \setminus X}$ with added constraints. Define
\begin{equation} \label{eq:Ins}
	I_{ns} = \{i \in I \setminus X \mid \mbox{$\tau(i) = i$ and $a_{ij} = 0$ for all $j \in X$} \} 
\end{equation}
where $a_{ij} = 2\frac{(\alpha_i, \alpha_j)}{(\alpha_j,\alpha_j)}$ for $i,j \in I$ are the entries of the Cartan matrix of $\lie{g}$. Define the parameter sets
\begin{align}
	\mathcal{C}	&=	\{ 	\mathbf{c} \in (\mathbb{K}(q)^{\times})^{I \setminus X} 
							\mid 
						\mbox{$c_i = c_{\tau(i)}$ if $\tau(i) \neq i$ and $(\alpha_i, \Theta(\alpha_i)) = 0$} \}, \label{eq:ParameterSetC} \\
	\mathcal{S}	&=	\{	\mathbf{s} \in (\mathbb{K}(q)^{\times})^{I \setminus X}
							\mid
						\mbox{$s_j \neq 0 \Rightarrow (j \in I_{ns}$ and $a_{ij} \in -2\mathbb{N}_0 \; \forall i \in I_{ns} \setminus \{j\}) $} \}. \label{eq:ParameterSetS}  
\end{align}

\begin{definition} \label{Def:QSPCS}
Let $(X, \tau)$ be a Satake diagram, $\mathbf{c} = (c_i)_{i \in I \setminus X} \in \mathcal{C}$ and $\mathbf{s} = (s_i)_{i \in I \setminus X} \in \mathcal{S}$. 
The quantum symmetric pair coideal subalgebra $\Bcs = \Bcs(X, \tau)$ is the subalgebra of $U_q(\lie{g})$ generated by $\mathcal{M}_X, U_{\Theta}^0$ and the elements 
\begin{equation} \label{eq:Bi}
	B_i	=	F_i	-	c_is(\tau(i))T_{w_X}(E_{\tau(i)})K_i^{-1}	+	s_iK_i^{-1} 
\end{equation}
for all $i\in I\setminus X$.
\end{definition}
By \cite[Proposition 5.2]{a-Kolb14} the algebra $\Bcs$ is a right coideal subalgebra of $U_q(\lie{g})$, that is
\begin{equation} \label{eq:RCSA}
	\Delta(\Bcs)	\subseteq	\Bcs \otimes U_q(\lie{g}).
\end{equation}
All through this paper we assume that the parameters $\mathbf{c} = (c_i)_{i \in I \setminus X} \in \mathcal{C}$ and $\mathbf{s} = (s_i)_{i \in I \setminus X} \in \mathcal{S}$ satisfy the additional relations
\begin{align}
	c_{\tau(i)}	&=	q^{(\alpha_i, \Theta(\alpha_i) - 2\rho_X)} \overline{c_i}^U, \label{eq:ciCond} \\
	\overline{s_i}^U	&=	s_i \label{eq:siCond} 
\end{align}
for all $i \in I \setminus X$.
By \cite[Theorem 3.11]{a-BK14}, if condition \eqref{eq:ciCond} holds, then there exists a $\mathbb{K}$-algebra automorphism
\begin{equation} \label{eq:BbarInv}
	\Bbarinv: \Bcs \rightarrow \Bcs, \quad b \mapsto \overline{b}^B
\end{equation}
such that 
\begin{align}
	\Bbarinv|_{\cM_X U_\Theta^0} &= \barinv|_{\cM_X U_\Theta^0}, & 	\overline{B_i}^B&=B_i \quad \mbox{for all $i\in I\setminus X$} . \label{eq:BbarInvProp}
\end{align}
The map $\Bbarinv$ is called the bar involution for $\Bcs$ and plays a similar role as the bar involution \eqref{eq:BarInv} for $U_q(\lie{g})$.
In particular there exists a quasi $K$-matrix $\Qkm \in \mathscr{U}$ which resembles the quasi $R$-matrix for $U_q(\lie{g})$.
More explicitly, following a program outlined by H.~Bao and W.~Wang in \cite{a-BaoWang13}, it was proved in \cite[Theorem 6.10]{a-BK15} that there exists a uniquely determined element $\Qkm = \sum_{\mu \in Q^+} \Qkm_{\mu} \in \prod_{\mu \in Q^+} U_{\mu}^+$ with $\Qkm_0 = 1$ and $\Qkm_{\mu} \in U_{\mu}^+$ such that the equation
\begin{equation} \label{eq:BbarIntertwiner}
	\overline{b}^B \Qkm	=	\Qkm \overline{b}^U
\end{equation}
holds in $\mathscr{U}$ for all $b \in \Bcs$. For symmetric pairs of type $AIII$/$IV$ with $X=\emptyset$ the existence of the quasi $K$-matrix satisfying \eqref{eq:BbarIntertwiner} was first observed in \cite{a-BaoWang13}.
The quasi $K$-matrix is an essential building block for the construction of the universal $K$-matrix in \cite{a-BK15}.
To unify notation we define $c_i = s_i = 0$ and $B_i = F_i$ for $i\in X$. Moreover, we write 
\begin{align} \label{eq:Xi}
	X_i	&=	-s(\tau(i))T_{w_X}(E_{\tau(i)}) &&\mbox{for all $i \in I \setminus X$,}\\
        X_j &= 0 &&\mbox{for all $j\in X$.}
\end{align}

By \cite[Proposition 6.1]{a-BK15} the quasi $K$-matrix $\Qkm = \sum_{\mu \in Q^+}\Qkm_{\mu}$ is the unique solution to the recursive system of equations
\begin{equation} \label{eq:irSyst}
\ir{i}{\Qkm_\mu} = -(q_i - q_i^{-1}) \big(q^{-(\Theta(\alpha_i),\alpha_i)}c_iX_i \Qkm_{\mu + \Theta(\alpha_i)-\alpha_i} + s_i\Qkm_{\mu-\alpha_i} \big) \quad \mbox{for all $i\in I$}
\end{equation}
with the normalisation $\Qkm_0 = 1$. Using the extension of the skew derivation ${}_{i}r$ to $\widehat{U^+}$ given by \eqref{eq:irExtension} we can rewrite \eqref{eq:irSyst} in the compact form
\begin{equation} \label{eq:irSyst2}
\ir{i}{\Qkm} = -(q_i-q_i^{-1}) \big(q^{-(\Theta(\alpha_i),\alpha_i)}c_iX_i \Qkm + s_i\Qkm \big) \qquad \mbox{for all $i\in I$.}
\end{equation} 
In Section \ref{sec:rank1Qkms} and in Appendix \ref{App:RankTwo} we will use the above formula to perform uniform calculations with $\Qkm$.
Equation \eqref{eq:irSyst} implies in particular that
\begin{align}\label{eq:jrX=0}
  \ir{j}{\Qkm_\mu}=0 \qquad \mbox{for all $j\in X$}
\end{align}
as $c_j = s_j = 0$ for all $j \in X$.
This property has the following consequence which was already observed in \cite[Proposition 4.15]{a-BW16}. Recall that $w_0\in W$ denotes the longest element. Moreover, for any $w\in W$ recall the definition of the subalgebra $U^+[w]$ of $U^+$ given in \cite[8.21, 8.24]{b-Jantzen96}.

\begin{lemma}\label{lem:xmuinUwxw0}
  For any $\mu\in Q^+$ the relation $\Qkm_\mu\in U^+[w_Xw_0]$ holds. 
\end{lemma}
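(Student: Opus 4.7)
The plan is to derive the statement from Equation \eqref{eq:jrX=0} together with the characterization
\begin{equation*}
U^+[w_X w_0] = \{ x \in U^+ \mid \ir{j}{x} = 0 \text{ for all } j \in X \}.
\end{equation*}
With this characterization in hand, \eqref{eq:jrX=0} directly yields $\Qkm_\mu \in U^+[w_X w_0]$ for every $\mu \in Q^+$, proving the lemma in one line.

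To prove the characterization, I would first exploit the standard compatibility of the skew derivations with Lusztig's non-degenerate bilinear form $(\cdot,\cdot)$ on $U^+$: up to a nonzero scalar, $\ir{j}{\cdot}$ is adjoint to left multiplication by $E_j$. This identifies $\bigcap_{j \in X} \ker(\ir{j}{\cdot})$ with the orthogonal complement of the left ideal $\sum_{j \in X} E_j U^+$ inside $U^+$. Next, I would compare this complement with $U^+[w_X w_0]$ weight space by weight space using the vector space decomposition $U^+ \cong U^+[w_X w_0] \otimes U^+_X$ (via multiplication), which arises from the reduced factorisation $w_0 = w_X \cdot (w_X w_0)$ with $\ell(w_0) = \ell(w_X) + \ell(w_X w_0)$. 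A Kostant-partition argument, splitting each partition of $\mu \in Q^+$ into its $\Phi^+_X$-part and its $(\Phi^+ \setminus \Phi^+_X)$-part, confirms the decomposition and shows that $U^+[w_X w_0]$ and the orthogonal complement above have the same graded dimensions.

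The main obstacle is the orthogonality bookkeeping: one has to verify that the PBW generators $T_{j_1}\cdots T_{j_{k-1}}(E_{j_k})$ of $U^+[w_X w_0]$ genuinely lie in $\ker(\ir{j}{\cdot})$ for all $j \in X$. This can be done either by direct computation using the explicit formulas for $T_w$ on Chevalley generators, as in \cite[8.14]{b-Jantzen96}, or more conceptually via the adjointness property of Lusztig's form together with the subalgebra property of $\bigcap_{j \in X} \ker(\ir{j}{\cdot})$ that follows from the derivation identity \eqref{eq:ir}. Once the characterization is established, the lemma is an immediate consequence of \eqref{eq:jrX=0}, and no further induction on $\mathrm{ht}(\mu)$ is required.
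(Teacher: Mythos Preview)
Your proposal is correct and follows the same route as the paper: both deduce the lemma from \eqref{eq:jrX=0} via the identification $U^+[w_X w_0] = \bigcap_{j \in X} \ker(\ir{j}{\cdot})$. The paper obtains this identification in two lines by citing \cite[8.26, (4)]{b-Jantzen96} (which gives $\ker(\ir{j}{\cdot}) = U^+[s_j w_0]$ for each $j$) and \cite[Theorem~7.3]{a-HS} (which gives $\bigcap_{j \in X} U^+[s_j w_0] = U^+[w_X w_0]$), whereas you sketch a self-contained proof of the same fact via Lusztig's form and a graded-dimension comparison; your argument is sound, but the cited results make the extra work unnecessary.
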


\begin{proof}
  In view of \eqref{eq:jrX=0}, Equation $(4)$ of \cite[8.26]{b-Jantzen96} implies that $\Qkm_\mu\in  U^+[s_jw_0]$ for all $j \in X$.
By \cite[Theorem 7.3]{a-HS} we have
\begin{equation*}
\bigcap_{j \in X} U^+[s_jw_0] = U^+[w_Xw_0]
\end{equation*}
which completes the proof of the Lemma.
\end{proof}  

We write $\Qkm_{\B{c},\B{s}}$ for $\Qkm$ if we need to specify the dependence on the parameters. Any diagram automorphism $\eta : I \rightarrow I$ with $\eta(X)=X$ induces a map $\eta: \mathbb{K}(q)^{I \setminus X} \rightarrow \mathbb{K}(q)^{I \setminus X}$ by
\begin{equation} \label{eq:ParameterDiagAut}
	\eta((c_i)) = (d_i) \quad \mbox{with $d_i = c_{\eta^{-1}(i)}$.}
\end{equation}
This notation allows us to record the effect of diagram automorphisms on the quasi $K$-matrix $\Qkm$.

\begin{lemma} \label{lem:DiagAutofQkm}
Let $\eta: I \rightarrow I$ be a diagram automorphism with $\eta\circ \tau=\tau\circ \eta$ and $\eta(X)=X$. Then for any $\B{c} \in \mathcal{C}$, $\B{s} \in \mathcal{S}$ we have $\eta(\B{c}) \in \mathcal{C}$, $\eta(\B{s}) \in \mathcal{S}$ and
\begin{equation} \label{eq:DiagAutofQkm}
	\eta(\Qkm_{\B{c},\B{s}}) = \Qkm_{\eta(\B{c}),\eta(\B{s})}.
\end{equation}
\end{lemma}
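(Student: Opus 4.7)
The plan is to use the uniqueness characterisation of the quasi $K$-matrix via the recursive system \eqref{eq:irSyst2}, together with the fact that diagram automorphisms commute appropriately with all the data entering that recursion. Since the $\eta$-twisted parameters $\eta(\mathbf{c}),\eta(\mathbf{s})$ are designed so that $(\eta(\mathbf{c}))_{\eta(i)}=c_i$ and similarly for $\mathbf{s}$, the natural guess is that $\eta(\Qkm_{\bc,\bs})$ solves the recursion defining $\Qkm_{\eta(\bc),\eta(\bs)}$.

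First I would verify the preliminary claim that $\eta(\bc)\in \mathcal{C}$ and $\eta(\bs)\in \mathcal{S}$. Since $\eta$ is a diagram automorphism, it preserves the bilinear form $(-,-)$ and the Cartan entries $a_{ij}$; since additionally $\eta(X)=X$ and $\eta\circ \tau=\tau\circ \eta$, we have $\Theta\circ \eta=\eta\circ \Theta$, so in particular $(\alpha_{\eta(i)},\Theta(\alpha_{\eta(i)}))=(\alpha_i,\Theta(\alpha_i))$, $2\rho_X^\vee$ is $\eta$-invariant, and the defining set $I_{ns}$ is stable under $\eta$. The constraints \eqref{eq:ParameterSetC}, \eqref{eq:ParameterSetS}, \eqref{eq:ciCond}, \eqref{eq:siCond} then translate directly from indices $i$ to indices $\eta(i)$ under permutation of coordinates, which is all that $\eta$ does to $\bc$ and $\bs$.

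Next I would extend $\eta$ to a Hopf algebra automorphism of $\uqg$ in the standard way ($E_i\mapsto E_{\eta(i)}$, $F_i\mapsto F_{\eta(i)}$, $K_i\mapsto K_{\eta(i)}$) and extend it to the completion $\mathscr{U}$ coordinate-wise. Three compatibilities are needed. (i) $\eta$ conjugates the Lusztig automorphisms: $\eta\circ T_w=T_{\eta(w)}\circ \eta$ for all $w\in W$; since $\eta(X)=X$ implies $\eta(w_X)=w_X$, this gives $\eta\circ T_{w_X}=T_{w_X}\circ \eta$. (ii) The skew derivations satisfy $\eta\circ {}_i r={}_{\eta(i)}r\circ \eta$ on $U^+$ (and on $\widehat{U^+}$), which follows from their characterisation by \eqref{eq:ir} applied to generators, together with the observation that the weight grading is permuted by $\eta$. (iii) Using the chosen function $s$ to define $X_i$, the identity $\eta(X_i)=X_{\eta(i)}$ holds provided $s$ is $\eta$-equivariant in the sense $s(\eta(i))=s(i)$; this can be arranged since \eqref{eq:sCond1}, \eqref{eq:sCond2} determine $s$ only up to the symmetry encoded by $\tau$ which commutes with $\eta$.

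With these three compatibilities in hand, applying $\eta$ to the recursion \eqref{eq:irSyst2} for $\Qkm_{\bc,\bs}$ and relabelling $\eta(i)\mapsto j$ yields
\begin{equation*}
\ir{j}{\eta(\Qkm_{\bc,\bs})}=-(q_j-q_j^{-1})\bigl(q^{-(\Theta(\alpha_j),\alpha_j)}(\eta(\bc))_j X_j\, \eta(\Qkm_{\bc,\bs})+(\eta(\bs))_j\,\eta(\Qkm_{\bc,\bs})\bigr)
\end{equation*}
for every $j\in I$, and $\eta(\Qkm_{\bc,\bs})_0=\eta(1)=1$. By the uniqueness assertion of \cite[Proposition 6.1]{a-BK15} this forces $\eta(\Qkm_{\bc,\bs})=\Qkm_{\eta(\bc),\eta(\bs)}$. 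I expect the main subtlety to be in step (iii): choosing (or adjusting) $s$ so that it is simultaneously compatible with $\tau$ and with $\eta$, and carefully tracking the scalar $s(\tau(i))$ in \eqref{eq:Xi} under the action of $\eta$. Once this is settled, the argument is a mechanical translation of indices and no further calculation is required.
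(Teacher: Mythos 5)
Your proof is correct, and it follows the same basic strategy as the paper (equivariance of all the defining data under $\eta$ plus the uniqueness statement of \cite[Proposition 6.1]{a-BK15}), but via a different characterisation of $\Qkm$. The paper applies $\eta$ to the intertwining relation \eqref{eq:BbarIntEquiv}, $B_i^{\bc,\bs}\Qkm_{\bc,\bs}=\Qkm_{\bc,\bs}\overline{B_i^{\bc,\bs}}^U$, using only that $\eta(B_i^{\bc,\bs})=B_{\eta(i)}^{\eta(\bc),\eta(\bs)}$ and that $\eta$ commutes with $\barinv$; this makes the argument a two-line relabelling. You instead apply $\eta$ to the skew-derivation recursion \eqref{eq:irSyst2}, which obliges you to check the extra compatibilities $\eta\circ{}_ir={}_{\eta(i)}r\circ\eta$, $\eta\circ T_{w_X}=T_{w_X}\circ\eta$ and $\eta(X_i)=X_{\eta(i)}$; all of these are verified correctly, so the route is longer but sound. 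Note that the subtlety you isolate in step (iii) --- that $X_i=-s(\tau(i))T_{w_X}(E_{\tau(i)})$ transforms correctly only if the function $s$ can be taken $\eta$-equivariant --- is not avoided by the paper's argument either: it is hidden in the paper's phrase ``by construction, $\eta(B_i^{\bc,\bs})=B_{\eta(i)}^{\eta(\bc),\eta(\bs)}$'', since $B_i$ contains the same scalar $s(\tau(i))$. Your observation that $\alpha_{\eta(i)}(2\rho_X^\vee)=\alpha_i(2\rho_X^\vee)$ permits such a choice of $s$ (and that this is harmless in the cases where the lemma is used, e.g.\ $\tau(i)=i$ where $s(i)=1$) is therefore a point where your write-up is, if anything, more careful than the paper's.
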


\begin{proof}
The relations $\eta(\B{c}) \in \mathcal{C}$, $\eta(\B{s}) \in \mathcal{S}$ follow from the assumptions on $\eta$ and the definitions \eqref{eq:ParameterSetC} and \eqref{eq:ParameterSetS} of $\mathcal{C}$ and $\mathcal{S}$.
By \cite[Proposition 6.1]{a-BK15}, property \eqref{eq:BbarIntertwiner} is equivalent to the relation
\begin{equation} \label{eq:BbarIntEquiv}
	B_i^{\B{c},\B{s}} \Qkm_{\B{c},\B{s}} = \Qkm_{\B{c},\B{s}} \overline{B_i^{\B{c},\B{s}}}^U \quad \mbox{for all $i \in I$}
\end{equation}
where we write $B_i^{\B{c},\B{s}}$ instead of $B_i$ to denote the dependence on $\B{c}$ and $\B{s}$.

By construction, $\eta(B_i^{\B{c},\B{s}}) = B_{\eta(i)}^{\eta(\B{c}),\eta(\B{s})}$ and $\eta:U_q(\lie{g}) \rightarrow U_q(\lie{g})$ commutes with the bar involution $\barinv$ on $U_q(\lie{g})$.
Hence applying $\eta$ to relation \eqref{eq:BbarIntEquiv} we obtain
\begin{equation*}
	B_{\eta(i)}^{\eta(\B{c}),\eta(\B{s})} \eta(\Qkm_{\B{c},\B{s}}) = \eta(\Qkm_{\B{c},\B{s}}) \overline{B_{\eta(i)}^{\eta(\B{c}),\eta(\B{s})}}^U\quad \mbox{for all $i \in I$}
\end{equation*}
which proves \eqref{eq:DiagAutofQkm}.
\end{proof}
\subsection{Quasi $K$-matrices for Satake diagrams of rank one} \label{sec:rank1Qkms}
For the remainder of this paper, following Remark \ref{rem:Satake}, we denote Satake diagrams as triples $(I, X, \tau)$ to also indicate the underlying Lie algebra.

\begin{definition} \label{Def:SatakeSubdiagram}
A subdiagram of a Satake diagram $\satake$ is a triple $(J, X \cap J, \tau|_J)$ with $J \subseteq I$, such that $\tau(J)=J$ and $J\cap X$ consists of all connected components of $X$ which are connected to a white node in $J$. 
\end{definition}
\begin{remark}
  Any subdiagram $(J, X \cap J, \tau|_J)$ of a Satake diagram satisfies the properties of Definition \ref{def:Satake} and hence is itself a Satake diagram.
  It is possible to give a slightly more general definition of a Satake subdiagram which allows $J\cap X$ to contain connected components of $X$ which are not connected to any white node in $J$. Here we exclude such components for convenience. 
\end{remark}
  Let $\widetilde{I}$ be the set of $\tau$-orbits of $I \setminus X$.
There is a projection map
\begin{equation} \label{eq:NodeProjection}
	\pi \colon I \setminus X \longrightarrow \widetilde{I}
\end{equation}
that takes any white node to the $\tau$-orbit it belongs to.

\begin{definition} \label{Def:SubdiagramRank}
The rank of a Satake diagram $\satake$ is ~defined by ~$\rank\satake =|\pi(I \setminus X ) |$.
\end{definition}

In other words, a Satake diagram has rank $n$ if there are $n$ distinct orbits of white nodes. By Proposition \ref{proposition:ReflGrp} the rank of a Satake diagram coincides with the rank of the corresponding restricted root system $\Sigma$. 

For any $i\in I\setminus X$ let $X^i$ denote the union of all connected components of $X$ which are connected to $i$. It follows by inspection of the list of Satake diagrams in \cite[p.~32/33]{a-Araki62} that $X^i=X^{\tau(i)}$.
Given a Satake diagram $\satake$, any $i\in I\setminus X$ determines a subdiagram $(\{i,\tau(i)\}\cup X^i, X^i, \tau|_{\{i,\tau(i)\}\cup X^i})$ of rank one. Let $\Qkm_i$ be the 
quasi $K$-matrix corresponding to this rank one subdiagram. For any $w\in W$ we define $\widehat{U^+[w]} = \prod_{\mu \in Q^+} U^+[w]_{\mu}$.
As $U[w]^+$ is a subalgebra of $U^+$ we obtain that $\widehat{U^+[w]}$ is a subalgebra of $\widehat{U^+}$ and hence of $\mathscr{U}$. Formulating Lemma \ref{lem:xmuinUwxw0} in the present setting we obtain
\begin{equation}\label{eq:Qkmi in U[si]}
  \Qkm_i \in \widehat{U^+[\widet{s_i}]}.
\end{equation}  
In the following lemma we consider the case $\tau(i)=i$ and make the dependence of $\Qkm_i$ on the parameter $c_i$ more explicit. 

\begin{lemma} \label{lem:RankOneQkm}
Assume that $\B{s} = (0, \dotsc ,0)$ and $i \in I \setminus X$ satisfies $\tau(i) = i$. 
Then 
\begin{equation} \label{eq:RankOneQkm}
\Qkm_i	=	\sum_{n \in \mathbb{N}_0} c_i^n E_{n(\alpha_i - \Theta(\alpha_i))}
\end{equation}
where $E_{n(\alpha_i - \Theta(\alpha_i))} \in U^+_{n(\alpha_i - \Theta(\alpha_i))}$ is independent of $\B{c}$.
\end{lemma}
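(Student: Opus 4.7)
The plan is to exploit the recursive characterisation \eqref{eq:irSyst} of $\Qkm_i$, specialised to the rank-one Satake subdiagram $(\{i\}\cup X^i, X^i, \mathrm{id})$ and the parameter choice $\bs=\bo$. Under these assumptions the system collapses to
\begin{align*}
\ir{j}{\Qkm_i}&=0 &&\mbox{for all $j\in X^i$,}\\
\ir{i}{\Qkm_i}&=-(q_i-q_i^{-1})q^{-(\Theta(\alpha_i),\alpha_i)}c_iX_i\Qkm_i,
\end{align*}
where $X_i=-s(i)T_{w_{X^i}}(E_i)$ carries no $\bc$-dependence. Since the full system \eqref{eq:irSyst} with normalisation $\Qkm_0=1$ admits a unique solution by \cite[Proposition 6.1]{a-BK15}, it suffices to construct a solution weight-by-weight and read off its shape.

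The first step will be to show by induction on the height of $\mu\in Q^+$ that $\Qkm_\mu=0$ unless $\mu=n(\alpha_i-\Theta(\alpha_i))$ for some $n\in\N_0$. The key observation is that $\Theta(\alpha_i)=-w_{X^i}(\alpha_i)$ is the negative of a positive root whose $\alpha_i$-coefficient equals $1$, so $\alpha_i-\Theta(\alpha_i)=2\alpha_i+\sum_{j\in X^i}n_j\alpha_j$ with $n_j\in\N_0$, in particular an element of strictly positive height. Hence if $\mu$ is not of the form $n(\alpha_i-\Theta(\alpha_i))$, then neither is $\mu+\Theta(\alpha_i)-\alpha_i$, which moreover has smaller height. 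By the induction hypothesis $\Qkm_{\mu+\Theta(\alpha_i)-\alpha_i}=0$, and the recursion forces $\ir{j}{\Qkm_\mu}=0$ for every $j\in\{i\}\cup X^i$. The standard non-degeneracy result that the skew derivations $\{{}_{j}r\}_{j\in I}$ jointly annihilate only weight-zero elements of $U^+$ \cite[1.2.15]{b-Lusztig94} then yields $\Qkm_\mu=0$.

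For the second step I will iterate the recursion at the surviving weights $\mu=n(\alpha_i-\Theta(\alpha_i))$, with base case $\Qkm_0=1$ providing $E_0=1$. Assuming inductively that $\Qkm_{n(\alpha_i-\Theta(\alpha_i))}=c_i^nE_{n(\alpha_i-\Theta(\alpha_i))}$ with $E_{n(\alpha_i-\Theta(\alpha_i))}$ independent of $\bc$, the recursion gives
\begin{equation*}
\ir{i}{\Qkm_{(n+1)(\alpha_i-\Theta(\alpha_i))}}=-(q_i-q_i^{-1})q^{-(\Theta(\alpha_i),\alpha_i)}c_i^{n+1}X_iE_{n(\alpha_i-\Theta(\alpha_i))}
\end{equation*}
together with $\ir{j}{\Qkm_{(n+1)(\alpha_i-\Theta(\alpha_i))}}=0$ for $j\in X^i$. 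The right-hand sides are $c_i^{n+1}$ times $\bc$-free expressions, and by the same non-degeneracy argument they determine $\Qkm_{(n+1)(\alpha_i-\Theta(\alpha_i))}$ uniquely. Hence $\Qkm_{(n+1)(\alpha_i-\Theta(\alpha_i))}=c_i^{n+1}E_{(n+1)(\alpha_i-\Theta(\alpha_i))}$ with $E_{(n+1)(\alpha_i-\Theta(\alpha_i))}\in U^+_{(n+1)(\alpha_i-\Theta(\alpha_i))}$ independent of $\bc$, closing the induction.

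The only genuine obstacle is the recalled non-degeneracy of $\{{}_{j}r\}_{j\in I}$ on nonzero weight spaces of $U^+$; once it is in hand, both the support statement and the exact $c_i^n$-scaling fall out of the short inductive argument, and equivalently may be packaged as the unique series solution of \eqref{eq:irSyst2} guaranteed by \cite[Proposition 6.1]{a-BK15}.
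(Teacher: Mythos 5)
Your proposal is correct and follows essentially the same route as the paper: you derive the support statement from the recursion \eqref{eq:irSyst} with $s_i=0$ and then observe that $c_i^{-n}\Qkm_{n(\alpha_i-\Theta(\alpha_i))}$ satisfies $\bc$-independent relations which, by the non-degeneracy of the skew derivations ${}_jr$ (equivalently the uniqueness in \cite[Proposition 6.1]{a-BK15}), determine it uniquely once $E_0=1$ is imposed. Your version merely makes explicit the height induction and the non-degeneracy input that the paper leaves implicit.
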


\begin{proof}
It follows from the recursion \eqref{eq:irSyst} and the assumption $s_i = 0$ that $\Qkm_i = \sum_{n \in \mathbb{N}_0} \Qkm_{n(\alpha_i - \Theta(\alpha_i))}$ with $\Qkm_{n(\alpha_i - \Theta(\alpha_i))} \in U^+_{n(\alpha_i - \Theta(\alpha_i))}$.
Again by \eqref{eq:irSyst}, the elements $E_{n(\alpha_i - \Theta(\alpha_i))} = c_i^{-n}\Qkm_{n(\alpha_i - \Theta(\alpha_i))}$ for $n \in \mathbb{N}$ satisfy the relations
\begin{equation} \label{eq:irRankOne1}
	\ir{i}{E_{n(\alpha_i - \Theta(\alpha_i))}} = -(q-q^{-1})q^{-(\Theta(\alpha_i),\alpha_i)}X_iE_{(n-1)(\alpha_i - \Theta(\alpha_i))}
\end{equation}
and 
\begin{equation} \label{eq:irRankOne2}
\ir{j}{E_{n(\alpha_i - \Theta(\alpha_i))}} = 0 \quad \mbox{for $j \in X$}.
\end{equation}
The relations \eqref{eq:irRankOne1} and \eqref{eq:irRankOne2} are independent of $\B{c}$ and determine $E_{n(\alpha_i - \Theta(\alpha_i))}$ uniquely if we additionally impose $E_0 = 1$.
\end{proof}

The quasi $K$-matrices of rank one are the building blocks for quasi $K$-matrices of higher rank. In the following we give explicit formulas for rank one quasi $K$-matrices of type $A$ shown on the left hand side of Table \ref{Table:RankOne} in the case $\B{s} = (0, \dotsc ,0)$. The additional rank one Satake diagrams on the right hand side of Table \ref{Table:RankOne} are not tackled in this paper.

 Recall the definition of the $q$-number
\begin{equation} \label{eq:qnumber}
	[n]_{q_i} =	[n]_i	= \dfrac{q_i^n - q_i^{-n}}{q_i - q_i^{-1}} \qquad \mbox{for any $n \in \mathbb{Z}$ and $i \in I$}
\end{equation} 
and of the $q$-factorial $[n]_i!=[1]_i [2]_i \dots [n]_i$ for $n\ge 0$, see for example \cite[Chapter 0]{b-Jantzen96}. If all roots $\alpha \in \Phi$ are of the same length, then we simply write $[n]$ and $[n]!$.

Throughout the following calculations in the present section and in Appendix \ref{App:RankTwo}, we use a modified form of the $q$-number $[n]_i$. For $n\ge 1$ define
\begin{equation}
	\{ n \}_i = q_i^{n-1}[n]_i = 1 + q_i^2 + \dots + q_i^{2(n-1)}.
\end{equation}
Define $\{n\}_i! = \prod_{k=1}^{n}\{k\}_i$, and let $\{n\}_i!!$ be the double factorial of $\{n\}_i$ defined by
\begin{equation}
	\{n\}_i!!	=	\prod_{k=0}^{\lceil\frac{n}{2}\rceil-1} \{n - 2k\}_i. 
\end{equation}

For $n=0$ we set $\{0\}_i! = 1$ and $\{0\}_i!!=1$.
Again, we omit the index $i$ if all roots are of the same length.
Further we use the following conventions.
For any $x,y \in U_q(\lie{g})$, $a \in \mathbb{K}(q)$ we denote by $[x,y]_a$ the element $xy - ayx$.
For any $i,j \in I$ we write $T_{ij} = T_i \circ T_j:\uqg \rightarrow \uqg$ and we extend this definition recursively.
 
\begin{table}
 \centering
 \caption{Satake diagrams of symmetric pairs of rank one} \label{Table:RankOne}
 
 \newcolumntype{C}{ >{\centering\arraybackslash} m{2cm} }
\newcolumntype{D}{ >{\centering\arraybackslash} m{4.5cm} }
\resizebox{\columnwidth}{!}{
\begin{tabular}[t]{| C |D || C | D |}

	\hline

	$AI_1$ & 
	\rb{
		\begin{tikzpicture} 
			[scale = 0.7, white/.style={circle,draw=black,inner sep = 0mm, minimum size = 3mm},
			black/.style={circle,draw=black,fill=black, inner sep= 0mm, minimum size = 3mm}, every node/.style={transform shape}]
		
			\node[white] (first)  [label = below:{ \scriptsize $1$}] {};		
	
		\end{tikzpicture}	
	} & 
	
	$BII$, $n\geq 2$ & 
	\rb{
		\begin{tikzpicture} 
			[scale=0.7, white/.style={circle,draw=black,inner sep = 0mm, minimum size = 3mm},
			black/.style={circle,draw=black,fill=black, inner sep= 0mm, minimum size = 3mm}, every node/.style={transform shape}] 
		
			\node[white] (first)  [label = below:{\scriptsize $1$}] {};		
			\node[black] (second) [right=of first] [label = below:{\scriptsize $2$}]  {}			
				edge (first);
			\node[black] (third) [right = 1.5cm of second]  {}
				edge [dashed] (second);
			\node[black] (fourth) [right = of third] [label = below:{\scriptsize $n$}] {}
				edge [double equal sign distance, -<-] (third);
		\end{tikzpicture}	
	} \\ \hline
	
	
	
	$AII_3$ & 
	\rb{
		\begin{tikzpicture}  
			[scale=0.7, white/.style={circle,draw=black,inner sep = 0mm, minimum size = 3mm},
			black/.style={circle,draw=black,fill=black, inner sep= 0mm, minimum size = 3mm}, every node/.style={transform shape}]
	
			\node[black] (first) [label = below:{ \scriptsize $1$}] {};
			\node[white] (second) [right=of first] [label = below:{ \scriptsize $2$}]{}
				edge (first);
			\node[black] (last) [right= of second] [label = below:{ \scriptsize $3$}]{}
				edge (second);		
		\end{tikzpicture}	
	} & 
	$CII$, $n \geq 3$ & 
	\rb{
		\begin{tikzpicture} 
			[scale=0.7, white/.style={circle,draw=black,inner sep = 0mm, minimum size = 3mm},
			black/.style={circle,draw=black,fill=black, inner sep= 0mm, minimum size = 3mm}, every node/.style={transform shape}]
		
			\node[black] (first)  [label = below:{\scriptsize $1$}] {};		
			\node[white] (second) [right= of first] [label = below:{\scriptsize $2$}] {}			
				edge (first);
			\node[black] (third) [right = of second] [label = below:{\scriptsize $3$}] {}
				edge (second);
			\node[black] (fourth) [right = 1.5cm of third] {}
				edge [dashed] (third);
			\node[black] (last) [right = of fourth] [label = below:{\scriptsize $n$}] {}
				edge [double equal sign distance, ->-] (fourth);
		\end{tikzpicture}	
	} \\ \hline

		

	$AIII_{11}$ & 
	\rb{
		\begin{tikzpicture} 
			[scale=0.7, white/.style={circle,draw=black,inner sep = 0mm, minimum size = 3mm},
			black/.style={circle,draw=black,fill=black, inner sep= 0mm, minimum size = 3mm}, every node/.style={transform shape}]
		
			\node[white] (first) [label = below:{ \scriptsize $1$}] {};		
			\node[white] (third) [right= 0.8cm of first] [label = below:{ \scriptsize $2$}] {}
				edge	 [latex'-latex' , shorten <=3pt, shorten >=3pt, bend right=60, densely dotted] node[auto,swap] {} (first); 
		\end{tikzpicture}
	} & 
	$DII$, $n \geq 4$ & 
	\rb{
		\begin{tikzpicture}
			[scale=0.7, white/.style={circle,draw=black,inner sep = 0mm, minimum size = 3mm},
			black/.style={circle,draw=black,fill=black, inner sep= 0mm, minimum size = 3mm}, every node/.style={transform shape}]
		
			\node[white] (first) [label = below:{\scriptsize $1$}] {};		
			\node[black] (second) [right= of first] [label = below:{\scriptsize $2$}]  {}
				edge (first);
			\node[black] (third) [right = 1.5cm of second] {}
				edge [dashed] (second);
			\node[black] (fourth) [above right = 0.5cm of third] [label = above:{\scriptsize $n-1$}] {}
				edge (third);
			\node[black] (fifth) [below right = 0.5cm of third] [label = below:{\scriptsize $n$}] {}
				edge (third);		
		\end{tikzpicture}	
	} \\ \hline

		

	$AIV$, $n \geq 2$ & 
	\rb{
		\begin{tikzpicture}
			[scale=0.7, white/.style={circle,draw=black,inner sep = 0mm, minimum size = 3mm},
			black/.style={circle,draw=black,fill=black, inner sep= 0mm, minimum size = 3mm}, every node/.style={transform shape}]
	
			\node[white] (first) [label = below:{ \scriptsize $1$}] {};
			\node[black] (second) [right=of first] [label = below:{ \scriptsize $2$}]{}
				edge (first);
			\node[black] (last) [right=1.5cm of second] {}
				edge [dashed] (second);		
			\node[white] (fourth) [right=of last] [label = below:{ \scriptsize $n$}] {}
				edge (last)
				edge	 [latex'-latex' , shorten <=3pt, shorten >=3pt, bend right=30, densely dotted] node[auto,swap] {} (first);
		\end{tikzpicture}	
	} & 
	$FII$ & 
	\rb{
		\begin{tikzpicture} 
			[scale=0.7, white/.style={circle,draw=black,inner sep = 0mm, minimum size = 3mm},
			black/.style={circle,draw=black,fill=black, inner sep= 0mm, minimum size = 3mm}, every node/.style={transform shape}]
		
			\node[black] (first) [label = below:{ \scriptsize $1$}] {};
			\node[black] (second) [right = of first] [label = below:{ \scriptsize $2$}] {}
				edge (first);		
			\node[black] (third) [right=of second] [label = below:{ \scriptsize $3$}] {}			
				edge [double equal sign distance, -<-]  (second);
			\node[white] (fourth) [right = of third] [label = below:{ \scriptsize $4$}]{}
				edge (third);
		\end{tikzpicture}	
	} \\ \hline
	
		
\end{tabular}
}
\end{table} 


\subsubsection{Type $AI_1$}

Consider the Satake diagram of type $AI_1$.

\begin{center}
	\begin{tikzpicture}  
		[white/.style={circle,draw=black,inner sep = 0mm, minimum size = 3mm},
		black/.style={circle,draw=black,fill=black, inner sep= 0mm, minimum size = 3mm}]
		
		\node[white] (first)  [label = below:{ \scriptsize $1$}] {};		
	
	\end{tikzpicture}
\end{center}
\begin{lemma} \label{lem:QkmRankOneAI}
The quasi $K$-matrix $\Qkm$ of type $AI_1$ is given by
\begin{equation} \label{eq:QkmRankOneAI}
	\Qkm	=	\sum_{n \geq 0} \dfrac{\q^n}{ \{2n\}!! } (q^2c_1)^n E_1^{2n}.
\end{equation}
\end{lemma}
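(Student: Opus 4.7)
In type $AI_1$ we have $I=\{1\}$, $X=\emptyset$, $\tau=\mathrm{id}$, so that $w_X=e$, $T_{w_X}=\mathrm{id}$ and $\Theta=-\mathrm{id}$. In particular $\Theta(\alpha_1)=-\alpha_1$, $(\Theta(\alpha_1),\alpha_1)=-2$, and by \eqref{eq:sCond1} we have $s(1)=1$, hence $X_1=-E_1$ by \eqref{eq:Xi}. Since $\mathbf{s}=\mathbf{0}$, Lemma \ref{lem:RankOneQkm} applies and yields
\begin{equation*}
\Qkm=\sum_{n\geq 0} c_1^n E_{2n\alpha_1},\qquad E_{2n\alpha_1}\in U^+_{2n\alpha_1}.
\end{equation*}
The weight space $U^+_{2n\alpha_1}$ is one-dimensional, spanned by $E_1^{2n}$, so I can write $\Qkm_{2n\alpha_1}=a_n E_1^{2n}$ for unique scalars $a_n\in\mathbb{K}(q)$ with $a_0=1$. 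The goal is to show $a_n=\q^n(q^2c_1)^n/\{2n\}!!$.

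The key computation is to evaluate $\ir{1}{E_1^{2n}}$. Using $\ir{1}{E_1}=1$ and the twisted Leibniz rule \eqref{eq:ir} with $(\alpha_1,\alpha_1)=2$, a straightforward induction on $n$ gives
\begin{equation*}
\ir{1}{E_1^n}=(1+q^2+\cdots+q^{2(n-1)})E_1^{n-1}=\{n\}E_1^{n-1},
\end{equation*}
and in particular $\ir{1}{E_1^{2n}}=\{2n\}E_1^{2n-1}$.

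Next I substitute this into the defining recursion \eqref{eq:irSyst} for $i=1$, $\mu=2n\alpha_1$. Using $s_1=0$, $X_1=-E_1$, $q^{-(\Theta(\alpha_1),\alpha_1)}=q^{2}$, and $\mu+\Theta(\alpha_1)-\alpha_1=(2n-2)\alpha_1$, the recursion simplifies to
\begin{equation*}
a_n\{2n\}E_1^{2n-1}=\q\, q^2 c_1\, E_1\cdot a_{n-1}E_1^{2n-2}=\q\, q^2 c_1\, a_{n-1} E_1^{2n-1},
\end{equation*}
which yields the one-term recursion $a_n=\dfrac{\q(q^2c_1)}{\{2n\}}\,a_{n-1}$. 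With $a_0=1$ this immediately unwinds to $a_n=\dfrac{\q^n(q^2c_1)^n}{\{2n\}\{2n-2\}\cdots\{2\}}=\dfrac{\q^n(q^2c_1)^n}{\{2n\}!!}$, establishing \eqref{eq:QkmRankOneAI}.

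There is no real obstacle here: once Lemma \ref{lem:RankOneQkm} is applied to reduce to the one-parameter family $\{a_n\}$, the problem collapses to the single skew-derivation identity $\ir{1}{E_1^{n}}=\{n\}E_1^{n-1}$, and the rest is bookkeeping in the recursion \eqref{eq:irSyst}.
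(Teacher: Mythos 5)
Your proposal is correct and rests on exactly the same computational core as the paper's proof, namely the identity $\ir{1}{E_1^{m}}=\{m\}E_1^{m-1}$ fed into the defining recursion \eqref{eq:irSyst}; the only difference is direction, since you solve the recursion weight-by-weight for the coefficients $a_n$ (using Lemma \ref{lem:RankOneQkm} and one-dimensionality of $U^+_{2n\alpha_1}$), whereas the paper verifies that the stated closed formula satisfies \eqref{eq:irSyst2} and appeals to uniqueness. Both are complete and essentially equivalent arguments.
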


\begin{proof}
By Equation \ref{eq:irSyst2}, we need to show that
\begin{equation*}
	\ir{1}{\Qkm} = \q(q^2c_1)E_1\Qkm.
\end{equation*}
Using the recursive formula \eqref{eq:ir} for ${}_1r$, we see that 
\begin{equation} \label{eq:irEi}
\ir{1}{E_1^n}	=	\{n\}E_1^{n-1} \qquad \mbox{for all $n\in \N$.}
\end{equation}
Hence 
\begin{align*}
	\ir{1}{\Qkm}	&=	\sum_{n \geq 0} \dfrac{\q^n}{\{2n\}!!} (q^2c_1)^n \ir{1}{E_1^{2n}}\\
				&=	\sum_{n \geq 1} \dfrac{\q^n}{\{2n\}!!} (q^2c_1)^n \{2n\} E_1^{2n-1}\\
				&=	\sum_{n \geq 0} \dfrac{\q^{n+1}}{\{2n\}!!} (q^2c_1)^{n+1} E_1^{2n+1}\\
				&=	\q(q^2c_1)E_1\Qkm
\end{align*} 
as required.
\end{proof}


\subsubsection{Type $AII_3$}

Consider the Satake diagram of type $AII_3$.

\begin{center}
	\begin{tikzpicture} 
		[white/.style={circle,draw=black,inner sep = 0mm, minimum size = 3mm},
		black/.style={circle,draw=black,fill=black, inner sep= 0mm, minimum size = 3mm}]
	
		\node[black] (first) [label = below:{ \scriptsize $1$}] {};
		\node[white] (second) [right=of first] [label = below:{ \scriptsize $2$}]{}
			edge (first);
		\node[black] (last) [right= of second] [label = below:{ \scriptsize $3$}]{}
			edge (second);		
	\end{tikzpicture}
\end{center}

\begin{lemma} \label{lem:QkmRankOneAII}
The quasi $K$-matrix $\Qkm$ of type $AII_3$ is given by 
\begin{equation} \label{eq:QkmRankOneAII}
	\Qkm	=	\sum_{n \geq 0} \dfrac{(qc_2)^n}{\{n\}!} [E_2, T_{13}(E_2)]_{q^{-2}}^n.
\end{equation}
\end{lemma}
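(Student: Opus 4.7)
The plan is to verify that the proposed formula satisfies the defining recursive system \eqref{eq:irSyst}. For the Satake diagram $AII_3$ with $X = \{1,3\}$ and $\tau = \mathrm{id}$, one computes $w_X = s_1 s_3$, $\Theta(\alpha_2) = -(\alpha_1 + \alpha_2 + \alpha_3)$, $(\Theta(\alpha_2), \alpha_2) = 0$, and $X_2 = -T_{13}(E_2)$. Thus \eqref{eq:irSyst2} collapses to the single equation
\begin{equation*}
  \ir{2}{\Qkm} = (q-q^{-1})\, c_2\, T_{13}(E_2)\, \Qkm,
\end{equation*}
the vanishing of $\ir{j}{\Qkm}$ for $j \in X$ being automatic from \eqref{eq:jrX=0}. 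Setting $Z = T_{13}(E_2)$ and $Y = [E_2, Z]_{q^{-2}}$, Lemma \ref{lem:RankOneQkm} reduces the claim to proving the identity
\begin{equation*}
  \ir{2}{Y^n} = q^{-1}(q-q^{-1})\, \{n\}\, Z\, Y^{n-1} \qquad \mbox{for all } n \geq 1,
\end{equation*}
which I would establish by induction on $n$.

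The base case hinges on the auxiliary identity $\ir{2}{Z} = 0$. This follows from a direct computation: expanding $Z = E_1 E_3 E_2 - q^{-1} E_3 E_2 E_1 - q^{-1} E_1 E_2 E_3 + q^{-2} E_2 E_1 E_3$ and applying \eqref{eq:ir} term-by-term produces four contributions proportional to $E_1 E_3$ or $E_3 E_1$, which cancel by the commutation $E_1 E_3 = E_3 E_1$. Using $\ir{2}{Z} = 0$, $(\alpha_2, \alpha_2) = 2$, and $(\alpha_2, \alpha_1 + \alpha_2 + \alpha_3) = 0$, the Leibniz rule immediately yields $\ir{2}{Y} = (1 - q^{-2})Z = q^{-1}(q-q^{-1})Z$, confirming the case $n = 1$.

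I expect the main obstacle to be the commutation
\begin{equation*}
  [Y, Z] = 0,
\end{equation*}
equivalent to the $q$-Serre-type identity $E_2 Z^2 - (1 + q^{-2}) Z E_2 Z + q^{-2} Z^2 E_2 = 0$. I would verify this by substituting the PBW expansion of $Z$ and reducing using the quantum Serre relations for the pairs $(\alpha_1, \alpha_2)$ and $(\alpha_2, \alpha_3)$ together with $E_1 E_3 = E_3 E_1$. Granting this, the inductive step closes routinely: since $(\alpha_2, \mathrm{wt}(Y)) = 2$, the Leibniz rule gives
\begin{equation*}
  \ir{2}{Y^n} = \ir{2}{Y}\, Y^{n-1} + q^2\, Y\, \ir{2}{Y^{n-1}},
\end{equation*}
and substituting the inductive hypothesis and using $YZ = ZY$ to slide $Z$ to the left produces $q^{-1}(q-q^{-1})(1 + q^2\{n-1\})\,Z\, Y^{n-1}$, which equals $q^{-1}(q-q^{-1})\{n\}\, Z\, Y^{n-1}$ by the identity $\{n\} = 1 + q^2\{n-1\}$. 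Summing against $(qc_2)^n/\{n\}!$ then yields the required recursion for $\Qkm$.
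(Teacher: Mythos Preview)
Your overall strategy matches the paper's: verify that the candidate satisfies the defining system \eqref{eq:irSyst2}. However, your appeal to \eqref{eq:jrX=0} for the vanishing of $\ir{j}{\Qkm}$ when $j\in X$ is circular. Equation \eqref{eq:jrX=0} is a \emph{consequence} of the recursion for the true quasi $K$-matrix; it says nothing about an arbitrary candidate. To conclude that your candidate equals $\Qkm$ you must check \emph{all} of \eqref{eq:irSyst2}, including $\ir{1}{\Qkm}=\ir{3}{\Qkm}=0$. The paper does this explicitly: from $\ir{1}{T_{13}(E_2)}=(1-q^{-2})T_3(E_2)$ one computes $\ir{1}{[E_2,T_{13}(E_2)]_{q^{-2}}}=0$ directly via \eqref{eq:ir}, whence $\ir{1}{Y^n}=0$ and by symmetry $\ir{3}{Y^n}=0$. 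This is the missing ingredient in your argument; Lemma~\ref{lem:RankOneQkm} does \emph{not} reduce the claim to \eqref{eq:irRankOne1} alone, since the uniqueness statement in its proof requires both \eqref{eq:irRankOne1} and \eqref{eq:irRankOne2}.

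On the two computational points, the paper takes shorter routes than you propose. For $\ir{2}{Z}=0$ the paper simply invokes \cite[8.26,~(4)]{b-Jantzen96}, which gives $\ir{i}{U^+[s_i w_0]}=0$ in general, avoiding your term-by-term expansion. For the commutation $[Y,Z]=0$, rather than a PBW reduction the paper observes that $E_2$ commutes with $[T_{213}(E_2),E_2]_{q^{-2}}$ and applies the algebra automorphism $T_{13}$ to transport this to $[T_{13}(E_2),Y]=0$. Your brute-force plan would work, but the automorphism trick is considerably cleaner.
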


\begin{proof}
Since $T_{13}(E_2) = [E_1, T_3(E_2)]_{q^{-1}}$, Property \eqref{eq:ir} of the skew derivative ${}_1r$ implies that $\ir{1}{T_{13}(E_2)} = (1-q^{-2})T_3(E_2)$. Again by Property \eqref{eq:ir}, it follows that $\ir{1}{[E_2,T_{13}(E_2)]_{q^{-2}}} = 0$. Hence $\ir{1}{\Qkm} = 0$. By symmetry, we also have $\ir{3}{\Qkm} = 0$. 

We want to show that
\begin{equation}
\ir{2}{\Qkm} = (q-q^{-1})c_2T_{13}(E_2)\Qkm. \nonumber
\end{equation}
Since $\ir{2}{T_{13}(E_2)} = 0$ by \cite[8.26, (4)]{b-Jantzen96}, the relation
\begin{equation}
\ir{2}{[E_2,T_{13}(E_2)]_{q^{-1}}} = (1-q^{-2})T_{13}(E_2) \nonumber
\end{equation}
holds in $U_q(\lie{sl}_4)$. 

Moreover, the element $T_{13}(E_2)$ commutes with the element $[E_2, T_{13}(E_2)]_{q^{-2}}$. 
Indeed, this follows from the fact that $E_2$ commutes with $[T_{213}(E_2), E_2]_{q^{-2}}$ by applying the automorphism $T_{13}(E_2)$.
This implies that the relation
\begin{equation}
\ir{2}{[E_2, T_{13}(E_2)]_{q^{-2}}^n} = (1-q^{-2})\{n\}T_{13}(E_2)[E_2, T_{13}(E_2)]_{q^{-2}}^{n-1} \nonumber
\end{equation}
holds in $U_q(\lie{sl}_4)$. Using this, we obtain
\begin{align*}
\ir{2}{\Qkm}	&= \sum_{n \geq 0} \dfrac{(qc_2)^n}{\{n\}!} \ir{2}{[E_2,T_{13}(E_2)]_{q^{-2}}^n} \\
				&= (1-q^{-2})T_{13}(E_2) \sum_{n \geq 1} \dfrac{(qc_2)^n}{\{n-1\}!} [E_2, T_{13}(E_2)]_{q^{-2}}^{n-1}\\
				&= (q-q^{-1})c_2T_{13}(E_2)\Qkm
\end{align*}
as required.
\end{proof}


\subsubsection{Type $AIII_{11}$}

Consider the Satake diagram of type $AIII_{11}$.

\begin{center}
\begin{tikzpicture} 
		[white/.style={circle,draw=black,inner sep = 0mm, minimum size = 3mm},
		black/.style={circle,draw=black,fill=black, inner sep= 0mm, minimum size = 3mm}]
		
		\node[white] (first) [label = below:{ \scriptsize $1$}] {};		
		\node[white] (third) [right= 0.8cm of first] [label = below:{ \scriptsize $2$}] {}
			edge	 [latex'-latex' , shorten <=3pt, shorten >=3pt, bend right=60, densely dotted] node[auto,swap] {} (first); 
		
\end{tikzpicture}

\end{center}
Note that $s(1) = s(2) = 1$ by \eqref{eq:sCond2} and $c_1 = c_2$ by \eqref{eq:ParameterSetC} and \eqref{eq:ciCond}
.
\begin{lemma} \label{lem:QkmRankOneA1xA1}
The quasi $K$-matrix $\Qkm$ of type $AIII_{11}$ is given by
\begin{equation} \label{eq:QkmRankOneA1xA1}
\Qkm = \sum_{n \geq 0} \dfrac{\q^n}{\{n\}!}c_1^n (E_1E_2)^n.
\end{equation}
\end{lemma}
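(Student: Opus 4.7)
The plan is to verify the recursive characterisation \eqref{eq:irSyst2} of the quasi $K$-matrix directly on the proposed closed form. The setup is easy: here $X = \emptyset$, so $w_X = 1$ and $T_{w_X} = \mathrm{id}$. Consequently $\Theta(\alpha_1) = -\alpha_{\tau(1)} = -\alpha_2$ and $\Theta(\alpha_2) = -\alpha_1$, so $(\Theta(\alpha_1),\alpha_1) = (\Theta(\alpha_2),\alpha_2) = 0$. Since $s(1) = s(2) = 1$, the definition of $X_i$ in \eqref{eq:Xi} yields $X_1 = -E_2$ and $X_2 = -E_1$. With $\bs = \bo$, the recursion \eqref{eq:irSyst2} therefore reduces to the two equalities
\begin{align*}
\ir{1}{\Qkm} &= (q - q^{-1}) c_1 E_2 \Qkm, &
\ir{2}{\Qkm} &= (q - q^{-1}) c_1 E_1 \Qkm,
\end{align*}
where we have used $c_1 = c_2$.

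The crucial observation is that the underlying Lie algebra is $\mathfrak{sl}_2 \oplus \mathfrak{sl}_2$, so $E_1$ and $E_2$ commute in $\uqg$. Hence $(E_1 E_2)^n = E_1^n E_2^n$, and the candidate expression is manifestly invariant under the swap $1 \leftrightarrow 2$. By Lemma~\ref{lem:DiagAutofQkm} applied to $\eta = \tau$ (noting $\tau(\bc) = \bc$ because $c_1 = c_2$), it suffices to verify the first of the two equalities above; the second follows by applying $\tau$.

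For the first equality, I would apply ${}_1r$ termwise, using \eqref{eq:ir}. Since ${}_1r(E_2) = 0$, the product rule gives ${}_1r(E_1^n E_2^n) = {}_1r(E_1^n) E_2^n = \{n\} E_1^{n-1} E_2^n$ by \eqref{eq:irEi}. Substituting and shifting the summation index by one then yields
\begin{equation*}
\ir{1}{\Qkm} = \sum_{n \geq 0} \frac{(q-q^{-1})^{n+1}}{\{n\}!} c_1^{n+1} E_1^n E_2^{n+1},
\end{equation*}
which matches $(q - q^{-1}) c_1 E_2 \Qkm$ after using $E_2 E_1^n = E_1^n E_2$. By uniqueness of the solution to \eqref{eq:irSyst2} with $\Qkm_0 = 1$, this identifies $\Qkm$ with the claimed formula. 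There is no real obstacle here beyond bookkeeping of the scalar factors; the commutativity of $E_1$ and $E_2$ collapses what is in general a subtle computation (as in the $AII_3$ case) into a clean geometric series.
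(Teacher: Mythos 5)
Your proposal is correct and follows essentially the same route as the paper: reduce to the single identity $\ir{1}{\Qkm} = (q-q^{-1})c_1E_2\Qkm$ by the $1\leftrightarrow 2$ symmetry, then verify it termwise using $\ir{1}{E_1^n}=\{n\}E_1^{n-1}$, $\ir{1}{E_2}=0$ and the commutativity of $E_1$ and $E_2$, followed by an index shift. The extra bookkeeping you include (computing $X_i$, $(\Theta(\alpha_i),\alpha_i)=0$, $c_1=c_2$, and the appeal to uniqueness of the solution of \eqref{eq:irSyst2}) is implicit in the paper's shorter argument.
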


\begin{proof}
By symmetry, we only need to show that
\begin{equation*}
	\ir{1}{\Qkm} = \q c_1E_2\Qkm.
\end{equation*}
By \eqref{eq:irEi}, we have
\begin{align*}
	\ir{1}{\Qkm}		&= \sum_{n \geq 0} \dfrac{\q^n}{\{n\}!}c_1^n \ir{1}{(E_1E_2)^n}\\
					&= \sum_{n \geq 1} \dfrac{\q^n}{\{n\}!} \{n\} c_1^nE_1^{n-1}E_2^n\\
					&= \q c_1 E_2\Qkm
\end{align*}
as required.
\end{proof}


\subsubsection{Type $AIV$ for $n \geq 2$}
Consider the Satake diagram of type $AIV$ for $n \geq 2$.

\begin{center}
	\begin{tikzpicture}
		[white/.style={circle,draw=black,inner sep = 0mm, minimum size = 3mm},
		black/.style={circle,draw=black,fill=black, inner sep= 0mm, minimum size = 3mm}]
	
		\node[white] (first) [label = below:{ \scriptsize $1$}] {};
		\node[black] (second) [right=of first] [label = below:{ \scriptsize $2$}]{}
			edge (first);
		\node[black] (last) [right=1.5cm of second] {}
			edge [dashed] (second);		
		\node[white] (fourth) [right=of last] [label = below:{ \scriptsize $n$}] {}
			edge (last)
			edge	 [latex'-latex' , shorten <=3pt, shorten >=3pt, bend right=30, densely dotted] node[auto,swap] {} (first); 
	\end{tikzpicture}
\end{center}
By \eqref{eq:sCond2}, we have $s(1) = -s(n)$ and by \eqref{eq:ciCond}, we have $c_1 = q^{-2}\overline{c_n}$.

\begin{lemma} \label{lem:QkmRankOneAIV}
The quasi $K$-matrix $\Qkm$ of type $AIV$ is given by
\begin{equation} \label{eq:QkmRankOneAIV}
	\Qkm	=	\bigg( \sum_{k \geq 0} \dfrac{ (c_1s(n) )^k}{\{k\}!} T_1T_{w_X}(E_n)^k \bigg)
			\bigg( \sum_{k \geq 0} \dfrac{ (c_ns(1) )^k}{\{k\}!} T_nT_{w_X}(E_1)^k \bigg).
\end{equation}
\end{lemma}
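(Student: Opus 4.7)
The plan is to verify the defining recursion \eqref{eq:irSyst2} for the proposed formula $\Qkm = AB$, where $A$ and $B$ denote the two factors on the right hand side of \eqref{eq:QkmRankOneAIV}, following the pattern of the earlier rank one computations (Lemmas \ref{lem:QkmRankOneAI}--\ref{lem:QkmRankOneA1xA1}). A preliminary induction on $|X|$ gives $w_X(\alpha_1) = \alpha_1 + \dotsb + \alpha_{n-1}$ and $w_X(\alpha_n) = \alpha_2 + \dotsb + \alpha_n$, so the two atomic factors $u = T_1 T_{w_X}(E_n)$ and $v = T_n T_{w_X}(E_1)$ are both root vectors of weight $\theta = \alpha_1 + \dotsb + \alpha_n$, the highest root of $\lie{sl}_{n+1}$. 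Using $X_1 = -s(n)T_{w_X}(E_n)$, $X_n = -s(1)T_{w_X}(E_1)$ and $(\Theta(\alpha_i), \alpha_i) = 1$ for $i \in \{1,n\}$, the system \eqref{eq:irSyst2} reduces to $\ir{j}{\Qkm} = 0$ for $j \in X = \{2, \dotsc, n-1\}$ together with
\begin{align*}
\ir{1}{\Qkm} &= \q\, q^{-1} c_1 s(n)\, T_{w_X}(E_n)\, \Qkm, \\
\ir{n}{\Qkm} &= \q\, q^{-1} c_n s(1)\, T_{w_X}(E_1)\, \Qkm.
\end{align*}

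The next step is to establish atomic identities for $u$ and $v$: show that $\ir{j}{u} = 0$ for $j \in \{1\} \cup X$ and $\ir{j}{v} = 0$ for $j \in \{n\} \cup X$, together with explicit expressions for $\ir{n}{u}$ and $\ir{1}{v}$ as scalar multiples of $T_{w_X}(E_n)$ and $T_{w_X}(E_1)$ respectively. The vanishing results rest on the criterion of \cite[8.26]{b-Jantzen96} that $T_j^{-1}(x) \in U^+$ forces $\ir{j}{x} = 0$: for $j = 1$ this gives $T_1^{-1}(u) = T_{w_X}(E_n) \in U^+$ immediately, while for $j \in X$ one pushes $T_j^{-1}$ past $T_1$ via the braid relations and then past $T_{w_X}$ using $l(s_j w_X) < l(w_X)$, yielding $T_j^{-1}(u) \in U^+$ after careful handling of the special case $j = 2$ where $T_1$ and $T_2$ do not commute. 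The computations of $\ir{n}{u}$ and $\ir{1}{v}$ use the commutation formulas between Lusztig's automorphisms $T_i$ and the skew derivations ${}_j r$ from \cite[Sections 37.1--37.2]{b-Lusztig94}; by the $\tau$-symmetry of the Satake diagram the statements for $v$ follow from those for $u$ in view of Lemma \ref{lem:DiagAutofQkm}.

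Finally, one iterates the Leibniz rule \eqref{eq:ir}. As in the proof of Lemma \ref{lem:QkmRankOneAII}, one needs a commutation (or $q$-commutation) identity between $u$ and $T_{w_X}(E_n)$ so that $\ir{n}{u^k}$ collapses to an explicit multiple of $T_{w_X}(E_n) \cdot u^{k-1}$; analogously for $v$ and $T_{w_X}(E_1)$. Summing over $k$ against the coefficients $(c_1 s(n))^k/\{k\}!$ and $(c_n s(1))^k/\{k\}!$, and combining with the weight-dependent factor $q^{(\alpha_i, k\theta)}$ from the Leibniz rule applied to $AB$, one finds that for $j \in X$ both $\ir{j}{A}$ and $\ir{j}{B}$ vanish, for $j = 1$ only the contribution $q^{(\alpha_1, k\theta)} A_k \cdot \ir{1}{B}$ survives, and for $j = n$ only the contribution $\ir{n}{A} \cdot B$ survives; after accounting for the shift in the index of summation the resulting expressions match the right hand sides displayed above. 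The main obstacle will be the atomic computation of $\ir{n}{u}$ (and symmetrically of $\ir{1}{v}$), ensuring the correct $q^{-1}$ prefactor emerges, together with the $q$-commutation of $u$ and $T_{w_X}(E_n)$ needed to propagate the single-application formula through arbitrary powers $u^k$; once these are pinned down the remaining assembly is routine bookkeeping with $q$-factorials and powers of $q$.
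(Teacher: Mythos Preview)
Your overall strategy matches the paper's: verify the recursion \eqref{eq:irSyst2} factor by factor via the Leibniz rule, argue vanishing for $j\in X$, and invoke the $\tau$-symmetry for the two white nodes. However, you have the roles of the two factors reversed, and the error traces to a misstatement of the vanishing criterion you attribute to \cite[8.26]{b-Jantzen96}. It is \emph{not} true that $T_j^{-1}(x)\in U^+$ forces ${}_jr(x)=0$: already in $U_q(\lie{sl}_3)$ one has $T_1^{-1}\bigl(T_1(E_2)\bigr)=E_2\in U^+$ while a direct computation with \eqref{eq:ir} gives ${}_1r\bigl(T_1(E_2)\bigr)=(1-q^{-2})E_2\neq 0$. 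In the present situation one has
\[
{}_1r(u)=q^{-1}(q-q^{-1})\,T_{w_X}(E_n)\neq 0,\qquad {}_1r(v)=0,
\]
exactly the opposite of what you assert; dually ${}_nr(u)=0$ and ${}_nr(v)\neq 0$. Thus it is the \emph{first} factor $A$ that contributes to ${}_1r(\Qkm)$, and the paper obtains ${}_1r(A)=q^{-1}(q-q^{-1})c_1s(n)\,T_{w_X}(E_n)\,A$ directly, then uses that $u$ and $v$ commute to pass to $\Qkm=AB$.

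Your reversed assignment is not a harmless relabelling: it would produce a prefactor $c_ns(1)$ in ${}_1r(\Qkm)$ rather than the required $c_1s(n)$, and since $s(1)=-s(n)$ by \eqref{eq:sCond2} and $c_1=q^{-2}\overline{c_n}$ by \eqref{eq:ciCond}, these differ in general and the recursion would fail. Once you correct which skew derivation kills which atom, the remaining bookkeeping is exactly as you describe and as the paper carries out.
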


\begin{proof}
We have $\ir{i}{\Qkm} = 0$ for $i \in X$. 
Hence by symmetry we only need to show that
\begin{equation*}
	\ir{1}{\Qkm}	=	\q q^{-1}c_1s(n)T_{w_X}(E_n)\Qkm
\end{equation*}
since $T_1T_{w_X}(E_n)$ and $T_nT_{w_X}(E_1)$ commute.
We have
\begin{align*}
	\ir{1}{T_nT_{w_X}(E_1)^k} &= 0,\\
	\ir{1}{T_1T_{w_X}(E_n)^k} &= q^{-1}\q \{k\} T_{w_X}(E_n)T_1T_{w_X}(E_n)^{k-1}.
\end{align*} 
Using this, we obtain
\begin{align*}
	\ir{1}{\Qkm}	&=	\bigg( \sum_{k \geq 0} \dfrac{ (c_1s(n) )^k}{\{k\}!} \ir{1}{T_1T_{w_X}(E_n)^k} \bigg)
					\bigg( \sum_{k \geq 0} \dfrac{ (c_ns(1) )^k}{\{k\}!} T_nT_{w_X}(E_1)^k \bigg)\\
				&=	q^{-1}\q c_1s(n) T_{w_X}(E_n)\Qkm
\end{align*}
as required.
\end{proof}

\begin{remark}\label{rem:Qkm-int-rank1}
  Let $\mathscr{A}=\Z[q,q^{-1}]$ and let ${}_{\sA}U^+$ be the $\sA$-subalgebra of $U^+$ generated by $E_i^{(n)}=\frac{E_i^n}{[n]!}$ for all $n\in \N_0$, $i\in I$. Set ${}_{\sA}\widehat{U^+}=\prod_{\mu\in Q^+}{}_{\sA}U^+_\mu$ where ${}_{\sA}U^+_\mu= {}_{\sA}U^+\cap U^+_\mu$ for all $\mu\in Q^+$. By \cite[Theorem 5.3]{a-BW16} we have $\Qkm\in {}_{\sA}\widehat{U^+}$ if $c_is(\tau(i))\in \pm q^\Z$ for all $i\in I\setminus X$. This integrality property is crucial for the theory of canonical bases of quantum symmetric pairs developed in \cite{a-BW16}.

  We observe that the integrality of the quasi $K$-matrix in rank one can in some cases be read off the explicit formulas given in this section. Indeed, Lemma \ref{lem:QkmRankOneAI}, \ref{lem:QkmRankOneA1xA1} and \ref{lem:QkmRankOneAIV} imply that $\Qkm\in {}_{\sA}\widehat{U^+}$ in the rank one cases of type $AI$, $AIII$ and $AIV$. The rank one case $AII_3$ is more complicated, and Lemma \ref{lem:QkmRankOneAII} does not give an obvious way to see that $\Qkm\in {}_{\sA}\widehat{U^+}$. Nevertheless, $\Qkm$ is also integral in this case, as shown in \cite[A.5]{a-BW16}. Based on the present remark, the integrality of $\Qkm$ in higher rank is discussed in Remark \ref{rem:Qkm-int-higherrank}.
\end{remark}


\subsection{Partial quasi $K$-matrices} \label{sec:PartialQkms}
 All through this section we make the assumption that $\B{s} = \bo = (0, 0, \dotsc ,0) \in \mathcal{S}$. In Section \ref{sec:sGeneral} we discuss the case of general parameters $\B{s} \in \mathcal{S}$.
Recall that the Lusztig automorphisms $T_i$ of $U_q(\lie{g})$ for all $i \in I$ give rise to a representation of $Br(W)$ on $U_q(\lie{g})$.
Since $\widetilde{W}$ is a subgroup of $W$, we obtain algebra automorphisms of $U_q(\lie{g})$ defined by
\begin{align*}
  \widet{T_i} := T_{\widetilde{s_i}} \qquad \mbox{for each $i\in I\setminus X$.}
\end{align*}  
By Theorem \ref{thm: Wtilde Coxeter} and Proposition \ref{prop: length comparison} the algebra automorphisms $\widetilde{T_i}$ give rise to a representation of $Br(\widetilde{W})$ on $U_q(\lie{g})$.

Recall from Section \ref{sec:restricted} that $\widet{\Pi}=\{\widet{\alpha_i}\,|\,i\in I\setminus X\}$ and define $Q(2\Sigma)=2\Z\widet{\Pi}$ and $Q^+(2\Sigma)=2\N_0 \widet{\Pi}$.
By \eqref{eq:irSyst} and the assumption $\B{s}= \bo$ we have
\begin{equation*}
	\ir{i}{\Qkm_{\mu}} = -\q q^{-(\Theta(\alpha_i),\alpha_i)} c_i X_i \Qkm_{\mu - 2\widet{\alpha_i}} \qquad \mbox{ for any $\mu\in Q^+$}.
\end{equation*}
Hence we may consider the quasi $K$-matrix $\Qkm$ as an element in $\prod_{\mu \in Q^+(2\Sigma)}U_{\mu}^+ \subset \mathscr{U}$. For any $w\in W$ define
\begin{align*}
  \widet{U}^+[w]= \bigoplus\limits_{\mu \in Q^+(2\Sigma)} U^+[w]_{\mu}
\end{align*}  
and set $\widet{U}^+ = \bigoplus_{\mu \in Q^+(2\Sigma)} U_{\mu}^+$. Then $\widet{U}^+$ and $\widet{U}^+[w]$ are $\mathbb{K}(q)$-subalgebras of $U^+$ and $U^+[w]$, respectively.
In particular by Equation \eqref{eq:Qkmi in U[si]} we have
\begin{align*}
  \Qkm_i \in \reallywidehat{\widet{U}^+[\widet{s_i}]}=\prod_{\mu\in Q^+(2\Sigma)} \widet{U}^+[\widet{s_i}]_\mu \qquad \mbox{for any $i\in I \setminus X$.} 
\end{align*}  
Let $\field'$ be a field extension of $\field(q)$ which contains $q^{1/2}$ and elements $\widet{c_i}$ such that
\begin{align}\label{eq:citil}
  \widet{c_i}^2=c_i c_{\tau(i)}s(i)s(\tau(i)) \qquad \mbox{for all $i\in I\setminus X$.}
\end{align}  
We extend $\widet{U}^+$ and $\widet{U}^+[w]$ for $w\in W$ to $\mathbb{K}'$-algebras $\widet{U}^+_{1/2} = \mathbb{K}' \otimes_{\mathbb{K}(q)} \widet{U}^+$ and $\widet{U}^+_{1/2}[w] = \mathbb{K}' \otimes_{\mathbb{K}(q)} \widet{U}^+[w]$.
Define an algebra automorphism $\Psi : \widet{U}^+_{1/2} \rightarrow \widet{U}^+_{1/2}$ by
\begin{equation} \label{eq:Psi}
	\Psi(E_{2\widet{\alpha_i}}) = q^{(\widet{\alpha_i}, \widet{\alpha_i})} \widet{c_i}E_{2\widet{\alpha_i}} \qquad \mbox{for all  $E_{2\widet{\alpha_i}} \in U^+_{2\widet{\alpha_i}}$.}
\end{equation}
For each $i \in I\setminus X$ define an algebra homomorphism
\begin{equation*}
  \Omega_i = \Psi \circ \widet{T_i} \circ \Psi^{-1}: \widet{U}^+_{1/2}[\widet{s_i}w_0] \rightarrow \widet{U}^{+}_{1/2}.
\end{equation*}
We consider the restriction of the algebra homomorphism $\Omega_i$ to the subalgebra $\widet{U}^+[\widet{s_i}w_0]$, and we denote this restriction also by $\Omega_i$.
Crucially, by the following proposition, the image of the restriction $\Omega_i$ belongs to $\widet{U}^+$ and does not involve any of the adjoined square roots. 
\begin{proposition} \label{prop:Omega1}
For every $i \in I \setminus X$ the map $\Omega_i: \widet{U}^+[\widet{s_i}w_0]  \rightarrow \widet{U}^+$ is a well defined algebra homomorphism.
\end{proposition}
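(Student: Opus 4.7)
The plan combines a PBW-type argument, a weight-grading check, and a scalar computation. First, I would show that $\widetilde{T_i}$ maps $U^+[\widetilde{s_i}w_0]$ into $U^+$. Since $l(w_0) = l(\widetilde{s_i}) + l(\widetilde{s_i}w_0)$, the factorisation $w_0 = \widetilde{s_i}\cdot(\widetilde{s_i}w_0)$ is reduced, so the Levendorski\u{\i}--Soibelman PBW factorisation of $U^+ = U^+[w_0]$ gives $U^+ = U^+[\widetilde{s_i}] \cdot \widetilde{T_i}(U^+[\widetilde{s_i}w_0])$; in particular $\widetilde{T_i}(U^+[\widetilde{s_i}w_0]) \subseteq U^+$.

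Second, I would verify that this image lies in the $Q^+(2\Sigma)$-graded part. For $u \in U^+_\mu[\widetilde{s_i}w_0]$ with $\mu \in Q^+(2\Sigma)$, the element $\widetilde{T_i}(u)$ has weight $\widetilde{s_i}(\mu)$; by Proposition \ref{proposition:ReflGrp}, $\widetilde{s_i}$ acts on $V_{-1}$ as the reflection in $\widetilde{\alpha_i}\in \Sigma$ and therefore stabilises $Q(2\Sigma) = 2\mathbb{Z}\widetilde{\Pi}$. Corollary \ref{cor:ReflGrp} gives $\widetilde{s_i}(\mu) = \mu - 2k\widetilde{\alpha_i}$ for some $k\in\mathbb{Z}$, so $\widetilde{s_i}(\mu)\in Q(2\Sigma)$. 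Combining this with $\widetilde{T_i}(u)\in U^+$ from the first step, and using that each $2\widetilde{\alpha_j}$ expands in $Q$ as $\alpha_j + \alpha_{\tau(j)}$ plus a non-negative combination of simple roots in $Q_X$, one deduces $\widetilde{s_i}(\mu) \in Q^+(2\Sigma)$, so $\widetilde{T_i}(u)$ lies in $\widetilde{U}^+$.

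Finally, I would track the scalar produced by the two copies of $\Psi$. Since $\Psi$ scales $U^+_\mu$, for $\mu = \sum_{[j]} 2m_{[j]}\widetilde{\alpha_j}$, by $\prod_{[j]}(q^{(\widetilde{\alpha_j},\widetilde{\alpha_j})}\widetilde{c_j})^{m_{[j]}}$, the overall factor produced by $\Omega_i = \Psi \circ \widetilde{T_i} \circ \Psi^{-1}$ on $U^+_\mu[\widetilde{s_i}w_0]$ collapses to $(q^{(\widetilde{\alpha_i},\widetilde{\alpha_i})}\widetilde{c_i})^{-k}$, with $k$ the integer identified above. The main obstacle is the final verification that this scalar lies in $\mathbb{K}(q)$ rather than merely in the square-root extension $\mathbb{K}'$. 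This is where the defining relation $\widetilde{c_i}^2 = c_i c_{\tau(i)}s(i)s(\tau(i))$ and the parameter constraint \eqref{eq:ciCond} come into play, through a parity analysis of $k$ and of the exponent $k(\widetilde{\alpha_i},\widetilde{\alpha_i})$ based on the combinatorics of the restricted root system $\Sigma$; one shows that the adjoined square roots of $q$ and of the parameters always appear in combinations whose ambiguities cancel, so that the scalar is unambiguously in $\mathbb{K}(q)$.
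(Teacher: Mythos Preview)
Your overall strategy matches the paper's: both reduce to computing the scalar by which $\Omega_i$ acts on each weight component and then verify this scalar lies in $\mathbb{K}(q)$. Your first two steps are somewhat more explicit than the paper, which simply records $\widetilde{T_i}(\widetilde{U}^+_\mu)\subseteq \widetilde{U}^+_{\widetilde{s_i}(\mu)}$ and proceeds directly to the scalar; your scalar formula $(q^{(\widetilde{\alpha_i},\widetilde{\alpha_i})}\widetilde{c_i})^{-k}$ with $k=(\mu,\widetilde{\alpha_i})/(\widetilde{\alpha_i},\widetilde{\alpha_i})$ agrees with the paper's.

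The gap is in your final step, which you leave as a sketch, and in two places it misfires. First, you invoke constraint \eqref{eq:ciCond}, but the paper does not use it at all; what is actually needed is the definition \eqref{eq:ParameterSetC} of $\mathcal{C}$ (forcing $c_i=c_{\tau(i)}$ when $\tau(i)\neq i$ and $(\alpha_i,\Theta(\alpha_i))=0$) together with \eqref{eq:sCond1}, \eqref{eq:sCond2}. Second, your ``parity analysis'' omits the key non-obvious input. The paper proceeds by a three-case split. If $\tau(i)=i$, then $\widetilde{c_i}=\pm c_i$ by \eqref{eq:sCond1}; if $\tau(i)\neq i$ and $(\alpha_i,\Theta(\alpha_i))=0$, then $\widetilde{c_i}=\pm c_i s(i)$. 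In both cases $\widetilde{c_i}$ already lies in $\mathbb{K}(q)^\times$, so integrality of $k$ suffices. The genuinely delicate case is $\tau(i)\neq i$ with $(\alpha_i,\Theta(\alpha_i))\neq 0$: here $\widetilde{c_i}$ is an honest square root, and the paper invokes Letzter's classification \cite[p.~17, Case~3]{a-Letzter08} to obtain $(\widetilde{\alpha_i},\widetilde{\alpha_i})=\tfrac14(\alpha_i,\alpha_i)$, whence $k=4(\mu,\alpha_i)/(\alpha_i,\alpha_i)\in 2\mathbb{Z}$ since $\mu\in Q$. Without this structural input your parity argument cannot close. (As a side remark, your worry about $q^{1/2}$ is unnecessary: for $\mu\in Q^+(2\Sigma)\subset V_{-1}\cap Q$ one has $(\mu,\widetilde{\alpha_i})=(\mu,\alpha_i)\in\mathbb{Z}$, so the $q$-exponent is always integral.)
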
 
\begin{proof}
It remains to show that the image of $\Omega_i$ is contained in $\widet{U}^+$.
Observe that $\widet{T_i}(\widet{U}^+_{\mu}) \subseteq \widet{U}^+_{\widet{s_i}(\mu)}$ for all $\mu \in Q^+(2\Sigma)$.
By Corollary \ref{cor:ReflGrp} we have
\begin{equation*}
	\widet{s_i}(\mu) = \mu - \dfrac{2(\mu, \widet{\alpha_i})}{(\widet{\alpha_i}, \widet{\alpha_i})} \widet{\alpha_i} \qquad \mbox{for all $\mu \in Q^+(2\Sigma)$}.
\end{equation*}
Hence Equation \eqref{eq:Psi} implies that
\begin{equation*}
\Omega_i|_{\widet{U}^+_{\mu}} = q^{-(\mu, \widet{\alpha_i})} \widet{c_i}^{-(\mu,\widet{\alpha_i})/(\widet{\alpha_i},\widet{\alpha_i})} \widet{T_i}|_{\widet{U}^+_{\mu}}.
\end{equation*}
Since $\mu \in Q^+(2\Sigma)$ it follows that the exponent $-(\mu,\widet{\alpha_i})$ is an integer. Moreover, Corollary \ref{cor:ReflGrp} implies that the exponent $-(\mu, \widet{\alpha_i})/(\widet{\alpha_i},\widet{\alpha_i})$ is an integer.

If $i = \tau(i)$ then Equation \eqref{eq:citil} and condition \eqref{eq:sCond1} imply that $\widet{c_i} = \pm c_i$. This implies that the image of $\Omega_i$ is contained in $\widet{U}^+$ in this case.

Suppose instead that $i\in I\setminus X$ satisfies $i \neq \tau(i)$. If additionally $(\alpha_i,\Theta(\alpha_i))=0$, then \eqref{eq:ParameterSetC} implies that $c_i=c_{\tau(i)}$. Moreover in this case $\Theta(\alpha_i)=-\alpha_{\tau(i)}$ and hence $s(i)=s(\tau(i))$ by \eqref{eq:sCond2}. Hence we get $\widet{c_i}=\pm c_i s(i)$ in the case $i\neq \tau(i)$, $(\alpha_i,\Theta(\alpha_i))=0$ which implies that the image of $\Omega_i$ is contained in $\widet{U}^+$ in this case.

Finally, we consider the case that $i\neq \tau(i)$ and $(\alpha_i,\Theta(\alpha_i))\neq 0$. We are then in Case 3 in \cite[p.~17]{a-Letzter08} and
hence the restricted root system $\Sigma$ is of type $(BC)_n$ for $n \geq 1$ and $(\widet{\alpha_i}, \widet{\alpha_i}) = \frac{1}{4}(\alpha_i,\alpha_i)$. Since $\mu \in Q^+(2\Sigma)\subset Q$ we have
\begin{equation*}
  \dfrac{(\mu,\widet{\alpha_i})}{(\widet{\alpha_i}, \widet{\alpha_i})}
  =4 \dfrac{(\mu,\alpha_i)}{(\alpha_i, \alpha_i)}\in 2\mathbb{Z}.
\end{equation*}
Hence the image of $\Omega_i$ is contained in $\widet{U}^+$ in all cases as required.
\end{proof}



Consider $\widet{w} \in \widetilde{W}$ and let $\widet{w} = \widetilde{s_{i_1}}\widetilde{s_{i_2}} \dots \widetilde{s_{i_t}}$ be a reduced expression.
For $k = 1, \dotsc, t$ let
\begin{equation} \label{eq:PartialQkmPart}
	\Qkm_{\widet{w}}^{[k]}		=	\Omega_{i_1}\Omega_{i_2} \dotsm \Omega_{i_{k-1}}(\Qkm_{i_k})
					=	\Psi \circ \widetilde{T_{i_1}} \dotsm \widetilde{T_{i_{k-1}}} \circ \Psi^{-1}(\Qkm_{i_k}).
\end{equation}
By Proposition \ref{prop: length comparison} we have  $U^+[\widet{s_{i_k}}] \subset U^+[\widet{s_{i_{k-1}}}w_0]$ for $k = 2, \dotsc, t$, and 
\begin{equation*}
	\widet{T_{i_l}} \dotsm \widet{T_{i_{k-1}}}(U^+[\widet{s_{i_k}}]) \subset U^+[\widet{s_{i_{l-1}}}w_0] \qquad \mbox{for $l=2, \dotsc , k-1$}
\end{equation*}
and hence the elements $\Qkm_{\widet{w}}^{[k]}$ are well-defined.
Moreover, by Proposition \ref{prop:Omega1} we have
\begin{equation*}
  \Qkm_{\widet{w}}^{[k]} \in \reallywidehat{\widet{U}^+[\widet{w}]}=\prod_{\mu\in Q^+(2\Sigma)} U^+[\widet{w}]_\mu \qquad \mbox{for $k = 1, \dotsc , t.$}
\end{equation*}
When clear, we omit the subscript $\widet{w}$ and write $\Qkm^{[k]}$ instead of  $\Qkm_{\widet{w}}^{[k]}$.
\begin{definition}
   Let $\widet{w} \in \widetilde{W}$ and let $\widet{w} = \widetilde{s_{i_1}}\widetilde{s_{i_2}} \dots \widetilde{s_{i_t}}$ be a reduced expression. The partial quasi $K$-matrix $\Qkm_{\widet{w}}$ associated to $\widet{w}$ and the given reduced expression is defined by 
\begin{equation} \label{eq:PartialQkm}
	\Qkm_{\widet{w}}	=	\Qkm^{[k]}\Qkm^{[k-1]} \dotsm \Qkm^{[2]}\Qkm^{[1]}.
\end{equation}
\end{definition}
We expect that the partial quasi $K$-matrix $\Qkm_{\widet{w}}$ only depends on $\widet{w} \in \widet{W}$ and not on the chosen reduced expression. As we will see in Theorem
 \ref{Thm:HigherRankPartialQkm} it suffices to check the independence of the reduced expression in rank two. If the Satake diagram is of rank two then the restricted Weyl group $\widetilde{W}$ is of one of the types $A_1 \times A_1$, $A_2$, $B_2$ or $G_2$. In each case, only the longest word for $\widetilde{W}$ has distinct reduced expressions.

\begin{conjecture} \label{conj}
	Assume that $\satake$ is a Satake diagram of rank two.
	Then the element $\Qkm_{\widet{w}} \in \mathscr{U}$ defined by \eqref{eq:PartialQkm} depends only on $\widet{w} \in \widetilde{W}$ and not on the chosen reduced expression.
\end{conjecture}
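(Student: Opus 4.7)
The strategy is to exploit the uniqueness statement for the quasi $K$-matrix recorded in Equation \eqref{eq:irSyst2}: any element $Y=\sum_{\mu\in Q^+(2\Sigma)} Y_\mu$ of $\widehat{U^+}$ with $Y_0=1$ which satisfies ${}_ir(Y) = -(q_i-q_i^{-1})q^{-(\Theta(\alpha_i),\alpha_i)}c_i X_i Y$ for every $i \in I$ is automatically equal to the quasi $K$-matrix $\Qkm$ of the full rank two Satake diagram $\satake$. Consequently it is enough to show that for each of the two reduced expressions of $\widet{w_0}$ the element $\Qkm_{\widet{w_0}}$ defined by \eqref{eq:PartialQkm} satisfies this recursive system. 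Both reduced expressions then yield the same element $\Qkm$, proving Conjecture B and, as a bonus, identifying $\Qkm_{\widet{w_0}}$ with the full quasi $K$-matrix.

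The argument is organised by the type of the restricted Weyl group, which in rank two is one of $A_1\times A_1$, $A_2$, $B_2$ or $G_2$, and within each type by the admissible rank two Satake diagrams from Araki's classification. For each reduced expression $\widet{w_0}=\widet{s_{i_1}}\cdots\widet{s_{i_t}}$ one first writes down $\Qkm_{\widet{w_0}}=\Qkm^{[t]}\cdots\Qkm^{[1]}$ using the rank one formulas of Section \ref{sec:rank1Qkms} combined with the operators $\Omega_{i_k}=\Psi\circ\widet{T_{i_1}}\cdots\widet{T_{i_{k-1}}}\circ\Psi^{-1}$. For each $i\in I$ one then applies the skew derivation ${}_ir$ via the Leibniz-type rule \eqref{eq:ir} across the product $\Qkm^{[t]}\cdots\Qkm^{[1]}$ and verifies that the result collapses to $-(q_i-q_i^{-1})q^{-(\Theta(\alpha_i),\alpha_i)}c_i X_i \Qkm_{\widet{w_0}}$. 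For $j\in X$ this reduces to the identity ${}_jr(\Qkm_{\widet{w_0}})=0$, which is immediate from $\Qkm_{\widet{w_0}}\in\widehat{U^+[\widet{w_0}]}\subseteq\widehat{U^+[w_Xw_0]}$ together with Equation \eqref{eq:jrX=0} and Lemma \ref{lem:xmuinUwxw0}; the substantive content lies in the verification for white nodes $i\in I\setminus X$.

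The main obstacle is computational rather than structural. The algebra automorphism $\Psi$ introduces the auxiliary parameters $\widet{c_i}$ and powers of $q^{1/2}$ that must cancel weight component by weight component in $\Qkm_{\widet{w_0}}$, and the Lusztig operators $\widet{T_j}$ interact non-trivially with the skew derivation ${}_ir$ through the braid group action. The combinatorial complexity grows sharply with the length $t\in\{2,3,4,6\}$ of $\widet{w_0}$, so that the identities to be verified for $B_2$ and especially $G_2$ are very lengthy; this is precisely why such calculations are banned to Appendix \ref{App:RankTwo}. A secondary obstacle, and the reason Theorem A is not proved in full generality, is that the strategy depends on having explicit rank one quasi $K$-matrices at hand: outside of type $A_n$ or $X=\emptyset$ the rank one cases $BII$, $CII$, $DII$, $FII$ of Table \ref{Table:RankOne} have not been computed, so the case analysis cannot even start for the rank two Satake diagrams that involve them.
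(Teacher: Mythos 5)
Your strategy is exactly the paper's: for each of the two reduced expressions of $\widet{w_0}$ one verifies, case by case and using the explicit rank one formulas twisted by $\Psi$ and the operators $\widet{T_i}$, that $\Qkm_{\widet{w_0}}$ satisfies the defining recursion \eqref{eq:irSyst2} (with $\ir{j}{\cdot}=0$ for $j\in X$ handled via $U^+[w_Xw_0]$ and \cite[8.26\,(4)]{b-Jantzen96}), so that uniqueness of the quasi $K$-matrix forces both products to equal $\Qkm$; this is precisely how Theorem \ref{Thm:RankTwoPartialQkm} is proved in Appendix \ref{App:RankTwo} (with $G_2$ deferred to \cite{Dob18}). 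Like the paper, this establishes the statement only for $\lie{g}$ of type $A_n$ or $X=\emptyset$, since the remaining rank one quasi $K$-matrices of Table \ref{Table:RankOne} are unavailable, so Conjecture \ref{conj} itself remains open in general --- a limitation you correctly acknowledge.
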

In Appendix \ref{App:RankTwo}, we prove the following Theorem which confirms Conjecture \ref{conj} in many cases. The proof is performed by showing that for both reduced expressions of the longest element $\widet{w_0}\in\widet{W}$ the resulting elements $\Qkm_{\widet{w_0}}$ satisfy the relations \eqref{eq:irSyst2}.

\begin{theorem} \label{Thm:RankTwoPartialQkm}
	Assume that $\lie{g} = \mathfrak{sl}_n(\mathbb{C})$ or $X = \emptyset$.
	Then Conjecture \ref{conj} holds.
\end{theorem}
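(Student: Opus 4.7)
The plan is to verify the theorem by a direct case-by-case check, exploiting the fact that in rank two the only element of $\widet{W}$ with two distinct reduced expressions is the longest element $\widet{w_0}\in\widet{W}$. Concretely, I would show that for each rank two Satake diagram satisfying the hypotheses and for each of the two reduced expressions $\widet{w_0}=\widet{s_{i_1}}\dotsm\widet{s_{i_t}}$, the product $\Qkm_{\widet{w_0}} = \Qkm^{[t]}\dotsm \Qkm^{[1]}$ defined by \eqref{eq:PartialQkm} satisfies the recursion \eqref{eq:irSyst2} with $\bs=\bo$, namely
\begin{equation*}
\ir{i}{\Qkm_{\widet{w_0}}} = -(q_i-q_i^{-1})\,q^{-(\Theta(\alpha_i),\alpha_i)}\,c_i X_i\,\Qkm_{\widet{w_0}}\qquad\text{for every $i\in I$.}
\end{equation*}
By \cite[Proposition 6.1, Theorem 6.10]{a-BK15} the quasi $K$-matrix is the unique normalised solution of this system, so both reduced expressions must yield the same element, namely $\Qkm$ itself, which then proves Conjecture \ref{conj} in these cases.

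The first step is to enumerate the relevant rank two Satake diagrams. For $X=\emptyset$ the restricted Weyl group $\widet{W}$ coincides with $W$ and the diagrams are the split forms of types $A_1\times A_1$, $A_2$, $B_2$ and $G_2$; here each $\Qkm_i$ is the classical rank one quasi $K$-matrix associated with the Chevalley involution, specialising the formula of Lemma \ref{lem:QkmRankOneAI}. For $\lie{g}=\mathfrak{sl}_n$ a short analysis of the Satake diagrams of types $AI$--$AIV$ in \cite[pp.~32/33]{a-Araki62} isolates the rank two subdiagrams with restricted root system of type $A_1\times A_1$, $A_2$ or $(BC)_2$, for which the rank one factors $\Qkm_i$ are given explicitly by Lemmas \ref{lem:QkmRankOneAI}--\ref{lem:QkmRankOneAIV}.

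For each such diagram I would fix both reduced expressions of $\widet{w_0}$, compute the factors
\begin{equation*}
\Qkm^{[k]}=\Psi\circ \widet{T_{i_1}}\dotsm \widet{T_{i_{k-1}}}\circ\Psi^{-1}(\Qkm_{i_k})
\end{equation*}
using the explicit rank one formulas and the braid group action of $Br(\widet{W})$ guaranteed by Theorem \ref{thm: Wtilde Coxeter} and Proposition \ref{prop: length comparison}, and then evaluate $\ir{i}{\Qkm_{\widet{w_0}}}$ by applying the Leibniz-type rule \eqref{eq:ir} across the product. Showing that the two sides of the recursion agree reduces to a $q$-identity among the coefficients $\{n\}_i!$, $\{n\}_i!!$ appearing in the rank one formulas, together with intertwining relations of the form $\ir{i}\circ \widet{T_j}$ that move the skew derivation across the Lusztig automorphisms. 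Having done this for every $i\in I$ and both reduced expressions, uniqueness of the solution of \eqref{eq:irSyst2} closes the argument.

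The main obstacle is the length and subtlety of the calculations. Besides the elementary combinatorial identities for $q$-numbers, one must handle: (i) the scalar twists $\widet{c_i}$ introduced by $\Psi$, which involve the parameters of both roots in a $\tau$-orbit and whose non-trivial exponents are tracked via Corollary \ref{cor:ReflGrp} and Proposition \ref{prop:Omega1}; (ii) the different root lengths in types $B_2$ and $(BC)_2$, where the index $i\in I\setminus X$ can correspond to either a long or a short restricted root, forcing separate verifications; and (iii) the rank one factors of types $AII_3$ and $AIV$, in which $\Qkm_i$ already contains the nested Lusztig automorphisms $T_{w_{X^i}}$ and so produces lengthy expansions under further conjugation by $\widet{T_{i_{k-1}}}\dotsm\widet{T_{i_1}}$. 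Because the computations are mechanical but voluminous, I defer them to Appendix \ref{App:RankTwo}, where each rank two case is treated in turn.
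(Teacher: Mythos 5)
Your proposal takes essentially the same route as the paper: for each rank two Satake diagram allowed by the hypotheses you verify that the products $\Qkm_{\widet{w_0}}$ attached to the two reduced expressions of $\widet{w_0}$ both satisfy the recursion \eqref{eq:irSyst2}, and the uniqueness of the normalised solution from \cite{a-BK15} then forces both to equal $\Qkm$, which is exactly what the case-by-case computations of Appendix \ref{App:RankTwo} (with the $G_2$ case deferred to \cite{Dob18}) carry out. One minor slip in your enumeration: $X=\emptyset$ does not force $\tau=\mathrm{id}$ (e.g.\ type $AIII_3$ has $X=\emptyset$ and $\widet{W}\neq W$), but since all such diagrams are of type $A$ they are absorbed into your $\mathfrak{sl}_n$ branch, so the list of cases you would check is still complete.
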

\begin{remark}
The Hopf algebra automorphism $\Psi$ in the definition of $\Omega_i$ turns out to be necessary for the rank two calculations in Appendix \ref{App:RankTwo} which prove Theorem \ref{Thm:RankTwoPartialQkm}. The conjugation by $\Psi$ affects the coefficients in the partial quasi $K$-matrix associated to a reduced expression of an element $\widet{w} \in \widet{W}$. In rank two the two partial quasi $K$-matrices associated to the longest word $\widet{w_0}\in \widet{W}$ coincide only after this change of coefficients. The effect of the conjugation by $\Psi$ can be seen in particular in Sections \ref{app:AIII3} and \ref{app:AIIIge4} of the appendix which treat type $AIII_n$ for $n\ge 3$.
\end{remark}
Once the rank two case is established, we can generalise to higher rank cases.

\begin{theorem} \label{Thm:HigherRankPartialQkm}
	Suppose that $\satake$ is a Satake diagram such that all subdiagrams $\subsatake$ of rank two satisfy Conjecture \ref{conj}.
	Then the element $\Qkm_{\widet{w}} \in \mathscr{U}$ depends on $\widet{w} \in \widetilde{W}$ and not on the chosen reduced expression. 
\end{theorem}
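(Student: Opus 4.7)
The plan is to invoke Matsumoto's theorem: since $(\widet{W}, \widet{S})$ is a Coxeter system by Theorem \ref{thm: Wtilde Coxeter}, any two reduced expressions for a given $\widet{w} \in \widet{W}$ are connected by a sequence of braid moves, so it suffices to show that $\Qkm_{\widet{w}}$ is invariant under a single braid move. Such a move replaces a subword of the form $\widet{s_i}\widet{s_j}\widet{s_i}\dotsm$ of length $m = m_{ij}$ (the order of $\widet{s_i}\widet{s_j}$ in $\widet{W}$) by $\widet{s_j}\widet{s_i}\widet{s_j}\dotsm$, with fixed prefix and suffix. Accordingly, I would write the reduced expression as $\widet{w} = \widet{u}\,\widet{v}\,\widet{u}'$, where $\widet{u} = \widet{s_{i_1}}\dotsm\widet{s_{i_k}}$ is the common prefix, $\widet{v} = \widet{s_{i_{k+1}}}\dotsm\widet{s_{i_{k+m}}}$ is the longest element of the dihedral subgroup $\langle\widet{s_i},\widet{s_j}\rangle \subset \widet{W}$, and $\widet{u}'$ is the common suffix.

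The next step is to split the product $\Qkm_{\widet{w}} = \Qkm^{[t]}\dotsm\Qkm^{[1]}$ into three blocks according to this decomposition and check that each block is unaffected by the braid move. The factors $\Qkm^{[l]}$ with $l \leq k$ agree for both expressions by direct inspection. For $l > k+m$, invariance follows from the fact that the $\widet{T_i}$ satisfy the braid relations of $\widet{W}$ on $U_q(\lie{g})$ (a consequence of Proposition \ref{prop: length comparison}), so the prefix operators $\widet{T_{i_1}}\dotsm\widet{T_{i_{l-1}}}$ and $\widet{T_{j_1}}\dotsm\widet{T_{j_{l-1}}}$ agree. For the middle block, the crucial point is that $\Omega_{\widet{u}} := \Psi \circ \widet{T_{i_1}}\dotsm\widet{T_{i_k}} \circ \Psi^{-1}$ is a well-defined algebra homomorphism on an appropriate subalgebra, so
\begin{equation*}
  \Qkm^{[k+m]}\dotsm\Qkm^{[k+1]} \;=\; \Omega_{\widet{u}}\bigl(\Qkm^{[m]}_{\widet{v}}\dotsm\Qkm^{[1]}_{\widet{v}}\bigr) \;=\; \Omega_{\widet{u}}(\Qkm_{\widet{v}}),
\end{equation*}
where the inner $\Qkm_{\widet{v}}$ is the partial quasi $K$-matrix for $\widet{v}$ computed in the rank two Satake subdiagram with label set $J = \{i,\tau(i),j,\tau(j)\} \cup X^i \cup X^j$. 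Since Conjecture \ref{conj} holds for this subdiagram by hypothesis, $\Qkm_{\widet{v}}$ is independent of the reduced expression chosen for $\widet{v}$, and applying $\Omega_{\widet{u}}$ preserves this equality; combining the three blocks yields invariance of $\Qkm_{\widet{w}}$ under the braid move.

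The main obstacle is justifying the middle-block factorisation, which requires two compatibility checks. First, one must verify that the generators $\widet{s_i}, \widet{s_j}$ defined intrinsically in the rank two subdiagram $(J, X \cap J, \tau|_J)$ coincide with the corresponding generators of the dihedral subgroup of the ambient $\widet{W}$; this follows from the identity $\widet{s_i} = w_{\{i,\tau(i)\} \cup X^i}\, w_{X^i}^{-1}$, noting that the connected components of $X$ disjoint from $\{i,\tau(i),j,\tau(j)\}$ contribute commuting factors that cancel in both $w_{\{i,\tau(i)\} \cup X}$ and $w_X$. Second, one needs $\Qkm_{\widet{v}}$ to lie in the domain of $\Omega_{\widet{u}}$, which reduces via Proposition \ref{prop: length comparison} to the length additivity $\ell(\widet{u}\widet{v}) = \ell(\widet{u}) + \ell(\widet{v})$ built into the assumption that $\widet{s_{i_1}}\dotsm\widet{s_{i_t}}$ is a reduced expression, combined with the domain tracking already present in the definition of $\Qkm_{\widet{w}}^{[k]}$. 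Once these compatibilities are in place, the argument reduces cleanly to the assumed rank two Conjecture \ref{conj}.
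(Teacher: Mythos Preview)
Your proposal is correct and follows essentially the same approach as the paper's proof: reduce via Matsumoto's theorem to a single braid move, split the product $\Qkm^{[t]}\dotsm\Qkm^{[1]}$ into prefix, middle, and suffix blocks, and verify that the outer blocks are unchanged (directly for the prefix, via the braid relations for the $\widet{T_i}$ for the suffix) while the middle block is handled by applying $\Psi\circ\widet{T_{i_1}}\dotsm\widet{T_{i_{k-1}}}\circ\Psi^{-1}$ to the rank two identity furnished by Conjecture~\ref{conj}. Your explicit discussion of the compatibility between the subdiagram generators $\widet{s_i}$ and the ambient ones, and of the domain of $\Omega_{\widet{u}}$, makes precise points that the paper treats only implicitly.
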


\begin{proof}
Let $\widet{w}$ and $\widet{w}^{\prime}$ be reduced expressions which represent the same element in $\widet{W}$. 
Assume that $\widet{w}$ and $\widet{w}^{\prime}$ differ by a single braid relation.	
The following are the possible braid relations:
\begin{align}
	\widet{s_p}\widet{s_r}	&=	\widet{s_r}\widet{s_p}, \nonumber \\
	\widet{s_p}\widet{s_r}\widet{s_p}	&=	\widet{s_r}\widet{s_p}\widet{s_r},  \label{Prf:Star}\\
	(\widet{s_p}\widet{s_r})^2	&=	(\widet{s_r}\widet{s_p})^2, \nonumber\\
	(\widet{s_p}\widet{s_r})^3	&=	(\widet{s_r}\widet{s_p})^3. \nonumber
\end{align} 
The argument for each relation is the same, so we only consider the second case. Assume that $\widet{w}$ and $\widet{w}^{\prime}$ differ by relation \eqref{Prf:Star}, that is
\begin{align*}
	\widet{w}	&=	\widet{s_{i_1}} \dotsm \widet{s_{i_{k-1}}} \big(\widet{s_p}\widet{s_r}\widet{s_p} \big) \widet{s_{i_{k+3}}} \dotsm \widet{s_{i_t}},\\
	\widet{w}^{\prime}	&=	\widet{s_{i_1}} \dotsm \widet{s_{i_{k-1}}} \big(\widet{s_r}\widet{s_p}\widet{s_r} \big) \widet{s_{i_{k+3}}} \dotsm \widet{s_{i_t}}
\end{align*}
for some $k = 1, \dotsc, t-2$.
For $l = 1, \dotsc ,k-1$, we have
\begin{align*}
	\Qkm_{\widet{w}}^{[l]}	&=	\Psi \circ \widet{T_{i_1}} \dotsm \widet{T_{i_{l-1}}} \circ \Psi^{-1}(\Qkm_{i_l})	=	\Qkm_{\widet{w}^{\prime}}^{[l]}.
\end{align*}
Since the algebra automorphisms $\widet{T_i}$ satisfy braid relations, we have
\begin{align*}
\Qkm_{\widet{w}}^{[l]}	&=	\Psi \circ \widet{T_{i_1}} \dotsm \widet{T_{i_{k-1}}} \big( \widet{T_p}\widet{T_r}\widet{T_p} \big) \widet{T_{i_{k+3}}} \dotsm \widet{T_{i_{l-1}}} \circ \Psi^{-1}(\Qkm_{i_l})\\
	&=\Psi \circ \widet{T_{i_1}} \dotsm \widet{T_{i_{k-1}}} \big( \widet{T_r}\widet{T_p}\widet{T_r} \big) \widet{T_{i_{k+3}}} \dotsm \widet{T_{i_{l-1}}} \circ \Psi^{-1}(\Qkm_{i_l}) = \Qkm_{\widet{w}^{\prime}}^{[l]}
\end{align*}
for $l = k+3, \dotsc ,t$.
Finally, consider the rank two subdiagram $\subsatake$ obtained by taking $J = J_1 \cup J_2$, where $J_1 = \{r, p, \tau(r), \tau(p)\}$ and $J_2 \subset X$ is the union of connected components of $X$ which are connected to a node of $J_1$.
By assumption,
\begin{align*}
\Qkm_{\widet{s_p}\widet{s_r}\widet{s_p}}	&=	\Qkm_p \cdot \Psi\widet{T_p}\Psi^{-1}(\Qkm_r) \cdot \Psi\widet{T_p}\widet{T_r}\Psi^{-1}(\Qkm_p)\\
										&=	\Qkm_r \cdot \Psi\widet{T_r}\Psi^{-1}(\Qkm_p) \cdot \Psi\widet{T_r}\widet{T_p}\Psi^{-1}(\Qkm_r)
										=	\Qkm_{\widet{s_r}\widet{s_p}\widet{s_r}}.
\end{align*}
It follows from this that
\begin{align*}
	\Qkm_{\widet{w}}^{[k]}\Qkm_{\widet{w}}^{[k+1]}\Qkm_{\widet{w}}^{[k+2]}
			&=	\Psi\widet{T_{i_1}} \dotsm \widet{T_{i_{k-1}}}\Psi^{-1} \big(\Qkm_{\widet{s_p}\widet{s_r}\widet{s_p}} \big)\\
			&=	\Psi\widet{T_{i_1}} \dotsm \widet{T_{i_{k-1}}}\Psi^{-1} \big(\Qkm_{\widet{s_r}\widet{s_p}\widet{s_r}} \big)\\
			&=	\Qkm_{\widet{w}^{\prime}}^{[k]}\Qkm_{\widet{w}^{\prime}}^{[k+1]}\Qkm_{\widet{w}^{\prime}}^{[k+2]}
\end{align*}
Hence we have $\Qkm_{\widet{w}} = \Qkm_{\widet{w}^{\prime}}$ as required.

If $\widet{w}$ and $\widet{w}^{\prime}$ differ by more than a single relation, then we can find a sequence of reduced expressions
\begin{equation*}
	\widet{w} = \widet{w_1}, \widet{w_2}, \dotsc, \widet{w_n} = \widet{w}^{\prime}
\end{equation*}
such that for each $i = 1, \dotsc ,n-1$, the expressions $\widet{w_i}$ and $\widet{w_{i+1}}$ differ by a single relation.
We repeat the above argument at each step and obtain $\Qkm_{\widet{w}} = \Qkm_{\widet{w}^{\prime}}$.
\end{proof}

Recall from \eqref{eq:w0Action} that there exists a diagram automorphism $\tau_0: I \rightarrow I$ such that the longest element $w_0 \in W$ satisfies
\begin{equation}
	w_0(\alpha_i)	=	-\alpha_{\tau_0(i)}
\end{equation}
for all $i \in I$.
Denote the longest element of $\widet{W}$ by $\widet{w_0}$.

\begin{proposition} \label{prop:LongestWordPartialQkmPart}
Let $\widet{w_0} \in \widet{W}$ be the longest word with reduced expression $\widet{w_0} = \widet{s_{i_1}} \dotsm \widet{s_{i_t}}$. 
Then
\begin{equation} \label{eq:LongestWordPartialQkmPart}
	\Qkm_{\widet{w_0}}^{[t]}	=	\Qkm_{\tau_0(i_t)}.
\end{equation}
\end{proposition}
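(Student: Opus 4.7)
Set $\widet{v}=\widet{s_{i_1}}\dotsm \widet{s_{i_{t-1}}}$, so that $\widet{w_0}=\widet{v}\widet{s_{i_t}}$ is a reduced decomposition. By Proposition \ref{prop: length comparison} the product $\widet{T_{\widet{v}}}:=\widet{T_{i_1}}\dotsm \widet{T_{i_{t-1}}}$ is well defined via braid relations, and by \eqref{eq:PartialQkmPart} we have $\Qkm_{\widet{w_0}}^{[t]}=\Psi\circ \widet{T_{\widet{v}}}\circ \Psi^{-1}(\Qkm_{i_t})$. The plan is to identify this element with $\Qkm_{\tau_0(i_t)}$ by verifying that it satisfies the defining recursive system \eqref{eq:irSyst2} with $\Qkm_0=1$ for the rank-one Satake subdiagram at $\tau_0(i_t)$; uniqueness of this recursion will then finish the proof.

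The first step is to pin down the weight support. Since $\widet{w_0}$ is the longest element of $\widet{W}$, Proposition \ref{proposition:ReflGrp} gives $\widet{w_0}(\widet{\alpha_{i_t}})=-\widet{\alpha_{i^*}}$ for some $i^*\in I\setminus X$, whence $\widet{v}(\widet{\alpha_{i_t}})=\widet{\alpha_{i^*}}$. Comparing with $w_0(\alpha_{i_t})=-\alpha_{\tau_0(i_t)}$ and using that $W_X$ acts trivially on $V_{-1}$ (so in particular $\widet{\alpha_j}=\widet{\alpha_{\tau(j)}}$ for every $j\in I\setminus X$), one checks that $i^*$ lies in the $\tau$-orbit of $\tau_0(i_t)$ and therefore determines the same rank-one Satake subdiagram, so $\Qkm_{i^*}=\Qkm_{\tau_0(i_t)}$. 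Applying $\widet{T_{\widet{v}}}$ to $\Qkm_{i_t}\in \prod_{n\ge 0}U^+_{2n\widet{\alpha_{i_t}}}$ therefore yields an element in $\prod_{n\ge 0}U^+_{2n\widet{\alpha_{\tau_0(i_t)}}}$ with constant term $1$, and the diagonal action of $\Psi$ prescribed by \eqref{eq:Psi} preserves both support and normalisation.

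For the recursion one applies ${}_kr$, for each $k$ in the rank-one subdiagram at $\tau_0(i_t)$, to $\Qkm_{\widet{w_0}}^{[t]}$. Using the rank-one defining relations \eqref{eq:irSyst2} for $\Qkm_{i_t}$, together with the interaction of skew derivations with the Lusztig automorphisms appearing in $\widet{T_{\widet{v}}}$ and the scalar rescaling induced by $\Psi$, one arrives at
\begin{equation*}
  \ir{k}{\Qkm_{\widet{w_0}}^{[t]}} = -\q q^{-(\Theta(\alpha_k),\alpha_k)} c_k X_k\, \Qkm_{\widet{w_0}}^{[t]}
\end{equation*}
for $k$ in the rank-one subdiagram at $\tau_0(i_t)$, which together with the weight support and normalisation from the previous step forces $\Qkm_{\widet{w_0}}^{[t]}=\Qkm_{\tau_0(i_t)}$ by uniqueness.

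The main obstacle is precisely the scalar bookkeeping in this last step. The commutation of ${}_kr$ with $\widet{T_{\widet{v}}}$ produces powers of $q$, and the conjugation by $\Psi$ must convert the leading coefficient from $c_{i_t}$ into $c_{\tau_0(i_t)}$ via the relation \eqref{eq:citil} together with the compatibility condition \eqref{eq:ciCond}; this conversion is exactly what the normalisation \eqref{eq:Psi} of $\Psi$ was designed to accomplish, but tracking it carefully through the action of $\widet{T_{\widet{v}}}$ on the rank-one recursion is where the real work lies.
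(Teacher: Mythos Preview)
Your proposal has a genuine gap in the recursion step. You write that by ``the interaction of skew derivations with the Lusztig automorphisms appearing in $\widet{T_{\widet{v}}}$'' one arrives at the desired relation for ${}_kr(\Qkm_{\widet{w_0}}^{[t]})$, but there is no general formula for how ${}_kr$ commutes with an arbitrary Lusztig automorphism $T_w$; you yourself concede that ``this is where the real work lies'' without actually doing it. Tracking ${}_kr$ through the long composition $\widet{T_{i_1}}\cdots\widet{T_{i_{t-1}}}$ would require controlling each factor separately, and the intermediate root vectors produced along the way do not interact with skew derivations in any transparent manner. Nor is it clear that the scalar produced by $\Psi$ would convert $c_{i_t}$ into $c_{\tau_0(i_t)}$, since $\Psi$ only sees the symmetric combinations $\widet{c_j}$, while $c_{i_t}$ and $c_{\tau_0(i_t)}$ can be genuinely different parameters.

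The paper avoids this difficulty entirely by exploiting the special form of the longest element. Writing $\widet{T_{\widet{v}}} = T_{\widet{w_0}}T_{\widet{s_{i_t}}}^{-1}$ and using $\widet{w_0}w_X = w_0$ together with $w_X\widet{s_{i_t}} = w_{\{i_t,\tau(i_t)\}\cup X}$, one obtains $\widet{T_{\widet{v}}} = T_{w_0}T_{w_{\{i_t,\tau(i_t)\}\cup X}}^{-1}$. The key input is then the classical identity $T_{w_0} = \mathrm{tw}^{-1}\circ\tau_0$ and its rank-one analogue $T_{w_{\{i_t,\tau(i_t)\}\cup X}} = \mathrm{tw}^{-1}\circ\tau_{0,i_t}$, where $\mathrm{tw}$ is the automorphism $E_j\mapsto -K_j^{-1}F_j$. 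Since the two occurrences of $\mathrm{tw}$ cancel, the action of $\widet{T_{\widet{v}}}$ on $\Qkm_{i_t}$ reduces to the \emph{diagram automorphism} $\tau_0\tau_{0,i_t}$, and Lemma \ref{lem:DiagAutofQkm} computes the effect of diagram automorphisms on quasi $K$-matrices directly. The remaining scalar bookkeeping for $\Psi$ is then a short case analysis ($\tau(i_t)=i_t$ versus $\tau(i_t)\neq i_t$, and within the latter whether the rank-one subdiagram is of type $AIII_{11}$ or $AIV$). In short, the paper never pushes a skew derivation through $\widet{T_{\widet{v}}}$; it replaces that step by a structural identity for $T_{w_0}$ that your approach does not use.
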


\begin{proof}
To simplify notation we write $i_t = i$.
By construction we have
\begin{align*}
	\widet{w_0}w_X &= w_0, \\
	w_X\widet{s_i} &= w_{\{i,\tau(i)\} \cup X} \quad \mbox{for all $i \in I \setminus X$.}
\end{align*}
As $w_X$ and $w_{ \{i, \tau(i)\} \cup X}$ commute, we get
\begin{align*}
\Qkm_{\widet{w_0}}^{[t]}
	&=	\Psi \circ T_{\widet{w_0}} T_{\widet{s_{i}}}^{-1} \circ \Psi^{-1} (\Qkm_{i})\\
	&=	\Psi \circ T_{w_0}T_{w_X}^{-1} T_{w_X} T_{w_{\{i,\tau(i)\} \cup X}}^{-1} \circ \Psi^{-1}(\Qkm_{i})\\
	&=	\Psi \circ T_{w_0}T_{w_{\{i,\tau(i)\} \cup X}}^{-1} \circ \Psi^{-1}(\Qkm_{i}).
\end{align*}
Recall that
\begin{equation}\label{eq:Tw0AltDef}
	T_{w_0} = \mathrm{tw}^{-1} \circ \tau_0 
\end{equation}
where $\mathrm{tw} : U_q(\lie{g}) \rightarrow U_q(\lie{g})$ is the algebra automorphism defined by
\begin{equation*}
  \mathrm{tw}(E_i) = -K_i^{-1}F_i, \quad \mathrm{tw}(F_i) = -E_iK_i, \quad \mathrm{tw}(K_i) = K_i^{-1}
\end{equation*}
for $i \in I$, see \cite[Section 7.1]{a-BK15}.
Analogously we have on $U_q(\lie{g}_{\{i,\tau(i)\} \cup X})$ the relation
\begin{equation}
	T_{w_{\{i, \tau(i)\}\cup X}} = \mathrm{tw}^{-1} \circ \tau_{0,i} = \tau_{0,i} \circ \mathrm{tw}^{-1}
\end{equation}
where $\tau_{0,i}: \{i,\tau(i)\} \cup X \rightarrow \{i, \tau(i)\} \cup X$ is the diagram automorphism satisfying \eqref{eq:w0iAction}.
We obtain
\begin{align}
	\Qkm_{\widet{w_0}}^{[t]}
		&=	\Psi \circ \mathrm{tw}^{-1} \circ  \tau_0 \circ \mathrm{tw} \circ \tau_{0,i} \circ \Psi^{-1}(\Qkm_{i})\nonumber \\
		&=	\Psi \circ \tau_0\tau_{0,i} \circ \Psi^{-1} (\Qkm_{i}). \label{eq: tauRankOneQkm}
\end{align}
\begin{case}
$\tau(i) = i$.
\end{case}
In this case Lemma \ref{lem:DiagAutofQkm} implies that
\begin{equation} \label{eq:tau0i_Qkm}
	\tau_{0,i}(\Qkm_i)	=	\Qkm_i.
\end{equation}
Moreover $s(\tau(i)) = s(i) = 1$ and hence by Lemma \ref{lem:RankOneQkm} and by definition of $\Psi$ we have
\begin{align*}
\Qkm_{\widet{w_0}}^{[t]}	
	&\overset{\phantom{\eqref{eq:tau0i_Qkm}}}{=}	\sum_{n \in \mathbb{N}_0} \Psi \circ \tau_0\tau_{0,i} \circ \Psi^{-1} (c_i^nE_{n(\alpha_i - \Theta(\alpha_i))})\\
	&\overset{\phantom{\eqref{eq:tau0i_Qkm}}}{=}	\sum_{n \in \mathbb{N}_0} q^{-n/2(\alpha_i - \Theta(\alpha_i), \alpha_i)} \Psi \circ \tau_0\tau_{0,i}(E_{n(\alpha_i - \Theta(\alpha_i))})\\
	&\overset{\eqref{eq:tau0i_Qkm}}{=} \sum_{n \in \mathbb{N}_0} q^{-n/2(\alpha_i - \Theta(\alpha_i),\alpha_i)} \Psi \circ \tau_0 (E_{n(\alpha_i - \Theta(\alpha_i))})\\
	&\overset{\phantom{\eqref{eq:tau0i_Qkm}}}{=} \sum_{n \in \mathbb{N}_0} q^{-n/2(\alpha_i - \Theta(\alpha_i),\alpha_i)} \Psi (E_{n(\alpha_{\tau_0(i)} - \Theta(\alpha_{\tau_0(i)})})				
\end{align*}
where we use the notation from Lemma \ref{lem:RankOneQkm} also for $\Qkm_{\tau_0(i)}$.

As $(\alpha_{\tau_0(i)} - \Theta(\alpha_{\tau_0(i)}), \alpha_{\tau_0(i)})~= (\alpha_i - \Theta(\alpha_i), \alpha_i)$ and $s(\tau_0(i))=1$, formula \eqref{eq:Psi} gives us
\begin{equation}
	\Qkm_{\widet{w_0}}^{[t]} = \sum_{n \in \mathbb{N}_0} c_{\tau_0(i)}^n E_{n(\alpha_{\tau_0(i)} - \Theta(\alpha_{\tau_0(i)}))}	=	\Qkm_{\tau_0(i)}
\end{equation}
which proves the Lemma in this case.

\begin{case}
$\tau(i) \neq i$.
\end{case}
In this case the rank one Satake subdiagram is either of type $AIII_{11}$ or of type $AIV$ for $n \geq 2$ as in Table \ref{Table:RankOne}.

If the rank one Satake subdiagram is of type $AIV$ for $n \geq 2$ then $\tau = \tau_0$ and $\tau_{0,i}$ coincide on $\{i, \tau(i)\} \cup X$ and hence \eqref{eq: tauRankOneQkm} implies that
\begin{equation}
	\Qkm_{\widet{w_0}}^{[t]}	=	\Qkm_i	=	\Qkm_{\tau\tau_{0}(i)} = \Qkm_{\tau_0(i)}.
\end{equation} 
If the rank one subdiagram is of type $AIII_{11}$ then $\tau_{0,i}(i) = i$.
If additionally $\tau_0(i) = i$ then $\tau_0(\tau(i)) = \tau(i)$ and hence \eqref{eq: tauRankOneQkm} implies that $\Qkm_{\widet{w_0}}^{[t]} = \Qkm_i = \Qkm_{\tau_0(i)}$ in this case.

If on the other hand $\tau_0(i) \neq i$ then $\tau_0 = \tau$ and we invoke the fact that
\begin{equation} \label{eq:sscc}
s(i) = s(\tau(i)), \quad c_i = c_{\tau(i)}
\end{equation}
which hold by \eqref{eq:sCond2} and \eqref{eq:ParameterSetC}. Relation \eqref{eq:sscc} and $\tau = \tau_0$ imply that $\tau_0 \circ \Psi^{-1}(\Qkm_i) = \Psi^{-1} \circ \tau_0(\Qkm_i)$. Hence Equation \eqref{eq: tauRankOneQkm} implies that $\Qkm_{\widet{w_0}}^{[t]} = \Qkm_{\tau_0(i)}$ also in this case. 
\end{proof}

\begin{lemma}
Let $\widet{w_0} = \widet{s_{i_1}} \dotsm \widet{s_{i_t}}$ be a reduced expression for the longest word in $\widet{W}$. Then $\Qkm_{\widet{w_0}}^{[i]} \in \widehat{U^+[\widet{s_{k}}\widet{w_0}]}$ for $i = 1, \dotsc , t-1$ and $k = \tau_0(i_t)$.
\end{lemma}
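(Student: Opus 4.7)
The plan is to establish two successive inclusions: first that $\Qkm_{\widet{w_0}}^{[i]} \in \widehat{U^+[\widet{s_{i_1}} \dotsm \widet{s_{i_i}}]}$ for each $i \leq t-1$, and then that $U^+[\widet{s_{i_1}} \dotsm \widet{s_{i_i}}] \subseteq U^+[\widet{s_k}\widet{w_0}]$. Combining these immediately yields the lemma.

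The first inclusion is a direct consequence of the construction of $\Qkm_{\widet{w_0}}^{[i]}$ together with standard properties of the Lusztig automorphisms. By \eqref{eq:Qkmi in U[si]} one has $\Qkm_{i_i} \in \widehat{\widet{U}^+[\widet{s_{i_i}}]} \subseteq \widehat{U^+[\widet{s_{i_i}}]}$. The automorphism $\Psi^{\pm 1}$ acts by a scalar on each weight space and hence preserves the subspaces $U^+[w]$ (after extension to $\field'$). Iterating Proposition \ref{prop: length comparison} shows that $\widet{s_{i_1}} \dotsm \widet{s_{i_i}}$ is reduced in $W$, so the PBW description of $U^+[w]$ gives $\widet{T_{i_1}} \dotsm \widet{T_{i_{i-1}}}(U^+[\widet{s_{i_i}}]) \subseteq U^+[\widet{s_{i_1}} \dotsm \widet{s_{i_i}}]$.

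The second inclusion reduces to the combinatorial statement that $\widet{s_{i_1}} \dotsm \widet{s_{i_i}}$ is a prefix of a reduced expression for $\widet{s_k}\widet{w_0}$. I would prove this by identifying
\begin{equation*}
  \widet{s_k}\widet{w_0} = \widet{w_0}\widet{s_{i_t}} = \widet{s_{i_1}} \dotsm \widet{s_{i_{t-1}}},
\end{equation*}
the second equality being immediate from $\widet{s_{i_t}}^2 = 1$ and the right hand side having length $t-1$ and hence being reduced. The first equality rests on the identity
\begin{equation*}
  \widet{w_0}(\widet{\alpha_j}) = -\widet{\alpha_{\tau_0(j)}} \qquad \mbox{for all $j \in I \setminus X$,}
\end{equation*}
together with Proposition \ref{proposition:ReflGrp}, which yields the conjugation relation $\widet{w_0}\widet{s_{i_t}}\widet{w_0}^{-1} = \widet{s_{\tau_0(i_t)}} = \widet{s_k}$.

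The main step requiring care is the verification of the displayed identity. Using the factorisation $\widet{w_0} = w_0 w_X^{-1}$ and relation \eqref{eq:WThetaaction}, the calculation reduces to computing $\widet{w_0 w_X^{-1}(\alpha_j)}$. Since $w_X \in W_X$ a straightforward induction on length gives $w_X^{-1}(\alpha_j) - \alpha_j \in Q_X$; the $\tau_0$-invariance of $X$ then implies $w_0(Q_X) = Q_X \subseteq V_{+1}$. The tilde projection annihilates this error term, leaving $\widet{w_0}(\widet{\alpha_j}) = \widet{w_0(\alpha_j)} = -\widet{\alpha_{\tau_0(j)}}$ as required. I anticipate no further obstacles, as once the identification $\widet{s_k}\widet{w_0} = \widet{s_{i_1}} \dotsm \widet{s_{i_{t-1}}}$ is secured, the inclusion $U^+[\widet{s_{i_1}} \dotsm \widet{s_{i_i}}] \subseteq U^+[\widet{s_k}\widet{w_0}]$ follows from the standard fact that a reduced prefix yields a subalgebra of the corresponding $U^+[w]$.
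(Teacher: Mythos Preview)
Your proposal is correct and follows the same overall architecture as the paper: both arguments identify $\widet{s_k}\widet{w_0} = \widet{s_{i_1}}\dotsm\widet{s_{i_{t-1}}}$ and then conclude via $\widet{T_{i_1}}\dotsm\widet{T_{i_{j-1}}}(U^+[\widet{s_{i_j}}]) \subseteq U^+[\widet{s_k}\widet{w_0}]$ together with \eqref{eq:Qkmi in U[si]}. The only difference is in how the conjugation relation $\widet{w_0}\widet{s_{i_t}}\widet{w_0}^{-1}=\widet{s_k}$ is obtained. The paper works directly inside $W$: it writes $\widet{s_k}\widet{w_0} = \widet{s_k}w_0w_X = w_0\widet{s_{\tau_0(k)}}w_X = w_0w_X\widet{s_{\tau_0(k)}} = \widet{w_0}\widet{s_{i_t}}$, using $w_0 s_j = s_{\tau_0(j)} w_0$ and the commutation of $w_X$ with $\widet{s_j}$. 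You instead pass to the reflection representation on $V_{-1}$, compute $\widet{w_0}(\widet{\alpha_j})=-\widet{\alpha_{\tau_0(j)}}$, and invoke Proposition~\ref{proposition:ReflGrp}. Both routes are short and self-contained; the paper's is slightly more economical since it avoids the detour through restricted roots and the faithfulness statement, while yours has the conceptual merit of making the role of $\tau_0$ on $\widet{W}$ visible at the level of $\Sigma$.
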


\begin{proof}
We have 
\begin{equation*}
\widet{s_k}\widet{w_0} = \widet{s_k}w_0w_X = w_0\widet{s_{\tau_0(k)}}w_X = w_0w_X\widet{s_{\tau_0(k)}} = \widet{w_0}\widet{s_{i_t}} = \widet{s_{i_1}} \dotsm \widet{s_{i_{t-1}}}.
\end{equation*}
By definition of $U^+[w]$ for each $w \in W$ and Proposition \ref{prop: length comparison} we have
\begin{equation*}
\widet{T_{i_1}} \dotsm \widet{T_{i_{j-1}}}( U^+[\widet{s_{i_j}}] ) \subseteq U^+[\widet{s_k}\widet{w_0}]
\end{equation*}
for $j = 1, \dotsc, t-1$. 
Now the claim of the lemma follows from Equation \eqref{eq:Qkmi in U[si]},  Proposition \ref{prop:Omega1} and the fact that
\begin{equation*}
\Qkm_{\widet{w_0}}^{[j]} = \Psi \circ \widet{T_{i_1}} \dotsm \widet{T_{i_{j-1}}} \circ \Psi^{-1} ( \Qkm_{i_j})
\end{equation*}
for $j = 1, \dotsc , t-1$.
\end{proof}

With the above preparations we are ready to prove the main result of the paper.

\begin{theorem} \label{Thm:LongestWordPartialQkm}
Suppose that $\satake$ is a Satake diagram such that all subdiagrams $\subsatake$ of rank two satisfy Conjecture \ref{conj}.
Then $\Qkm_{\widet{w_0}}$ coincides with the quasi $K$-matrix $\Qkm$.
\end{theorem}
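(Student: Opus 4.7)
The plan is to verify that $\Qkm_{\widet{w_0}}$ satisfies the defining recursion \eqref{eq:irSyst2} together with the normalization $\Qkm_0 = 1$ and then appeal to the uniqueness statement \cite[Proposition 6.1]{a-BK15}. The normalization is immediate since every rank-one factor $\Qkm_{i_k}$ has constant term $1$. Since the hypothesis of the theorem is precisely the hypothesis of Theorem \ref{Thm:HigherRankPartialQkm}, the element $\Qkm_{\widet{w_0}}$ is well defined, and any reduced expression may be used to analyze it; this flexibility is the central tool below.

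For $j \in X$, where $c_j = s_j = 0$, one has to show $\ir{j}{\Qkm_{\widet{w_0}}} = 0$. Each factor $\Qkm^{[k]}$ lies in $\reallywidehat{\widet{U}^+[\widet{w_0}]}$; using $\widet{w_0} = w_X w_0$, this sits inside $\reallywidehat{U^+[w_X w_0]} = \bigcap_{j \in X} \reallywidehat{U^+[s_j w_0]}$ by \cite[Theorem 7.3]{a-HS}, and each $U^+[s_j w_0]$ coincides with $\ker {}_j r|_{U^+}$ by \cite[8.26(4)]{b-Jantzen96}. The skew-derivation formula \eqref{eq:ir} shows that $\ker {}_j r$ is closed under multiplication, hence $\ir{j}{\Qkm_{\widet{w_0}}} = 0$.

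For $i \in I \setminus X$ the plan is to pick a reduced expression of $\widet{w_0}$ with $i_t = \tau_0(i)$; such an expression exists because $\widet{s_{\tau_0(i)}}$ is necessarily a right descent of the longest element. Proposition \ref{prop:LongestWordPartialQkmPart} then gives $\Qkm^{[t]} = \Qkm_{\tau_0(\tau_0(i))} = \Qkm_i$, so that $\Qkm_{\widet{w_0}} = \Qkm_i \cdot Y$ with $Y = \Qkm^{[t-1]} \cdots \Qkm^{[1]}$. Applying \eqref{eq:ir} and combining with the rank-one recursion $\ir{i}{\Qkm_i} = -\q\, q^{-(\Theta(\alpha_i),\alpha_i)} c_i X_i \Qkm_i$ will then yield the required identity $\ir{i}{\Qkm_{\widet{w_0}}} = -\q\, q^{-(\Theta(\alpha_i),\alpha_i)} c_i X_i \Qkm_{\widet{w_0}}$, provided the second summand in the skew-derivation expansion vanishes, i.e.\ $\ir{i}{Y} = 0$.

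The main obstacle is this vanishing. The preceding lemma supplies $Y \in \reallywidehat{U^+[\widet{s_i}\widet{w_0}]}$, so the task reduces to the subalgebra inclusion $U^+[\widet{s_i}\widet{w_0}] \subseteq U^+[s_i w_0] = \ker {}_i r|_{U^+}$. By the PBW description of $U^+[w]$, this inclusion is equivalent to $\widet{s_i}\widet{w_0} \leq s_i w_0$ in left weak Bruhat order, i.e.\ to the existence of a reduced expression of $s_i w_0$ beginning with a reduced expression of $\widet{s_i}\widet{w_0}$. A short length calculation confirms this: using $\widet{w_0} = w_X w_0$ and $\widet{s_i} = w_{\{i,\tau(i)\}\cup X} w_X^{-1}$, one computes $(s_i w_0)(\widet{s_i}\widet{w_0})^{-1} = s_i w_{\{i,\tau(i)\}\cup X}$, whose length is $l(\widet{s_i}) + l(w_X) - 1 = l(s_i w_0) - l(\widet{s_i}\widet{w_0})$. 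This Weyl-group-combinatorial step is the essential input beyond the preparatory lemmas; once it is in place, the recursion \eqref{eq:irSyst2} is verified for all $i \in I$ and uniqueness of the quasi $K$-matrix completes the argument.
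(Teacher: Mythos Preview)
Your proof is correct and follows essentially the same strategy as the paper: use Theorem~\ref{Thm:HigherRankPartialQkm} to choose a convenient reduced expression ending in $\widet{s_{\tau_0(i)}}$, invoke Proposition~\ref{prop:LongestWordPartialQkmPart} to identify the leftmost factor as $\Qkm_i$, use the preceding lemma to place the remaining product $Y$ in $\widehat{U^+[\widet{s_i}\widet{w_0}]}$, and conclude via the rank-one recursion and the skew-derivation rule. Your explicit length calculation verifying $\widet{s_i}\widet{w_0}\le_L s_i w_0$ (hence $U^+[\widet{s_i}\widet{w_0}]\subseteq\ker{}_ir$) actually fills in a step that the paper's proof leaves implicit when it cites \cite[8.26(4)]{b-Jantzen96} directly; the paper's usage is justified by the simpler observation that $(\widet{s_i}\widet{w_0})^{-1}(\alpha_i)=\alpha_{\tau_0\tau_{0,i}(i)}>0$, but your weak-order argument is equally valid.
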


\begin{proof}
It suffices to show that 
\begin{equation} \label{eq:irLongPartial}
	\ir{i}{\Qkm_{\widet{w_0}}}	=	(q-q^{-1})q^{-(\Theta(\alpha_i), \alpha_i)}c_is(\tau(i))T_{w_X}(E_{\tau(i)})\Qkm_{\widet{w_0}} 
\end{equation}
for all $i \in I \setminus X$.
By Theorem \ref{Thm:HigherRankPartialQkm}, we can choose any reduced expression $\widet{w_0} = \widet{s_{i_1}} \dotsm \widet{s_{i_t}}$ of the longest element of $\widet{W}$.
Proposition \ref{prop:LongestWordPartialQkmPart} implies that
\begin{equation*}
	\Qkm_{\widet{w_0}}	=	\Qkm_{\tau_0(i_t)} \Qkm^{[t-1]} \dotsm \Qkm^{[2]}\Qkm^{[1]}.
\end{equation*} 
Suppose $\tau_0(i_t) \in I$ is a representative of the $\tau$-orbit $\{k, \tau(k)\}$ for some $k \in I \setminus X$.

By the previous Lemma we have $\Qkm^{[i]} \in \widehat{U^{+}[\widet{s_k}\widet{w_0}]}$ for $i = 1, \dotsc, t-1$.
By \cite[8.26, (4)]{b-Jantzen96} this implies that $\ir{k}{\Qkm^{[i]}} = 0$ for $i = 1, \dotsc , t-1$.
By Equation \ref{eq:irSyst2}, we have
\begin{equation*}
	\ir{k}{\Qkm_{\tau_0(i_t)}} = (q-q^{-1})q^{-(\Theta(\alpha_k),\alpha_k)}c_ks(\tau(k))T_{w_X}(E_{\tau(k)})\Qkm_{\tau_0(i_t)}
\end{equation*}
and similarly an expression for $\ir{\tau(k)}{\Qkm_{\tau_0(i_t)}}$.
Equation \eqref{eq:irLongPartial} for $k, \tau(k)$ follows from the above and the skew derivation property \eqref{eq:ir}.
Since we can arbitrarily choose the reduced expression for $\widet{w_0}$, the result follows.
\end{proof}

Combining Theorems \ref{Thm:RankTwoPartialQkm} and \ref{Thm:LongestWordPartialQkm} we obtain the following result. 

\begin{corollary} \label{cor:Qkm}
Let $\lie{g}$ be of type $A$ or $X = \emptyset$. Then the quasi $K$-matrix $\Qkm$ is given by $\Qkm = \Qkm_{\widet{w_0}}$ for any reduced expression of the longest word $\widet{w_0} \in \widet{W}$. 
\end{corollary}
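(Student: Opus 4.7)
The proof is essentially the formal combination of Theorem \ref{Thm:RankTwoPartialQkm} and Theorem \ref{Thm:LongestWordPartialQkm}, so the plan is to assemble these two results. Theorem \ref{Thm:LongestWordPartialQkm} gives the desired identity $\Qkm = \Qkm_{\widet{w_0}}$ as soon as every rank two Satake subdiagram $\subsatake$ of $\satake$ satisfies Conjecture \ref{conj}; and Theorem \ref{Thm:RankTwoPartialQkm} confirms Conjecture \ref{conj} whenever the underlying Lie algebra is of type $A$ or the black-dot set is empty. So the only thing to verify is that this hypothesis is inherited by any rank two subdiagram $\subsatake$.

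The first step is therefore an elementary check on Satake subdiagrams. If $\lie{g}$ is of type $A$, then for any subset $J \subseteq I$ the Lie subalgebra $\lie{g}_J$ is a direct sum of algebras of type $A$, so the rank two subdiagram $\subsatake$ again falls under the first clause of Theorem \ref{Thm:RankTwoPartialQkm}. If instead $X = \emptyset$, then $X \cap J = \emptyset$ for every $J \subseteq I$, which places $\subsatake$ under the second clause of Theorem \ref{Thm:RankTwoPartialQkm}. In either case Conjecture \ref{conj} holds for every rank two subdiagram of $\satake$.

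Once this inheritance is in place, Theorem \ref{Thm:HigherRankPartialQkm} ensures that $\Qkm_{\widet{w_0}}$ is well-defined independently of the chosen reduced expression for $\widet{w_0} \in \widet{W}$, and Theorem \ref{Thm:LongestWordPartialQkm} identifies this element with the quasi $K$-matrix $\Qkm$. This yields the statement of the corollary. There is no genuine obstacle here: all the substantive work has been carried out in the rank two computations of Appendix \ref{App:RankTwo} (feeding Theorem \ref{Thm:RankTwoPartialQkm}) and in the braid-relation analysis underlying Theorems \ref{Thm:HigherRankPartialQkm} and \ref{Thm:LongestWordPartialQkm}; the corollary is simply their juxtaposition.
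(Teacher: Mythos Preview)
Your proposal is correct and matches the paper's approach exactly: the paper states the corollary as the immediate consequence of combining Theorems \ref{Thm:RankTwoPartialQkm} and \ref{Thm:LongestWordPartialQkm}, and you have simply spelled out the (straightforward) observation that the hypothesis ``type $A$ or $X=\emptyset$'' is inherited by every rank two Satake subdiagram.
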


\begin{conjecture}\label{conj:anyType}
 The statement of Corollary \ref{cor:Qkm} holds for any Satake diagram of finite type.
\end{conjecture}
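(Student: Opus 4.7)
The plan is to follow the three-step strategy that already succeeds under the hypotheses of Corollary \ref{cor:Qkm}, and to fill in exactly the two ingredients that are currently missing in the remaining cases. Concretely, Theorem \ref{Thm:LongestWordPartialQkm} reduces Conjecture \ref{conj:anyType} to Conjecture \ref{conj}, and Theorem \ref{Thm:RankTwoPartialQkm} establishes Conjecture \ref{conj} whenever explicit closed formulas for the rank one quasi $K$-matrices are available. Hence the task is to remove the restriction ``$\lie{g}$ of type $A_n$ or $X=\emptyset$'' which appears only through the rank one input.

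The first step is therefore to produce explicit closed formulas for the rank one quasi $K$-matrices $\Qkm$ for the four remaining families in Table \ref{Table:RankOne}, namely $BII$ ($n\ge 2$), $CII$ ($n\ge 3$), $DII$ ($n\ge 4$) and $FII$. Following the pattern of Lemmas \ref{lem:QkmRankOneAI}--\ref{lem:QkmRankOneAIV}, for each such diagram the element $X_i=-s(\tau(i))T_{w_X}(E_{\tau(i)})$ should be computed in terms of iterated Lusztig automorphisms, and then one should make an Ansatz of the form
\begin{equation*}
  \Qkm=\sum_{n\ge 0}a_n(q)\,c_i^n\, Y_i^n
\end{equation*}
(or a product of two such factors when $\tau(i)\neq i$) where $Y_i$ is an appropriate $q$-commutator in the Chevalley generators built from $X_i$ and its bar-conjugate. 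The recursion \eqref{eq:irSyst2} together with \eqref{eq:jrX=0} then determines the coefficients $a_n(q)$, which one expects to be of the form $\{n\}_i!^{-1}$ or $\{2n\}_i!!^{-1}$ up to powers of $q$, in analogy with the type $A$ list. Verifying the Ansatz reduces to a finite $q$-commutator computation inside $U^+[\widet{s_i}]$; the Jantzen skew-derivation identities used in the type $A$ cases carry over verbatim.

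The second step is to verify Conjecture \ref{conj} in rank two for every remaining Satake diagram whose rank two subdiagram falls outside the cases treated in Appendix \ref{App:RankTwo}. For each such two-orbit diagram the restricted root system $\Sigma$ is one of $A_1\times A_1$, $A_2$, $B_2$, $G_2$, and only the longest element $\widet{w_0}\in\widet{W}$ admits two reduced expressions. Following the strategy of Appendix \ref{App:RankTwo}, one writes down $\Qkm_{\widet{w_0}}$ for both reduced expressions using the formula \eqref{eq:PartialQkmPart} and the explicit rank one data obtained in step one, and checks that in each case the system \eqref{eq:irSyst2} is satisfied for every $i\in I\setminus X$. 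The conjugation by $\Psi$ in \eqref{eq:Psi} handles the rescaling of the parameters $c_i$, exactly as in the type $A$ rank two calculation. Once Conjecture \ref{conj} is established in all rank two cases, Theorems \ref{Thm:HigherRankPartialQkm} and \ref{Thm:LongestWordPartialQkm} apply verbatim and yield $\Qkm=\Qkm_{\widet{w_0}}$ for every Satake diagram of finite type, independent of the chosen reduced expression.

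The main obstacle is unquestionably the first step. When $X\neq\emptyset$ the element $T_{w_X}(E_{\tau(i)})$ is a higher $q$-commutator whose $q$-commutation relations with its bar conjugate can be intricate, and it is not a priori clear that a clean product formula of the type given in Lemmas \ref{lem:QkmRankOneAI}--\ref{lem:QkmRankOneAIV} exists for all of $BII$, $CII$, $DII$, $FII$. A secondary obstacle is that in the $B$, $C$, $F$ cases the restricted root system may be of type $(BC)_1$, so some rescaling of the exponents of $q$ (as already appeared in the proof of Proposition \ref{prop:Omega1}) will be needed to make the rank one generating function compatible with the definition \eqref{eq:Psi} of $\Psi$. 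Once closed rank one formulas are secured, the rank two verification, although lengthy, is a direct extension of Appendix \ref{App:RankTwo} and should encounter no fundamentally new difficulty.
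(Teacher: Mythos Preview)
The statement you are attempting to prove is a \emph{conjecture} in the paper, not a theorem. The paper does not contain a proof of Conjecture \ref{conj:anyType}; it is explicitly left open, and the authors state in Section \ref{sec:rank1Qkms} that they ``plan to return to the remaining rank one cases in the future.'' Hence there is no proof in the paper to compare your proposal against.

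What you have written is not a proof but an accurate description of the programme the paper itself advocates: obtain explicit rank one formulas for the missing diagrams $BII$, $CII$, $DII$, $FII$, then verify Conjecture \ref{conj} in the remaining rank two cases, and finally invoke Theorems \ref{Thm:HigherRankPartialQkm} and \ref{Thm:LongestWordPartialQkm}. You correctly identify that the genuine obstacle is the first step, and you are candid that ``it is not a priori clear that a clean product formula \ldots\ exists.'' That candour is precisely the point: your proposal does not supply those formulas, nor does it carry out the rank two verifications that depend on them. In particular, the Ansatz $\Qkm=\sum_{n\ge 0}a_n(q)c_i^n Y_i^n$ is speculative; already in type $AII_3$ the correct building block is the $q$-commutator $[E_2,T_{13}(E_2)]_{q^{-2}}$ rather than a single Lusztig root vector, and for the remaining non-type-$A$ rank one diagrams the structure of $\Qkm_i$ may be more involved still. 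Until those computations are actually performed and shown to close up, the argument remains a strategy rather than a proof, which is exactly why the paper records this as a conjecture.
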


\begin{remark} \label{rem:Qkm-int-higherrank}
  We continue the discussion of the integrality of the quasi $K$-matrix $\Qkm$ from Remark \ref{rem:Qkm-int-rank1} under the assumption that $c_is(\tau(i))\in \pm q^\Z$ for all $i\in I\setminus X$. In this case $\Qkm^{[k]}_{\widet{w}} \in {}_{\sA}\widehat{U^+}$ for $k=1, \dots, t$ if $\widet{w} \in \widet{W}$ has a reduced expression $\widet{w}=\widet{s_{i_1}} \widet{s_{i_2}} \dots \widet{s_{i_t}}$. Indeed, the discussion in the proof of Proposition \ref{prop:Omega1} shows that  $\Qkm^{[k]}_{\widet{w}}$ differs from $\widet{T_{i_1}} \dots \widet{T_{i_1}}(\Qkm_{i_k})$ by a factor in $\pm q^\Z$. Hence we obtain $\Qkm_{\widet{w}}\in {}_{\sA}\widehat{U^+}$ for all $\widet{w} \in \widet{W}$. By Corollary \ref{cor:Qkm}, choosing $\widet{w}=\widet{w_0}$, we obtain $\Qkm\in {}_{\sA}\widehat{U^+}$ whenever $\lie{g}$ is of type $A$ or $X=\emptyset$. In these cases we have hence reproduced \cite[Theorem 5.3]{a-BW16} for $\bs=\bo$ without the use of canonical bases. The case of general Satake diagrams hinges on Conjecture \ref{conj:anyType} and the integrality in rank one from \cite[Appendix A]{a-BW16}.
\end{remark}  

\subsection{Quasi $K$-matrices for general parameters}\label{sec:sGeneral}
We now give a description of the quasi $K$-matrix $\Qkm$ for general parameters $\B{s}\in \mathcal{S}$. By \cite[Theorem 7.1]{a-Letzter03}, \cite[Theorem 7.1]{a-Kolb14} there exists an algebra isomorphism $\varphi_{\B{s}}:\Bco\rightarrow \Bcs$ given by
\begin{equation*}
  \varphi_{\B{s}}(B_i^{\bc,\bo})=B_i^{\bc,\bs}, \quad \varphi_{\B{s}}(b)=b \qquad \mbox{for all $i\in I \setminus X$, $b\in \cM_X U^0_\Theta$.}
\end{equation*}
This algebra isomorphism allows us to define a one dimensional representation
$\chi_\bs:\Bco\rightarrow \field(q)$ by $\chi_\bs=\varepsilon\circ \varphi_\bs$.
By definition we have
\begin{align*}
  \chi_{\bs}(B_i^{\bc,\bo})=s_i \quad \mbox{for all $i\in I \setminus X$,} \qquad  \chi_{\bs}|_{\cM_X U^0_\Theta}=\varepsilon|_{\cM_X U^0_\Theta}.
\end{align*}
By \cite[(5.5)]{a-Kolb14} we have
\begin{equation}\label{eq:kowBi}
  \Delta(B_i) - B_i\otimes K_i^{-1} \in \cM_X U^0_\Theta\otimes \uqg
\end{equation}
which implies that
\begin{equation}\label{eq:varphis}
  \varphi_{\bs}=(\chi_\bs\otimes \mathrm{id})\circ \Delta
\end{equation}
on $\Bco$. For later use we observe the following compatibility with the bar involution.

\begin{lemma}\label{lem:bar-compatible}
  For all $b\in \Bco$ we have
  \begin{equation}\label{eq:bar-compatible}
     (\chi_\bs \otimes \mathrm{id})\circ (\Bbarinvo\otimes \barinv)\circ \Delta(b) = \overline{\varphi_\bs(b)}^{U}.
  \end{equation}  
\end{lemma}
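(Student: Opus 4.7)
The plan is to reduce equation \eqref{eq:bar-compatible} to the scalar intertwining identity
\[
\chi_{\bs}\circ \Bbarinvo \;=\; \sigma\circ \chi_{\bs}
\]
of maps $\Bco\to \field(q)$, where $\sigma:\field(q)\to \field(q)$ denotes the $\field$-algebra automorphism sending $q$ to $q^{-1}$.

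For the reduction, I will expand $\Delta(b)=\sum b_{(1)}\otimes b_{(2)}$ in Sweedler notation. Using \eqref{eq:varphis} together with the fact that $\barinv$ is a $\sigma$-semilinear ring automorphism of $\uqg$, the right hand side of \eqref{eq:bar-compatible} becomes
\[
\overline{\varphi_{\bs}(b)}^{U} \;=\; \overline{\sum \chi_{\bs}(b_{(1)})\, b_{(2)}}^{U} \;=\; \sum \sigma\bigl(\chi_{\bs}(b_{(1)})\bigr)\, \overline{b_{(2)}}^{U},
\]
while the left hand side is directly equal to $\sum \chi_{\bs}\bigl(\overline{b_{(1)}}^{B_{\bc,\bo}}\bigr)\, \overline{b_{(2)}}^{U}$. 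Hence \eqref{eq:bar-compatible} will follow at once from the scalar identity above.

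To establish the scalar identity, I observe that both sides are $\sigma$-semilinear ring homomorphisms $\Bco\to\field(q)$: indeed, $\chi_{\bs}$ is a $\field(q)$-algebra homomorphism, and both $\Bbarinvo$ and $\sigma$ are $\sigma$-semilinear ring automorphisms. It therefore suffices to verify the identity on the algebra generators of $\Bco$. On a generator $B_i^{\bc,\bo}$ with $i\in I\setminus X$, property \eqref{eq:BbarInvProp} gives $\chi_{\bs}(\overline{B_i^{\bc,\bo}}^{B_{\bc,\bo}}) = \chi_{\bs}(B_i^{\bc,\bo}) = s_i$, while $\sigma(\chi_{\bs}(B_i^{\bc,\bo})) = \overline{s_i}^{U} = s_i$ by the parameter constraint \eqref{eq:siCond}. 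On an element $b\in \cM_X U^0_\Theta$, property \eqref{eq:BbarInvProp} and the definition $\chi_\bs|_{\cM_X U^0_\Theta} = \varepsilon|_{\cM_X U^0_\Theta}$ reduce the desired equality to the auxiliary identity $\varepsilon(\overline{x}^{U})=\sigma(\varepsilon(x))$ on $\uqg$, which I will verify directly on the Chevalley generators $E_i,F_i,K_i^{\pm 1}$ and on the toral generators $K_\lambda$ with $\Theta(\lambda)=\lambda$.

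The only point requiring care is tracking the $\sigma$-semilinearity of the two bar involutions against the $\field(q)$-linear coproduct and counit throughout; once this bookkeeping is set up, no substantial obstacle remains.
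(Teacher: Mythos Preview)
Your proof is correct and takes a genuinely different route from the paper's. The paper verifies \eqref{eq:bar-compatible} directly on generators of $\Bco$, treating both sides as $\field$-algebra homomorphisms $\Bco\to\uqg$; this forces a three-way case split (elements of $\cM_X U^0_\Theta$, generators $B_i^{\bc,\bo}$ with $s_i=0$, and generators with $s_i\neq 0$), and in the middle case the argument relies on the coproduct membership property \eqref{eq:kowBi} to push $\chi_\bs$ past $\Bbarinvo$. Your approach instead factors out the second tensor leg via Sweedler notation and reduces everything to the scalar intertwining identity $\chi_\bs\circ\Bbarinvo=\sigma\circ\chi_\bs$ on $\Bco$, which is then checked uniformly on generators without any case distinction on $s_i$ and without appealing to \eqref{eq:kowBi}. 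This is cleaner and isolates the actual content of the lemma: the only datum genuinely needed is $\overline{s_i}^U=s_i$ together with the compatibility of $\varepsilon$ with $\barinv$. The paper's approach, on the other hand, stays closer to the later application and makes the computation of both sides on each $B_i^{\bc,\bs}$ fully explicit.
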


\begin{proof}
  As $(\chi_\bs \otimes \mathrm{id})\circ (\Bbarinvo\otimes \barinv)\circ \Delta$ and $\barinv\circ \varphi_\bs$ are $\field$-algebra homomorphisms, it suffices to check Equation \eqref{eq:bar-compatible} on the generators $B_i^{\bc,\bo}$ for $i\in I\setminus X$ and on $\cM_X U^0_\Theta$. If $b\in \cM_X U^0_\Theta$ then both sides of \eqref{eq:bar-compatible} coincide with $\overline{b}^U$. If $b=B_i^{\bc, \bo}$ for some $i\notin \{j\in I_{ns}\,|\, a_{jk}\in -2\N_0 \mbox{ for all }k \in I_{ns}\setminus\{j\}\}$ then $s_i=0$ by the definition of $\cS$ in \eqref{eq:ParameterSetS} and hence $B_i^{\bc,\bo}=B_i^{\bc,\bs}$. Using the membership property \eqref{eq:kowBi} we get
  \begin{equation*}
((\chi_\bs\circ\Bbarinvo)\otimes \barinv)\circ \Delta(B_i^{\bc,\bo})=((\barinv\circ \varepsilon)\otimes \barinv)\circ\Delta(B_i^{\bc,\bo})=\overline{B_i^{\bc,\bo}}^{U}=\overline{\varphi_\bs(b)}^{U}
  \end{equation*}
  which proves \eqref{eq:bar-compatible} in this case. Finally, if $i\in \{j\in I_{ns}\,|\, a_{jk}\in -2\N_0 \mbox{ for all }k \in I_{ns}\setminus\{j\}\}$, then the definition \eqref{eq:Ins} of $I_{ns}$ implies that
  \begin{align*}
    B_i^{\bc,\bs}=F_i-c_iE_i K_i^{-1} + s_i K_i^{-1}.
  \end{align*}
Hence, using $s_i=\overline{s_i}^U$ from \eqref{eq:siCond}, we get
  \begin{align*}
    ((\chi_\bs\circ\Bbarinvo)\otimes \barinv)\circ \Delta(B_i^{\bc,\bo})&=  ((\chi_\bs\circ\Bbarinvo)\otimes \barinv)(B_i^{\bc,\bo} \otimes K_i^{-1} + 1 \otimes B_i^{\bc,\bo})\\
    &=s_i K_i + \overline{B_i^{\bc,\bo}}^U\\
    &=\overline{B_i^{\bc,\bs}}^U\\
    &=\overline{\varphi_\bs(B_i^{\bc,\bo})}^U
  \end{align*}  
  which completes the proof of the lemma.
\end{proof}
As in \cite[3.2]{a-BK15} we consider the algebra
\begin{align*}
  \mathscr{U}^{(2)}_0=\mathrm{End}(\mathcal{F}or\circ \otimes:\mathcal{O}_{int}\times \mathcal{O}_{int} \rightarrow \mathcal{V}ect)
\end{align*}  
and observe that $\prod_{\mu\in Q^+} U^-_\mu\otimes U^+_\mu$ is a subalgebra of $\mathscr{U}^{(2)}_0$. Let $R\in \prod_{\mu\in Q^+} U^-_\mu\otimes U^+_\mu$ be the quasi $R$-matrix for $\uqg$, see \cite[Theorem 4.1.2]{b-Lusztig94}. Following \cite[3.1]{a-BaoWang13} we define an element
\begin{align}\label{eq:Rtheta}
  R^\theta = \Delta(\Qkm)\cdot R \cdot (\Qkm^{-1} \otimes 1) \in \mathscr{U}_{0}^{(2)},
\end{align}  
see also \cite[Section 3.3]{a-Kolb17p}. In \cite{a-BaoWang13} the element $R^\theta$ is called the quasi $R$-matrix for $\Bcs$. By \cite[Proposition 3.2]{a-BaoWang13} it satisfies the following intertwiner property
  \begin{align}\label{eq:Rtheta-intertwine}
    \Delta(\overline{b}^B)\cdot R^\theta = R^\theta \cdot (\Bbarinv\otimes \barinv)\circ \Delta(b) \qquad \mbox{for all $b\in B = \Bcs$}
  \end{align}
  in $\sU^{(2)}_0$. Moreover, by \cite[Proposition 3.5]{a-BaoWang13}, \cite[Proposition 3.6]{a-Kolb17p} we can write $R^\theta$ as an infinite sum
  \begin{align}\label{eq:Rtheta-sum}
    R^\theta=\sum_{\mu\in Q^+}R^\theta_\mu \qquad \mbox{ with $R^\theta_\mu\in \Bcs\otimes U^+_\mu$.}
  \end{align}
  Similarly to the notation $\Qkm_{\bc,\bs}$ introduced at the end of Section \ref{sec:QSPs}, we write $R^\theta_{\bc,\bs}$ if we need to specify the dependence on the parameters. Observe that once we have an explicit formula for $\Qkm_{\bc,\bo}$, Equation \eqref{eq:Rtheta} provides us with an explicit formula for $R^\theta_{\bc,\bo}$. This in turn provides a formula for the quasi $K$-matrix $\Qkm_{\bc,\bs}$ for general parameters $\bs\in \mathcal{S}$. Indeed, by Equation \eqref{eq:Rtheta-sum} we can apply the character $\chi_\bs$ to the first tensor factor of $R^\theta_{\bc,\bo}$ to obtain an element
  $\Qkm'=(\chi_\bs\otimes \mathrm{id})(R^{\theta}_{\bc,\bo})$ which can be written as
  \begin{align*}
    \Qkm'=\sum_{\mu\in Q^+}\Qkm'_\mu \qquad \mbox{ with $\Qkm'_\mu\in U_{\mu}^+$.}
  \end{align*}
Moreover, Equation \eqref{eq:Rtheta-sum} implies that $\Qkm'_0=1$. By the following proposition the element $\Qkm'\in \sU$ is the quasi $K$-matrix for $\Bcs$. 
\begin{proposition}\label{prop:os}
  For any $\bc\in \mathcal{C}$, $\bs\in \cS$ we have $\Qkm_{\bc,\bs}=(\chi_{\bs} \otimes \mathrm{id})(R^{\theta}_{\bc,\bo})$.
\end{proposition}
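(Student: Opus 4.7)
The plan is to appeal to the uniqueness characterisation of the quasi $K$-matrix: by \cite[Theorem 6.10]{a-BK15} the element $\Qkm_{\bc,\bs}$ is the unique element of $\prod_{\mu\in Q^+} U^+_\mu$ with constant term $1$ which satisfies $\overline{b}^B \Qkm_{\bc,\bs} = \Qkm_{\bc,\bs} \overline{b}^U$ for every $b\in \Bcs$. Writing $\Qkm' := (\chi_\bs\otimes \mathrm{id})(R^\theta_{\bc,\bo})$, the decomposition \eqref{eq:Rtheta-sum} places $\Qkm'$ in $\prod_{\mu\in Q^+} U^+_\mu$, and the degree-zero component of $R^\theta_{\bc,\bo}$ is $1\otimes 1$ (since $\Qkm_{\bc,\bo}$ has constant term $1$ and $R$ has constant term $1\otimes 1$), so $\Qkm'_0 = \chi_\bs(1)=1$. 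Hence it suffices to verify the intertwiner equation for $\Qkm'$.

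Since $\varphi_\bs \colon \Bco \to \Bcs$ is a surjective algebra isomorphism, the identity only needs to be established for $b = \varphi_\bs(b')$ with $b' \in \Bco$. I would start from the intertwiner property \eqref{eq:Rtheta-intertwine} for $R^\theta_{\bc,\bo}$ applied to $b'$ and apply $\chi_\bs \otimes \mathrm{id}$ to both sides. Because $\chi_\bs$ is a character of $\Bco$, the map $\chi_\bs \otimes \mathrm{id}$ acting on the relevant completion of $\Bco \otimes \uqg$ is multiplicative in the sense that $(\chi_\bs\otimes \mathrm{id})\big((a_1\otimes x_1)(a_2\otimes x_2)\big) = \chi_\bs(a_1)\chi_\bs(a_2)\,x_1 x_2$, and it therefore converts the products in \eqref{eq:Rtheta-intertwine} into products in $\uqg$. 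Using \eqref{eq:varphis} the left-hand side becomes $\varphi_\bs(\overline{b'}^{B_{\bc,\bo}})\cdot \Qkm'$, whereas by Lemma \ref{lem:bar-compatible} the right-hand side becomes $\Qkm'\cdot \overline{\varphi_\bs(b')}^U$.

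To finish I would verify that $\varphi_\bs$ intertwines the two bar involutions, i.e.\ $\varphi_\bs(\overline{b'}^{B_{\bc,\bo}}) = \overline{\varphi_\bs(b')}^B$, by a direct check on generators of $\Bco$: on $\cM_X U^0_\Theta$ both sides reduce to $\overline{b'}^U$ since $\varphi_\bs$ is the identity there and the two bar involutions agree with $\barinv$ by \eqref{eq:BbarInvProp}, while on the generators $B_i^{\bc,\bo}$ for $i\in I\setminus X$ both sides equal $B_i^{\bc,\bs}$ by the defining property of $\varphi_\bs$ and \eqref{eq:BbarInvProp} applied inside $\Bcs$. Combining these steps yields $\overline{\varphi_\bs(b')}^B\,\Qkm' = \Qkm'\,\overline{\varphi_\bs(b')}^U$, and uniqueness gives $\Qkm' = \Qkm_{\bc,\bs}$. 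I expect the only genuine subtlety to be bookkeeping: one must confirm that the algebra-homomorphism property of $\chi_\bs\otimes\mathrm{id}$ is valid on the infinite sums appearing inside $\sU_0^{(2)}$, but this is guaranteed by the $U^+$-grading of $R^\theta_{\bc,\bo}$ in \eqref{eq:Rtheta-sum} together with the coideal property $\Delta(\Bco)\subseteq \Bco\otimes \uqg$.
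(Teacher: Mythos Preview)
Your proposal is correct and follows essentially the same route as the paper's proof: set $\Qkm'=(\chi_\bs\otimes\mathrm{id})(R^\theta_{\bc,\bo})$, apply $\chi_\bs\otimes\mathrm{id}$ to the intertwiner identity \eqref{eq:Rtheta-intertwine}, use \eqref{eq:varphis} on the left and Lemma~\ref{lem:bar-compatible} on the right, and conclude by uniqueness. The only minor difference is that you insert an extra step verifying that $\varphi_\bs$ intertwines the two bar involutions, whereas the paper bypasses this by directly specialising the resulting identity $\varphi_\bs(\overline{b'}^{B_{\bc,\bo}})\Qkm'=\Qkm'\,\overline{\varphi_\bs(b')}^U$ to $b'=B_i^{\bc,\bo}$ (where $\overline{B_i^{\bc,\bo}}^{B_{\bc,\bo}}=B_i^{\bc,\bo}$) and to $b'\in\cM_X U^0_\Theta$, which already yields the characterising relations for $\Qkm_{\bc,\bs}$.
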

\begin{proof}
  We keep the notation  $\Qkm'=(\chi_\bs\otimes \mathrm{id})(R^{\theta}_{\bc,\bo})$ from above. By Equation \eqref{eq:Rtheta-intertwine} we have
  \begin{align*}
    \Delta(\overline{b}^{B_{\bc,\bo}})\cdot R^\theta_{\bc,\bo}=R^\theta_{\bc,\bo}\cdot  (\Bbarinvo \otimes \barinv)\circ \Delta(b) \qquad \mbox{for all $b\in B_{\bc,\bo}$}.
  \end{align*}
  Applying $\chi_\bs\otimes \mathrm{id}$ to both sides of this relation, we obtain in view of Equation \eqref{eq:varphis} the relation
  \begin{align*}
     \varphi_\bs(\overline{b}^{B_{\bc,\bo}}) \cdot \Qkm' = \Qkm'  (\chi_\bs \otimes \mathrm{id})\circ (\Bbarinvo\otimes \barinv)\circ \Delta(b) \qquad \mbox{ for all $b\in B_{\bc,\bo}$.}
  \end{align*}
  By Lemma \ref{lem:bar-compatible} the above relation implies that
  \begin{align*}
    \varphi_\bs(\overline{b}^{B_{\bc,\bo}}) \Qkm'= \Qkm'\overline{\varphi_\bs(b)}^U \qquad \mbox{for all $b\in B_{\bc,\bo}$.}  
  \end{align*}
  This gives in particular $B_i^{\bc,\bs} \Qkm' = \Qkm' \overline{B_i^{\bc,\bs}}^U$ for all $i\in I$ and $b \Qkm'=\Qkm' b$ for all $b\in \cM_XU^\Theta_0$. This means that $\Qkm'$ satisfies the defining relation \eqref{eq:BbarIntertwiner} of $\Qkm_{\bc,\bs}$ and hence, in view of the normalisation $\Qkm'_0=1\otimes 1$ observed above, we get $\Qkm'=\Qkm_{\bc,\bs}$. 
\end{proof}  
\begin{remark}
  The existence of the quasi $K$-matrix $\Qkm_{\bc,\bs}$ was established in \cite[Theorem 6.10]{a-BK15} by fairly involved calculations. It was noted in \cite[Remark 6.9]{a-BK15} that these calculations simplify significantly if one restricts to the case $\bs=\bo$. Proposition \ref{prop:os} now shows that in the presence of \eqref{eq:Rtheta-sum} the existence of $\Qkm_{\bc,\bo}$ implies the existence of $\Qkm_{\bc,\bs}$ for any $\bs\in \mathcal{S}$ satisfying \eqref{eq:siCond}. Relation \eqref{eq:Rtheta-sum} was established in \cite{a-Kolb17p} for $\lie{g}$ of finite type.
\end{remark}


\appendix

\section{Rank two calculations} \label{App:RankTwo}

In rank two, there are two distinct reduced expressions for the longest word $\widet{w_0} \in \widet{W}$. All irreducible rank two Satake diagrams for simple $\lie{g}$ are shown in Table \ref{Table:RankTwo}. Using the explicit formulas from Section \ref{sec:rank1Qkms}, we confirm that the partial quasi $K$-matrices for the two reduced expressions of $\widet{w_0}$ coincide with the quasi $K$-matrix if the rank two Satake diagram is of type $A_n$ or $B_2$. 
We have also performed the calculation in type $G_2$. As this case involves significantly longer calculations, we do not include it here, but the details will appear in \cite{Dob18}. The calculations in this appendix and \cite{Dob18} prove Theorem \ref{Thm:RankTwoPartialQkm}.

\begin{table}[h!] \label{TableRankTwo}
\centering
\caption{Irreducible Satake diagrams of rank two for simple $\lie{g}$} \label{Table:RankTwo}

\newcolumntype{C}{ >{\centering\arraybackslash} m{1.5cm} }
\newcolumntype{D}{ >{\centering\arraybackslash} m{6.4cm} }
\newcolumntype{E}{ >{\centering\arraybackslash} m{4.3cm} }

\resizebox{\columnwidth}{!}{
\begin{tabular}[t]{| C |D || C | E |}
	\hline
	
	$AI_2$ & 
	\rb{
		\begin{tikzpicture} 
			[scale=0.7, white/.style={circle,draw=black,inner sep = 0mm, minimum size = 3mm},
			black/.style={circle,draw=black,fill=black, inner sep= 0mm, minimum size = 3mm}, every node/.style={transform shape}]
		
			\node[white] (first) [label = below:{\scriptsize $1$}] {};		
			\node[white] (third) [right= 0.8cm of first] [label = below:{\scriptsize $2$}] {}
				edge (first);
		\end{tikzpicture}	
	} & 
	
	$CII_4$ & 
	\rb{
		\begin{tikzpicture}
			[scale=0.7, white/.style={circle,draw=black,inner sep = 0mm, minimum size = 3mm},
			black/.style={circle,draw=black,fill=black, inner sep= 0mm, minimum size = 3mm}, every node/.style={transform shape}]
		
			\node[black] (first)  [label = below:{\scriptsize $1$}] {};		
			\node[white] (second) [right= of first] [label = below:{\scriptsize $2$}] {}			
				edge (first);
			\node[black] (third) [right = of second] [label = below:{\scriptsize $3$}] {}
				edge (second);
			\node[white] (fourth) [right = of third] [label = below:{\scriptsize $4$}] {}
				edge [double equal sign distance, ->-] (third);
		\end{tikzpicture}	
	} \\ \hline

	$AII_5$ & 
	\rb{
		\begin{tikzpicture}  
			[scale=0.7, white/.style={circle,draw=black,inner sep = 0mm, minimum size = 3mm},
			black/.style={circle,draw=black,fill=black, inner sep= 0mm, minimum size = 3mm}, every node/.style={transform shape}]
	
			\node[black] (1)	 [label = below:{\scriptsize $1$}]	{};
			\node[white] (first) [right = of 1] [label = below:{\scriptsize $2$}]  {}
				edge (1);
			\node[black] (second) [right=of first] [label = below:{\scriptsize $3$}] {}
				edge (first);
			\node[white] (last) [right= of second] [label = below:{\scriptsize $4$}]  {}
				edge (second);		
			\node[black] (fourth) [right=of last] [label = below:{\scriptsize $5$}] {}
				edge (last);
		\end{tikzpicture}	
	} & 
	
	$DI_n$, $n \geq 5$ & 
	\rb{
		\begin{tikzpicture}
			[scale=0.7, white/.style={circle,draw=black,inner sep = 0mm, minimum size = 3mm},
			black/.style={circle,draw=black,fill=black, inner sep= 0mm, minimum size = 3mm}, every node/.style={transform shape}]
		
			\node[white] (first) [label = below:{\scriptsize $1$}] {};		
			\node[white] (second) [right= of first] [label = below:{\scriptsize $2$}]  {}
				edge  (first);
			\node[black] (third) [right = of second] {}
				edge (second);
			\node[black] (fourth) [right = 1.5cm of third] {}
				edge [dashed] (third);
			\node[black] (fifth) [above right = 0.5cm of fourth] [label = above:{\scriptsize $n-1$}] {}
				edge (fourth);
			\node[black] (sixth) [below right = 0.5cm of fourth] [label = below:{\scriptsize $n$}] {}
				edge (fourth);	
		\end{tikzpicture}	
	} \\ \hline
	
	$AIII_3$ & 
	\rb{
		\begin{tikzpicture} 
			[scale=0.7, white/.style={circle,draw=black,inner sep = 0mm, minimum size = 3mm},
			black/.style={circle,draw=black,fill=black, inner sep= 0mm, minimum size = 3mm}, every node/.style={transform shape}]
		
			\node[white] (first) [label = below:{\scriptsize $1$}] {};		
			\node[white] (second) [right=of first] [label = below:{\scriptsize $2$}] {}
				edge (first);
			\node[white] (third) [right=of second] [label = below:{\scriptsize $3$}] {}
				edge (second)
				edge	 [latex'-latex' , shorten <=3pt, shorten >=3pt, bend right=30, densely dotted] node[auto,swap] {} (first); 
		\end{tikzpicture}	
	} & 
	
	$DIII_4$ & 
	\rb{
		\begin{tikzpicture}
			[scale=0.7, white/.style={circle,draw=black,inner sep = 0mm, minimum size = 3mm},
			black/.style={circle,draw=black,fill=black, inner sep= 0mm, minimum size = 3mm}, every node/.style={transform shape}]
		
			\node[black] (first) [label = below:{\scriptsize $1$}] {};		
			\node[white] (second) [right= of first] [label = below:{\scriptsize $2$}]  {}
				edge  (first);
			\node[black] (third) [above right = 0.5cm of second] [label = above:{\scriptsize $3$}] {}
				edge (second);
			\node[white] (fourth) [below right = 0.5cm of second] [label = below:{\scriptsize $4$}] {}
				edge (second);
		\end{tikzpicture}	
	} \\ \hline

	$AIII_{n}$, $n \geq 4$ & 
	\rb{
		\begin{tikzpicture} 
			[scale=0.7, white/.style={circle,draw=black,inner sep = 0mm, minimum size = 3mm},
			black/.style={circle,draw=black,fill=black, inner sep= 0mm, minimum size = 3mm}, every node/.style={transform shape}]
	
			\node[white] (1)	 [label = below:{\scriptsize $1$}]	{};
			\node[white] (first) [right = of 1] [label = below:{\scriptsize $2$}]  {}
				edge (1);
			\node[black] (second) [right=of first] {}
				edge (first);
			\node[black] (last) [right=1.5cm of second] {}
				edge [dashed] (second);		
			\node[white] (fourth) [right=of last]  {}
				edge (last);
			\node[white] (2) [right = of fourth] [label = below:{\scriptsize $\phantom{1} n \phantom{1}$}]{}
				edge(fourth) 
				edge	 [latex'-latex' , shorten <=3pt, shorten >=3pt, bend right=30, densely dotted] node[auto,swap] {} (1);
		\end{tikzpicture}	
	} & 
	
	$DIII_5$ & 
	\rb{
		\begin{tikzpicture}
			[scale=0.7, white/.style={circle,draw=black,inner sep = 0mm, minimum size = 3mm},
			black/.style={circle,draw=black,fill=black, inner sep= 0mm, minimum size = 3mm}, every node/.style={transform shape}]
		
			\node[black] (first) [label = below:{\scriptsize $1$}] {};		
			\node[white] (second) [right= of first] [label = below:{\scriptsize $2$}] {}
				edge (first);
			\node[black] (third) [right = of second][label = below:{\scriptsize $3$}] {}
				edge (second);
			\node[white] (fourth) [above right = 0.5cm of third] [label = above:{\scriptsize $4$}] {}
				edge (third);
			\node[white] (fifth) [below right = 0.5cm of third] [label = below:{\scriptsize $5$}] {}
				edge (third)
				edge	 [latex'-latex' , shorten <=3pt, shorten >=3pt, bend right=60, densely dotted] node[auto,swap] {} (fourth);	
		\end{tikzpicture}	
	} \\ \hline
	
	$BI_n$, $n \geq 3$ & 
	\rb{
		\begin{tikzpicture}
			[scale=0.7, white/.style={circle,draw=black,inner sep = 0mm, minimum size = 3mm},
			black/.style={circle,draw=black,fill=black, inner sep= 0mm, minimum size = 3mm}, every node/.style={transform shape}]
		
			\node[white] (first) [label = below:{\scriptsize $1$}] {};
			\node[white] (second) [right = of first] [label = below:{\scriptsize $2$}] {}
				edge  (first);		
			\node[black] (third) [right=of second] {}			
				edge (second);
			\node[black] (fourth) [right = 1.5cm of third] {}
				edge [dashed] (third);
			\node[black] (fifth) [right = of fourth] [label = below:{\scriptsize $\phantom{1} n \phantom{1}$}] {}
				edge [double equal sign distance, -<-] (fourth);
		\end{tikzpicture}	
	} & 
	
	$EIII$ & 
	\rb{
		\begin{tikzpicture}
			[scale=0.7, white/.style={circle,draw=black,inner sep = 0mm, minimum size = 3mm},
			black/.style={circle,draw=black,fill=black, inner sep= 0mm, minimum size = 3mm}, every node/.style={transform shape}]
		
			\node[white] (first) [label = below:{\scriptsize $1$}] {};		
			\node[black] (second) [right= of first] [label = below:{\scriptsize $3$}] {}
				edge (first);
			\node[black] (third) [right = of second] [label = below:{\scriptsize $4$}] {}
				edge (second);
			\node[black] (fourth) [right = of third][label = below:{\scriptsize $5$}]  {}
				edge (third);
			\node[white] (fifth) [above = of third] [label = above:{\scriptsize $2$}] {}
				edge (third);
			\node[white] (sixth) [right = of fourth] [label = below:{\scriptsize $6$}]  {}
				edge (fourth)
				edge	 [latex'-latex' , shorten <=3pt, shorten >=3pt, bend right=330, densely dotted] node[auto,swap] {} (first);
		\end{tikzpicture}	
	} \\ \hline

		$CI_2$ & 
	\rb{
		\begin{tikzpicture}
			[scale=0.7, white/.style={circle,draw=black,inner sep = 0mm, minimum size = 3mm},
			black/.style={circle,draw=black,fill=black, inner sep= 0mm, minimum size = 3mm}, every node/.style={transform shape}]
		
			\node[white] (first) [label = below:{\scriptsize $1$}] {};		
			\node[white] (second) [right=of first] [label = below:{\scriptsize $2$}] {}			
				edge [double equal sign distance, -<-] (first);
		\end{tikzpicture}	
	} &

	$EIV$ & 
	\rb{
		\begin{tikzpicture}
			[scale=0.7, white/.style={circle,draw=black,inner sep = 0mm, minimum size = 3mm},
			black/.style={circle,draw=black,fill=black, inner sep= 0mm, minimum size = 3mm}, every node/.style={transform shape}]
		
			\node[white] (first) [label = below:{\scriptsize $1$}]{};		
			\node[black] (second) [right= of first] [label = below:{\scriptsize $3$}] {}
				edge (first);
			\node[black] (third) [right = of second][label = below:{\scriptsize $4$}] {}
				edge (second);
			\node[black] (fourth) [right = of third][label = below:{\scriptsize $5$}] {}
				edge (third);
			\node[black] (fifth) [above = of third][label = above:{\scriptsize $2$}] {}
				edge (third);
			\node[white] (sixth) [right = of fourth] [label = below:{\scriptsize $6$}]{}
				edge (fourth);
		\end{tikzpicture}	
	}\\ 
	\hline
	
	$CII_n$, $n \geq 5$ & 
	\rb{
		\begin{tikzpicture}
			[scale=0.7, white/.style={circle,draw=black,inner sep = 0mm, minimum size = 3mm},
			black/.style={circle,draw=black,fill=black, inner sep= 0mm, minimum size = 3mm}, every node/.style={transform shape}]
		
			\node[black] (first)  [label = below:{\scriptsize $1$}] {};		
			\node[white] (second) [right= of first] [label = below:{\scriptsize $2$}] {}			
				edge (first);
			\node[black] (third) [right = of second] {}
				edge (second);
			\node[white] (fourth) [right = of third] {}
				edge (third);
			\node[black] (fifth) [right = of fourth] {}
				edge (fourth);
			\node[black] (sixth) [right = 1.5cm of fifth]  {}
				edge [dashed] (fifth);
			\node[black] (seventh) [right = of sixth] [label = below:{\scriptsize $n$}] {}
				edge [double equal sign distance, ->-] (sixth);
		\end{tikzpicture}	
	} & 
	
	$G$ & 
	\rb{
		\begin{tikzpicture}
			[scale=0.7, white/.style={circle,draw=black,inner sep = 0mm, minimum size = 3mm},
			black/.style={circle,draw=black,fill=black, inner sep= 0mm, minimum size = 3mm}, every node/.style={transform shape}]
		
			\node[white] (first) [label = below:{\scriptsize $1$}] {};		
			\node[white] (second) [right=of first] [label = below:{\scriptsize $2$}] {}			
				edge [double distance = 2pt, -<-] (first)
				edge (first);
		\end{tikzpicture}	
	} \\ \hline
\end{tabular} 
}
\end{table}


\begingroup
\allowdisplaybreaks
\subsection{Type $AI_2$}

Consider the Satake diagram of type $AI_2$.

\begin{center}
	\begin{tikzpicture}  
		[white/.style={circle,draw=black,inner sep = 0mm, minimum size = 3mm},
		black/.style={circle,draw=black,fill=black, inner sep= 0mm, minimum size = 3mm}]
		
		\node[white] (first) [label = below:{\scriptsize $1$}] {};		
		\node[white] (third) [right= 0.8cm of first] [label = below:{\scriptsize $2$}] {}
			edge (first);
	\end{tikzpicture}
\end{center}

Since $\Theta = -\text{id}$ the restricted Weyl group $\widet{W}$ coincides with the Weyl group $W$. 
The longest word of the Weyl group has two reduced expressions given by
\begin{align*}
	w_0 &= s_1s_2s_1\\
	w_0^{\prime} &= s_2s_1s_2.
\end{align*} 

\begin{proposition} \label{Prop:QkmRankTwoAI}
	In this case, the partial quasi $K$-matrices $\Qkm_{w_0}$ and $\Qkm_{w_0^{\prime}}$ coincide with the quasi $K$-matrix $\Qkm$. Hence $\Qkm_{w_0} = \Qkm_{w_0^{\prime}}$.
\end{proposition}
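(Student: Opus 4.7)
The plan is to verify that both $\Qkm_{w_0}$ and $\Qkm_{w_0'}$ satisfy the defining recursive system \eqref{eq:irSyst2} for the quasi $K$-matrix. Since \eqref{eq:irSyst2} together with the normalisation $\Qkm_0 = 1$ has a unique solution, this simultaneously yields $\Qkm_{w_0} = \Qkm_{w_0'} = \Qkm$.

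In the present setting $X = \emptyset$, $\tau = \mathrm{id}$, $\Theta = -\mathrm{id}$, so $s(i) = 1$, $X_i = -E_i$ and $(\Theta(\alpha_i),\alpha_i) = -2$. With $\bs = \bo$, the recursion \eqref{eq:irSyst2} specialises to
\begin{align*}
  \ir{i}{\Qkm} = (q - q^{-1})\, q^2 c_i\, E_i\, \Qkm \qquad (i = 1, 2).
\end{align*}
Two of the four relations required come essentially for free. Since $\tau_0$ is the non-trivial diagram automorphism of $A_2$, Proposition \ref{prop:LongestWordPartialQkmPart} gives $\Qkm_{w_0}^{[3]} = \Qkm_{\tau_0(1)} = \Qkm_2$ and $\Qkm_{w_0'}^{[3]} = \Qkm_1$. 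Moreover the lemma preceding Theorem \ref{Thm:LongestWordPartialQkm} places the remaining two factors of $\Qkm_{w_0}$ in $\widehat{U^+[\widet{s_2}\widet{w_0}]}$, on which ${}_2r$ vanishes by \cite[8.26(4)]{b-Jantzen96}. Combined with the skew derivation rule \eqref{eq:ir} and the rank one identity from Lemma \ref{lem:QkmRankOneAI}, this verifies the $i = 2$ recursion for $\Qkm_{w_0}$; the symmetric argument verifies the $i = 1$ recursion for $\Qkm_{w_0'}$.

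The remaining task is to establish the $i = 1$ recursion for $\Qkm_{w_0}$; the $i = 2$ recursion for $\Qkm_{w_0'}$ then follows by applying the diagram automorphism $\eta$ swapping nodes $1$ and $2$, via Lemma \ref{lem:DiagAutofQkm}, since $\eta(\Qkm_{w_0})$ coincides with $\Qkm_{w_0'}$ after the corresponding swap of parameters. I would expand the three factors explicitly using Lemma \ref{lem:QkmRankOneAI} and the Lusztig formula $T_1(E_2) = E_1E_2 - q^{-1}E_2E_1$; the action of $\Psi$ on $U^+_\mu$ recorded in \eqref{eq:Psi} rescales each $T_1(E_2)^{2n}$ by a power of $q^2\widet{c_1}$, so that $\Qkm_{w_0}^{[2]} = \Psi T_1 \Psi^{-1}(\Qkm_2)$ becomes a wholly explicit series in the powers $T_1(E_2)^{2n}$. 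Applying ${}_1r$ to the product $\Qkm_{w_0}^{[3]}\Qkm_{w_0}^{[2]}\Qkm_{w_0}^{[1]}$ via the Leibniz rule \eqref{eq:ir}, and using $\ir{1}{E_1^{2n}} = \{2n\}E_1^{2n-1}$, $\ir{1}{E_2} = 0$ and $\ir{1}{T_1(E_2)} = (1-q^{-2})E_2$, one obtains a double sum which must be reorganised so as to match $(q-q^{-1}) q^2 c_1 E_1 \Qkm_{w_0}$.

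The main obstacle is bookkeeping: one has to track carefully the $q$-powers and the factors of $\widet{c_i}$ introduced by the conjugation by $\Psi$ as ${}_1r$ crosses each factor, and then identify the surviving terms, via standard identities for the $q$-numbers $\{n\}$ and $\{n\}!!$, with the required right hand side. Once this identity is verified, the uniqueness of the solution to \eqref{eq:irSyst2} with $\Qkm_0 = 1$ completes the proof.
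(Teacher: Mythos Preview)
Your strategy is the same as the paper's: verify \eqref{eq:irSyst2} for each reduced expression, get the ``easy'' relation from $\Qkm^{[t]}=\Qkm_{\tau_0(i_t)}$ together with ${}_ir$ vanishing on the remaining factors, and invoke the symmetry of the diagram for the second expression. Two remarks.

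First, your handling of the hard step (the $i=1$ relation for $\Qkm_{w_0}$) is left as ``reorganise a double sum'', which understates the structure. The paper does not reorganise a raw double sum; it writes, via the twisted Leibniz rule,
\[
\ir{1}{\Qkm_{w_0}} = \Qkm_{[3]}\,\ir{1}{\Qkm^{[2]}}\,\Qkm^{[1]} \;+\; \Qkm_{[3]}\Qkm_{[2]}\,\ir{1}{\Qkm^{[1]}},
\]
where $\Qkm_{[j]} = K_1\Qkm^{[j]}K_1^{-1}$, and then commutes the explicit $E_1$ in the second summand to the front using the relations of Lemmas \ref{lem:RankTwoAIComm} and \ref{lem:RankTwoAIir}. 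The point is that the commutator terms generated in this process exactly cancel the first summand, leaving $(q-q^{-1})(q^2c_1)E_1\Qkm_{w_0}$. This ``move $E_1$ to the front and watch the correction terms cancel $\ir{1}{\Qkm^{[2]}}$'' is the organising principle in every rank two case in Appendix \ref{App:RankTwo}, and you should make it explicit rather than describing the computation as coefficient matching.

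Second, Lemma \ref{lem:DiagAutofQkm} is stated for the quasi $K$-matrix $\Qkm$, not for the partial objects $\Qkm_{w}$, so your citation for the symmetry step is formally inapt. The symmetry you need is simply that applying the diagram automorphism $\eta$ to the explicit factors $\Qkm^{[j]}$ of $\Qkm_{w_0}$ yields those of $\Qkm_{w_0'}$ (with $c_1\leftrightarrow c_2$), so the verification for $\Qkm_{w_0'}$ is literally the same computation with indices swapped; this is what the paper means by ``the underlying symmetry in type $AI_2$''.
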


Before we prove this, we need to know how the Lusztig skew derivations ${}_1r$ and ${}_2r$ act on certain elements and their powers, and also some commutation relations. These are given in the following two lemmas, whose proofs are obtained by straightforward computation.

\begin{lemma} \label{lem:RankTwoAIComm}
For any $n \in \mathbb{N}$, the relations
\begin{align*}
	E_2^n E_1 &= q^n E_1 E_2^n - q\{ n \} E_2^{n-1} \T{1}(E_2),\\
	E_1^n E_2 &= q^n E_2 E_1^n - q\{ n \} E_1^{n-1} \T{2}(E_1),\\
	\T{1}(E_2)^n E_1 &= q^{-n} E_1 \T{1}(E_2)^n,\\
	\T{2}(E_1)^n E_2 &= q^{-n} E_2 \T{2}(E_1)^n
\end{align*}
hold in $U_q(\lie{sl}_3)$.
\end{lemma}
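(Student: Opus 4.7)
The plan is to prove all four identities by induction on $n$, with the base case $n=1$ handled by direct computation using the defining relations of the Lusztig automorphism and the quantum Serre relations in $U_q(\lie{sl}_3)$. The Dynkin diagram of type $A_2$ has the obvious symmetry which interchanges the two nodes, and this symmetry maps the Lusztig automorphism $\T{1}$ to $\T{2}$; hence identities (2) and (4) are immediate consequences of (1) and (3), respectively, and I focus on proving the latter two.

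For the base cases I would proceed as follows. The relation $E_2 E_1 = q E_1 E_2 - q \T{1}(E_2)$ is a direct rearrangement of the explicit formula for $\T{1}(E_2)$ given in \cite[37.1.3]{b-Lusztig94}. Substituting this formula into $\T{1}(E_2) E_1$ and $q^{-1} E_1 \T{1}(E_2)$, the equality $\T{1}(E_2) E_1 = q^{-1} E_1 \T{1}(E_2)$ reduces to the quantum Serre relation
\begin{equation*}
(q + q^{-1}) E_1 E_2 E_1 = E_1^2 E_2 + E_2 E_1^2,
\end{equation*}
and is therefore straightforward to verify.

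The inductive step for (3) is the easier of the two: one writes $\T{1}(E_2)^n E_1 = \T{1}(E_2)^{n-1} \bigl( \T{1}(E_2) E_1 \bigr)$, applies the base case to move $E_1$ past a single factor of $\T{1}(E_2)$, and then applies the inductive hypothesis to move it past the remaining $n-1$ factors. This produces the expected coefficient $q^{-n}$ without further complications. The inductive step for (1) is more delicate. Expanding $E_2^n E_1 = E_2 \cdot E_2^{n-1} E_1$ and applying the inductive hypothesis yields, besides the expected $q^n E_1 E_2^n$ term, two separate contributions proportional to $\T{1}(E_2) E_2^{n-1}$ and $E_2^{n-1} \T{1}(E_2)$. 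To merge these one needs the auxiliary $q$-commutation
\begin{equation*}
\T{1}(E_2) E_2 = q E_2 \T{1}(E_2),
\end{equation*}
which itself follows from the Serre relation $E_1 E_2^2 + E_2^2 E_1 = (q + q^{-1}) E_2 E_1 E_2$. Iterating gives $\T{1}(E_2) E_2^{n-1} = q^{n-1} E_2^{n-1} \T{1}(E_2)$, and the $q$-integer identity $q^{2(n-1)} + \{n-1\} = \{n\}$ collapses the two contributions into the single claimed term $-q\{n\} E_2^{n-1} \T{1}(E_2)$.

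The main obstacle is not conceptual but bookkeeping: one must carefully track the powers of $q$ appearing in the inductive step of identity (1) to confirm that they combine into the coefficient $q\{n\}$. Everything else is a routine consequence of the base case, the auxiliary $q$-commutation, and the symmetry $1 \leftrightarrow 2$.
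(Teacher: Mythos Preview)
Your proposal is correct and is precisely the ``straightforward computation'' the paper alludes to without writing out: the paper states that Lemmas \ref{lem:RankTwoAIComm} and \ref{lem:RankTwoAIir} have proofs ``obtained by straightforward computation'' and gives no further details, and your induction argument, together with the base cases from the Serre relations and the auxiliary $q$-commutation $\T{1}(E_2)E_2 = q E_2 \T{1}(E_2)$, is exactly such a computation. The use of the diagram symmetry $1 \leftrightarrow 2$ to reduce (2) and (4) to (1) and (3) is also entirely in keeping with how the paper handles the $AI_2$ case elsewhere (cf.\ the end of the proof of Proposition \ref{Prop:QkmRankTwoAI}).
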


\begin{lemma} \label{lem:RankTwoAIir}
For any $n \in \mathbb{N}$, the relations
\begin{align*}
	&\ir{ 1 }{ E_2^n } = \: \ir{ 1 }{ \T{2}(E_1)^n } = \: \ir{ 2 }{ E_1^n } = \: \ir{2}{ \T{1}(E_2)^n } = 0,\\
	&\ir{ 1 }{ E_1^n } = \{ n \} E_1^{n-1},\\
	&\ir{ 2 }{ E_2^n } = \{ n \} E_2^{n-1},\\
	&\ir{ 1 }{ \T{1}(E_2)^n } = (1 - q^{-2}) \{ n \} E_2 \T{1}(E_2)^{n-1},\\
	&\ir{ 2 }{ \T{2}(E_1)^n } = (1 - q^{-2}) \{ n \} E_1 \T{2}(E_1)^{n-1}
\end{align*}
hold in $U_q(\lie{sl}_3)$.
\end{lemma}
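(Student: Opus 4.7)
The plan is a simultaneous induction on $n$ in which all five identity families are verified in parallel. First I would settle the base case $n=1$. The relations ${}_ir(E_j)=\delta_{ij}$ are immediate from \eqref{eq:ir}, giving the trivial pieces. For the remaining base cases, I would expand $T_1(E_2)=E_1E_2-q^{-1}E_2E_1$ and $T_2(E_1)=E_2E_1-q^{-1}E_1E_2$ in $U_q(\lie{sl}_3)$ and apply \eqref{eq:ir} once. A short computation using $(\alpha_1,\alpha_2)=-1$ yields ${}_1r(T_1(E_2))=(1-q^{-2})E_2$ and ${}_1r(T_2(E_1))=0$; the identities for ${}_2r$ follow by the evident $1\leftrightarrow 2$ symmetry of the Satake diagram.

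For the inductive step I would apply \eqref{eq:ir} to the factorisation $x^n=x\cdot x^{n-1}$, where $x$ is the element under consideration, and feed in the $n-1$ case from the induction hypothesis. For the four vanishing identities the step is immediate since both terms produced by \eqref{eq:ir} vanish. For ${}_1r(E_1^n)$ and ${}_2r(E_2^n)$ the step reduces to the $q$-arithmetic identity $1+q^2\{n-1\}=\{n\}$. For the two genuinely nontrivial identities ${}_1r(T_1(E_2)^n)=(1-q^{-2})\{n\}E_2T_1(E_2)^{n-1}$ and its symmetric counterpart, the skew derivation produces two contributions, and combining them requires commuting $T_1(E_2)$ past $E_2$ via $T_1(E_2)E_2=qE_2T_1(E_2)$. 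Since $(\alpha_1,\alpha_1+\alpha_2)=1$, the resulting coefficient is $(1-q^{-2})\bigl(1+q^2\{n-1\}\bigr)=(1-q^{-2})\{n\}$ as required.

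The only substantive point is that the inductive closure of the two $T_i(E_j)^n$ families invokes the third and fourth commutation relations of Lemma \ref{lem:RankTwoAIComm}; these commutations are themselves a short consequence of the $U_q(\lie{sl}_3)$ Serre relations $E_j^2E_i-(q+q^{-1})E_jE_iE_j+E_iE_j^2=0$ for $\{i,j\}=\{1,2\}$ and should be established first. Once this order of verification is arranged, the entire argument is the straightforward bookkeeping claimed in the text, and no further obstacle arises.
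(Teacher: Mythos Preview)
Your approach is correct and is exactly the ``straightforward computation'' the paper intends: induction on $n$ using the skew-derivation rule \eqref{eq:ir}, with the base cases handled by expanding $T_i(E_j)$ as a $q$-commutator. The arithmetic $1+q^2\{n-1\}=\{n\}$ and the Serre-relation consequence $T_1(E_2)E_2=qE_2T_1(E_2)$ are precisely what close the induction for the nontrivial families.

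One small slip in your final paragraph: the commutations you actually need, namely $T_1(E_2)E_2=qE_2T_1(E_2)$ and $T_2(E_1)E_1=qE_1T_2(E_1)$, are \emph{not} the third and fourth relations of Lemma~\ref{lem:RankTwoAIComm}. Those relations commute $T_1(E_2)^n$ past $E_1$ (and symmetrically), not past $E_2$. The relation you need is a separate (and equally elementary) consequence of the Serre relation, as you correctly derive; just do not cite Lemma~\ref{lem:RankTwoAIComm} for it.
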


\begin{proof}[Proof of Proposition \ref{Prop:QkmRankTwoAI}]
Consider first the element $\Qkm_{w_0}$. Using \eqref{eq:PartialQkm} and Lemma \ref{lem:QkmRankOneAI}, we write
\begin{equation}
	\Qkm_{w_0}	=	\Qkm^{[3]}\Qkm^{[2]}\Qkm^{[1]}
\end{equation}
where 
\begin{align*}
\Qkm^{[3]}	&\overset{\eqref{eq:LongestWordPartialQkmPart}}{=} 	\Qkm_2	=	\sum_{n \geq 0} \dfrac{\q^n}{\{2n\}!!}(q^2c_2)^nE_2^{2n},\\
\Qkm^{[2]}	&\overset{\phantom{\eqref{eq:LongestWordPartialQkmPart}}}{=}	\Psi \circ T_1 \circ \Psi^{-1}(\Qkm_2)	=	\sum_{n \geq 0} \dfrac{\q^n}{\{2n\}!!}(q^2c_1)^n(q^2c_2)^nT_1(E_2)^{2n},\\
\Qkm^{[1]}	&\overset{\phantom{\eqref{eq:LongestWordPartialQkmPart}}}{=}	\Qkm_1	=	\sum_{n \geq 0}\dfrac{\q^n}{\{2n\}!!}(q^2c_1)^nE_1^{2n}.
\end{align*}
For $i = 1,2,3$, let $\Qkm_{[i]} = K_1\Qkm^{[i]}K_1^{-1}$. 
The difference between $\Qkm_{[i]}$ and $\Qkm^{[i]}$ is the occurrence of a $q$-power in each summand of the infinite series.
By Equation \eqref{eq:irSyst2}, to show that $\Qkm_{w_0}$ coincides with the quasi $K$-matrix $\Qkm$ we show that
\begin{align*}
	\ir{1}{\Qkm_{w_0}}	&=	\q (q^2c_1)E_1\Qkm_{w_0},\\
	\ir{2}{\Qkm_{w_0}}	&=	\q (q^2c_2)E_2\Qkm_{w_0}.
\end{align*}
By Lemma \ref{lem:RankTwoAIir} and \ref{lem:QkmRankOneAI}, we see that
\begin{align*}
	\ir{2}{\Qkm_{w_0}}	&=	\ir{2}{\Qkm^{[3]}}\Qkm^{[2]}\Qkm^{[1]}\\
					&=	\q (q^2c_2)E_2\Qkm^{[3]}\Qkm^{[2]}\Qkm^{[1]}\\
					&=	\q (q^2c_2)E_2\Qkm_{w_0}.
\end{align*}
By the property \eqref{eq:ir} of the skew derivative ${}_1r$, we have
\begin{equation} \label{proofeq:irQkmwAI}
\ir{1}{\Qkm_{w_0}}	=	\Qkm_{[3]}\ir{1}{\Qkm^{[2]}}\Qkm^{[1]} + \Qkm_{[3]}\Qkm_{[2]}\ir{1}{\Qkm^{[1]}} 
\end{equation} 
Using Lemma \ref{lem:RankTwoAIir}, we have
\begin{align*}
\ir{1}{\Qkm^{[2]}}
	&=	\sum_{n \geq } \dfrac{\q^n}{\{2n\}!!} (q^2c_1)^n(q^2c_2)^n \ir{1}{T_1(E_2)^{2n}}\\
	&=	q^{-1}\q E_2 \sum_{n \geq 1} \dfrac{\q^n}{\{2n-2\}!!} (q^2c_1)^n(q^2c_2)^n T_1(E_2)^{2n-1}\\
	&=	\q^2 q^{-1} (q^2c_1)(q^2c_2) E_2T_1(E_2) \Qkm^{[2]},\\
\ir{1}{\Qkm^{[1]}}
	&=	\q (q^2c_1) E_1 \Qkm^{[1]}.
\end{align*}
The second summand of Equation \eqref{proofeq:irQkmwAI} is of the form
\begin{equation*}
\q (q^2c_1) \Qkm_{[3]}\Qkm_{[2]}E_1\Qkm^{[1]}.
\end{equation*}
We use Lemma \ref{lem:RankTwoAIComm} to bring the $E_1$ in the above summand to the front.
We have
\begin{align*}
\Qkm_{[2]}E_1
	&=	\sum_{n \geq 0} \dfrac{\q^n}{\{2n\}!!} (q^2c_1)^n(q^2c_2)^n q^{2n} T_1(E_2)^{2n}E_1\\
	&=	E_1 \sum_{n \geq 0} \dfrac{\q^n}{\{2n\}!!} (q^2c_1)^n(q^2c_2)^n T_1(E_2)^{2n}\\
	&=	E_1\Qkm^{[2]},\\
\Qkm_{[3]}E_1
	&=	\sum_{n \geq 0} \dfrac{\q^n}{\{2n\}!!} (q^2c_2)^n q^{-2n} E_2^{2n}E_1\\
	&=	\sum_{n \geq 0} \dfrac{\q^n}{\{2n\}!!} (q^2c_2)^n q^{-2n} (q^{2n}E_1E_2^{2n} - q\{2n\}E_2^{2n-1}T_1(E_2))\\
	&=	E_1\Qkm^{[3]} - q\sum_{n \geq 1} \dfrac{\q^n}{\{2n-2\}!!} (q^2c_2)^nq^{-2n}E_2^{2n-1}T_1(E_2)\\
	&=	E_1\Qkm^{[3]} - \q q^{-1}(q^2c_2)\Qkm_{[3]}E_2T_1(E_2).
\end{align*}
Hence,
\begin{align*}
\q (q^2c_1) \Qkm_{[3]}\Qkm_{[2]}E_1\Qkm^{[1]}
	&=	\q (q^2c_1) \Qkm_{[3]}E_1\Qkm^{[2]}\Qkm^{[1]}\\
	&=	\q (q^2c_1)E_1\Qkm^{[3]}\Qkm^{[2]}\Qkm^{[1]}\\ 
		&\quad{}- \q^2 q^{-1}(q^2c_1)(q^2c_2)\Qkm_{[3]}E_2T_1(E_2) \Qkm^{[2]}\Qkm^{[1]}\\
	&=	\q (q^2c_1) E_1\Qkm_{w_0} - \Qkm_{[3]}\ir{1}{\Qkm^{[2]}}\Qkm^{[1]}.
\end{align*}
It follows from \eqref{proofeq:irQkmwAI} that $\ir{1}{\Qkm_{w_0}} = \q (q^2c_1) E_1\Qkm_{w_0}$, so $\Qkm_{w_0}$ coincides with the quasi $K$-matrix $\Qkm$.
Instead of repeating the same calculation for $\Qkm_{w_0^{\prime}}$, we use the underlying symmetry in type $AI_2$, which implies that $\Qkm_{w_0^{\prime}}$ also coincides with the quasi $K$-matrix $\Qkm$.
\end{proof}


\subsection{Type $AII_5$}

Consider the Satake diagram of type $AII_{5}$.

\begin{center}
	\begin{tikzpicture} 
		[white/.style={circle,draw=black,inner sep = 0mm, minimum size = 3mm},
		black/.style={circle,draw=black,fill=black, inner sep= 0mm, minimum size = 3mm}]
	
		\node[black] (1)	 [label = below:{\scriptsize $1$}]	{};
		\node[white] (first) [right = of 1] [label = below:{\scriptsize $2$}]  {}
			edge (1);
		\node[black] (second) [right=of first] [label = below:{\scriptsize $3$}] {}
			edge (first);
		\node[white] (last) [right= of second] [label = below:{\scriptsize $4$}]  {}
			edge (second);		
		\node[black] (fourth) [right=of last] [label = below:{\scriptsize $5$}] {}
			edge (last);
	\end{tikzpicture}
\end{center}
In this case the involutive automorphism $\Theta: \lie{h}^{\ast} \rightarrow \lie{h}^{\ast}$ is given by
\begin{equation*}
	\Theta = -s_1s_3s_5.
\end{equation*}
There are two $\tau$-orbits of white nodes given by the sets $\{2\}$ and $\{4\}$.
The restricted root system is of type $AI_2$ since the restricted roots
\begin{equation*}
	\widet{\alpha_2} = \dfrac{\alpha_1 + 2\alpha_2 + \alpha_3}{2}, \qquad{}	\widet{\alpha_4} = \dfrac{\alpha_3 + 2\alpha_4 + \alpha_5}{2}
\end{equation*}
have the same length.
The subgroup $\widet{W} \subset W^{\Theta}$ is generated by the elements
\begin{equation*}
	\widet{s_2}	=	 s_2s_1s_3s_2, \qquad{}
	\widet{s_4}	=	 s_4s_3s_5s_4.
\end{equation*}
The longest word of the restricted Weyl group has two reduced expressions given by
\begin{equation*}
	\widet{w_0}	=	\widet{s_2}\widet{s_4}\widet{s_2}, \qquad{}
	\widet{w_0}^{\prime}	=	\widet{s_4}\widet{s_2}\widet{s_4}.
\end{equation*}
By Lemma \ref{lem:QkmRankOneAII} we have
\begin{align*}
\Qkm_2	&=	\sum_{n \geq 0} \dfrac{(qc_2)^n}{\{n\}!} [E_2, T_{13}(E_2)]_{q^{-2}}^n,\\
\Qkm_4	&=	\sum_{n \geq 0} \dfrac{(qc_4)^n}{\{n\}!} [E_4, T_{35}(E_4)]_{q^{-2}}^n.
\end{align*}

\begin{proposition} \label{Prop:QkmRankTwoAII}
The partial quasi $K$-matrices $\Qkm_{\widet{w_0}}$ and $\Qkm_{\widet{w_0}^{\prime}}$ coincide with the quasi $K$-matrix $\Qkm$.
\end{proposition}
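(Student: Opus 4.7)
The plan is to mimic the structure of the proof of Proposition \ref{Prop:QkmRankTwoAI}: I will write down $\Qkm_{\widet{w_0}}$ explicitly as a product of three factors $\Qkm^{[3]}\Qkm^{[2]}\Qkm^{[1]}$ corresponding to the reduced expression $\widet{w_0}=\widet{s_2}\widet{s_4}\widet{s_2}$, and then verify by direct computation that $\Qkm_{\widet{w_0}}$ satisfies the defining recursive system \eqref{eq:irSyst2}. Since the normalisation $(\Qkm_{\widet{w_0}})_0 = 1$ is evident from the formulas in Lemma \ref{lem:QkmRankOneAII}, and since \eqref{eq:irSyst} together with $\Qkm_0=1$ characterises the quasi $K$-matrix uniquely, this will force $\Qkm_{\widet{w_0}}=\Qkm$. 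The identical argument applied to the reduced expression $\widet{w_0}^\prime=\widet{s_4}\widet{s_2}\widet{s_4}$ yields $\Qkm_{\widet{w_0}^\prime}=\Qkm$.

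The diagram automorphism $\tau_0$ of $A_5$ satisfies $\tau_0(2)=4$ and $\tau_0(4)=2$, so Proposition \ref{prop:LongestWordPartialQkmPart} gives $\Qkm^{[3]}=\Qkm_4$ and $\Qkm^{[1]}=\Qkm_2$, with Lemma \ref{lem:QkmRankOneAII} supplying the explicit series. The central factor
\begin{equation*}
\Qkm^{[2]}=\Psi\circ\widet{T_2}\circ\Psi^{-1}(\Qkm_4)
\end{equation*}
requires evaluation of $\widet{T_2}=T_2T_1T_3T_2$ on $[E_4,T_{35}(E_4)]_{q^{-2}}$; the resulting element lies in $U^+_{2\widet{\alpha_4}}$, and conjugation by $\Psi$ rescales each summand of the rank one series by a scalar determined by \eqref{eq:Psi} together with the identity $\widet{c_4}^2=c_4c_4 s(4)s(4)=c_4^2$, so that the resulting series has $\field(q)$-coefficients as guaranteed by Proposition \ref{prop:Omega1}.

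Next I would verify the vanishing $\ir{j}{\Qkm_{\widet{w_0}}}=0$ for $j\in X=\{1,3,5\}$. Each factor $\Qkm^{[k]}$ lies in $\reallywidehat{\widet{U}^+[\widet{w_0}]}\subseteq\widehat{U^+[w_Xw_0]}$ by Proposition \ref{prop:Omega1} and Lemma \ref{lem:xmuinUwxw0}, so by \cite[8.26(4)]{b-Jantzen96} the skew derivations ${}_jr$ with $j\in X$ annihilate the individual factors, and the skew-derivation property \eqref{eq:ir} then annihilates the product. The substantive content is \eqref{eq:irSyst2} for $i\in\{2,4\}$. Here one computes $\ir{2}{\Qkm^{[k]}}$ and $\ir{4}{\Qkm^{[k]}}$ on each factor using Lemma \ref{lem:QkmRankOneAII} and its analogue for $\Qkm^{[2]}$, and then collects the three contributions from \eqref{eq:ir}. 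The constant coefficients appearing at the end should assemble to $-(q-q^{-1})q^{-(\Theta(\alpha_i),\alpha_i)}c_i X_i$, with $X_i=-s(\tau(i))T_{w_X}(E_{\tau(i)})$ matching the factor $T_{13}(E_2)$ or $T_{35}(E_4)$ produced by the derivation.

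The main obstacle will be the commutation calculations used to move the term $X_i$ produced by applying ${}_ir$ to a non-leftmost factor past the remaining $\Qkm^{[k]}$'s to the front of the product. In the $AI_2$ case this was handled by Lemma \ref{lem:RankTwoAIComm}, but here the relevant relations involve the quartic generators $[E_2,T_{13}(E_2)]_{q^{-2}}$, $[E_4,T_{35}(E_4)]_{q^{-2}}$ and their $\widet{T_2}$-images inside $U_q(\lie{sl}_6)$. The key simplification to exploit is that $[E_2,T_{13}(E_2)]_{q^{-2}}$ and $[E_4,T_{35}(E_4)]_{q^{-2}}$ are supported on disjoint node sets except at the black node $3$, so their mutual commutators reduce to explicit $q$-bracket relations via the braid group action. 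Given this, the required identities fall into two categories: a derivation-type computation analogous to Lemma \ref{lem:RankTwoAIir} giving $\ir{i}{\Qkm^{[2]}}$ as a scalar multiple of $T_{w_X}(E_{\tau(i)})\Qkm^{[2]}$, and a commutation identity expressing $\Qkm^{[3]}T_{w_X}(E_{\tau(i)})$ and $\Qkm^{[2]}T_{w_X}(E_{\tau(i)})$ as a telescoping combination of $T_{w_X}(E_{\tau(i)})\Qkm^{[k]}$ and a boundary term that cancels the missing summand in $\ir{i}{\Qkm^{[2]}}\Qkm^{[1]}$. Once these relations are established, the three summands of \eqref{proofeq:irQkmwAI}'s analogue collapse to the desired single expression $-(q-q^{-1})q^{-(\Theta(\alpha_i),\alpha_i)}c_i X_i\Qkm_{\widet{w_0}}$, completing the verification.
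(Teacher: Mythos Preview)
Your proposal is correct and follows essentially the same approach as the paper. The paper likewise writes $\Qkm_{\widet{w_0}}=\Qkm^{[3]}\Qkm^{[2]}\Qkm^{[1]}$ with $\Qkm^{[3]}=\Qkm_4$, $\Qkm^{[1]}=\Qkm_2$, and $\Qkm^{[2]}=\sum_{n\ge 0}\frac{(q^2c_2c_4)^n}{\{n\}!}[T_{23}(E_4),T_{1235}(E_4)]_{q^{-2}}^n$, verifies \eqref{eq:irSyst2} for $i=4$ immediately from Lemma \ref{lem:QkmRankOneAII} and \cite[8.26(4)]{b-Jantzen96}, and for $i=2$ via precisely the two auxiliary relations you anticipate (recorded there as Lemmas \ref{lemma: AIIcomm} and \ref{lemma: AIIir}); the case $\widet{w_0}^{\prime}$ is then handled by the diagram symmetry interchanging the nodes $2$ and $4$, exactly as you suggest. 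One small correction: your appeal to Lemma \ref{lem:xmuinUwxw0} for the vanishing of $\ir{j}{\Qkm_{\widet{w_0}}}$ with $j\in X$ is circular, since that lemma concerns the quasi $K$-matrix itself; the correct justification is that each $\Qkm^{[k]}$ lies in $\reallywidehat{\widet{U}^+[\widet{w_0}]}$ by the construction following \eqref{eq:PartialQkmPart}, and $\widet{w_0}=w_Xw_0$ here.
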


We have the following relations needed for the proof of Proposition \ref{Prop:QkmRankTwoAII}. These are proved by induction.

\begin{lemma} \label{lemma: AIIcomm}
For any $n \in \mathbb{N}$ the relations
\begin{samepage}
\begin{align}
 &[E_4, T_{35}(E_4)]_{q^{-2}}^n T_{13}(E_2) = q^nT_{13}(E_2)[E_4, T_{35}(E_4)]_{q^{-2}}^n \label{Appeq: AIIcomm1}\\ &\phantom{==} - q\{n\}[E_4, T_{35}(E_4)]^{n-1}_{q^{-2}}[T_3(E_4), T_{1235}(E_4)]_{q^{-2}}, \nonumber \\ 
 &[T_{23}(E_4), T_{1235}(E_4)]_{q^{-2}}^n T_{13}(E_2) = q^{-n}T_{13}(E_2)[T_{23}(E_4), T_{1235}(E_4)]_{q^{-2}}^n \label{Appeq: AIIcomm2}
\end{align}
\end{samepage}
hold in $U_q(\lie{sl}_6)$.
\end{lemma}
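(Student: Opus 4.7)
The plan is to induct on $n$, with the cases $n = 0$ (trivial) and $n = 1$ as the base. For the $n = 1$ case of \eqref{Appeq: AIIcomm1}, I would expand $[E_4, T_{35}(E_4)]_{q^{-2}} = E_4 T_{35}(E_4) - q^{-2} T_{35}(E_4) E_4$ and commute each of the factors $E_4$ and $T_{35}(E_4)$ past $T_{13}(E_2)$ in turn. The required $q$-commutation relations between these quantum root vectors in $U_q^+(\lie{sl}_6)$ are obtained by applying the Lusztig braid operators $T_1, T_3, T_5$ to the standard rank two relations inside the relevant Levi subalgebras. Combining the resulting corrections, one finds that they assemble into the term $-q[T_3(E_4), T_{1235}(E_4)]_{q^{-2}}$, consistent with the weight identity $(\alpha_3+2\alpha_4+\alpha_5)+(\alpha_1+\alpha_2+\alpha_3)=(\alpha_3+\alpha_4)+(\alpha_1+\alpha_2+\alpha_3+\alpha_4+\alpha_5)$. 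The $n=1$ case of \eqref{Appeq: AIIcomm2} is handled in the same way; here the simpler output occurs because no non-zero correction of the required weight can arise in $U_q^+(\lie{sl}_6)$, leaving only the pure $q^{-1}$ commutation.

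For the induction step in \eqref{Appeq: AIIcomm1}, assume the identity holds at level $n \geq 1$, write $[E_4, T_{35}(E_4)]_{q^{-2}}^n T_{13}(E_2) = [E_4, T_{35}(E_4)]_{q^{-2}}^{n-1} \cdot [E_4, T_{35}(E_4)]_{q^{-2}} \cdot T_{13}(E_2)$, apply the $n=1$ base case to move $T_{13}(E_2)$ past a single factor, and then apply the inductive hypothesis at level $n-1$ to move it past the remaining $[E_4, T_{35}(E_4)]_{q^{-2}}^{n-1}$. Collecting the resulting correction terms requires a subsidiary $q$-commutation relation of the form $[T_3(E_4), T_{1235}(E_4)]_{q^{-2}} \cdot [E_4, T_{35}(E_4)]_{q^{-2}} = q\, [E_4, T_{35}(E_4)]_{q^{-2}} \cdot [T_3(E_4), T_{1235}(E_4)]_{q^{-2}}$, which is established by a separate direct computation in the same spirit as the base case. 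The recursive identity $\{n+1\} = 1 + q^2\{n\}$ then guarantees that the two correction terms merge into the claimed coefficient $q\{n\}$. For \eqref{Appeq: AIIcomm2}, the induction step is immediate and requires no subsidiary commutation, as the $q^{-n}$ prefactor is purely multiplicative and there is no correction term to track.

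The main obstacle is the $n = 1$ base case, which amounts to a detailed bookkeeping of the commutation of $T_{13}(E_2)$ past the several quantum root vectors $E_4$, $T_3(E_4)$, $T_{35}(E_4)$, $T_{23}(E_4)$ and $T_{1235}(E_4)$ inside $U_q^+(\lie{sl}_6)$, together with the verification of the subsidiary commutation used in the inductive step. Once these root vector calculations are in hand, the induction itself is a formal manipulation of $q$-numbers.
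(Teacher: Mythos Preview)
Your approach is correct and matches the paper's, which simply records that the relations ``are proved by induction'' without giving further detail. You have in fact supplied considerably more structure than the paper does: the identification of the base case, the subsidiary $q$-commutation $[T_3(E_4), T_{1235}(E_4)]_{q^{-2}} \cdot [E_4, T_{35}(E_4)]_{q^{-2}} = q\, [E_4, T_{35}(E_4)]_{q^{-2}} \cdot [T_3(E_4), T_{1235}(E_4)]_{q^{-2}}$ needed to merge the correction terms, and the role of the identity $\{n\} = 1 + q^2\{n-1\}$. One small wording slip: in the inductive step you say ``assume the identity holds at level $n \geq 1$'' but then split $A^n$ and invoke the hypothesis ``at level $n-1$''; presumably you mean to assume the identity at level $n-1$ and deduce it at level $n$ (or equivalently go from $n$ to $n+1$).
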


\begin{lemma} \label{lemma: AIIir}
For any $n \in \mathbb{N}$ the relation
\begin{equation} \label{Appeq: AIIir}
\begin{split}
&\ir{2}{[T_{23}(E_4), T_{1235}(E_4)]_{q^{-2}}^n}\\ &\qquad{}= q^{-1}(q-q^{-1})\{n\} [T_3(E_4), T_{1235}(E_4)]_{q^{-2}} [T_{23}(E_4), T_{1235}(E_4)]_{q^{-2}}^{n-1}
\end{split}
\end{equation}
holds in $U_q(\lie{sl}_6)$.
\end{lemma}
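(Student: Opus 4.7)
The plan is to prove the identity by induction on $n$, following a template analogous to the rank-one proof of Lemma \ref{lem:QkmRankOneAII}. I first establish two preliminary formulas for the action of $\ir{2}{\,\cdot\,}$:
\begin{align*}
\ir{2}{T_{23}(E_4)} &= (1-q^{-2})\,T_3(E_4),\\
\ir{2}{T_{1235}(E_4)} &= 0.
\end{align*}
Both follow from a direct expansion of the relevant Lusztig automorphisms as polynomials in $E_2, E_3, E_4, E_5$ and a termwise application of the skew derivation property \eqref{eq:ir}; for the second equation it is convenient to write $T_{1235}(E_4) = [E_5, T_{123}(E_4)]_{q^{-1}}$ and verify the cancellation in $\ir{2}{T_{123}(E_4)}$ directly.

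Set $A = T_{23}(E_4)$, $B = T_{1235}(E_4)$, $C = [A, B]_{q^{-2}}$ and $D = [T_3(E_4), B]_{q^{-2}}$. Weight computations yield $(\alpha_2, \mu_A) = 1$, $(\alpha_2, \mu_B) = 0$ and $(\alpha_2, \mu_C) = 1$. A single application of \eqref{eq:ir} then produces the base case $n = 1$:
\begin{equation*}
\ir{2}{C} = \ir{2}{A}\,B + q\,A\,\ir{2}{B} - q^{-2}\,\ir{2}{B}\,A - q^{-2}\,B\,\ir{2}{A} = (1-q^{-2})\,D.
\end{equation*}

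For the inductive step, I apply \eqref{eq:ir} to $C^n = C\cdot C^{n-1}$ together with the inductive hypothesis to obtain
\begin{equation*}
\ir{2}{C^n} = (1-q^{-2})\,D\,C^{n-1} + q\,(1-q^{-2})\,\{n-1\}\,C\,D\,C^{n-2}.
\end{equation*}
The technical core of the argument is the commutation relation $C\,D = q\,D\,C$. Granting this, the second summand rewrites as $q^2(1-q^{-2})\{n-1\}\,D\,C^{n-1}$, and the identities $1 + q^2\{n-1\} = \{n\}$ and $1-q^{-2} = q^{-1}\q$ complete the induction.

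The main obstacle is establishing $CD = qDC$. I would reduce this to three auxiliary relations: $AB = BA$, $T_3(E_4)\,B = B\,T_3(E_4)$, and $A\,T_3(E_4) = q\,T_3(E_4)\,A$. The first two are instances of Levendorski\u{\i}-Soibelman-type commutation, available because the corresponding weight sums are not roots of $\lie{sl}_6$ and the relevant inner products vanish. The third reduces, via the identity $A = [E_2, T_3(E_4)]_{q^{-1}}$, to the quantum Serre-type relation $E_2\,T_3(E_4)^2 - [2]\,T_3(E_4)\,E_2\,T_3(E_4) + T_3(E_4)^2\,E_2 = 0$, which can be verified directly by expansion using $E_2\,E_4 = E_4\,E_2$ to reduce it to the Serre relation between $E_2$ and $E_3$ in $U_q(\lie{sl}_4)$.
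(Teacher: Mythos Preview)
Your inductive scaffold is sound: the base case via $\ir{2}{A}=(1-q^{-2})T_3(E_4)$ and $\ir{2}{B}=0$ is correct, and the inductive step does reduce exactly to the commutation $CD=qDC$. The paper also proves this lemma by induction, so in spirit you are on the same track.

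The gap is in your proof of $CD=qDC$. Your auxiliary relation (i) $AB=BA$ is \emph{false}. Under the automorphism $T_{2132}$ one has $A=T_{2132}(E_4)$ and $B=T_{2132}(T_{35}(E_4))$, so $AB=BA$ is equivalent to $E_4\,T_{35}(E_4)=T_{35}(E_4)\,E_4$ inside the copy of $U_q(\lie{sl}_4)$ generated by $E_3,E_4,E_5$. A short computation with $\ir{3}{\,\cdot\,}$ (using $\ir{3}{T_{35}(E_4)}=(1-q^{-2})T_5(E_4)$) shows that $\ir{3}{[E_4,T_{35}(E_4)]}$ is a nonzero multiple of $q^{-1}E_4T_5(E_4)-T_5(E_4)E_4$, which does not vanish; hence $[E_4,T_{35}(E_4)]\neq 0$. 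The same style of argument shows (ii) is also false. Your appeal to ``Levendorski\u{\i}--Soibelman-type commutation'' is a misreading of those relations: for root vectors $E_\alpha,E_\beta$ with $(\alpha,\beta)=0$ and $\alpha+\beta\notin\Phi$, the LS relations only say that $E_\alpha E_\beta-E_\beta E_\alpha$ lies in the span of ordered monomials in root vectors \emph{strictly between} $\alpha$ and $\beta$ in the convex order, not that it vanishes. Here the intervals $[2,4]\subset[1,5]$ are nested and the intermediate roots $[1,4],[2,5]$ supply a nonzero correction term.

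The relation $CD=qDC$ is nevertheless true (it is equivalent, given your $n=1$ case, to the $n=2$ case of the lemma), but it has to be proved differently: either by a direct computation expanding $C$ and $D$, or by transporting through $T_{2132}$ and using the commutation facts already available in the rank-one $AII$ setting, in particular that $T_{35}(E_4)$ commutes with $[E_4,T_{35}(E_4)]_{q^{-2}}$ together with relation \eqref{Appeq: AIIcomm1} for $n=1$, which expresses $D$ in terms of $[E_4,T_{35}(E_4)]_{q^{-2}}$ and $T_{13}(E_2)$.
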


\begin{proof}[Proof of Proposition \ref{Prop:QkmRankTwoAII}]
We only confirm that $\Qkm_{\widet{w_0}}$ coincides with the quasi $K$-matrix $\Qkm$. By the underlying symmetry in type $AII_5$, the calculation for $\Qkm_{\widet{w_0}^{\prime}}$ is the same up to a change of indices. By definition, we have
\begin{equation} 
\Qkm_{\widet{w_0}} = \Qkm^{[3]} \Qkm^{[2]} \Qkm^{[1]} \nonumber
\end{equation}
where
\begin{align*}
\Qkm^{[3]}	&\overset{\eqref{eq:LongestWordPartialQkmPart}}{=} \Qkm_4, \\
\Qkm^{[2]}	&=	\Psi \circ T_{2132} \circ \Psi^{-1}(\Qkm_4) = \sum_{n \geq 0} \dfrac{(q^2c_2c_4)^n}{\{n\}!} [T_{23}(E_4), T_{1235}(E_4)]_{q^{-2}}^n,\\
\Qkm^{[1]}	&= \Qkm_2.
\end{align*}
Using Lemma \ref{lem:QkmRankOneAII} and \cite[8.26, (4)]{b-Jantzen96} we see that
\begin{align*}
\ir{4}{\Qkm_{\widet{w_0}}} &= \ir{4}{\Qkm_4} \Qkm^{[2]}\Qkm^{[1]}\\
							&= (q-q^{-1})c_4T_{35}(E_4)\Qkm_{4}\Qkm^{[2]}\Qkm^{[1]}\\
							&= (q-q^{-1})c_4T_{35}(E_4)\Qkm_{\widet{w_0}}.
\end{align*}
We want to show that
\begin{equation}
\ir{2}{\Qkm_{\widet{w_0}}} = (q-q^{-1})c_2T_{13}(E_2)\Qkm_{\widet{w_0}}. \nonumber
\end{equation}
For $i = 1, 2, 3$, let $\Qkm_{[i]} = K_2\Qkm^{[i]}K_2^{-1}$. By property \eqref{eq:ir} of the skew derivative ${}_2r$ and \cite[8.26, (4)]{b-Jantzen96} we have
\begin{equation} \label{Appeq: RankTwoAIIproof1}
\ir{2}{\Qkm_{\widet{w_0}}} = \Qkm_{[3]} \ir{2}{\Qkm^{[2]}}\Qkm^{[1]} + \Qkm_{[3]}\Qkm_{[2]} \ir{2}{\Qkm^{[1]}}.
\end{equation}
By Lemma \ref{lem:QkmRankOneAII} we have
\begin{equation}
\ir{2}{\Qkm^{[1]}} = (q-q^{-1})c_2T_{13}(E_2)\Qkm^{[1]}. \nonumber
\end{equation}
By Lemma \ref{lemma: AIIir} we have
\begin{align*}
\ir{2}{\Qkm^{[2]}}	&\overset{\phantom{\eqref{Appeq: AIIir}}}{=}	\sum_{n \geq 1} \dfrac{(q^2c_2c_4)^n}{\{n\}!} \ir{2}{[T_{23}(E_4), T_{1235}(E_4)]_{q^{-2}}^n} \\
					&\overset{\eqref{Appeq: AIIir}}{=}	q(q-q^{-1})c_2c_4 [T_3(E_4), T_{1235}(E_4)]_{q^{-2}} \Qkm^{[2]}.
\end{align*}
The second summand of Equation \eqref{Appeq: RankTwoAIIproof1} is of the form $(q-q^{-1})c_2\Qkm_{[3]}\Qkm_{[2]}T_{13}(E_2)\Qkm^{[1]}$. Using Lemma \ref{lemma: AIIcomm}, we bring the $T_{13}(E_2)$ term in this expression to the front. We have
\begin{align}
\Qkm_{[2]}T_{13}(E_2)	&\overset{\eqref{Appeq: AIIcomm2}}{=}	T_{13}(E_2)\Qkm^{[2]}, \label{Appeq: RankTwoAIIproof2} \\
\Qkm_{[3]}T_{13}(E_2)	&\overset{\eqref{Appeq: AIIcomm1}}{=}	T_{13}(E_2)\Qkm^{[3]} - qc_4\Qkm_{[3]}[T_3(E_4), T_{1235}(E_4)]_{q^{-2}}. \label{Appeq: RankTwoAIIproof3}
\end{align}
Substituting \eqref{Appeq: RankTwoAIIproof2} and \eqref{Appeq: RankTwoAIIproof3} into $(q-q^{-1})c_2\Qkm_{[3]}\Qkm_{[2]}T_{13}(E_2)\Qkm^{[1]}$, we obtain
\begin{align*}
(q-q^{-1})c_2\Qkm_{[3]}\Qkm_{[2]}T_{13}(E_2)\Qkm^{[1]}	&\overset{\eqref{Appeq: RankTwoAIIproof2}}{=} (q-q^{-1})c_2\Qkm_{[3]}T_{13}(E_2)\Qkm^{[2]}\Qkm^{[1]}\\
														&\overset{\eqref{Appeq: RankTwoAIIproof3}}{=} (q-q^{-1})c_2T_{13}(E_2)\Qkm^{[3]}\Qkm^{[2]}\Qkm^{[1]}\\ &\;\;\;{}- \q qc_2c_4\Qkm^{[3]}[T_3(E_4), T_{1235}(E_4)]_{q^{-2}} \Qkm^{[2]}\Qkm^{[1]}\\
														&\overset{\phantom{\eqref{Appeq: RankTwoAIIproof3}}}{=}	(q-q^{-1})c_2T_{13}(E_2)\Qkm_{\widet{w_0}} - \Qkm_{[3]}\ir{2}{\Qkm^{[2]}}\Qkm^{[1]}.
\end{align*}
It follows from \eqref{Appeq: RankTwoAIIproof1} that $\ir{2}{\Qkm_{\widet{w_0}}} = (q-q^{-1}c_2T_{13}(E_2)\Qkm_{\widet{w_0}}$ as required.
\end{proof}

\subsection{Type $AIII_3$}\label{app:AIII3}

We consider the diagram of type $A3$ with non-trivial diagram automorphism $\tau$ and no black dots. 

\begin{center}
	\begin{tikzpicture}  
		[white/.style={circle,draw=black,inner sep = 0mm, minimum size = 3mm},
		black/.style={circle,draw=black,fill=black, inner sep= 0mm, minimum size = 3mm}]
		
		\node[white] (first) [label = below:{\scriptsize $1$}] {};		
		\node[white] (second) [right=of first] [label = below:{\scriptsize $2$}] {}
			edge (first);
		\node[white] (third) [right=of second] [label = below:{\scriptsize $3$}] {}
			edge (second)
			edge	 [latex'-latex' , shorten <=3pt, shorten >=3pt, bend right=30, densely dotted] node[auto,swap] {} (first); 
	\end{tikzpicture}
\end{center}
Here, we see that there are $2$ nodes in the restricted Dynkin diagram, corresponding to the restricted roots
\begin{equation*}
\widet{\alpha_1} = \dfrac{\alpha_1 +\alpha_3}{2}, \qquad{}	\widet{\alpha_2} = \alpha_2.
\end{equation*}
A quick check confirms that $1/2(\alpha_1 + \alpha_3)$ is the short root, and hence the restricted root system is of type $B_2$. 
The subgroup $\widet{W}$ is generated by the elements
\begin{equation*}
\widet{s_1} = s_1s_3, \qquad{}
\widet{s_2}	= s_2.
\end{equation*}
The longest word of the restricted Weyl group has two reduced expressions given by
\begin{equation*}
\widet{w_0} = \widet{s_1}\widet{s_2}\widet{s_1}\widet{s_2}, \qquad{}
\widet{w_0^{\prime}} = \widet{s_2}\widet{s_1}\widet{s_2}\widet{s_1}. 
\end{equation*}
The definition \eqref{eq:ParameterSetC} and condition \eqref{eq:ciCond} imply that $c_1=c_3=\overline{c_1}$.
By Lemmas \ref{lem:QkmRankOneAI} and \ref{lem:QkmRankOneA1xA1} we have
\begin{align}
		\Qkm_1 &= \sum_{n \geq 0} \dfrac{(q-q^{-1})^n}{ \{n \}! } c_1^n(E_1E_3)^n,\\
		\Qkm_2 &= \sum_{n \geq 0} \dfrac{(q-q^{-1})^n}{ \{2n \}!! } (q^2c_2)^n E_2^{2n}.
\end{align}

\begin{proposition} \label{AppProp: A3 Qkmw0}
The partial quasi $K$-matrix $\Qkm_{\widet{w_0}}$ coincides with the quasi $K$-matrix $\Qkm$.
\end{proposition}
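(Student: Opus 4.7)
The plan is to verify directly that $\Qkm_{\widet{w_0}} = \Qkm^{[4]}\Qkm^{[3]}\Qkm^{[2]}\Qkm^{[1]}$ satisfies the defining recursive system \eqref{eq:irSyst2} of the quasi $K$-matrix. With $X = \emptyset$ and $\bs = \bo$, and using $\Theta(\alpha_1) = -\alpha_3$, $\Theta(\alpha_2) = -\alpha_2$, $\Theta(\alpha_3) = -\alpha_1$, together with $s(1) = s(2) = s(3) = 1$, this system reduces to
\begin{align*}
\ir{1}{\Qkm_{\widet{w_0}}} &= (q-q^{-1})\, c_1 E_3\, \Qkm_{\widet{w_0}}, \\
\ir{2}{\Qkm_{\widet{w_0}}} &= (q-q^{-1})\, q^2 c_2 E_2\, \Qkm_{\widet{w_0}}, \\
\ir{3}{\Qkm_{\widet{w_0}}} &= (q-q^{-1})\, c_1 E_1\, \Qkm_{\widet{w_0}}.
\end{align*}
The $\tau$-symmetry of the Satake diagram combined with Lemma \ref{lem:DiagAutofQkm} reduces the third identity to the first, so only the ${}_1r$ and ${}_2r$ relations need to be checked.

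To set up the calculation, I would first write down explicit closed formulas for the four factors $\Qkm^{[k]}$, $k=1,2,3,4$, using Lemmas \ref{lem:QkmRankOneAI}, \ref{lem:QkmRankOneA1xA1} and the weight-graded description of $\Omega_i = \Psi \circ \widet{T_i} \circ \Psi^{-1}$ obtained in the proof of Proposition \ref{prop:Omega1}. Each factor becomes a series in powers of a single Lusztig-twisted root vector: $(E_1 E_3)^n$ in $\Qkm^{[1]}$, $T_{13}(E_2)^{2n}$ in $\Qkm^{[2]}$, $(T_{132}(E_1) T_{132}(E_3))^n$ in $\Qkm^{[3]}$, and $T_{13213}(E_2)^{2n}$ in $\Qkm^{[4]}$, with coefficients that are integer powers of $q$, $c_1$ and $c_2$. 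The absence of residual half-integer powers of $\widet{c_i}$ relies on the identities $\widet{c_1}^2 = c_1 c_3 s(1) s(3) = c_1^2$ and $\widet{c_2}^2 = c_2^2$; this is the precise point at which the $\Psi$-conjugation in the definition of $\Omega_i$ is indispensable. I would then establish commutation relations and skew-derivation formulas for these four root vectors and their powers, in the spirit of Lemmas \ref{lem:RankTwoAIComm}, \ref{lem:RankTwoAIir}, \ref{lemma: AIIcomm} and \ref{lemma: AIIir}. The ${}_2r$ identity follows quickly: the factors in $\Qkm^{[1]}$ and $\Qkm^{[3]}$ lie in suitable $\widet{T}$-translates of $U^+[\widet{s_1}]$, so ${}_2r$ annihilates them by \cite[8.26, (4)]{b-Jantzen96}, and the remaining two contributions combine, after commuting the resulting $E_2$-prefactor back through $\Qkm^{[3]}$ and $\Qkm^{[1]}$, into $(q-q^{-1})q^2 c_2 E_2 \Qkm_{\widet{w_0}}$.

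The main obstacle is the ${}_1r$ identity. The Leibniz rule \eqref{eq:ir} splits $\ir{1}{\Qkm_{\widet{w_0}}}$ into four summands, so a four-step telescoping cancellation is required: the $E_3$-factor produced by $\ir{1}{\Qkm^{[1]}}$ must be moved leftward past $\Qkm^{[2]}$, $\Qkm^{[3]}$ and $\Qkm^{[4]}$ via the commutation lemmas, and the correction terms generated by each of these three commutations must precisely cancel the contributions $\ir{1}{\Qkm^{[k]}}$ for $k = 2, 3, 4$. While each individual step is mechanical, the $q$-combinatorics of the $\{n\}!$ and $\{2n\}!!$ coefficients must be tracked carefully, and integrality of all intermediate coefficients once again hinges on the $\Psi$-conjugation. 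This extends the two-step cascade in the proof of Proposition \ref{Prop:QkmRankTwoAI} and the three-step cascade of Proposition \ref{Prop:QkmRankTwoAII}, and is precisely the sort of lengthy but routine calculation banned to Appendix \ref{App:RankTwo}.
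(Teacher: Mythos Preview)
Your approach is exactly the paper's: verify the system \eqref{eq:irSyst2} directly by computing ${}_ir$ of each factor and telescoping the corrections obtained by commuting the resulting prefactor to the left. Two small bookkeeping corrections will make your life easier. First, by Proposition \ref{prop:LongestWordPartialQkmPart} one has $\Qkm^{[4]}=\Qkm_{\tau_0(2)}=\Qkm_2$, a series in $E_2^{2n}$; hence $\ir{1}{\Qkm^{[4]}}=0$ and the Leibniz expansion for $\ir{1}{\Qkm_{\widet{w_0}}}$ has only three nonzero summands, while commuting $E_3$ past $\Qkm_{[2]}$ produces no correction (Equation \eqref{eq:A3 Qkm2 E3}), so only two cancellations are needed rather than the four-step cascade you anticipate. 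Second, for the ${}_2r$ identity, $\ir{2}{\Qkm^{[k]}}=0$ for $k=1,2,3$ (not just $k=1,3$) by \cite[8.26,(4)]{b-Jantzen96}, since $T_{13}(E_2)\in U^+[\widet{s_2}\widet{w_0}]$ as well; thus only the leftmost factor contributes and no commutation is required at all.
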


The following relations are needed for the proof of Proposition \ref{AppProp: A3 Qkmw0}. They are checked by induction.

\begin{lemma}\label{A3commE3}
For any $n \in \mathbb{N}$, the relations
\begin{align}
T_{13}(E_2)^{n} E_3 &= q^{-n}E_3T_{13}(E_2)^n, \label{eq:A3commE3 T13E2}\\
\big( T_1(E_2)T_3(E_2) \big)^nE_3 &= E_3 \big( T_1(E_2)T_3(E_2) \big)^n \label{eq:A3commE3 T1E2T3E2}\\ &\qquad{}- q\{ n \} \big( T_1(E_2)T_3(E_2) \big)^{n-1} T_3(E_2)T_{13}(E_2), \nonumber \\
E_2^nE_3 &= q^nE_3E_2^n - q\{ n \} E_2^{n-1}T_3(E_2) \label{eq:A3commE3 E2} 
\end{align}
hold in $U_q(\mathfrak{sl}_4)$.
\end{lemma}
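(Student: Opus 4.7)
All three relations have the form ``moving $E_3$ through some power $X^n$'', and I will prove each by induction on $n$. The base cases $n=1$ are commutation identities in $U_q(\mathfrak{sl}_4)^+$ that follow from the explicit formula $T_3(E_2) = E_3 E_2 - q^{-1} E_2 E_3$ (equivalent to \eqref{eq:A3commE3 E2} at $n=1$), the corresponding formula for $T_1(E_2)$, and the quantum Serre relations for adjacent nodes in $A_3$. Specifically, $T_{13}(E_2) = T_1 T_3(E_2) = E_3 T_1(E_2) - q^{-1} T_1(E_2) E_3$ gives the key $n=1$ identity $T_1(E_2) E_3 = q E_3 T_1(E_2) - q T_{13}(E_2)$, while a short Serre computation produces $T_3(E_2) E_3 = q^{-1} E_3 T_3(E_2)$, and the latter yields the $n=1$ case of \eqref{eq:A3commE3 T13E2} after an analogous (slightly longer) Serre manipulation applied to $T_{13}(E_2)$.

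Before running the inductions I would collect three auxiliary $q$-commutations in $U_q(\mathfrak{sl}_4)^+$, each proved by the same method (Serre plus the explicit form of $T_i(E_j)$):
\begin{itemize}
\item[(A)] $T_3(E_2) E_2 = q\, E_2 T_3(E_2)$ and $T_1(E_2) E_2 = q\, E_2 T_1(E_2)$;
\item[(B)] $T_{13}(E_2) T_3(E_2) = q\, T_3(E_2) T_{13}(E_2)$, obtained by applying the algebra automorphism $T_3$ to the first identity in (A);
\item[(C)] $T_1(E_2) T_3(E_2) = T_3(E_2) T_1(E_2)$, which holds because the roots $\alpha_1+\alpha_2$ and $\alpha_2+\alpha_3$ are orthogonal, so that no commutator term arises.
\end{itemize}

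The induction step for \eqref{eq:A3commE3 T13E2} is immediate: $T_{13}(E_2)^n E_3 = T_{13}(E_2)^{n-1} \cdot T_{13}(E_2) E_3 = q^{-1} T_{13}(E_2)^{n-1} E_3 T_{13}(E_2)$, then apply the inductive hypothesis. For \eqref{eq:A3commE3 E2} one expands $E_2^n E_3 = E_2 \cdot E_2^{n-1} E_3$, applies the hypothesis, uses (A) to push $T_3(E_2)$ past $E_2^{n-1}$, and checks the arithmetic identity $q^{2n-1} + q\{n-1\} = q\{n\}$ to match the coefficient. For \eqref{eq:A3commE3 T1E2T3E2} one writes $(T_1(E_2)T_3(E_2))^n E_3 = (T_1(E_2)T_3(E_2))^{n-1} \cdot T_1(E_2)T_3(E_2) E_3$, invokes the $n=1$ base case on the rightmost factor and the hypothesis on the left, and then uses (B) and (C) to move the emerging $T_3(E_2) T_{13}(E_2)$ past the intermediate $T_1(E_2) T_3(E_2)$ with a total factor of $q^2$.

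The main obstacle will be the bookkeeping for \eqref{eq:A3commE3 T1E2T3E2}: the inductive hypothesis produces a term of the shape $(T_1(E_2)T_3(E_2))^{n-2} T_3(E_2) T_{13}(E_2) \cdot T_1(E_2) T_3(E_2)$, which must be rewritten as $q^2 (T_1(E_2)T_3(E_2))^{n-1} T_3(E_2) T_{13}(E_2)$ so that it combines with the base-case error term via $q\{n-1\} \cdot q^2 + q = q\{n\}$, using $\{n\} - 1 = q^2\{n-1\}$. The rewriting is exactly where (A), (B), (C) conspire: (B) contributes one factor of $q$, the second identity in (A) (transported by $T_3$ to give $T_{13}(E_2) T_1(E_2) = q T_1(E_2) T_{13}(E_2)$) contributes the other, and (C) lets the resulting $T_3(E_2) T_1(E_2)$ be reabsorbed into a $T_1(E_2) T_3(E_2)$ factor. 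Once this single reordering is justified, the three inductions close cleanly and the lemma follows.
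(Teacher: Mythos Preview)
Your inductive proof is correct and is exactly what the paper has in mind (the paper states only that the lemma is ``checked by induction'' and gives no further details). One small slip: your derivations of (B) and of the auxiliary relation $T_{13}(E_2)T_1(E_2)=qT_1(E_2)T_{13}(E_2)$ have their labels swapped---(B) comes from applying $T_3$ to the \emph{second} identity in (A), and the auxiliary relation from applying $T_1$ to the \emph{first}---but both identities are true and your inductions close exactly as you describe.
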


\begin{lemma}\label{A3r1}
For any $n \in \mathbb{N}$, the relations
\begin{align}
\ir{1}{T_{13}(E_2)^n } &= q^{-1}(q-q^{-1}) \{n \} T_3(E_2)T_{13}(E_2)^{n-1}, \label{eq:A3r1 T13E2}\\
\ir{1}{ \big( T_1(E_2)T_3(E_2) \big)^n} &= q^{-1}(q-q^{-1}) \{ n \} E_2T_3(E_2) \big( T_1(E_2)T_3(E_2) \big)^{n-1} \label{eq:A3r1 T1E2T3E2}
\end{align}
hold in $U_q(\mathfrak{sl}_4)$.
\end{lemma}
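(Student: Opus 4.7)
Both identities can be proved in parallel by induction on $n$, using the twisted Leibniz rule \eqref{eq:ir} and a small set of auxiliary $q$-commutations in $U_q(\lie{sl}_4)^+$. For the base case $n=1$, the calculations $\ir{1}{T_1(E_2)} = (1-q^{-2})E_2$ and $\ir{1}{T_{13}(E_2)} = (1-q^{-2}) T_3(E_2)$ follow directly by applying \eqref{eq:ir} to the bracket expressions $T_1(E_2) = [E_1, E_2]_{q^{-1}}$ and $T_{13}(E_2) = [E_3, T_1(E_2)]_{q^{-1}}$ (using $T_1(E_3) = E_3$ since $(\alpha_1,\alpha_3) = 0$, and $\ir{1}{T_3(E_2)} = 0$ since $T_3(E_2)$ lies in the subalgebra generated by $E_2, E_3$). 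One more Leibniz step yields $\ir{1}{T_1(E_2)T_3(E_2)} = (1-q^{-2}) E_2 T_3(E_2)$. Since $q^{-1}\q = 1-q^{-2}$ and $\{1\} = 1$, both base cases agree with the claimed right-hand sides.

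Before the inductive step I would establish three auxiliary $q$-commutations in $U_q(\lie{sl}_4)^+$ by short direct Serre-relation calculations: first, $T_1(E_2)\, E_2 = q\, E_2\, T_1(E_2)$ follows from the $q$-Serre identity $E_1 E_2^2 + E_2^2 E_1 = (q+q^{-1}) E_2 E_1 E_2$; second, $T_3(E_2)\, E_2 = q\, E_2\, T_3(E_2)$ follows from the Serre identity between $E_2$ and $E_3$; third, $T_1(E_2)$ and $T_3(E_2)$ commute, which after expansion in $E_1, E_2, E_3$ reduces, using both Serre relations together with $[E_1, E_3] = 0$, to a short cancellation. Applying the automorphism $T_3$ (which commutes with $T_1$ since $(\alpha_1,\alpha_3) = 0$) to the first of these commutations immediately produces $T_{13}(E_2)\, T_3(E_2) = q\, T_3(E_2)\, T_{13}(E_2)$, the auxiliary needed for \eqref{eq:A3r1 T13E2}.

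The inductive step now runs uniformly. Writing $T_{13}(E_2)^n = T_{13}(E_2)^{n-1} \cdot T_{13}(E_2)$ and applying \eqref{eq:ir} with weight twist $q^{(\alpha_1, (n-1)(\alpha_1+\alpha_2+\alpha_3))} = q^{n-1}$ produces two terms; the inductive hypothesis identifies the first with $q^{-1}\q\{n-1\}\, T_3(E_2) T_{13}(E_2)^{n-1}$, and iterating $T_{13}(E_2) T_3(E_2) = q T_3(E_2) T_{13}(E_2)$ exactly $n-1$ times converts the second term into $q^{-1}\q\, q^{2(n-1)}\, T_3(E_2) T_{13}(E_2)^{n-1}$. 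The telescoping $\{n\} = \{n-1\} + q^{2(n-1)}$ finishes \eqref{eq:A3r1 T13E2}. For \eqref{eq:A3r1 T1E2T3E2}, set $X = T_1(E_2)T_3(E_2)$: since $(\alpha_1, \mathrm{wt}(X)) = 0$, no twist appears, and the three auxiliary commutations combine to give $X \cdot E_2 T_3(E_2) = q^2 \cdot E_2 T_3(E_2) \cdot X$, which iterates to $X^{n-1} E_2 T_3(E_2) = q^{2(n-1)} E_2 T_3(E_2) X^{n-1}$ and feeds into the same telescoping.

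The one real obstacle is the commutativity $T_1(E_2) T_3(E_2) = T_3(E_2) T_1(E_2)$. Classically this is immediate, since the positive roots $\alpha_1 + \alpha_2$ and $\alpha_2 + \alpha_3$ of $\lie{sl}_4$ are orthogonal and their sum is not a root; in the quantised setting, one verifies it by expanding $T_1(E_2) T_3(E_2) - T_3(E_2) T_1(E_2)$ as a linear combination of monomials in $E_1, E_2, E_3$, applying the two Serre relations to reduce the $E_1 E_2^2$- and $E_3 E_2^2$-terms, and observing that everything collapses thanks to $[E_1, E_3] = 0$. Once this identity is in hand, the remainder of the proof is pure bookkeeping of $q$-powers.
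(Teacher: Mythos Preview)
Your proposal is correct and follows exactly the approach the paper indicates: the paper offers no proof beyond the sentence ``They are checked by induction,'' and your argument is a faithful and complete elaboration of that inductive check via the twisted Leibniz rule together with the auxiliary $q$-commutations $T_1(E_2)E_2=qE_2T_1(E_2)$, $T_3(E_2)E_2=qE_2T_3(E_2)$, $[T_1(E_2),T_3(E_2)]=0$, and their consequence $T_{13}(E_2)T_3(E_2)=qT_3(E_2)T_{13}(E_2)$. The only step you flag as an obstacle, the commutativity of $T_1(E_2)$ and $T_3(E_2)$, is indeed a short Serre-relation computation (reducing $E_2E_iE_2$ for $i=1,3$ and using $[E_1,E_3]=0$ makes the difference collapse), so there is no gap.
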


\begin{proof}[Proof of Proposition \ref{AppProp: A3 Qkmw0}]

Take $\widet{w_0} = \os_1 \os_2 \os_1 \os_2$. Then we have
\begin{equation*}
\Qkm_{\widet{w_0}} = \Qkm^{[4]}\Qkm^{[3]}\Qkm^{[2]}\Qkm^{[1]}
\end{equation*}
where
\begin{align*}
\Qkm^{[4]} &\overset{\eqref{eq:LongestWordPartialQkmPart}}{=} \Qkm_{\tau_0(2)} = \Qkm_2, \\
\Qkm^{[3]} &\overset{\phantom{\eqref{eq:LongestWordPartialQkmPart}}}{=} \Psi \circ \widet{T_1}\widet{T_2} \circ \Psi^{-1}(\Qkm_1) = \sum_{n \geq 0} \dfrac{(q-q^{-1})^n}{ \{n \}! } (q^2c_1c_2)^n (T_3(E_2)T_1(E_2))^n,\\
\Qkm^{[2]} &\overset{\phantom{\eqref{eq:LongestWordPartialQkmPart}}}{=} \Psi \circ \widet{T_1} \circ \Psi^{-1}(\Qkm_2) = \sum_{n \geq 0} \dfrac{(q-q^{-1})^n}{ \{2n \}!!} (q^4c_1^2c_2)^n T_{13}(E_2)^{2n},\\
\Qkm^{[1]} &\overset{\phantom{\eqref{eq:LongestWordPartialQkmPart}}}{=} \Qkm_1.  
\end{align*}
By Lemma \ref{lem:QkmRankOneAI}, property \eqref{eq:ir} of the skew derivative ${}_2r$ and \cite[8.26, (4)]{b-Jantzen96}, we see that $\ir{2}{\Qkm_{\widet{w_0}}} = (q-q^{-1})q^2c_2E_2\Qkm_{\widet{w_0}}$. Due to the underlying symmetry in this case, we only need to show that
\begin{equation*}
\ir{1}{\Qkm_{\widet{w_0}}} = (q-q^{-1})c_1E_3\Qkm_{\widet{w_0}}.
\end{equation*}
For each $i = 1, 2, 3, 4$, let $\Qkm_{[i]} = K_1\Qkm^{[i]}K_1^{-1}$. Then by the property \eqref{eq:ir} of the skew derivation ${}_1r$, we have
\begin{equation*}
\ir{1}{\Qkm_w} = \Qkm_{[4]} \ir{1}{\Qkm^{[3]}} \Qkm^{[2]}\Qkm^{[1]} + \Qkm_{[4]}\Qkm_{[3]} \ir{1}{\Qkm^{[2]}} \Qkm^{[1]} + \Qkm_{[4]}\Qkm_{[3]}\Qkm_{[2]} \ir{1}{\Qkm^{[1]}}.
\end{equation*}
Using Lemmas \ref{lem:QkmRankOneA1xA1} and \ref{A3r1}, it follows that
\begin{align}
\ir{1}{\Qkm^{[3]}}	&\overset{\eqref{eq:A3r1 T1E2T3E2}}{=}	q^{-1}(q-q^{-1})^2(q^2c_1c_2)E_2T_3(E_2)\Qkm^{[3]}, \label{eq:A3 ir1 Qkm3}\\
\ir{1}{\Qkm^{[2]}}	&\overset{\eqref{eq:A3r1 T13E2}}{=}	q^{-1}(q-q^{-1})^2(q^4c_1^2c_2)T_3(E_2)T_{13}(E_2)\Qkm^{[2]}, \label{eq:A3 ir1 Qkm2}\\
\ir{1}{\Qkm^{[1]}}	&\overset{{\eqref{eq:QkmRankOneA1xA1}}}{=}	(q-q^{-1})E_3\Qkm^{[1]}. \label{eq:A3 ir1 Qkm1}	
\end{align}
Using Lemma \ref{A3commE3}, we look at the term $\Qkm_{[4]}\Qkm_{[3]}\Qkm_{[2]} \ir{1}{\Qkm^{[1]}}$ in more detail. We have
\begin{align}
\Qkm_{[2]}E_3	&\overset{\eqref{eq:A3commE3 T13E2}}{=}	E_3\Qkm^{[2]}, \label{eq:A3 Qkm2 E3} \\
\Qkm_{[3]}E_3	&\overset{\eqref{eq:A3commE3 T1E2T3E2}}{=}	E_3\Qkm^{[3]} - q(q-q^{-1})(q^2c_1c_2)\Qkm_{[3]}T_3(E_2)T_{13}(E_2), \label{eq:A3 Qkm3 E3}\\
\Qkm_{[4]}E_3	&\overset{\eqref{eq:A3commE3 E2}}{=}	E_3\Qkm^{[4]} - q^{-1}(q-q^{-1})(q^2c_2)\Qkm_{[4]}E_2T_3(E_2). \label{eq:A3 Qkm4 E3}
\end{align}
It follows that 
\begin{align*}
\Qkm_{[4]}&\Qkm_{[3]}\Qkm_{[2]} \ir{1}{\Qkm^{[1]}}	\\
	&\overset{\eqref{eq:A3 Qkm2 E3}}{=}	(q-q^{-1})c_1\Qkm_{[4]}\Qkm_{[3]}E_3\Qkm^{[2]}\Qkm^{[1]}\\
	&\overset{\eqref{eq:A3 Qkm3 E3}}{=}	(q-q^{-1})c_1\Qkm_{[4]} \big( E_3\Qkm^{[3]} - q(q-q^{-1})(q^2c_1c_2)\Qkm_{[3]}T_3(E_2)T_{13}(E_2) \big)\Qkm^{[2]}\Qkm^{[1]}\\
	&\overset{\eqref{eq:A3 Qkm4 E3}}{=} (q-q^{-1})c_1\big( E_3\Qkm^{[4]} - q^{-1}(q-q^{-1})(q^2c_2)\Qkm_{[4]}E_2T_3(E_2) \big)\Qkm^{[3]}\Qkm^{[2]}\Qkm^{[1]}\\
	&\qquad{} -  \Qkm_{[4]}\Qkm_{[3]} \ir{1}{\Qkm^{[2]}}\Qkm^{[1]} \\
	&\overset{\phantom{\eqref{eq:A3 Qkm2 E3}}}{=} (q-q^{-1})c_1E_3\Qkm_{\widet{w_0}} - \Qkm_{[4]}\ir{1}{\Qkm^{[3]}}\Qkm^{[2]}\Qkm^{[1]} - \Qkm_{[4]}\Qkm_{[3]} \ir{1}{\Qkm^{[2]}}\Qkm^{[1]} 
\end{align*}
by equations \eqref{eq:A3 ir1 Qkm3}, \eqref{eq:A3 ir1 Qkm2} and \eqref{eq:A3 ir1 Qkm1}.
Hence, it follows that $\ir{1}{\Qkm_{\widet{w_0}}} = (q-q^{-1})c_1E_3\Qkm_{\widet{w_0}}$, as required.

\end{proof}

\begin{proposition} \label{AppProp: A3 Qkmw0'}
The partial quasi $K$-matrix $\Qkm_{\widet{w_0}^\prime}$ coincides with the quasi $K$-matrix $\Qkm$.
\end{proposition}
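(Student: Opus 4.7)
The plan is to mirror the argument of Proposition \ref{AppProp: A3 Qkmw0}: verify directly that $\Qkm_{\widet{w_0}^\prime}$ satisfies the defining recursive system \eqref{eq:irSyst2}, then invoke \cite[Proposition 6.1]{a-BK15} to conclude $\Qkm_{\widet{w_0}^\prime} = \Qkm$. The reduced expression $\widet{w_0}^\prime = \widet{s_2}\widet{s_1}\widet{s_2}\widet{s_1}$ is $\tau$-invariant term by term (since $\widet{s_1} = s_1 s_3$ is fixed by $\tau$), so a direct calculation using the equivariance properties of $\Psi$, the $T_i$ and the rank one factors shows $\tau(\Qkm_{\widet{w_0}^\prime}) = \Qkm_{\widet{w_0}^\prime}$. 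Applying $\tau$ to the $i=1$ case of \eqref{eq:irSyst2} via Lemma \ref{lem:DiagAutofQkm} then yields the $i=3$ case for free, so only $i = 1$ and $i = 2$ need to be checked.

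For the reduced expression $\widet{w_0}^\prime$, formula \eqref{eq:PartialQkm} gives $\Qkm_{\widet{w_0}^\prime} = \Qkm^{[4]}\Qkm^{[3]}\Qkm^{[2]}\Qkm^{[1]}$ with
\begin{align*}
\Qkm^{[4]} &= \Qkm_1, & \Qkm^{[3]} &= \Psi \circ T_2 T_1 T_3 \circ \Psi^{-1}(\Qkm_2), \\
\Qkm^{[2]} &= \Psi \circ T_2 \circ \Psi^{-1}(\Qkm_1), & \Qkm^{[1]} &= \Qkm_2,
\end{align*}
the identification $\Qkm^{[4]} = \Qkm_1$ following from Proposition \ref{prop:LongestWordPartialQkmPart} together with the fact that $\tau_0(1) = 3$ lies in the $\tau$-orbit $\{1,3\}$ defining the rank one subdiagram of $\Qkm_1$. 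Using $T_2(E_i) = [E_2,E_i]_{q^{-1}}$ for $i = 1,3$ and the rank one formulas of Lemmas \ref{lem:QkmRankOneAI} and \ref{lem:QkmRankOneA1xA1}, the factors $\Qkm^{[2]}$ and $\Qkm^{[3]}$ become explicit infinite series in twisted root vectors, with the scalar corrections from the conjugation by $\Psi$ tracked exactly as in the proof of Proposition \ref{prop:Omega1}.

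With these explicit expressions in hand, I would compute $\ir{i}{\Qkm_{\widet{w_0}^\prime}}$ for $i = 1, 2$ by applying the skew-derivation property \eqref{eq:ir} to obtain four summands, one per factor, evaluating each using Lemmas \ref{lem:QkmRankOneAI}, \ref{lem:QkmRankOneA1xA1} and analogues of Lemma \ref{A3r1} adapted to the new twisted generators. The final step is to move the required generator $E_2$ or $E_3$ past the remaining $\Qkm^{[k]}$ factors to the leftmost position via commutation identities analogous to those in Lemma \ref{A3commE3}, producing a cancellation pattern in which three of the four summands combine to eliminate the unwanted correction terms, leaving the desired right-hand side of \eqref{eq:irSyst2}. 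This is structurally parallel to the proof of Proposition \ref{AppProp: A3 Qkmw0}, with the roles of the outer rank-one factors exchanged.

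The main obstacle is that the auxiliary commutation and skew-derivation identities of Lemmas \ref{A3commE3} and \ref{A3r1} must be re-established in a form adapted to the reversed ordering of the factors; in particular, new identities are required for the element $\Psi \circ T_2 T_1 T_3 \circ \Psi^{-1}$ applied to $[E_2, T_{13}(E_2)]_{q^{-2}}$. Beyond this bookkeeping step, no new conceptual input is needed, and the verification is a direct but lengthy computation.
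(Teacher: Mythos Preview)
Your outline is correct and essentially matches the paper's proof: the paper also verifies \eqref{eq:irSyst2} directly for the factorisation $\Qkm_{\widet{w_0}^\prime}=\Qkm^{[4]}\Qkm^{[3]}\Qkm^{[2]}\Qkm^{[1]}$, and the auxiliary identities you anticipate needing are precisely the content of Lemmas \ref{A3commE2} and \ref{A3r2}. Two small corrections: first, the element ``$[E_2,T_{13}(E_2)]_{q^{-2}}$'' does not appear here (that is the $AII_3$ root vector); in type $AIII_3$ one has $\Qkm_2=\sum_{n\ge 0}\frac{(q-q^{-1})^n}{\{2n\}!!}(q^2c_2)^nE_2^{2n}$, so $\Qkm^{[3]}$ is a series in $T_{213}(E_2)$. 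Second, the $i=1$ check is immediate rather than a four-term computation: since $\Qkm^{[4]}=\Qkm_1$ is leftmost and ${}_1r$ annihilates $\Qkm^{[1]},\Qkm^{[2]},\Qkm^{[3]}$ by \cite[8.26,(4)]{b-Jantzen96}, one gets $\ir{1}{\Qkm_{\widet{w_0}^\prime}}=(q-q^{-1})c_1E_3\Qkm_{\widet{w_0}^\prime}$ at once, leaving only $i=2$ for the detailed calculation.
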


The following relations are needed for the proof of Proposition \ref{AppProp: A3 Qkmw0'}. They are checked by induction. 

\begin{lemma}\label{A3commE2}
For any $n \in \mathbb{N}$, the relations
\begin{align}
 &\big(T_{2}(E_3)T_{2}(E_1) \big)^n E_2 = q^{-2n}E_2 \big(T_{2}(E_3)T_{2}(E_1) \big)^n, \label{eq:A3commE2 T2E1E3}\\
 &T_{213}(E_2)^nE_2 = E_2T_{213}^n(E_2) - (q-q^{-1}) \{n \} T_{213}(E_2)^{n-1}T_{2}(E_3)T_{2}(E_1), \label{eq:A3commE2 T213E2}\\
 &\big(E_1E_3\big)^n E_2 = q^{2n}E_2 \big(E_1E_3 \big)^n - q \{ n \}\big(E_1E_3\big)^{n-1}(E_3T_{2}(E_1) + e_1T_2(E_3) \label{eq:A3commE2 E1E3}\\
&\qquad{}- q^2 \{n \}^2 \big(E_1E_3 \big)^{n-1} T_{213}(E_2) \nonumber  
\end{align}
hold in $U_q(\mathfrak{sl}_4)$.
\end{lemma}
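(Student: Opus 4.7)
\textbf{Proposal for proving Lemma \ref{A3commE2}.}

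The three identities will be proved in the order stated, each by induction on $n$, with the earlier identities feeding into the later ones. My first step in each case is to unfold the relevant Lusztig-automorphism images using the standard formulas in $U_q(\mathfrak{sl}_4)$: $T_2(E_1) = E_2 E_1 - q^{-1} E_1 E_2$, $T_2(E_3) = E_2 E_3 - q^{-1} E_3 E_2$, and $T_{213}(E_2) = T_2 T_1 T_3(E_2)$, which can be expanded as a quadruple $q$-commutator in $E_1, E_2, E_3$. With these expressions and the $q$-Serre relations $E_j^2 E_2 - (q+q^{-1}) E_j E_2 E_j + E_2 E_j^2 = 0$ for $j = 1, 3$, the $n = 1$ case of each identity becomes a purely mechanical verification.

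For the base case of \eqref{eq:A3commE2 T2E1E3}, observe that $T_2(E_1)$ and $T_2(E_3)$ commute (they lie in commuting root spaces indexed by $\alpha_1 + \alpha_2$ and $\alpha_2 + \alpha_3$). Applying $T_2$ to the Serre-type identity $E_1 E_2 = q E_2 E_1 - T_2(E_1)^{-1}$-style relation, or equivalently, expanding $T_2(E_1) E_2$ using the formula above and then applying $E_2$-weight considerations, one finds $T_2(E_1) E_2 = q^{-1} E_2 T_2(E_1)$ and symmetrically $T_2(E_3) E_2 = q^{-1} E_2 T_2(E_3)$. Multiplying these gives the $n=1$ case of \eqref{eq:A3commE2 T2E1E3}, and the induction step is immediate:
\begin{equation*}
\big(T_2(E_3) T_2(E_1)\big)^{n+1} E_2 = \big(T_2(E_3) T_2(E_1)\big) \cdot q^{-2n} E_2 \big(T_2(E_3) T_2(E_1)\big)^n = q^{-2(n+1)} E_2 \big(T_2(E_3) T_2(E_1)\big)^{n+1}.
\end{equation*}

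For \eqref{eq:A3commE2 T213E2}, I would first establish the $n=1$ identity $T_{213}(E_2) E_2 = E_2 T_{213}(E_2) - (q-q^{-1}) T_2(E_3) T_2(E_1)$ by direct expansion; this is a commutator formula analogous to \cite[8.26, (4)]{b-Jantzen96} adapted to our setting. For the induction step, I multiply the inductive hypothesis on the left by $T_{213}(E_2)$ and push $T_{213}(E_2)$ past the final $E_2$ using the base case. The correction term $(q-q^{-1}) T_{213}(E_2)^{n-1} \cdot T_{213}(E_2) \cdot T_2(E_3) T_2(E_1)$ is then absorbed after observing (by a short commutator check) that $T_{213}(E_2)$ commutes with $T_2(E_3) T_2(E_1)$, so the higher-order term closes up cleanly into $\{n+1\}$.

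For \eqref{eq:A3commE2 E1E3}, the base case $n=1$ is the identity $E_1 E_3 E_2 = q^2 E_2 E_1 E_3 - q(E_3 T_2(E_1) + E_1 T_2(E_3)) - q^2 T_{213}(E_2)$, which follows by twice applying $E_j E_2 = q E_2 E_j - T_2(E_j)$ for $j = 1, 3$ and rewriting the $T_2(E_1) T_2(E_3)$-style quadratic term using the definition of $T_{213}(E_2)$. For the induction step I write $(E_1 E_3)^{n+1} E_2 = (E_1 E_3) \cdot (E_1 E_3)^n E_2$, substitute the hypothesis, and then handle the three resulting terms: the first uses the base case again, while the second and third require pushing $E_1 E_3$ across $E_3 T_2(E_1) + E_1 T_2(E_3)$ and across $T_{213}(E_2)$. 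These auxiliary commutators (which I would record as small sublemmas before launching the induction) are themselves verified from the $q$-Serre relations and \eqref{eq:A3commE2 T2E1E3} applied in the weight $2\widetilde{\alpha}_1$. The main obstacle is bookkeeping in this last induction: the three distinct correction terms all feed into one another, and one must verify that after each induction step the coefficients $q\{n+1\}$ in front of $E_3 T_2(E_1) + E_1 T_2(E_3)$ and $q^2 \{n+1\}^2$ in front of $T_{213}(E_2)$ emerge exactly, with no residue. Tracking the interplay of the three terms, rather than any single computation, is the delicate point.
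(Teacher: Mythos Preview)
Your overall strategy---induction on $n$, with the $n=1$ cases verified directly from the Lusztig-automorphism formulas and the Serre relations---is exactly what the paper intends; its proof consists of the single sentence ``They are checked by induction.'' However, there is a genuine error in your induction step for \eqref{eq:A3commE2 T213E2}. You assert that $T_{213}(E_2)$ commutes with $T_2(E_3)T_2(E_1)$, but this is false. Applying $T_2^{-1}$ reduces the question to whether $T_{13}(E_2)$ commutes with $E_3 E_1$; a short computation with the Serre relations (or the identities already recorded in Lemma~\ref{lem:RankTwoAIComm}) gives instead $E_j\, T_{13}(E_2) = q\, T_{13}(E_2)\, E_j$ for $j=1,3$, and hence
\[
T_2(E_3)T_2(E_1)\, T_{213}(E_2) \;=\; q^{2}\, T_{213}(E_2)\, T_2(E_3)T_2(E_1).
\]
With your stated (plain) commutation the induction would yield the coefficient $1+\{n\}$ in place of $\{n+1\}$, and the step fails. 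With the correct $q^{2}$-commutation one obtains $1+q^{2}\{n\}=\{n+1\}$ and the argument closes as intended.

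A minor secondary slip: the relation you quote as $E_j E_2 = q E_2 E_j - T_2(E_j)$ should read $E_j E_2 = q E_2 E_j - q\,T_2(E_j)$ (compare the $n=1$ case of Lemma~\ref{lem:RankTwoAIComm}); this affects the bookkeeping in your base case for \eqref{eq:A3commE2 E1E3} but is easily corrected.
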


\begin{lemma}\label{A3r2}
For any $n \in \mathbb{N}$, the relations
\begin{align}
\ir{2}{T_{213}(E_2)^n} &= q^{-2}(q-q^{-1})^2\{n\}E_1E_3T_{213}(E_2)^{n-1}, \label{eq:A3r2 T213E2}\\
\ir{2}{T_{2}(E_3)^n T_{2}(E_1)^n } &= q^{-1}(q-q^{-1})\{n\} E_3T_2(E_3)^{n-1}T_2(E_1)^n \label{eq:A3r2 T2E3}\\ 
&\quad{} + q^{-1}(q-q^{-1})\{n\}E_1T_2(E_3)^nT_2(E_1)^{n-1} \nonumber \\ 
&\quad\quad{} + (q-q^{-1}) \{n \}^2 T_{213}(E_2) \big( T_2(E_1)T_2(E_3) \big)^{n-1} \nonumber
\end{align}
hold in $U_q(\mathfrak{sl}_4)$.
\end{lemma}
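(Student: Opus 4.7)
Both identities will be proved by induction on $n$, driven by the Leibniz rule \eqref{eq:ir} for the skew derivation ${}_2r$ together with a short list of commutation relations in $U_q(\lie{sl}_4)$. The base cases reduce to direct computations using the explicit formulas $T_2(E_j) = E_2 E_j - q^{-1} E_j E_2$ (for $j \in \{1,3\}$) and $T_{213}(E_2) = T_2(E_3) E_1 - q^{-1} E_1 T_2(E_3)$; the latter can be verified by expanding both sides as three-letter monomials in $E_1, E_2, E_3$ and using $E_1 E_3 = E_3 E_1$. The preparatory identities are the quantum Serre consequence $T_2(E_j) E_j = q E_j T_2(E_j)$ for $j \in \{1,3\}$, the commutativity $T_2(E_1) T_2(E_3) = T_2(E_3) T_2(E_1)$ (from $E_1 E_3 = E_3 E_1$ and the fact that $T_2$ is an algebra automorphism), and the cross relation $T_2(E_3) E_1 = q^{-1} E_1 T_2(E_3) + T_{213}(E_2)$ together with its $1 \leftrightarrow 3$ symmetric partner.

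For \eqref{eq:A3r2 T213E2}, the base case follows by expanding $T_{213}(E_2) = E_2 E_1 E_3 - q^{-1} E_1 E_2 E_3 - q^{-1} E_3 E_2 E_1 + q^{-2} E_1 E_3 E_2$ and applying ${}_2r$ termwise; the coefficients collapse to $(1 - q^{-2})^2 = q^{-2}(q - q^{-1})^2$. For the inductive step, the weight $\alpha_1 + \alpha_2 + \alpha_3$ of $T_{213}(E_2)$ satisfies $(\alpha_2, \alpha_1 + \alpha_2 + \alpha_3) = 0$, so Leibniz gives
\[
  \ir{2}{T_{213}(E_2)^n} = \ir{2}{T_{213}(E_2)}\, T_{213}(E_2)^{n-1} + T_{213}(E_2)\, \ir{2}{T_{213}(E_2)^{n-1}}.
\]
The inductive hypothesis combined with the $q$-commutation $T_{213}(E_2)\, E_1 E_3 = q^2 E_1 E_3\, T_{213}(E_2)$ combines these two contributions with coefficient $1 + q^2 \{n-1\} = \{n\}$, as required.

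For \eqref{eq:A3r2 T2E3}, I will first establish two auxiliary identities by separate inductions. The first, $\ir{2}{T_2(E_j)^n} = q^{-1}(q - q^{-1}) \{n\} E_j T_2(E_j)^{n-1}$ for $j \in \{1,3\}$, follows from the base case $\ir{2}{T_2(E_j)} = q^{-1}(q-q^{-1}) E_j$ together with $T_2(E_j) E_j = q E_j T_2(E_j)$. The second, $T_2(E_3)^n E_1 = q^{-n} E_1 T_2(E_3)^n + [n]\, T_{213}(E_2)\, T_2(E_3)^{n-1}$, is obtained by iterating the cross relation and pulling each emerging $T_{213}(E_2)$ factor past the remaining $T_2(E_3)$'s via $T_2(E_3)\, T_{213}(E_2) = q\, T_{213}(E_2)\, T_2(E_3)$; the telescoping $q$-powers sum to $q^{n-1} + q^{n-3} + \cdots + q^{-(n-1)} = [n]$. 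Since $T_2(E_1)$ commutes with $T_2(E_3)$, Leibniz then gives
\[
  \ir{2}{T_2(E_3)^n\, T_2(E_1)^n} = \ir{2}{T_2(E_3)^n}\, T_2(E_1)^n + q^n\, T_2(E_3)^n\, \ir{2}{T_2(E_1)^n},
\]
and substituting the two auxiliary identities together with $q^{n-1}[n] = \{n\}$ produces exactly the three summands of the target, the $\{n\}^2$ coefficient on the $T_{213}(E_2)$-term emerging as the product $\{n\} \cdot q^{n-1}[n]$.

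The main obstacle is the pair of $q$-commutations $T_2(E_3)\, T_{213}(E_2) = q\, T_{213}(E_2)\, T_2(E_3)$ and $T_{213}(E_2)\, E_j = q E_j\, T_{213}(E_2)$ for $j \in \{1,3\}$ (the latter implying $T_{213}(E_2)\, E_1 E_3 = q^2 E_1 E_3\, T_{213}(E_2)$), which drive the collapse of the inductive sums to a single monomial. The first is obtained by substituting $T_{213}(E_2) = T_2(E_3) E_1 - q^{-1} E_1 T_2(E_3)$ into $T_2(E_1)\, T_{213}(E_2)$ and repeatedly applying the cross relation, $T_2(E_1) E_1 = q E_1 T_2(E_1)$, and the commutativity of $T_2(E_1)$ with $T_2(E_3)$; the stray $E_1 T_2(E_1) T_2(E_3)$ terms cancel to yield $T_2(E_1)\, T_{213}(E_2) = q\, T_{213}(E_2)\, T_2(E_1)$, and the $T_2(E_3)$ case follows by the $1 \leftrightarrow 3$ symmetry of $T_{213}(E_2)$ (which uses $T_1 T_3 = T_3 T_1$). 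The second is verified by expanding $T_{213}(E_2)$ into the above four monomials and simplifying $T_{213}(E_2) E_1 - q E_1 T_{213}(E_2)$ via the quantum Serre relation $E_2 E_1^2 + E_1^2 E_2 = (q + q^{-1}) E_1 E_2 E_1$; the $j = 3$ case again follows by symmetry.
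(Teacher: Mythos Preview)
Your proof is correct and follows the inductive approach the paper indicates (the paper simply states that these relations ``are checked by induction'' without giving details). Your decomposition of \eqref{eq:A3r2 T2E3} via the two auxiliary identities ${}_2r\big(T_2(E_j)^n\big)$ and $T_2(E_3)^n E_1 = q^{-n}E_1T_2(E_3)^n + [n]\,T_{213}(E_2)\,T_2(E_3)^{n-1}$, together with the $q$-commutations $T_2(E_j)\,T_{213}(E_2)=q\,T_{213}(E_2)\,T_2(E_j)$ and $T_{213}(E_2)\,E_j=q\,E_j\,T_{213}(E_2)$, is exactly the sort of bookkeeping one needs to make the induction go through.
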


\begin{proof}[Proof of Proposition \ref{AppProp: A3 Qkmw0'}]

For $\widet{w_0}^{\prime} = \os_{2}\os_{1}\os_{2}\os_{1}$ we have
\begin{equation*}
 \Qkm_{\widet{w_0}^{\prime}} = \Qkm^{[4]}\Qkm^{[3]}\Qkm^{[2]}\Qkm^{[1]},
\end{equation*}
where
\begin{align*}
\Qkm^{[4]} &\overset{\eqref{eq:LongestWordPartialQkmPart}}{=} \Qkm_{\tau_0(1)} = \Qkm_1,\\
\Qkm^{[3]} &\overset{\phantom{\eqref{eq:LongestWordPartialQkmPart}}}{=} \Psi \circ \widet{T_2}\widet{T_1} \circ \Psi^{-1}(\Qkm_2) = \sum_{n \geq 0} \dfrac{(q-q^{-1})^n}{ \{ 2n \}!!}(q^4c_1^2c_2)^n T_{213}(E_2)^{2n},\\
\Qkm^{[2]} &\overset{\phantom{\eqref{eq:LongestWordPartialQkmPart}}}{=}\Psi \circ \widet{T_2} \circ \Psi^{-1}(\Qkm_1) = \sum_{n \geq 0} \dfrac{(q-q^{-1})^n}{ \{n \}!}(q^2c_1c_2)^n T_2(E_1)^nT_2(E_3)^n,\\
\Qkm^{[1]} &\overset{\phantom{\eqref{eq:LongestWordPartialQkmPart}}}{=} \Qkm_2.
\end{align*}
By Lemma \ref{lem:QkmRankOneA1xA1}, property \eqref{eq:ir} of the skew derivative ${}_1r$ and \cite[8.26, (4)]{b-Jantzen96} we have 
$\ir{1}{\Qkm_{\widet{w_0}^{\prime}}}=	(q-q^{-1})c_1E_3\Qkm_{\widet{w_0}^{\prime}}$.
We want to show that
\begin{equation*}
\ir{2}{\Qkm_{\widet{w_0}^{\prime}}} = (q-q^{-1})(q^2c_2)E_2\Qkm_{\widet{w_0}^{\prime}}.
\end{equation*}
For $i = 1, 2, 3, 4$, we let $\Qkm_{[i]} = K_2\Qkm^{[i]}K_2^{-1}$.
Note that we have $\Qkm_{[3]} = \Qkm^{[3]}$. 
By the property \eqref{eq:ir} of the skew derivative ${}_2r$, we have
\begin{align*}
\ir{2}{\Qkm_{\widet{w_0}^{\prime}}} = \Qkm_{[4]} \ir{2}{\Qkm^{[3]}} \Qkm^{[2]}\Qkm^{[1]} + \Qkm_{[4]}\Qkm_{[3]} \ir{2}{\Qkm^{[2]}} \Qkm^{[1]} + \Qkm_{[4]} \Qkm_{[3]}\Qkm_{[2]} \ir{2}{\Qkm^{[1]}}.
\end{align*}
Using Lemma \ref{A3r2}, we have
\begin{align}
	\ir{2}{\Qkm^{[3]}}	&\overset{\phantom{\eqref{eq:A3r2 T213E2}}}{=}	\sum_{n \geq 0} \dfrac{(q-q^{-1})^n}{ \{ 2n \}!!}(q^4c_1^2c_2)^n \ir{2}{T_{213}(E_2)^{2n}} \nonumber \\
						&\overset{\eqref{eq:A3r2 T213E2}}{=}	q^{-2}(q-q^{-1})^2 \sum_{n \geq 1} \dfrac{(q-q^{-1})^n}{ \{ 2n \}!!}(q^4c_1^2c_2)^n \{ 2n \} E_1E_3T_{213}(E_2)^{2n-1} \nonumber \\
						&\overset{\phantom{\eqref{eq:A3r2 T213E2}}}{=}	q^{-2}(q-q^{-1})^2 E_1E_3 \sum_{n \geq 0} \dfrac{(q-q^{-1})^{n+1}}{ \{2n \}!!}(q^4c_1^2c_2)^{n+1} T_{213}(E_2)^{2n+1} \nonumber \\
						&\overset{\phantom{\eqref{eq:A3r2 T213E2}}}{=}	(q-q^{-1})^3(q^2c_1^2c_2) E_1E_3T_{213}(E_2) \Qkm^{[3]}. \label{eq:A3 ir2 Qkm3} 
\end{align}
Similarly, we have
\begin{align}
	&\ir{2}{\Qkm^{[2]}} \overset{\phantom{\eqref{eq:A3r2 T213E2}}}{=}	\sum_{n \geq 0} \dfrac{(q-q^{-1})^n}{ \{ n \}! } (q^2c_1c_2)^n \ir{2}{ T_2(E_1)^nT_2(E_3)^n} \nonumber\\
						&\overset{\eqref{eq:A3r2 T2E3}}{=}	q^{-1}(q-q^{-1})^2(q^2c_1c_2) \Big( E_3T_2(E_1) + E_1T_2(E_3) \Big) \Qkm^{[2]} \nonumber\\
							&\quad{} + (q-q^{-1})T_{213}(E_2) \sum_{n \geq 1} \dfrac{(q-q^{-1})^n}{ \{ n \}!}(q^2c_1c_2)^n \{n \}^2 T_2(E_1)^{n-1}T_2(E_3)^{n-1} \nonumber\\
						&\overset{\phantom{\eqref{eq:A3r2 T213E2}}}{=}	(q-q^{-1})^2(qc_1c_2) \Big( E_3T_2(E_1) + E_1T_2(E_3) \Big) \Qkm^{[2]} \nonumber \\
							&\quad{} + (q-q^{-1})^2(q^2c_1c_2)T_{213}(E_2) \sum_{n \geq 0} \dfrac{(q-q^{-1})^n}{ \{ n \}! } (q^2c_1c_2)^n \{n+1 \} T_2(E_1)^nT_2(E_3)^n \nonumber.
\end{align} 
We want to write the last summand in terms of $\Qkm^{[2]}$. To do this, we use the fact that $\{n+1\} = 1 + q^2\{n\}$ for $n \geq 1$. This is a useful fact that will be used again in future calculations. Using this, we have
\begin{align*}
&\sum_{n \geq 0} \dfrac{(q-q^{-1})^n}{ \{ n \}! } (q^2c_1c_2)^n \{n+1 \} T_2(E_1)^nT_2(E_3)^n\\
	&\qquad{}=	1 + \sum_{n \geq 1} \dfrac{(q-q^{-1})^n}{ \{ n \}! } (q^2c_1c_2)^n ( 1 + q^2\{n\} ) T_2(E_1)^nT_2(E_3)^n \\
	&\qquad{}=	\Qkm^{[2]} + q^2\sum_{n \geq 1} \dfrac{(q-q^{-1})^n}{ \{ n-1 \}! } (q^2c_1c_2)^n T_2(E_1)^nT_2(E_3)^n \\
	&\qquad{}=	\Qkm^{[2]} + q^2\sum_{n \geq 0} \dfrac{(q-q^{-1})^{n+1}}{ \{ n \}! } (q^2c_1c_2)^{n+1} T_2(E_1)^{n+1}T_2(E_3)^{n+1}\\
	&\qquad{}=	\Qkm^{[2]} + (q-q^{-1})(q^4c_1c_2)T_2(E_1)T_2(E_3)\Qkm^{[2]}.
\end{align*}
Inserting this equation into the expression for $\ir{2}{\Qkm^{[2]}}$, we obtain
\begin{align}
	\ir{2}{\Qkm^{[2]}}	&=(q-q^{-1})^2(qc_1c_2) \Big( E_3T_2(E_1) + E_1T_2(E_3) \Big) \Qkm^{[2]} \label{eq:A3 ir2 Qkm2} \\
							&\quad{} + (q-q^{-1})^2(q^2c_1c_2)T_{213}(E_2)\Qkm^{[2]} \nonumber \\ 
							&\qquad{}+ (q-q^{-1})^3(q^6c_1^2c_2^2)T_{213}(E_2)T_2(E_1)T_2(E_3)\Qkm^{[2]} \nonumber. 
\end{align}
Finally, we have
\begin{equation} \label{eq:A3 ir2 Qkm1}
	\ir{2}{\Qkm^{[1]}}	\overset{\eqref{eq:QkmRankOneA1xA1}}{=}	(q-q^{-1})(q^2c_2)E_2\Qkm^{[1]}.
\end{equation}
Now, we use Lemma \ref{A3commE2} to rewrite the term~$\Qkm_{[4]}\Qkm_{[3]}\Qkm_{[2]}\ir{2}{\Qkm^{[1]}}$. By calculations similar to those leading to \eqref{eq:A3 ir2 Qkm3} and \eqref{eq:A3 ir2 Qkm2}, we obtain
\begin{align}
	\Qkm_{[2]}E_2	&=	E_2\Qkm^{[2]}, \label{eq:A3 Qkm2E2}\\ 
	\Qkm_{[3]}E_2	&=	E_2\Qkm^{[3]} - (q-q^{-1})^2(q^4c_1^2c_2)\Qkm_{[3]}T_{213}(E_2)T_2(E_1)T_2(E_3), \label{eq:A3 Qkm3E2}\\
	\Qkm_{[4]}E_2	&=	E_2\Qkm^{[4]} - q^{-1}(q-q^{-1})c_1\Qkm_{[4]} \big(E_1T_2(E_3) + E_3T_2(E_1) \big) \label{eq:A3 Qkm4E2} \\
		&\qquad{}- (q-q^{-1})c_1 \big( \Qkm_{[4]} + (q-q^{-1})c_1\Qkm_{[4]}E_1E_3 \big)T_{213}(E_2).\nonumber
\end{align}
We use these to obtain the following.
\begin{align*}
\Qkm_{[4]}\Qkm_{[3]}&\Qkm_{[2]}\ir{2}{\Qkm^{[1]}}	\\
	&\overset{\eqref{eq:A3 ir2 Qkm1}}{=}	(q-q^{-1})(q^2c_2)\Qkm_{[4]}\Qkm_{[3]}\Qkm_{[2]}E_2\Qkm^{[1]}\\
	&\overset{\eqref{eq:A3 Qkm2E2}}{=}	(q-q^{-1})(q^2c_2)\Qkm_{[4]}\Qkm_{[3]}E_2\Qkm^{[2]}\Qkm^{[1]}\\
	&\overset{\eqref{eq:A3 Qkm3E2}}{=}	(q-q^{-1})(q^2c_2)\Qkm_{[4]}E_2\Qkm^{[3]}\Qkm^{[2]}\Qkm^{[1]}\\ &\qquad{}- (q-q^{-1})^3(q^6c_1^2c_2^2)\Qkm_{[4]}\Qkm_{[3]}T_{213}(E_2)T_2(E_1)T_2(E_3)\Qkm^{[2]}\Qkm^{[1]}\\
	&\overset{\eqref{eq:A3 Qkm4E2}}{=}	(q-q^{-1})(q^2c_2)\Big(E_2\Qkm^{[4]} - q^{-1}(q-q^{-1})c_1\Qkm_{[4]} \big(E_1T_2(E_3) + E_3T_2(E_1) \big)\\
				&\qquad{}- (q-q^{-1})c_1 \big( \Qkm_{[4]} + (q-q^{-1})c_1\Qkm_{[4]}E_1E_3 \big) T_{213}(E_2) \Big) \Qkm^{[3]}\Qkm^{[2]}\Qkm^{[1]}\\ 
				&\qquad\qquad{}- (q-q^{-1})^3(q^6c_1^2c_2^2)\Qkm_{[4]}\Qkm_{[3]}T_{213}(E_2)T_2(E_1)T_2(E_3)\Qkm^{[2]}\Qkm^{[1]}.
\end{align*}
We gather terms now and get
\begin{align*}
		\Qkm_{[4]}&\Qkm_{[3]}\Qkm_{[2]}\ir{2}{\Qkm^{[1]}}\\	
				&= (q-q^{-1})(q^2c_2)E_2\Qkm_{\widet{w_0}{\prime}} \\
					&\quad{}-(q-q^{-1})^2(qc_1c_2)\Qkm_{[4]} \big(E_1T_2(E_3) + E_3T_2(E_1) \big)\Qkm^{[3]}\Qkm^{[2]}\Qkm^{[1]}\\
					&\quad{}\quad{} - (q-q^{-1})^2(q^2c_1c_2)\Qkm_{[4]}\Qkm_{[3]}T_{213}(E_2)\Qkm^{[2]}\Qkm^{[1]}\\ 
					&\quad\quad\quad{}+ (q-q^{-1})^3(q^6c_1^2c_2^2)\Qkm_{[4]}\Qkm_{[3]}T_{213}(E_2)T_2(E_1)T_2(E_3)\Qkm^{[2]}\Qkm^{[1]}\\
						&\quad\quad\quad\quad{} -(q-q^{-1})^3(q^2c_1^2c_2)\Qkm_{[4]}E_1E_3T_{213}(E_2)\Qkm^{[3]}\Qkm^{[2]}\Qkm^{[1]}\\
			&= (q-q^{-1})(q^2c_2)E_2\Qkm_{\widet{w_0}^{\prime}} - \Qkm_{[4]}\Qkm_{[3]} \ir{2}{\Qkm^{[2]}}\Qkm^{[1]} - \Qkm_{[4]} \ir{2}{\Qkm^{[3]}} \Qkm^{[2]}\Qkm^{[1]}
\end{align*}
where we use the fact that $E_1T_2(E_3)$ and $E_3T_2(E_1)$ both commute with $T_{213}(E_2)$, and hence with $\Qkm^{[3]}$. It follows that
\begin{align*}
	\ir{2}{\Qkm_{\widet{w_0}^{\prime}}}
		&=	\Qkm_{[4]} \ir{2}{\Qkm^{[3]}} \Qkm^{[2]}\Qkm^{[1]} + \Qkm_{[4]}\Qkm_{[3]} \ir{2}{\Qkm^{[2]}} \Qkm^{[1]} + \Qkm_{[4]} \Qkm_{[3]}\Qkm_{[2]} \ir{2}{\Qkm^{[1]}}\\
		&=	(q-q^{-1})(q^2c_2)E_2\Qkm_{\widet{w_0}^{\prime}}
\end{align*}
as required.
\end{proof}
%


\subsection{Type $AIII_n$ for $n \geq 4$}\label{app:AIIIge4}

Consider the Satake diagram of type $AIII_n$ for $n \geq 4$.

\begin{center}
	\begin{tikzpicture}
		[white/.style={circle,draw=black,inner sep = 0mm, minimum size = 3mm},
		black/.style={circle,draw=black,fill=black, inner sep= 0mm, minimum size = 3mm}]
	
		\node[white] (1)	 [label = below:{\scriptsize $1$}]	{};
		\node[white] (first) [right = of 1] [label = below:{\scriptsize $2$}]  {}
			edge (1);
		\node[black] (second) [right=of first] {}
			edge (first);
		\node[black] (last) [right=1.5cm of second] {}
			edge [dashed] (second);		
		\node[white] (fourth) [right=of last]  {}
			edge (last);
		\node[white] (2) [right = of fourth] [label = below:{\scriptsize $\phantom{1} n \phantom{1}$}]{}
			edge(fourth) 
			edge	 [latex'-latex' , shorten <=3pt, shorten >=3pt, bend right=30, densely dotted] node[auto,swap] {} (1);
	\end{tikzpicture}
\end{center}
In this case the restricted root system is of type $B_2$ with
\begin{equation*}
\widet{\alpha_1} =  \dfrac{\alpha_1 + \alpha_n}{2},	\qquad	\widet{\alpha_2} = \dfrac{\alpha_2 + \alpha_3 + \dotsm + \alpha_{n-1}}{2}.
\end{equation*}
The subgroup $\widet{W} \subset W^{\Theta}$ is generated by the elements
\begin{equation*}
\widet{s_1} = s_1s_n, \qquad
\widet{s_2} = w_Xw_{\{2,n-1\} \cup X} = s_2s_3 \dots s_{n-1} \dots s_3s_2.
\end{equation*}
The longest word of the restricted Weyl group has two reduced expressions given by
\begin{equation*}
\widet{w_0} = \widet{s_1}\widet{s_2}\widet{s_1}\widet{s_2}, \qquad
\widet{w_0^{\prime}} = \widet{s_2}\widet{s_1}\widet{s_2}\widet{s_1}.
\end{equation*}
The definition \eqref{eq:ParameterSetC} and condition \eqref{eq:ciCond} imply that $c_1=c_n=\overline{c_1}$. By Lemmas \ref{lem:QkmRankOneA1xA1} and \ref{lem:QkmRankOneAIV} we have
\begin{align*}
\Qkm_1	&=	\sum_{k \geq 0} \dfrac{ \q^k }{\{k\}!}c_1^k (E_1E_n)^k,\\
\Qkm_2
	&=	\bigg( \sum_{k \geq 0}\dfrac{(c_{2}s(n-1))^k}{\{k\}!} T_2T_{w_X}(E_{n-1})^k \bigg)
		\bigg( \sum_{k \geq 0}\dfrac{(c_{n-1}s(2))^k}{\{k\}!} T_{n-1}T_{w_X}(E_2)^k \bigg).
\end{align*}

\begin{proposition} \label{AppProp: AIII PartialQkm1}
The partial quasi $K$-matrix $\Qkm_{\widet{w_0}}$ coincides with the quasi $K$-matrix $\Qkm$.
\end{proposition}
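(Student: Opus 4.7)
The plan is to mimic the argument for type $AIII_3$ in Propositions \ref{AppProp: A3 Qkmw0} and \ref{AppProp: A3 Qkmw0'}, with the added technical complication coming from the chain of black nodes $X=\{3,\dots,n-2\}$ between the two orbits of white nodes. Write $\widet{w_0} = \widet{s_1}\widet{s_2}\widet{s_1}\widet{s_2}$, so that
$$\Qkm_{\widet{w_0}} = \Qkm^{[4]}\Qkm^{[3]}\Qkm^{[2]}\Qkm^{[1]}$$
with $\Qkm^{[1]} = \Qkm_1$ and $\Qkm^{[4]} = \Qkm_{\tau_0(2)} = \Qkm_{n-1} = \Qkm_2$ by Proposition \ref{prop:LongestWordPartialQkmPart}. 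The middle factors are $\Qkm^{[2]} = \Psi \circ \widet{T_1}\circ \Psi^{-1}(\Qkm_2)$ and $\Qkm^{[3]} = \Psi \circ \widet{T_1}\widet{T_2}\circ \Psi^{-1}(\Qkm_1)$, where $\widet{T_1} = T_1 T_n$. The first step is to insert the closed rank one formulas from Lemmas \ref{lem:QkmRankOneA1xA1} and \ref{lem:QkmRankOneAIV}, and compute the Lusztig braid action on the root vectors appearing there, producing explicit series for each $\Qkm^{[j]}$.

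By Proposition \ref{prop:Omega1} each $\Qkm^{[j]}$ lies in $\reallywidehat{\widet{U}^+[\widet{w_0}]}\subseteq\widehat{U^+[w_Xw_0]}$, so property \eqref{eq:jrX=0} immediately yields $\ir{i}{\Qkm_{\widet{w_0}}} = 0$ for all $i\in X$. It therefore remains to verify the recursion \eqref{eq:irSyst2} for $i\in\{1,2,n-1,n\}$. Applying the diagram automorphism $k\mapsto n+1-k$, which commutes with $\tau$, and invoking Lemma \ref{lem:DiagAutofQkm} reduces the cases $i=n$ and $i=n-1$ to the cases $i=1$ and $i=2$ respectively.

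The case $i = 1$ proceeds in direct analogy with $\ir{1}{\Qkm_{w_0}}$ in Proposition \ref{AppProp: A3 Qkmw0}. Apply the derivation property \eqref{eq:ir} to split $\ir{1}{\Qkm_{\widet{w_0}}}$ into four contributions. The factor $\ir{1}{\Qkm^{[1]}}$ delivers the leading term $\q c_1 E_n \Qkm^{[1]}$, while the remaining three contributions must cancel against the terms produced when $E_n$ is pushed through the conjugates $\Qkm_{[2]}$, $\Qkm_{[3]}$, $\Qkm_{[4]}$. The required commutation identities and ${}_1r$-actions are higher-rank versions of Lemmas \ref{A3commE3} and \ref{A3r1}, now involving the root vectors $T_{w_X}(E_2)$, $T_{w_X}(E_{n-1})$ and their $\widet{T_1}$-twists; each is proved by induction on the exponent using braid and Serre relations through the $A_{n-4}$ chain $X$.

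The case $i = 2$ is the main obstacle. Here $\ir{2}{\Qkm^{[1]}} = 0$, so the leading contribution comes from $\ir{2}{\Qkm^{[4]}}$, which by \eqref{eq:QkmRankOneAIV} equals $\q q^{-1}c_{n-1}s(2)T_{w_X}(E_{n-1})\Qkm^{[4]}$, and the four terms produced by the derivation rule must combine to give the right hand side of \eqref{eq:irSyst2}. As emphasised in the remark following Theorem \ref{Thm:RankTwoPartialQkm}, the cancellations are arranged by the conjugation with $\Psi$, which supplies precisely the coefficient $\widet{c_1}$ relating $\Qkm^{[2]}$ and $\Qkm^{[3]}$ to the original rank one series. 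The verification demands analogues of Lemmas \ref{A3commE2} and \ref{A3r2} governing how $E_2$ and $T_{w_X}(E_{n-1})$ move past the double series in $\Qkm^{[2]}$ and $\Qkm^{[3]}$; the added novelty compared to $AIII_3$ is the extra conjugation by $T_{w_X}$, which will be handled by first commuting all $E_j$ for $j\in X$ through the relevant factors using Lemma \ref{lem:xmuinUwxw0} and then reducing to the $AIII_3$ identities. Once these identities are catalogued, the argument collapses exactly as in Proposition \ref{AppProp: A3 Qkmw0'}, completing the proof.
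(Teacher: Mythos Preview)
Your overall strategy is correct and matches the paper's, but you have inverted the difficulty of the two nontrivial cases. For the reduced expression $\widet{w_0}=\widet{s_1}\widet{s_2}\widet{s_1}\widet{s_2}$ the factor $\Qkm^{[4]}=\Qkm_2$ sits on the \emph{left}, and $\Qkm^{[1]},\Qkm^{[2]},\Qkm^{[3]}$ all lie in $\widehat{U^+[\widet{s_1}\widet{s_2}\widet{s_1}]}\subseteq \widehat{U^+[s_2w_0]}$. By \cite[8.26,(4)]{b-Jantzen96} this gives ${}_2r(\Qkm^{[j]})=0$ for $j=1,2,3$, so
\[
{}_2r(\Qkm_{\widet{w_0}})={}_2r(\Qkm^{[4]})\,\Qkm^{[3]}\Qkm^{[2]}\Qkm^{[1]}
=(q-q^{-1})q^{-1}c_2s(n-1)T_{w_X}(E_{n-1})\,\Qkm_{\widet{w_0}}
\]
immediately from Lemma~\ref{lem:QkmRankOneAIV}. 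There is no cancellation to arrange and no analogue of Lemmas~\ref{A3commE2} or~\ref{A3r2} is needed; the ``extra conjugation by $T_{w_X}$'' plays no role here. The relevant $AIII_3$ analogue is only Proposition~\ref{AppProp: A3 Qkmw0}; Proposition~\ref{AppProp: A3 Qkmw0'} treats the \emph{other} reduced expression $\widet{w_0}'=\widet{s_2}\widet{s_1}\widet{s_2}\widet{s_1}$, which in the $AIII_n$ setting is handled separately as Proposition~\ref{Approp:AIII PartialQkm2}.

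The genuinely nontrivial case is $i=1$ (and by symmetry $i=n$), exactly as in Proposition~\ref{AppProp: A3 Qkmw0}. This is where the commutation and ${}_1r$-identities (Lemmas~\ref{AIIIRankTwoRels} and~\ref{AIIIRankTwoLusDer} in the paper) are needed, including the two ``tricky'' relations \eqref{eq:AIIITrickComm1} and \eqref{eq:AIIITrickComm2} that have no direct $AIII_3$ counterpart because $\Qkm^{[2]}$ and $\Qkm^{[4]}$ are now \emph{products} of two series rather than single series. Your description of the $i=1$ case is along the right lines, but you should expect the extra work to appear there rather than for $i=2$.
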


We have the following relations needed in the proof of Proposition \ref{AppProp: AIII PartialQkm1}, proved by induction.

\begin{lemma} \label{AIIIRankTwoRels}
For any $k \in \mathbb{N}$ the relations
\begin{align}
&T_{1n(n-1)}T_{w_X}(E_2)^k E_n	=	q^{-k} E_n T_{1n(n-1)}T_{w_X}(E_2)^k, \nonumber \\
&T_{12n}T_{w_X}(E_{n-1})^kE_n		=	q^{-k} E_n T_{12n}T_{w_X}(E_{n-1})^k, \nonumber \\
&T_{n \cdots 3}(E_2)^k E_n		=	q^{-k} E_n T_{n \cdots 3}(E_2)^k, \nonumber \\
&T_{1 \cdots (n-2)}(E_{n-1})^k E_n
	=	q^k E_n T_{1 \cdots (n-2)}(E_{n-1})^k \nonumber\\ &\hspace{40pt}- q \{k \} T_{1 \cdots (n-2)}(E_{n-1})^{k-1} 								T_{12n}T_{w_X}(E_{n-1}), \nonumber \\
&T_{n-1}T_{w_X}(E_2)^k E_n
	=	q^k E_n T_{n-1}T_{w_X}(E_2)^k \nonumber \\
	&\hspace{40pt}- q \{k \} T_{n-1}T_{w_X}(E_2)^{k-1} T_{n \cdots 3}(E_2), \nonumber \\
&T_2T_{w_X}(E_{n-1})^k E_n
	=	q^k E_n T_2T_{w_X}(E_{n-1})^k  \nonumber \\
	&\hspace{40pt}- q \{k \} T_2T_{w_X}(E_{n-1})^{k-1} T_{2n}T_{w_X}(E_{n-1}), \nonumber \\
&T_{2n}T_{w_X}(E_{n-1}) T_{n-1}T_{w_X}(E_2)^k
	=	q^{-k}T_{n-1}T_{w_X}(E_2)^k T_{2n}T_{w_X}(E_{n-1}) \label{eq:AIIITrickComm1} \\
		&\hspace{40pt}+	q^{1-k} (q-q^{-1}) \{k \}T_{n-1}T_{w_X}(E_2)^{k-1}T_2T_{w_X}(E_{n-1})T_{n \cdots 3}(E_2),\nonumber \\
&T_{12n}T_{w_X}(E_{n-1})^{k} T_{n \cdots 3}(E_2)  \label{eq:AIIITrickComm2}\\
	&\hspace{40pt} =	\big( q^{-k} + (q-q^{-1})q^{1-k}\{ k \} \big) T_{n \cdots 3}(E_2) T_{12n}T_{w_X}(E_{n-1})^{k} \nonumber
\end{align}
hold in $U_q(\lie{sl}_{n+1})$.
\end{lemma}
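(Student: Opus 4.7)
The plan is to prove all eight relations by induction on $k \geq 1$, with each base case $k=1$ established by direct computation in $U_q(\mathfrak{sl}_{n+1})$. In every case the inductive pattern is the same: once the $k=1$ identity of the form $A \cdot E_n = q^{\epsilon} E_n A - q\, C$ (or $A \cdot E_n = q^{\epsilon} E_n A$) is in hand, one writes $A^k E_n = A^{k-1}(A E_n)$, applies the induction hypothesis to $A^{k-1} E_n$, and collects terms using the identity $\{k\} = 1 + q^2 + \cdots + q^{2(k-1)}$. For the relations carrying a correction term, this step also requires verifying that $A$ commutes with $C$ up to a $q$-power, which is an auxiliary claim to be checked in each case.

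First I would dispatch the three relations with no correction term. These assert that certain braided root vectors $q^{-1}$-commute with $E_n$. For $T_{n \cdots 3}(E_2)$, this root vector is built from nested $q^{-1}$-commutators starting with $E_2$, and its $q^{-1}$-commutation with $E_n$ follows directly from the construction together with the observation that only the outermost bracket interacts with $E_n$. For $T_{1n(n-1)}T_{w_X}(E_2)$ and $T_{12n}T_{w_X}(E_{n-1})$, I would first use braid relations among the $T_i$ (in particular $T_1 T_n = T_n T_1$ and the commutativity of $T_{w_X}$ with $T_1$ and $T_n$, since $X \cap \{1,n\} = \emptyset$) to put these expressions in a normal form where the $q^{-1}$-commutation with $E_n$ becomes transparent.

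Next, the three relations with a single correction term all reduce to identifying $[A, E_n]_q$ with $-q$ times the correction vector $B$. For example, for $T_{1\cdots(n-2)}(E_{n-1})$, the $k=1$ identity is essentially the definition of the next root vector $T_{1\cdots(n-2)n}(E_{n-1})$, which by braid relations equals $T_{12n}T_{w_X}(E_{n-1})$ up to a sign. The analogous identities for $T_{n-1}T_{w_X}(E_2)$ and $T_2 T_{w_X}(E_{n-1})$ follow by the same braid-word manipulations. The induction step additionally uses that $A$ commutes with the corresponding $B$ up to a $q$-power, which is either immediate from weight considerations or follows from a short commutator calculation.

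The main obstacle is the final two relations \eqref{eq:AIIITrickComm1} and \eqref{eq:AIIITrickComm2}, in which both sides involve products of two distinct braided root vectors rather than one such vector paired with $E_n$. For \eqref{eq:AIIITrickComm2}, I would apply $T_{w_X}^{-1}$ to localise the computation inside the rank-three parabolic indexed by $\{2, n-1, n\}$ and establish the base case there by a direct quantum Serre manipulation; the $\{k\}$-coefficient on the right then builds up by the usual induction. For \eqref{eq:AIIITrickComm1}, the correction term itself is a product of two distinct root vectors, so the induction step additionally requires a preliminary commutation identity between $T_{2n}T_{w_X}(E_{n-1})$ and the product $T_2 T_{w_X}(E_{n-1}) \, T_{n\cdots 3}(E_2)$ that is clean enough to iterate without spawning further corrections. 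Proving this preliminary identity — and ruling out cascading correction terms that would break the induction — is the most delicate aspect of the lemma, and I would handle it by a separate induction, again after localising inside the relevant rank-three subalgebra as for \eqref{eq:AIIITrickComm2}.
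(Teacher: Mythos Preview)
Your proposal is correct and matches the paper's approach: the paper simply states ``These are proved by induction'' without further detail, and your plan is a reasonable fleshing-out of exactly that induction. One small observation you may find useful: in \eqref{eq:AIIITrickComm2} the scalar coefficient $q^{-k} + (q-q^{-1})q^{1-k}\{k\}$ actually simplifies to $q^{k}$, so this relation is just a plain $q$-commutation $A^k D = q^k D A^k$ and the induction is immediate once the $k=1$ case $AD = qDA$ is checked---no cascading corrections arise.
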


\begin{lemma} \label{AIIIRankTwoLusDer}
For any $k \in \mathbb{N}$ the relations 
\begin{align}
&\ir{1}{T_{1 \cdots (n-2)}(E_{n-1})^k}  \label{eq: AIII ir1 T1...n-2(En-1)} \\
	&\hspace{40pt}=	q^{-1}(q-q^{-1}) \{k \} T_{2 \cdots (n-2)}(E_{n-1})T_{1 \cdots (n-2)}(E_{n-1})^{k-1}, \nonumber \\
&\ir{1}{T_{12n}T_{w_X}(E_{n-1})^k} \label{eq: AIII ir1 T12nTwX(En-1)} \\
	&\hspace{40pt}=	q^{-1}(q-q^{-1}) \{k \} T_{2n}T_{w_X}(E_{n-1})T_{12n}T_{w_X}(E_{n-1})^{k-1}, \nonumber \\
&\ir{1}{T_{1n(n-1)}T_{w_X}(E_2)^k} \label{eq: AIII ir1 T1n(n-1)TwX(E_2)} \\
	&\hspace{40pt}=	q^{-1}(q-q^{-1}) \{k \} T_{n(n-1)}T_{w_X}(E_2)T_{1n(n-1)}T_{w_X}(E_2)^{k-1} \nonumber
\end{align}
hold in $U_q(\lie{sl}_{n+1})$.
\end{lemma}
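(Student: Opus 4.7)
All three identities have the same shape and I would prove them by a single inductive scheme. In each case write the argument of ${}_1r$ as $x = T_1(y)$ where $y$ does not involve the index $1$: for the first formula $y = T_{2\cdots(n-2)}(E_{n-1})$; for the second, since $T_1$ commutes with $T_n$ and with $T_{w_X}$ (all indices in $X$ are at least $3$), we have $y = T_{2n}T_{w_X}(E_{n-1})$; for the third, since $T_1$ commutes with $T_n$ and $T_{n-1}$ when $n\ge 4$, we have $y = T_{n(n-1)}T_{w_X}(E_2)$. In all three cases $y\in U^+$ lies in the subalgebra generated by $\{E_j\mid j\neq 1\}$, so $\ir{1}{y}=0$, and $y$ has weight $\mu$ with $(\alpha_1,\mu)=-1$, so that $T_1(y) = E_1 y - q^{-1}\,y E_1$.

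Once this common setup is in place, the base case $k=1$ is immediate from the skew-derivation property \eqref{eq:ir}:
\begin{equation*}
\ir{1}{T_1(y)} \;=\; \ir{1}{E_1 y - q^{-1}yE_1}\;=\; y \;-\; q^{-1}\cdot q^{-1}\, y \;=\; q^{-1}(q-q^{-1})\,y.
\end{equation*}
The inductive step uses $x^k=x\cdot x^{k-1}$ and $(\alpha_1,\mathrm{wt}(x))=(\alpha_1,\alpha_1+\mu)=1$, so that \eqref{eq:ir} gives
\begin{equation*}
\ir{1}{x^k}\;=\;\ir{1}{x}\,x^{k-1} \;+\; q\, x\,\ir{1}{x^{k-1}}.
\end{equation*}
Substituting the inductive hypothesis and the base case produces the two terms $q^{-1}(q-q^{-1})yx^{k-1}$ and $(q-q^{-1})\{k-1\}\,xy\,x^{k-2}$, which collapse into $q^{-1}(q-q^{-1})\bigl(1+q^2\{k-1\}\bigr) yx^{k-1}=q^{-1}(q-q^{-1})\{k\}\,yx^{k-1}$ as soon as one has the $q$-commutation $xy = q\,yx$ and the identity $\{k\}=1+q^2\{k-1\}$.

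The real content of the proof is therefore the commutation $xy=q\,yx$ in each case. From $x = E_1 y - q^{-1}yE_1$ one computes
\begin{equation*}
xy - q\,yx \;=\; E_1 y^2 \;-\; (q+q^{-1})\, y E_1 y \;+\; y^2 E_1,
\end{equation*}
so what is needed is the quadratic Serre-type identity $E_1 y^2 - (q+q^{-1})y E_1 y + y^2 E_1 = 0$. I would verify this by observing that $y$ is a root vector of weight $\mu$ and that, in each of the three cases, the weight $\alpha_1 + 2\mu$ has a coefficient of $\alpha_2$ equal to $2$ (since $\mu$ itself has coefficient $1$ of $\alpha_2$), so $\alpha_1+2\mu$ is not a root of $\lie{sl}_{n+1}$ and there is no non-zero element of $U^+_{\alpha_1+2\mu}$; equivalently, $(\mathrm{ad}\,E_1)^2(y)=0$ in the quantum adjoint action, which is exactly the Serre-type relation above. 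A clean alternative is to derive the identity directly from $T_1(y^2)=x^2$ and the Serre relations for $E_1$ and a suitably chosen Chevalley generator adjacent to $1$, then push through the commuting Lusztig operators $T_n$, $T_{n-1}$ and $T_{w_X}$.

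The main obstacle is precisely this Serre-type commutation for the two more complicated vectors $T_{2n}T_{w_X}(E_{n-1})$ and $T_{n(n-1)}T_{w_X}(E_2)$: although the weight argument rules out $\alpha_1+2\mu$ as a root, one must still confirm that the relevant element of $U^+_{\alpha_1+2\mu}$ is zero and not some non-trivial element in a higher-dimensional weight space. I would handle this by applying $T_{w_X}^{-1}$ (and the commuting $T_n$, $T_{n-1}$) to reduce to a Serre relation between $E_1$ and a Chevalley generator or its $T_2$-image in $U_q(\lie{sl}_{n+1})$, where the relation is standard. Once the commutation is established, the three formulas of the lemma follow uniformly from the inductive scheme sketched above.
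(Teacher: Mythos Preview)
Your inductive scheme is exactly what the paper intends (it says only ``proved by induction''), and reducing everything to the single $q$-commutation $xy=qyx$ is the right way to organise it.

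One slip to fix: the assertion that $U^+_{\alpha_1+2\mu}=0$ because $\alpha_1+2\mu$ is not a root is false---that weight space has dimension equal to the Kostant partition number of $\alpha_1+2\mu$, which is positive here---so the ``equivalently'' is not an equivalence. You already flag this yourself, and your proposed remedy (peel off commuting Lusztig operators to reach a genuine Serre relation) does work. For instance in case~(i), applying $T_2^{-1}$ and using the braid relation $T_2^{-1}T_1T_2=T_1T_2T_1^{-1}$ together with $T_1^{-1}(z)=z$ for $z=T_{3\cdots(n-2)}(E_{n-1})$ converts $xy=qyx$ into the identical relation one index up, namely $T_2(z)\cdot z = q\,z\cdot T_2(z)$; iterating lands on the honest Serre relation $E_{n-2}E_{n-1}^2-(q+q^{-1})E_{n-1}E_{n-2}E_{n-1}+E_{n-1}^2E_{n-2}=0$. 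Cases~(ii) and~(iii) reduce the same way after first stripping off $T_n$, $T_{n-1}$ and $T_{w_X}$, all of which commute with $T_1$.
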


One can show that the above Lemmas still hold if we consider the case where we have no black dots. In this situation, the calculations differ slightly but the results still hold.

\begin{proof}[Proof of Proposition \ref{AppProp: AIII PartialQkm1}]
We write
\begin{equation}
	\Qkm_{\widet{w_0}} = \Qkm^{[4]}\Qkm^{[3]}\Qkm^{[2]}\Qkm^{[1]},
\end{equation}
where, recalling the notation $\widet{c_2}^2 = c_2c_{n-1}s(n-1)s(2)$, we have
\begin{align*}
	\Qkm^{[4]}	&\overset{\eqref{eq:LongestWordPartialQkmPart}}{=}	
						 \Qkm_2,\\
	\Qkm^{[3]}	&\overset{\phantom{\eqref{eq:LongestWordPartialQkmPart}}}{=}		\sum_{k \geq 0}\dfrac{(q-q^{-1})^k}{\{k\}!}(qc_1\widet{c_2}^2)^kT_{n \cdots 3}(E_2)^kT_{1 \cdots (n-2)}(E_{n-1})^k,\\
	\Qkm^{[2]}	&\overset{\phantom{\eqref{eq:LongestWordPartialQkmPart}}}{=}		\bigg(\sum_{k \geq 0} \dfrac{(qc_1c_2s(n-1))^{k}}{\{k\}!}T_{12n}T_{w_X}(E_{n-1})^{k} \bigg)\\ &\qquad\qquad\qquad\qquad\qquad \bigg( \sum_{k \geq 0} \dfrac{(qc_1c_{n-1}s(2))^{k}}{\{k\}!} T_{1n(n-1)}T_{w_X}(E_2)^{k} \bigg),\\
	\Qkm^{[1]}	&\overset{\phantom{\eqref{eq:LongestWordPartialQkmPart}}}{=}	\Qkm_1.
\end{align*}
When necessary, since $\Qkm^{[4]}$ and $\Qkm^{[2]}$ are a product of two infinite sums, we write $\Qkm^{[4]} = \Qkm^{[4;1]}\Qkm^{[4;2]}$ and $\Qkm^{[2]} = \Qkm^{[2;1]}\Qkm^{[2;2]}$.

For each $i = 1, 2, 3, 4$, let $\Qkm_{[i]} = K_1\Qkm^{[i]}K_1^{-1}$. We write $\Qkm_{[4]} = \Qkm_{[4;1]}\Qkm_{[4;2]}$ and $\Qkm_{[2]}= \Qkm_{[2;1]}\Qkm_{[2;2]}$. Now, by the rank one case for $\Qkm_2$ given in Lemma \ref{lem:QkmRankOneAIV} and \cite[8.26, (4)]{b-Jantzen96}, we obtain
\begin{equation}
	\ir{2}{\Qkm_{\widet{w_0}}} = (q-q^{-1})c_2s(n-1)T_{w_X}(E_{n-1})\Qkm_{\widet{w_0}}.
\end{equation}
The underlying symmetry implies that we only need to show that
\begin{equation*}
\ir{1}{\Qkm_{\widet{w_0}}} = \q c_1 E_n \Qkm_{\widet{w_0}}.
\end{equation*}
By Property \eqref{eq:ir} of the skew derivative ${}_1r$ and \cite[8.26, (4)]{b-Jantzen96} we have
\begin{equation} \label{AppEq: AIII ir1Qkmw}
\ir{1}{\Qkm_{\widet{w_0}}} = \Qkm_{[4]} \ir{1}{\Qkm^{[3]}} \Qkm^{[2]}\Qkm^{[1]} + \Qkm_{[4]}\Qkm_{[3]}\ir{1}\Qkm^{[2]}\Qkm^{[1]} + \Qkm_{[4]}\Qkm_{[3]}\Qkm_{[2]}\ir{1}{\Qkm^{[1]}}.
\end{equation}
Using Lemma \ref{AIIIRankTwoLusDer} and Lemma \ref{lem:QkmRankOneA1xA1} we find that
\begin{align}
	\ir{1}{\Qkm^{[3]}}	&\overset{\eqref{eq: AIII ir1 T1...n-2(En-1)}}{=}	(q-q^{-1})^2 c_1 \widet{c_2}^2T_{n \cdots 3}(E_2)T_{2 \cdots (n-2)}(E_{n-1})\Qkm^{[3]},\label{eq:AIIIr1X3}\\
	\ir{1}{\Qkm^{[1]}}	&\overset{\eqref{eq:QkmRankOneA1xA1}}{=}	(q-q^{-1})c_1E_n\Qkm^{[1]} \label{eq:AIIIr1X1}.
\end{align}
To write an expression for $\ir{1}{\Qkm^{[2]}}$, we use the splitting of $\Qkm^{[2]}$ into a product of two infinite sums.  By equations \eqref{eq: AIII ir1 T12nTwX(En-1)} and \eqref{eq: AIII ir1 T1n(n-1)TwX(E_2)} of Lemma \ref{AIIIRankTwoLusDer} we have
\begin{align}
	\ir{1}{\Qkm^{[2]}}	&=	\ir{1}{\Qkm^{[2;1]}}\Qkm^{[2;2]} + \Qkm_{[2;1]}\ir{1}{\Qkm^{[2;2]}} \label{eq: AIII ir1Qkm2}\\
						&=	(q-q^{-1})c_1c_2s(n-1)T_{2n}T_{w_X}(E_{n-1})\Qkm^{[2]} \nonumber \\
							&\qquad{}+	(q-q^{-1})c_1c_{n-1}s(2)\Qkm_{[2;1]} T_{n(n-1)}T_{w_X}(E_2)\Qkm^{[2;2]}. \nonumber
\end{align} 
We would like the last summand of this expression to be in terms of $\Qkm^{[2]}$. To this end, we use Equation \eqref{eq:AIIITrickComm2} in Lemma \ref{AIIIRankTwoRels} to obtain
\begin{align*}
&\Qkm_{[2;1]}T_{n(n-1)}T_{w_X}(E_2)\\	
	&\quad{}\overset{\phantom{\eqref{eq:AIIITrickComm2}}}{=}
		\bigg( 1 + \sum_{k \geq 1} \dfrac{(qc_1c_2s(n-1))^{k}}{\{k\}!}q^{k}T_{12n}T_{w_X}(E_{n-1})^{k} \bigg) T_{n(n-1)}T_{w_X}(E_2)\\
	&\quad{}\overset{\eqref{eq:AIIITrickComm2}}{=}	
		T_{n(n-1)}T_{w_X}(E_2)\Qkm^{[2;1]}\\ &\qquad\qquad{}+ q^2(q-q^{-1})c_1c_2s(n-1)T_{n(n-1)}T_{w_X}(E_2)T_{12n}T_{w_X}(E_{n-1})\Qkm^{[2;1]}.	 			
\end{align*}
Substituting this into \eqref{eq: AIII ir1Qkm2} it follows that
\begin{align}
	\ir{1}{\Qkm^{[2]}}	&=	(q-q^{-1})c_1c_2s(n-1)T_{2n}T_{w_X}(E_{n-1})\Qkm^{[2]} \label{eq:AIIIr1X2}\\ &\quad{}+ (q-q^{-1})c_1c_{n-1}s(2)T_{n(n-1)}T_{w_X}(E_2)\Qkm^{[2]} \nonumber \\ 
							&\quad\quad{}	+	q^2(q-q^{-1})^2 c_1^2\widet{c_2}^2T_{n(n-1)}T_{w_X}(E_2)T_{12n}T_{w_X}(E_{n-1})\Qkm^{[2]}.\nonumber
\end{align}
When we calculate $\ir{1}{\Qkm_{\widet{w_0}}}$, we obtain a component of the form $\Qkm_{[4]}\Qkm_{[3]}\Qkm_{[2]}\ir{1}{\Qkm^{[1]}}$. From Equation \eqref{eq:AIIIr1X1}, we see that we obtain an $E_n$ term that we want to pass to the front of this expression. We use Lemma \ref{AIIIRankTwoRels} to do this. We have
\begin{align}
	\Qkm_{[2]}E_n	&=	E_n\Qkm^{[2]}, \label{eq:AIIIX2En}\\
	\Qkm_{[3]}E_n	&=	E_n\Qkm_{[3]}
						- q^2(q-q^{-1})c_1 \widet{c_2}^2T_{n \cdots 3}(E_2)\Qkm^{[3]}T_{12n}T_{w_X}(E_{n-1}),\label{eq:AIIIX3En}\\
	\Qkm_{[4;2]}E_n	&=	E_n\Qkm^{[4;2]} - c_1c_{n-1}s(2)\Qkm_{[4;2]}T_{n \cdots 3}(E_2),\label{eq:AIIIX41En}\\
	\Qkm_{[4;1]}E_n	&=	E_n\Qkm^{[4;1]} - c_1c_2s(n-1)\Qkm_{[4;1]}T_{2n}T_{w_X}(E_{n-1})\label{eq:AIIIX42En}.
\end{align} 
Now, using Equation \eqref{eq:AIIITrickComm1}, we obtain the following expression for $\Qkm_{[4]}E_n$.
\begin{align}
\Qkm_{[4]}E_n	
	&\overset{\eqref{eq:AIIIX41En}}{=}	
		\Qkm_{[4;1]} \big( E_n\Qkm^{[4;2]} - c_1 c_{n-1}s(2)\Qkm_{[4;2]}T_{n \cdots 3}(E_2) \big) \nonumber \\
	&\overset{\eqref{eq:AIIIX42En}}{=}	
		\big( E_n\Qkm^{[4;1]} - c_1c_2s(n-1)\Qkm_{[4;1]}T_{2n}T_{w_X}(E_{n-1})\big)\Qkm^{[4;2]} \nonumber\\&\qquad{}-							c_1c_{n-1}s(2)\Qkm_{[4]}T_{n \cdots 3}(E_2)\nonumber \\
	&\overset{\eqref{eq:AIIITrickComm1}}{=}	
		E_n\Qkm^{[4]} - c_1c_{n-1}s(2)\Qkm_{[4]}T_{n \cdots 3}(E_2)\nonumber \\
	&\qquad{}	-	c_1c_2s(n-1)\Qkm_{[4;1]}\Qkm_{[4;2]}T_{2n}T_{w_X}(E_{n-1})\nonumber \\ &\qquad\quad{}+ (q-									q^{-1})c_1^2c_2c_{n-1}s(2)s(n-1)\Qkm_{[4;1]}\Qkm_{[4;2]}T_2T_{w_X}(E_{n-1})T_{n \cdots 3}(E_2) \nonumber \\
	&\overset{\phantom{\eqref{eq:AIIITrickComm1}}}{=} 	
		E_n\Qkm^{[4]} - c_1c_{n-1}s(2)\Qkm_{[4]}T_{n \cdots 3}(E_2) - c_1c_2s(n-1)\Qkm_{[4]}T_{2n}T_{w_X}(E_{n-1}) \label{eq:AIIIX4En}\\
	&\qquad{} - (q-q^{-1})c_1^2\widet{c_2}^2\Qkm_{[4]}T_2T_{w_X}(E_{n-1})T_{n \cdots 3}(E_2). \nonumber				
\end{align}
Using equations \eqref{eq:AIIIX2En}, \eqref{eq:AIIIX3En} and \eqref{eq:AIIIX4En}, and comparing with equations \eqref{eq:AIIIr1X3}, \eqref{eq:AIIIr1X1} and \eqref{eq:AIIIr1X2}, one finds that
\begin{align*}
	\Qkm_{[4]}\Qkm_{[3]}\Qkm_{[2]} \ir{1}{\Qkm^{[1]}}
		&=	(q-q^{-1})\Qkm_{[4]}\Qkm_{[3]}\Qkm_{[2]}E_n\Qkm^{[1]}\\
		&=  (q-q^{-1})E_n\Qkm_{\widet{w_0}} - \Qkm_{[4]} \ir{1}{\Qkm^{[3]}}\Qkm^{[2]}\Qkm^{[1]} - \Qkm_{[4]}\Qkm_{[3]}\ir{1}{\Qkm^{[2]}}\Qkm^{[1]},
\end{align*}
and hence it follows that
\begin{equation*}
	\ir{1}{\Qkm_{\widet{w_0}}}	=	(q-q^{-1})E_n\Qkm_{\widet{w_0}},
\end{equation*}
as required. This completes the proof.
\end{proof}

\begin{proposition} \label{Approp:AIII PartialQkm2}
The partial quasi $K$-matrix $\Qkm_{\widet{w_0}^{\prime}}$ coincides with the quasi $K$-matrix $\Qkm$.
\end{proposition}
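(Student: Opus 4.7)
The strategy is to verify directly that $\Qkm_{\widet{w_0}^{\prime}}$ satisfies the defining recursive relations \eqref{eq:irSyst2} for the quasi $K$-matrix, mimicking the structure of the proof of Proposition \ref{AppProp: AIII PartialQkm1} but with the roles of the factors reversed. First I would write out
\begin{equation*}
\Qkm_{\widet{w_0}^{\prime}} = \Qkm^{[4]} \Qkm^{[3]} \Qkm^{[2]} \Qkm^{[1]}
\end{equation*}
explicitly, where Proposition \ref{prop:LongestWordPartialQkmPart} gives $\Qkm^{[4]} = \Qkm_{\tau_0(1)} = \Qkm_1$, and
\begin{align*}
\Qkm^{[3]} &= \Psi \circ \widet{T_2}\widet{T_1} \circ \Psi^{-1}(\Qkm_2), \\
\Qkm^{[2]} &= \Psi \circ \widet{T_2} \circ \Psi^{-1}(\Qkm_1), \\
\Qkm^{[1]} &= \Qkm_2.
\end{align*}
Expanding the compositions using $\widet{T_1} = T_1 T_n$ and $\widet{T_2} = T_2 T_3 \dotsm T_{n-1} \dotsm T_3 T_2$ applied to the rank-one formulas from Lemmas \ref{lem:QkmRankOneA1xA1} and \ref{lem:QkmRankOneAIV} gives closed-form infinite sums analogous to those in the proof of Proposition \ref{AppProp: AIII PartialQkm1}.

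Next I would dispense with the easy cases. For $j \in X$, the skew derivation $\ir{j}{\Qkm_{\widet{w_0}^{\prime}}}$ vanishes because each partial factor lies in $\reallywidehat{\widet{U}^+[\widet{w}]}$ for an appropriate $\widet{w}$ by Proposition \ref{prop:Omega1}, and then \cite[8.26, (4)]{b-Jantzen96} applies. For the $\tau$-orbit $\{1,n\}$, since $\Qkm^{[4]} = \Qkm_1$ is now the leftmost factor, the argument runs entirely symmetrically to how the case $\{2, n-1\}$ was handled in Proposition \ref{AppProp: AIII PartialQkm1}: the three middle and right factors $\Qkm^{[3]}, \Qkm^{[2]}, \Qkm^{[1]}$ all lie in $\widehat{U^+[\widet{s_2}\widet{w_0}]}$, on which $\ir{1}$ and $\ir{n}$ vanish, so only the rank-one formula for $\Qkm_1$ contributes and yields $(q-q^{-1})c_1 E_n \Qkm_{\widet{w_0}^{\prime}}$ (and symmetrically for $\ir{n}$).

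The main obstacle is the $\tau$-orbit $\{2, n-1\}$. Here $\Qkm^{[1]} = \Qkm_2$ is the rightmost factor, so by the skew derivation property \eqref{eq:ir} all three of $\ir{2}{\Qkm^{[3]}}$, $\ir{2}{\Qkm^{[2]}}$, and $\ir{2}{\Qkm^{[1]}}$ generally contribute to $\ir{2}{\Qkm_{\widet{w_0}^{\prime}}}$. The plan is to compute each one using generalisations of the technical lemmas from Section \ref{app:AIII3} (the analogues of Lemmas \ref{A3commE2} and \ref{A3r2} for the higher-rank Lusztig-root vectors $T_{w_X}(E_2)$, $T_{w_X}(E_{n-1})$, $T_{n \cdots 3}(E_2)$ and $T_{1 \cdots (n-2)}(E_{n-1})$ that appear in the explicit formulas), and then rearrange the final summand $\Qkm_{[4]}\Qkm_{[3]}\Qkm_{[2]}\ir{2}{\Qkm^{[1]}}$ by pulling the $T_{w_X}(E_{n-1})$-term arising from the rank-one expression for $\ir{2}{\Qkm_2}$ leftward through the three commutators. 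Two corrections should arise, one from the commutation with $\Qkm_{[2]}$ and one from the commutation with $\Qkm_{[3]}$, and also a composite correction from $\Qkm_{[4]}$; these must exactly cancel $\Qkm_{[4]}\Qkm_{[3]}\ir{2}{\Qkm^{[2]}}\Qkm^{[1]}$ and $\Qkm_{[4]}\ir{2}{\Qkm^{[3]}}\Qkm^{[2]}\Qkm^{[1]}$, leaving only the clean $(q-q^{-1})c_2 s(n-1) T_{w_X}(E_{n-1}) \Qkm_{\widet{w_0}^{\prime}}$.

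The hard part is bookkeeping: establishing the correct higher-rank commutation relations (especially the analogue of \eqref{eq:AIIITrickComm1}--\eqref{eq:AIIITrickComm2}, which generated the tricky $(q-q^{-1})$ correction term that is absorbed into the re-expansion of $\Qkm^{[2]}$), and verifying that after the rearrangement every extra term produced by the non-commutativity of the rank-one root vectors with the operators coming from $\widet{T_2}\widet{T_1}$ is matched by a term from $\ir{2}{\Qkm^{[3]}}$ or $\ir{2}{\Qkm^{[2]}}$. Once the commutation and skew-derivation lemmas are in hand, the matching is bookkeeping along the lines of Proposition \ref{AppProp: A3 Qkmw0'}, and the proposition follows.
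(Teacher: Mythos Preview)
Your proposal follows essentially the same approach as the paper's proof: write out the four factors explicitly, dispatch $\ir{1}$ trivially via the leftmost factor $\Qkm^{[4]}=\Qkm_1$, and for $\ir{2}$ push the $T_{w_X}(E_{n-1})$ coming from $\ir{2}{\Qkm^{[1]}}$ leftward through $\Qkm_{[2]},\Qkm_{[3]},\Qkm_{[4]}$, matching the correction terms against $\ir{2}{\Qkm^{[2]}}$ and $\ir{2}{\Qkm^{[3]}}$.

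Two minor mispredictions in your plan are worth flagging, since they make the actual computation cleaner than you anticipate. First, $T_{w_X}(E_{n-1})$ commutes with $\Qkm_{[2]}$ up to the weight shift (i.e.\ $\Qkm_{[2]}T_{w_X}(E_{n-1})=T_{w_X}(E_{n-1})\Qkm^{[2]}$), so no correction arises there; the two corrections that do appear come from $\Qkm_{[3]}$ and $\Qkm_{[4]}$, and these match $\ir{2}{\Qkm^{[2]}}$ and $\ir{2}{\Qkm^{[3]}}$ respectively. Second, you will not need any analogue of the tricky relations \eqref{eq:AIIITrickComm1}--\eqref{eq:AIIITrickComm2} or the $\{2n{+}1\}=1+q^2\{2n\}$ re-expansion trick: those were required for the reduced expression $\widet{w_0}=\widet{s_1}\widet{s_2}\widet{s_1}\widet{s_2}$ because $\Qkm^{[2]}$ there splits as a product of two sums, but for $\widet{w_0}^{\prime}$ the factor that splits is $\Qkm^{[3]}$, and the commutation of $T_{w_X}(E_{n-1})$ past it produces a single clean correction term. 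The supporting lemmas (the paper's Lemmas \ref{AIIIRankTwoRelsb} and \ref{AIIIRankTwoLusDerb}) are therefore shorter than their counterparts for the other reduced expression.
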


We have the following relations needed in the proof of Proposition \ref{Approp:AIII PartialQkm2}, proved by induction.

\begin{lemma} \label{AIIIRankTwoRelsb}
For any $k \in \mathbb{N}$ the relations
\begin{align}
	&T_{2 \cdots (n-1)}(E_n)^kT_{w_X}(E_{n-1}) = T_{w_X}(E_{n-1})T_{2 \cdots (n-1)}(E_n)^k,\nonumber \\
	&T_{(n-1) \cdots 2}(E_1)^kT_{w_X}(E_{n-1}) = q^{-k}T_{w_X}(E_{n-1})T_{(n-1) \cdots 2}(E_1)^k, \nonumber\\
	&T_{2 \cdots (n-1)}T_{12n}T_{w_X}(E_{n-1})^kT_{w_X}(E_{n-1}) = T_{w_X}(E_{n-1})T_{2 \cdots (n-1)}T_{12n}T_{w_X}(E_{n-1})^k,\nonumber \\
	&T_{(n-1) \cdots 2}T_{1n(n-1)}T_{w_X}(E_2)^kT_{w_X}(E_{n-1})
		= T_{w_X}(E_{n-1})T_{(n-1) \cdots 2}T_{1n(n-1)}T_{w_X}(E_2)^k \nonumber\\ 
			&\qquad{}- (q-q^{-1})\{k\}T_{(n-1) \cdots 2}T_{1n(n-1)}T_{w_X}(E_2)^{k-1}T_{3 \cdots (n-1)}(E_n)T_{(n-1) \cdots 2}(E_1), \nonumber \\
	 &E_1^kT_{w_X}(E_{n-1}) = T_{w_X}(E_{n-1})E_1^k, \nonumber \\
	 &E_n^kT_{w_X}(E_{n-1}) = q^kT_{w_X}(E_{n-1})E_n^k - q\{k\}E_n^{k-1}T_{w_X}T_{n-1}(E_n) \nonumber 
\end{align}
hold in $U_q(\lie{sl}_{n+1})$.
\end{lemma}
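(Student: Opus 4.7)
The plan is to prove all six identities by induction on $k$, following the same template used throughout Appendix \ref{App:RankTwo}. For the base case $k=1$, each identity reduces to a $q$-commutation relation between two specific quantum root vectors in $U_q(\lie{sl}_{n+1})$. For each such pair $(A, B)$ I would first determine the associated positive roots $\alpha$ and $\beta$ via $T_w(E_i) \leftrightarrow w(\alpha_i)$: since $X=\{3,\ldots,n-2\}$ is of type $A_{n-4}$, one has $T_{w_X}(E_{n-1}) \leftrightarrow \alpha_3+\alpha_4+\cdots+\alpha_{n-1}$ and, for instance, $T_{2\cdots(n-1)}(E_n)\leftrightarrow \alpha_2+\cdots+\alpha_n$. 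The expected scalar in the commutation relation is then $q^{(\alpha,\beta)}$, and any correction carries the weight $\alpha+\beta$. Relations (1), (3), (5) are orthogonal-root (or disjoint-support) cases and yield trivial commutators; relation (2) produces a pure $q$-power; relations (4) and (6) produce a single correction monomial. The explicit form of each commutator can be read off either by expanding the quantum root vectors as iterated $q$-brackets in the Chevalley generators and applying the defining relations, or by invoking the Levendorskii-Soibelman formula on a convex ordering in which the two roots are adjacent.

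For the inductive step, each identity has the schematic form $A^kB = c^kBA^k + C_k$ with $c\in\{1,q^{-1},q\}$. From the base case $AB = cBA + D$ one computes
\[
A^kB = A(A^{k-1}B) = A\bigl(c^{k-1}BA^{k-1} + C_{k-1}\bigr) = c^{k-1}(AB)A^{k-1} + AC_{k-1},
\]
and substituting $AB=cBA+D$ produces the desired $C_k$ once an auxiliary commutation of the form $AD = q^{2}DA$ (or $AD=DA$) is established — itself a simple $q$-commutator computation of the same type as the base case. The accumulation of the coefficients $(q-q^{-1})\{k\}$ in (4) and (6) is handled by the standard $q$-number identity $\{k\} = \{k-1\} + q^{2(k-1)}$.

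The main obstacle is the base case of relation (4), since here the correction is a \emph{product} of two quantum root vectors, namely $T_{3\cdots(n-1)}(E_n)\,T_{(n-1)\cdots 2}(E_1)$, making the naive Chevalley-generator expansion unwieldy. I would attack this by applying $T_{w_X}^{-1}$ throughout, which reduces the identity to a commutation among quantum root vectors supported on $\{1, 2, n-1, n\}$, i.e. on the complement of $X$ in the Dynkin diagram; the resulting relation can be verified by a direct rank-two-style calculation (analogous to those already performed in Sections \ref{app:AIII3}--\ref{app:AIIIge4}) and then transported back to the original statement via the braid group. An analogous symmetry argument, interchanging the roles of $E_1$ and $E_n$ through the diagram automorphism $\tau$, then deduces relation (6) from (4) (or from its $E_n$-analog), so the real content lies in a single carefully-bookkept base-case computation.
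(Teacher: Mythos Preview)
Your approach --- induction on $k$ with the base case a direct $q$-commutator computation between two explicit quantum root vectors --- is exactly what the paper does; the paper simply writes ``proved by induction'' and gives no further detail, so your outline is if anything more informative than the original.

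Two small corrections to your execution plan. First, your reduction for relation (4) via $T_{w_X}^{-1}$ does not land you among root vectors ``supported on $\{1,2,n-1,n\}$'': the element $T_{(n-1)\cdots 2}T_{1n(n-1)}T_{w_X}(E_2)$ has weight equal to the highest root $\alpha_1+\cdots+\alpha_n$, which is $w_X$-invariant, so after applying $T_{w_X}^{-1}$ it still involves all simple roots. What the conjugation \emph{does} buy you is that $T_{w_X}(E_{n-1})$ becomes the generator $E_{n-1}$, so one side of the commutator is simple and the base case reduces to tracking how $E_{n-1}$ interacts with an explicit iterated $q$-bracket --- entirely parallel to the computations in Lemmas \ref{A3commE2} and \ref{AIIIRankTwoRels}. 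Second, relation (6) is not a $\tau$-image of (4): it involves the generator $E_n$ and is much simpler than (4), being the direct analogue of $E_2^nE_1=q^nE_1E_2^n-q\{n\}E_2^{n-1}T_1(E_2)$ from Lemma \ref{lem:RankTwoAIComm}; it needs no symmetry argument at all.
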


\begin{lemma} \label{AIIIRankTwoLusDerb}
For any $k \in \mathbb{N}$ the relations
\begin{align}
	&\ir{2}{T_{2 \cdots (n-1)}(E_n)^k} = q^{-1}(q-q^{-1})\{k\}T_{3 \cdots (n-1)}(E_n)T_{2 \cdots (n-1)}(E_n)^{k-1}, \nonumber \\
	&\ir{2}{T_{2 \cdots (n-1)}T_{12n}T_{w_X}(E_{n-1})^k} \nonumber \\ &\qquad{}= q^{-2}(q-q^{-1})^2\{k\}E_1T_{3 \cdots (n-1)}(E_n)T_{2 \cdots (n-1)}T_{12n}T_{w_X}(E_{n-1})^{k-1} \nonumber 
\end{align}
hold in $U_q(\lie{sl}_{n+1})$.
\end{lemma}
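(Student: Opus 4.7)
Both identities will be established by induction on $k \ge 1$, using the skew derivation property \eqref{eq:ir} as the main tool. Throughout, let $y_1 = T_{2\cdots(n-1)}(E_n)$, $z_1 = T_{3\cdots(n-1)}(E_n)$, $y_2 = T_{2\cdots(n-1)}T_{12n}T_{w_X}(E_{n-1})$ and $z_2 = T_{2n}T_{w_X}(E_{n-1})$ so that the target identities read $\ir{2}{y_1^k} = q^{-1}\q\{k\} z_1 y_1^{k-1}$ and $\ir{2}{y_2^k} = q^{-2}\q^2\{k\} E_1 T_{3\cdots(n-1)}(E_n)\, y_2^{k-1}$. Write $\mu_1 = \alpha_2 + \alpha_3 + \dots + \alpha_n$ and $\mu_2$ for the weight of $y_2$; a direct inspection gives $(\alpha_2,\mu_1) = 1$ and one computes $(\alpha_2,\mu_2)$ similarly.

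First I would establish the base case $k=1$ for both formulas. For \eqref{eq:ir} the key input is the standard identity $T_2(x) = E_2 x - q^{(\alpha_2,\nu)} x E_2$ for $x = T_{3\cdots(n-1)}(E_n) \in U^+_\nu$ with $\nu = \alpha_3 + \dots + \alpha_n$, so that $y_1 = [E_2, z_1]_{q^{-1}}$. Applying \eqref{eq:ir} and using $\ir{2}{z_1}=0$ (which follows from \cite[8.26, (4)]{b-Jantzen96} since $z_1 \in U^+[s_2 w_0]$, as $s_2$ appears at the start of a reduced expression for $s_3\cdots s_{n-1}\cdot(\text{something})\cdot w_0$ in the relevant sense) gives $\ir{2}{y_1} = (1-q^{-2}) z_1 = q^{-1}\q z_1$, which is the base case. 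For the second formula I would similarly rewrite $y_2$ as $T_2$ applied to an element of $U^+_\nu$ and extract a $q$-commutator presentation; applying ${}_2r$ then forces the $E_1$ factor to appear, since the only way for ${}_2r$ to act nontrivially on the inner $T_{12n}T_{w_X}(E_{n-1})$ block is via the piece that contains $E_1 T_n T_{w_X}(E_{n-1})$ after unwinding $T_1$ using \eqref{eq:ir}.

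The inductive step then proceeds identically for both formulas. Write $y_j^k = y_j \cdot y_j^{k-1}$ and apply \eqref{eq:ir}:
\begin{align*}
\ir{2}{y_j^k} = \ir{2}{y_j}\, y_j^{k-1} + q^{(\alpha_2,\mu_j)} y_j\, \ir{2}{y_j^{k-1}}.
\end{align*}
By the inductive hypothesis and the base case, the right hand side is a sum of two terms of the form $(\text{constant}) \cdot z\, y_j^{k-1}$ and $(\text{constant}) \cdot y_j z\, y_j^{k-2}$, where $z = z_1$ or $z = E_1 T_{3\cdots(n-1)}(E_n)$. To collect these I need a $q$-commutation relation of the form $y_j z = q^{a_j} z y_j$ for suitable $a_j$. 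A computation using the braid group action on root vectors (the elements $y_j$ and the $z$-factors both arise as $T_w$-images of Chevalley generators for compatible $w$, so they generate a two-dimensional quantum shuffle subalgebra) yields $a_j = 1$. The telescoping identity $q^{-1} + q\cdot q^{-1}\{k-1\} = q^{-1}\{k\}$, which is immediate from $\{k\} = 1 + q^2\{k-1\}$, then closes the induction.

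The main obstacle is the $q$-commutation relation $y_j z = q\, z\, y_j$ in each case. For the first formula this is a standard fact about the quantum Schubert cells attached to $s_2 s_3 \cdots s_{n-1}$ and can be verified either by the explicit inductive formula $T_i(T_j(E_k)) = [T_j(E_k), E_i]_{q^a}$ in type $A$, or by observing that $y_1$ and $z_1$ belong to the quantum plane generated by two consecutive convex PBW root vectors. For the second formula the corresponding computation is longer because $y_2$ has a larger weight and involves the braid $T_{12n}T_{w_X}$; here I would verify the relation by rewriting $y_2 = T_{2\cdots (n-1)}(v)$ for $v = T_{12n}T_{w_X}(E_{n-1})$, reducing the claim to a commutation relation between $v$ and $T_{3 \cdots (n-1)}T_2(E_n) = T_2(z_1)$, and then checking this final two-variable relation directly using \eqref{eq:ir} and \eqref{eq:ri} together with the $AIV$ rank-one computations of Section \ref{sec:rank1Qkms}. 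Once these commutation relations are in hand, the inductive step for both formulas is a one-line computation, so the lemma follows.
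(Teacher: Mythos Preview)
Your strategy---induction on $k$ via the skew-derivation rule \eqref{eq:ir}, reducing to a base case $\ir{2}{y_j}$ and a $q$-commutation $y_j z = q^{a_j} z y_j$---is exactly the route the paper intends (it says only ``proved by induction''), and for the first identity all your ingredients are correct: $y_1 = [E_2,z_1]_{q^{-1}}$, $\ir{2}{z_1}=0$, $(\alpha_2,\mu_1)=1$, and $y_1 z_1 = q\, z_1 y_1$.

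Two points to tighten. First, your displayed telescoping is off by a factor of $q$: after multiplying by $q^{(\alpha_2,\mu_1)}=q$ and then commuting $y_1$ past $z_1$ (which contributes another $q$), the second term carries $q\{k-1\}$, so the identity you actually need is $q^{-1} + q\{k-1\} = q^{-1}\{k\}$, which is indeed equivalent to $\{k\}=1+q^2\{k-1\}$. Second, for the second identity your sketch is too loose: the element $z$ you must commute $y_2$ past is $E_1\,T_{3\cdots(n-1)}(E_n)$, not $z_1$ alone, and your proposed reduction ``to a commutation between $v$ and $T_2(z_1)$'' drops the $E_1$ factor (the symbol $z_2$ you introduce is never used). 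The base case $\ir{2}{y_2} = q^{-2}(q-q^{-1})^2 E_1 T_{3\cdots(n-1)}(E_n)$ also requires unwinding both $T_2$ and $T_1$ to account for the squared factor $(q-q^{-1})^2$; this is routine but should be written out, along with the verification that $(\alpha_2,\mu_2)+a_2=2$ so that the same telescoping closes the induction.
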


\begin{proof}[Proof of Proposition \ref{Approp:AIII PartialQkm2}]
We write
\begin{equation*}
	\Qkm_{\widet{w_0}^{\prime}}	=	\Qkm^{[4]}\Qkm^{[3]}\Qkm^{[2]}\Qkm^{[1]},
\end{equation*}
where we have
\begin{align*}
\Qkm^{[4]}	&\overset{\eqref{eq:LongestWordPartialQkmPart}}{=}  \Qkm_1,	\\
\Qkm^{[3]}	&\overset{\phantom{\eqref{eq:LongestWordPartialQkmPart}}}{=}
		\bigg(	\sum_{k_1 \geq 0}\dfrac{ (qc_1c_2s(n-1))^{k_1} }{ \{ k_1 \}! } T_{2 \cdots (n-1)}T_{12n}T_{w_X}(E_{n-1})^{k_1} \bigg)\\ 
					&\qquad\qquad\qquad\qquad\qquad \bigg(	\sum_{k_2 \geq 0}\dfrac{ (qc_1c_{n-1}s(2))^{k_2} }{ \{ k_2 \}! } T_{(n-1) \cdots 2}T_{1n(n-1)}T_{w_X}(E_{2})^{k_2} \bigg),\\
	\Qkm^{[2]}	&\overset{\phantom{\eqref{eq:LongestWordPartialQkmPart}}}{=}		\sum_{k \geq 0} \dfrac{(q-q^{-1})^k}{\{k\}!}(qc_1\widet{c_2}^2)^k T_{(n-1) \cdots 2}(E_1)^k T_{2 \cdots (n-1)}(E_n)^k,\\
	\Qkm^{[1]}	&\overset{\phantom{\eqref{eq:LongestWordPartialQkmPart}}}{=}	\Qkm_2.
\end{align*}
For $i = 1, 2, 3, 4$ let $\Qkm_{[i]} = K_2\Qkm^{[i]}K_2^{-1}$.
By Lemma \ref{lem:QkmRankOneA1xA1} and \cite[8.26, (4)]{b-Jantzen96} it follows that
\begin{align*}
	\ir{1}{\Qkm_{\widet{w_0}^{\prime}}}	&=	\ir{1}{\Qkm^{[4]}} \Qkm^{[3]}\Qkm^{[2]}\Qkm^{[1]}\\
								&=	(q-q^{-1})E_n\Qkm_{\widet{w_0}^{\prime}}.
\end{align*}
Hence, we only need to check that
\begin{equation*}
	\ir{2}{\Qkm_{\widet{w_0}^{\prime}}}	=	q^{-1}(q-q^{-1})c_2s(n-1)T_{w_X}(E_{n-1})\Qkm_{\widet{w_0}^{\prime}}.
\end{equation*}
By \cite[8.26, (4)]{b-Jantzen96} and property \eqref{eq:ir} of the skew derivative ${}_2r$ we have
\begin{equation*}
\ir{2}{\Qkm_{\widet{w_0}^{\prime}}} = \Qkm_{[4]}\ir{2}{\Qkm^{[3]}}\Qkm^{[2]}\Qkm^{[1]} + \Qkm_{[4]}\Qkm_{[3]}\ir{2}{\Qkm^{[2]}}\Qkm^{[1]} + \Qkm_{[4]}\Qkm_{[3]}\Qkm_{[2]}\ir{2}{\Qkm^{[1]}}.
\end{equation*}
Using Lemmas \ref{lem:QkmRankOneAIV} and \ref{AIIIRankTwoLusDerb}, we have
\begin{align*}
	\ir{2}{\Qkm^{[3]}}	&=	q^{-1}(q-q^{-1})^2c_1c_2s(n-1)E_1T_{3 \cdots (n-1)}(E_n)\Qkm^{[3]},\\
	\ir{2}{\Qkm^{[2]}}	&=	(q-q^{-1})^2c_1\widet{c_2}^2T_{3 \cdots (n-1)}(E_n)T_{(n-1) \cdots 2}(E_1)\Qkm^{[2]},\\
	\ir{2}{\Qkm^{[1]}}	&=	q^{-1}(q-q^{-1})c_2s(n-1)T_{w_X}(E_{n-1})\Qkm^{[1]}.
\end{align*}
By Lemma \ref{AIIIRankTwoRelsb}, we have
\begin{align*}
	\Qkm_{[2]}T_{w_X}(E_{n-1})	&=	T_{w_X}(E_{n-1})\Qkm^{[2]},\\
	\Qkm_{[3]}T_{w_X}(E_{n-1})	&=	T_{w_X}(E_{n-1})\Qkm^{[3]}\\ &\quad{}- q(q-q^{-1})c_1c_{n-1}s(2)\Qkm_{[3]}T_{3 \cdots (n-1)}(E_n)T_{(n-1) \cdots 2}(E_1),\\
	\Qkm_{[4]}T_{w_X}(E_{n-1})	&=	T_{w_X}(E_{n-1})\Qkm^{[4]} - (q-q^{-1})c_1\Qkm_{[4]}E_1T_{3 \cdots (n-1)}(E_n).
\end{align*}
It follows that
\begin{align*}
	\Qkm_{[4]}\Qkm_{[3]}\Qkm_{[2]}\ir{2}{\Qkm^{[1]}} &= q^{-1}(q-q^{-1})c_2s(n-1)T_{w_X}(E_{n-1})\Qkm_{w^{\prime}} \\
		&\quad{}- \Qkm_{[4]}\ir{2}{\Qkm^{[3]}}\Qkm^{[2]}\Qkm^{[1]} - \Qkm_{[4]}\Qkm_{[3]}\ir{2}{\Qkm^{[2]}}\Qkm^{[1]}
\end{align*}
and therefore we obtain
\begin{equation*}
	\ir{2}{\Qkm_{\widet{w_0}^{\prime}}}	=	q^{-1}(q-q^{-1})c_2s(n-1)T_{w_X}(E_{n-1})\Qkm_{\widet{w_0}^{\prime}}
\end{equation*}
as required.
\end{proof}


\subsection{Type $CI_2$}
 Consider the Satake diagram of type $CI_2$.
 
\begin{center}
	\begin{tikzpicture}
		[white/.style={circle,draw=black,inner sep = 0mm, minimum size = 3mm},
		black/.style={circle,draw=black,fill=black, inner sep= 0mm, minimum size = 3mm}]
		
		\node[white] (first) [label = below:{\scriptsize $1$}] {};		
		\node[white] (second) [right=of first] [label = below:{\scriptsize $2$}] {}			
			edge [double equal sign distance, -<-] (first);
	\end{tikzpicture}
\end{center}
Since $\Theta = -\textrm{id}$ the subgroup $\widet{W}$ coincides with $W$. 
The longest word of the Weyl group has two reduced expressions given by
\begin{equation*}
	w_0 = s_1s_2s_1s_2,\qquad
	w_0^{\prime} = s_2s_1s_2s_1.	
\end{equation*}
By Lemma \ref{lem:QkmRankOneAI} we have
\begin{align*}
\Qkm_1	&=	\sum_{n \geq 0} \dfrac{\Q{2}^n}{\{2n\}_1!!} (q^4c_1)^nE_1^{2n},\\
\Qkm_2	&=	\sum_{n \geq 0} \dfrac{\q^n}{\{2n\}_2!!} (q^2c_2)^nE_2^{2n}.
\end{align*}

\begin{proposition} \label{AppProp:BC1212}
The partial quasi $K$-matrix $\Qkm_{w_0}$ coincides with the quasi $K$-matrix $\Qkm$.
\end{proposition}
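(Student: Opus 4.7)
The plan is to follow the template of the preceding rank-two propositions, most closely Proposition~\ref{AppProp: A3 Qkmw0}, adapted to the non-simply-laced $C_2$ setting. Since $CI_2$ has no black dots and trivial $\tau$, one has $\Theta = -\mathrm{id}$, $\widet{\alpha_i} = \alpha_i$, $s(i)=1$, $X_i = -E_i$, and $\widet{c_i} = \pm c_i$. Writing out $\Qkm_{w_0} = \Qkm^{[4]}\Qkm^{[3]}\Qkm^{[2]}\Qkm^{[1]}$ via \eqref{eq:PartialQkm} produces four explicit series whose factors involve the positive root vectors $E_1$, $\T{1}(E_2)$, $\T{1}\T{2}(E_1)$, and $\T{1}\T{2}\T{1}(E_2)$ respectively; these run through the four positive roots $\alpha_1$, $2\alpha_1+\alpha_2$, $\alpha_1+\alpha_2$, $\alpha_2$ of $C_2$ in the convex order induced by $w_0 = s_1 s_2 s_1 s_2$. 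By \eqref{eq:irSyst2} it then suffices to verify
\begin{equation*}
\ir{1}{\Qkm_{w_0}} = \Q{2}(q^4 c_1)\, E_1\, \Qkm_{w_0} \qquad \mbox{and} \qquad \ir{2}{\Qkm_{w_0}} = \q(q^2 c_2)\, E_2\, \Qkm_{w_0}.
\end{equation*}

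I would first dispatch the $\ir{2}$ equation. A short computation using \cite[8.26 (4)]{b-Jantzen96}, exactly as in the analogous step of Proposition~\ref{AppProp: A3 Qkmw0}, shows $\ir{2}{\Qkm^{[j]}} = 0$ for $j = 1, 2, 3$, so the Leibniz rule \eqref{eq:ir} collapses $\ir{2}{\Qkm_{w_0}}$ to $\ir{2}{\Qkm^{[4]}}\,\Qkm^{[3]}\Qkm^{[2]}\Qkm^{[1]}$. Because the weight space $U^+_{\alpha_2}$ is one-dimensional, $\T{1}\T{2}\T{1}(E_2)$ is a scalar multiple of $E_2$, so $\Qkm^{[4]}$ is a rescaled copy of $\Qkm_2$ and the required identity for $\ir{2}{\Qkm^{[4]}}$ reduces to Lemma~\ref{lem:QkmRankOneAI} after tracking the scalar introduced by conjugation by $\Psi$.

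The $\ir{1}$ equation is the technical core and proceeds in parallel with the proof of Proposition~\ref{AppProp: A3 Qkmw0}. Here $\ir{1}{\Qkm^{[4]}} = 0$ (again because $\T{1}\T{2}\T{1}(E_2)$ is proportional to $E_2$), while $\Qkm^{[1]}, \Qkm^{[2]}, \Qkm^{[3]}$ all contribute non-trivially. First I would prove by induction the $C_2$-analogues of Lemma~\ref{A3r1} giving $\ir{1}$ on powers of $\T{1}(E_2)$ and $\T{1}\T{2}(E_1)$, and of Lemma~\ref{A3commE3} giving commutation relations between $E_1$ and these powers. Expanding $\ir{1}{\Qkm_{w_0}}$ into three summands via \eqref{eq:ir}, I would then commute $E_1$ to the front in the rightmost summand $\Qkm_{[4]}\Qkm_{[3]}\Qkm_{[2]} \cdot \ir{1}{\Qkm^{[1]}}$ (with $\Qkm_{[j]} = K_1 \Qkm^{[j]} K_1^{-1}$) and check that the non-leading correction terms cancel exactly against the other two summands, leaving the required $\Q{2}(q^4 c_1)\, E_1 \Qkm_{w_0}$.

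The main obstacle will be the bookkeeping of $q$-powers in this final cancellation. Because $\alpha_1$ is long, $\Qkm_1$ contributes factors in $q_1 = q^2$ (hence the $\Q{2}$ and $\{2n\}_1$ symbols) whereas $\Qkm_2$ contributes factors in $q_2 = q$, and the non-simply-laced commutation relations among the four PBW root vectors produce correction terms carrying a mixture of $q$- and $q^2$-powers which must align precisely across the three summands. Tracking the scalars absorbed by $\Psi$, controlled by $\widet{c_i}^2 = c_i^2$, is an additional delicate point. I expect the identity $\{n+1\}_i = 1 + q_i^2 \{n\}_i$ already exploited in Section~\ref{app:AIII3} to be the key combinatorial tool reducing the $C_2$ bookkeeping to the same structural shape as the $AIII$ computations.
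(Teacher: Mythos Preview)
Your proposal is correct and follows essentially the same approach as the paper's proof: identify the four factors $\Qkm^{[1]},\dots,\Qkm^{[4]}$, dispose of the $\ir{2}$-relation immediately using \cite[8.26 (4)]{b-Jantzen96} and the rank-one formula, then expand $\ir{1}{\Qkm_{w_0}}$ via the Leibniz rule, commute $E_1$ to the front of $\Qkm_{[4]}\Qkm_{[3]}\Qkm_{[2]}\ir{1}{\Qkm^{[1]}}$, and match the correction terms against $\Qkm_{[4]}\ir{1}{\Qkm^{[3]}}\Qkm^{[2]}\Qkm^{[1]}$ and $\Qkm_{[4]}\Qkm_{[3]}\ir{1}{\Qkm^{[2]}}\Qkm^{[1]}$; the identity $\{n+1\}_i = 1 + q_i^2\{n\}_i$ you anticipate is indeed used (see \eqref{AppEqBC: Trick}). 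One small simplification: rather than arguing that $T_1T_2T_1(E_2)$ is a scalar multiple of $E_2$ and tracking that scalar through $\Psi$, you can invoke Proposition~\ref{prop:LongestWordPartialQkmPart} directly to get $\Qkm^{[4]} = \Qkm_{\tau_0(2)} = \Qkm_2$ on the nose, which is what the paper does.
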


The following relations are necessary for the proof of Proposition \ref{AppProp:BC1212}, proved by induction.

\begin{lemma} \label{AppLemBC: E1comms}
For any $n \in \mathbb{N}$ the relations
\begin{align}
&E_2^nE_1 
	= 	q^{2n}E_1E_2^{n} - q^2\{n\}_2E_2^{n-1}T_1(E_2) - q^3\{n\}_2\{n-1\}_2E_2^{n-1}T_{12}(E_1), \label{AppEqBC: E2E1comm}\\
&T_1(E_2)^nE_1
	=	q^{-2n}E_1T_1(E_2)^n, \label{AppEqBC: T1E2E1comm}\\
&T_{12}(E_1)^nE_1
	=	E_1T_{12}(E_1)^n - \frac{(q^2-1)}{[2]_2}\{n\}_1T_{12}(E_1)^{n-1}T_1(E_2)^2 \label{AppEqBC: T12E1E1comm}	
\end{align}
hold in $U_q(\lie{so}_5)$.
\end{lemma}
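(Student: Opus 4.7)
The plan is to prove all three relations by induction on $n$, with the $n=1$ base cases following from the Serre relations in type $C_2$ and the Lusztig formula $T_i(E_j) = \sum_{r+s=-a_{ij}}(-1)^r q_i^{-r} E_i^{(s)} E_j E_i^{(r)}$, using $q_1 = q^2$, $q_2 = q$. Concretely, \eqref{AppEqBC: E2E1comm} at $n=1$ reduces to the defining relation $E_2 E_1 - q^2 E_1 E_2 = -q^2 T_1(E_2)$ (the third term vanishes since $\{0\}_2 = 0$). The base case of \eqref{AppEqBC: T1E2E1comm}, namely $T_1(E_2) E_1 = q^{-2} E_1 T_1(E_2)$, follows either by direct Serre manipulation or by applying $T_1$ to the identity $E_2 \cdot (-F_1 K_1) = q^{-2}(-F_1 K_1)\cdot E_2$ (using $T_1(E_1) = -F_1 K_1$ and that $E_2$ commutes with $F_1$). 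The base case of \eqref{AppEqBC: T12E1E1comm} is a more involved but standard Serre-type computation realising $T_{12}(E_1) E_1 - E_1 T_{12}(E_1)$ as a rescaling of $T_1(E_2)^2$.

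The inductive step for \eqref{AppEqBC: T1E2E1comm} is immediate: iterating the base case $n$ times gives the result. For \eqref{AppEqBC: T12E1E1comm} I would multiply the induction hypothesis on the left by $T_{12}(E_1)$, use the $n=1$ case to rewrite the resulting $T_{12}(E_1) E_1$, and exploit the fact that $T_{12}(E_1)$ and $T_1(E_2)$ $q$-commute (an instance of the Levendorski--Soibelman formula applied to two adjacent PBW root vectors). Collecting terms and invoking $\{n+1\}_1 = 1 + q^2\{n\}_1$ matches the stated coefficient.

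The inductive step for \eqref{AppEqBC: E2E1comm} is the technical heart. Writing $E_2^{n+1} E_1 = E_2\cdot E_2^n E_1$ and inserting the induction hypothesis produces $q^{2n} E_2 E_1 E_2^n$ together with corrections in $E_2^n T_1(E_2)$ and $E_2^n T_{12}(E_1)$. Substituting the base case into the first summand introduces a spurious $q^{2n+2} T_1(E_2) E_2^n$ term, which must be reordered by commuting $T_1(E_2)$ past $E_2^n$. For this I would record in advance an auxiliary identity of the form
\[
T_1(E_2)\, E_2 = q^{-2} E_2\, T_1(E_2) + \kappa\, T_{12}(E_1)
\]
for an explicit scalar $\kappa \in \field(q)$ computed from the Serre relations, then iterate it to pass $T_1(E_2)$ to the right through $E_2^n$, generating the missing contribution to the $T_{12}(E_1)$ coefficient.

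The main obstacle will be the $q$-number bookkeeping in this last step: one must verify that the new $T_{12}(E_1)$ contribution produced by reordering combines with the existing $\{n\}_2 \{n-1\}_2$ coefficient to yield $\{n+1\}_2 \{n\}_2$, and that the $T_1(E_2)$ coefficient assembles into $\{n+1\}_2$ via the telescoping $\{n+1\}_2 = 1 + q^2\{n\}_2$. Isolating the auxiliary commutation lemma relating $E_2$, $T_1(E_2)$, and $T_{12}(E_1)$ in advance, essentially a rank-two Levendorski--Soibelman formula, is the key organisational step that makes the calculation tractable.
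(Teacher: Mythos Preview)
Your approach is correct and is essentially the same as the paper's: the paper simply records that these identities are ``proved by induction'' and gives no further details, so your plan of verifying the $n=1$ cases via the Serre relations and the explicit Lusztig formulas for $T_1(E_2)$ and $T_{12}(E_1)$, then running the induction with the auxiliary $q$-commutation between $E_2$ and $T_1(E_2)$ (a rank-two Levendorski--Soibelman identity) and the telescoping $\{n+1\}_i = 1 + q_i^{2}\{n\}_i$, is exactly what is intended.
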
  

\begin{lemma} \label{AppLemBC: 1r}
For any $n \in \mathbb{N}$ the relations 
\begin{align}
&\ir{1}{T_1(E_2)^{n+1}}
	=	q^{-2}\Q{2} \{n+1\}_2 E_2T_1(E_2)^{n} \label{AppEqBC: ir1(T1E2)}\\ &\qquad\qquad{}+ q^{-1}\Q{2} \{n+1\}_2\{n\}_2T_{12}(E_1)T_1(E_2)^{n-1}, \nonumber\\
&\ir{1}{T_{12}(E_1)^n}
	=	q^{-3}\q^2\{n\}_1E_2^2T_{12}(E_1)^{n-1} \label{AppEqBC: ir1(T12E1)}
\end{align}
hold in $U_q(\lie{so}_5)$.
\end{lemma}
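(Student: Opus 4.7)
The plan is to prove both identities \eqref{AppEqBC: ir1(T1E2)} and \eqref{AppEqBC: ir1(T12E1)} by induction on $n$, using the skew-derivation property \eqref{eq:ir} of ${}_1r$ together with commutation relations among the PBW root vectors $E_2$, $T_1(E_2)$, and $T_{12}(E_1)$ in $U_q(\lie{so}_5)$ associated to the reduced expression $w_0 = s_1 s_2 s_1 s_2$.

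I would begin with the base cases. For $n = 0$ of \eqref{AppEqBC: ir1(T1E2)} the second summand vanishes since $\{0\}_2 = 0$, and Lusztig's formula $T_1(E_2) = E_1 E_2 - q^{-2} E_2 E_1$ combined with \eqref{eq:ir} yields $\ir{1}{T_1(E_2)} = (1 - q^{-4}) E_2 = q^{-2}\Q{2} E_2$. For $n = 1$ of \eqref{AppEqBC: ir1(T12E1)} the key ingredient is the identity
\begin{equation*}
[2]_2 T_{12}(E_1) = T_1(E_2) E_2 - E_2 T_1(E_2),
\end{equation*}
which I would establish by applying $T_1$ to the explicit form of $T_2(E_1)$ and checking that the contributions of $T_1(E_1) = -F_1 K_1$ cancel, using the auxiliary commutation $T_1(E_2) F_1 K_1 = q^{-2} F_1 K_1 T_1(E_2) - E_2$. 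Applying ${}_1r$ to this commutator identity and using the first base case then gives $\ir{1}{T_{12}(E_1)} = q^{-3}\q^2 E_2^2$.

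For the inductive step of \eqref{AppEqBC: ir1(T1E2)}, \eqref{eq:ir} with $(\alpha_1, \alpha_1 + \alpha_2) = 2$ produces
\begin{equation*}
\ir{1}{T_1(E_2)^{n+2}} = \ir{1}{T_1(E_2)^{n+1}}\, T_1(E_2) + q^{2(n+1)}\, T_1(E_2)^{n+1}\, \ir{1}{T_1(E_2)}.
\end{equation*}
Inserting the inductive hypothesis and the base case, the second summand contains $T_1(E_2)^{n+1} E_2$. I would rewrite this using the commutator identity above together with the Levendorski--Soibelman $q$-commutation $T_1(E_2) T_{12}(E_1) = q^2 T_{12}(E_1) T_1(E_2)$ (valid because $(\alpha_1 + \alpha_2, \alpha_1 + 2\alpha_2) = 2$ and no PBW root vector strictly separates these two roots). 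A short side induction then yields $T_1(E_2)^{n+1} E_2 = E_2 T_1(E_2)^{n+1} + [2]_2 \{n+1\}_2 T_{12}(E_1) T_1(E_2)^n$, after which matching coefficients of $E_2 T_1(E_2)^{n+1}$ and $T_{12}(E_1) T_1(E_2)^n$ reduces to the $q$-arithmetic identities $q^{-2}\{n+1\}_2 + q^{2n} = q^{-2}\{n+2\}_2$ and $q^{-1}\{n\}_2 + q^{2n}[2]_2 = q^{-1}\{n+2\}_2$, both consequences of $\{k+1\}_2 - \{k\}_2 = q^{2k}$.

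The inductive step for \eqref{AppEqBC: ir1(T12E1)} is cleaner: since $(\alpha_1, \alpha_1 + 2\alpha_2) = 0$, no $q$-power enters from \eqref{eq:ir}, the only commutation needed is $T_{12}(E_1) E_2 = q^2 E_2 T_{12}(E_1)$ (again Levendorski--Soibelman, valid since there is no PBW root vector between $\alpha_1 + 2\alpha_2$ and $\alpha_2$), iterating to $T_{12}(E_1)^n E_2^2 = q^{4n} E_2^2 T_{12}(E_1)^n$; coefficient matching then reduces to $\{n\}_1 + q^{4n} = \{n+1\}_1$. I expect the main obstacle to lie in the commutator formula $[2]_2 T_{12}(E_1) = [T_1(E_2), E_2]$: because $T_1(E_1) = -F_1 K_1$ leaves $U^+$, one must carefully track how the $F_1 K_1$ factors produced by applying $T_1$ to $T_2(E_1)$ rearrange and cancel, and it is this verification (rather than the inductive bookkeeping itself) that carries the bulk of the calculation.
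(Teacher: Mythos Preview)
Your proposal is correct and follows exactly the approach the paper indicates: the paper records these relations as ``proved by induction'' without supplying details, and your argument fills in precisely the inductive steps one would carry out, using the skew-derivation property \eqref{eq:ir} together with the commutation relations among the PBW root vectors for the reduced expression $w_0=s_1s_2s_1s_2$. Your key commutator identity $[2]_2\,T_{12}(E_1)=[T_1(E_2),E_2]$ and the auxiliary relation $T_1(E_2)\,F_1K_1=q^{-2}F_1K_1\,T_1(E_2)-E_2$ both check out, and the $q$-arithmetic matches; the only point worth noting is that your Levendorski--Soibelman justification of $T_{12}(E_1)\,E_2=q^2E_2\,T_{12}(E_1)$ implicitly uses that $T_{121}(E_2)$ and $E_2$ are proportional (true since $U^+_{\alpha_2}$ is one-dimensional), and indeed this relation is equivalent to the quantum Serre relation for $E_1,E_2$.
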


\begin{proof}[Proof of Proposition \ref{AppProp:BC1212}]
We have 
\begin{equation*}
	\Qkm_{w_0} = \Qkm^{[4]}\Qkm^{[3]}\Qkm^{[2]}\Qkm^{[1]}
\end{equation*}
where
\begin{align*}
\Qkm^{[4]}	&\overset{\eqref{eq:LongestWordPartialQkmPart}}{=}  \Qkm_2, \\
\Qkm^{[3]}	&\overset{\phantom{\eqref{eq:LongestWordPartialQkmPart}}}{=}	\Psi \circ T_{12} \circ \Psi^{-1}(\Qkm_1)
			=\sum_{n \geq 0} \dfrac{\Q{2}^n}{\{2n\}_1!!} (q^4c_1)^n(q^2c_2)^{2n}T_{12}(E_1)^{2n},\\
\Qkm^{[2]}	&\overset{\phantom{\eqref{eq:LongestWordPartialQkmPart}}}{=}	\Psi \circ T_1 \circ \Psi^{-1}(\Qkm_2)
			=	\sum_{n \geq 0} \dfrac{\q^n}{\{2n\}_2!!} (q^4c_1)^n(q^2c_2)^n T_1(E_2)^{2n},\\
\Qkm^{[1]}	&\overset{\phantom{\eqref{eq:LongestWordPartialQkmPart}}}{=}	\Qkm_1.
\end{align*}
By Lemma \ref{lem:QkmRankOneAI} and \cite[8.26, (4)]{b-Jantzen96} we have
\begin{align*}
\ir{2}{\Qkm_{w_0}}	&=	\ir{2}{\Qkm_2}\Qkm^{[3]}\Qkm^{[2]}\Qkm^{[1]}\\
					&=	\q (q^2c_2) E_2 \Qkm_2\Qkm^{[3]}\Qkm^{[2]}\Qkm^{[1]}\\
					&=	\q (q^2c_2) E_2 \Qkm_{w_0}.
\end{align*}
We want to show that
\begin{equation*}
\ir{1}{\Qkm_{w_0}}	=	\Q{2}(q^4c_1)E_1\Qkm_{w_0}.
\end{equation*}
For each $i = 1,2,3,4$ let $\Qkm_{[i]} = K_1\Qkm^{[i]}K_1^{-1}$.
Note that $\Qkm_{[3]} = \Qkm^{[3]}$.
By property \eqref{eq:ir} of the skew derivative ${}_1r$ we see that
\begin{equation} \label{AppeqBC: 1rXw0}
	\ir{1}{\Qkm_{w_0}} = \Qkm_{[4]}\ir{1}{\Qkm^{[3]}}\Qkm^{[2]}\Qkm^{[1]} + \Qkm_{[4]}\Qkm_{[3]}\ir{1}{\Qkm^{[2]}}\Qkm^{[1]} + \Qkm_{[4]}\Qkm_{[3]}\Qkm_{[2]}\ir{1}{\Qkm^{[1]}}.
\end{equation}
Lemma \ref{AppLemBC: 1r} gives
\begin{align}
\ir{1}{\Qkm^{[3]}}
	&=	q^{-3}\q^2 \Q{2} (q^4c_1)(q^2c_2)^2 E_2^2T_{12}(E_1)\Qkm^{[3]}, \label{AppEqBC: 1rX3}\\
\ir{1}{\Qkm^{[1]}} 
	&=	\Q{2}(q^4c_1)E_1\Qkm^{[1]}. \label{AppEqBC: 1rX1}
\end{align}
By Equation \eqref{AppEqBC: ir1(T1E2)} we have
\begin{equation} \label{AppEqBC: irX2}
\begin{split}
	\ir{1}{\Qkm^{[2]}}
		&= q^{-2}\q \Q{2}(q^4c_1)(q^2c_2)E_2T_1(E_2)\Qkm^{[2]}\\
		&\quad{}+ q^{-1}\q\Q{2}(q^6c_1c_2)T_{12}(E_1) \widehat{\Qkm}^{[2]}
\end{split}
\end{equation}
where 
\begin{equation*}
\widehat{\Qkm}^{[2]} = \sum_{n \geq 0} \dfrac{\q^n}{\{2n\}_2!!}(q^6c_1c_2)^n \{2n+1\}_2T_1(E_2)^{2n}.
\end{equation*}
For any $n \geq 1$ we have
\begin{equation} \label{AppEqBC: Trick}
\{2n+1\}_2 = 1 + q^2\{2n\}_2.
\end{equation}
It hence follows that
\begin{equation*}
\begin{split}
\sum_{n \geq 0} \dfrac{\q^n}{\{2n\}_2!!}(q^4c_1)^n(q^2c_2)^n& \{2n+1\}_2T_1(E_2)^{2n}\\
	&=	\Qkm^{[2]} + q^2\q (q^4c_1)(q^2c_2)T_1(E_2)^2\Qkm^{[2]}.
\end{split}
\end{equation*}
Substituting this into Equation \eqref{AppEqBC: irX2} we see that
\begin{equation} \label{AppEqBC: 1rX2}
\begin{split}
	\ir{1}{\Qkm^{[2]}}
		&= q^{-2}\q \Q{2}(q^4c_1)(q^2c_2)E_2T_1(E_2)\Qkm^{[2]}\\
		&\quad{}+ q^{-1}\q\Q{2}(q^4c_1)(q^2c_2)T_{12}(E_1)\Qkm^{[2]}\\
		&\quad{}\quad{}+ q \q^2 \Q{2} (q^4c_1)^2(q^2c_2)^2T_{12}(E_1)T_1(E_2)^2\Qkm^{[2]}
\end{split}	
\end{equation}
When we calculate $\ir{1}{\Qkm_{w_0}}$, we obtain a component of the form $\Qkm_{[4]}\Qkm_{[3]}\Qkm_{[2]}\ir{1}{\Qkm^{[1]}}$. From Equation \eqref{AppEqBC: 1rX1}, this contains an $E_1$ term that we pass to the front using Lemma \ref{AppLemBC: E1comms}. We have
\begin{align}
\Qkm_{[2]}E_1 &\overset{\eqref{AppEqBC: T1E2E1comm}}{=} E_1\Qkm^{[2]}, \label{AppEqBC: Qkm2E1} \\
\Qkm_{[3]}E_1 &\overset{\eqref{AppEqBC: T12E1E1comm}}{=}	E_1\Qkm^{[3]} - q\q^2(q^4c_1)(q^2c_2)^2\Qkm_{[3]}T_{12}(E_1)T_1(E_2)^2, \label{AppEqBC: Qkm3E1}\\
\Qkm_{[4]}E_1 &\overset{\eqref{AppEqBC: E2E1comm}}{=}	E_1\Qkm^{[4]} - q^{-2}\q (q^2c_2)E_2\Qkm_{[4]}T_1(E_2) \label{AppEqBC: Qkm4E1}\\ &\qquad- q^{-1}\q (q^2c_2) \Qkm_{[4]}T_{12}(E_1) \nonumber \\   &\qquad{}\quad{}- q^{-3}\q^2(q^2c_2)^2\Qkm_{[4]}E_2^2T_{12}(E_1) \nonumber 
\end{align}
where we also use \eqref{AppEqBC: Trick} to obtain \eqref{AppEqBC: Qkm4E1}.
Note that $\Qkm^{[3]}$ commutes with $E_2T_1(E_2)$. Using \eqref{AppEqBC: Qkm2E1}, \eqref{AppEqBC: Qkm3E1} and \eqref{AppEqBC: Qkm4E1}, and comparing with \eqref{AppEqBC: 1rX3} and \eqref{AppEqBC: 1rX2} we obtain
\begin{align*}
\Qkm_{[4]}\Qkm_{[3]}&\Qkm_{[2]}\ir{1}{\Qkm^{[1]}}\\ &= \Q{2}(q^4c_1)E_1\Qkm_{w_0} - \Qkm_{[4]}\Qkm_{[3]}\ir{1}{\Qkm^{[2]}}\Qkm^{[1]} - \Qkm_{[4]}\ir{1}{\Qkm^{[3]}}\Qkm^{[2]}\Qkm^{[1]}.
\end{align*}
Hence by \eqref{AppeqBC: 1rXw0} we have
\begin{equation*}
\ir{1}{\Qkm_{w_0}} = \Q{2}(q^4c_1)E_1\Qkm_{w_0}
\end{equation*} 
as required.
\end{proof}

We now consider the reduced expression $w_0^{\prime} = s_2s_1s_2s_1$.

\begin{proposition} \label{AppPropBC: 2121}
The partial quasi $K$-matrix $\Qkm_{w_0^{\prime}}$ coincides with the quasi $K$-matrix $\Qkm$.
\end{proposition}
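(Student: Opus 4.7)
The plan is to follow the same strategy as the proof of Proposition \ref{AppProp:BC1212}, with the roles of the two simple reflections swapped. However, since nodes $1$ and $2$ of $CI_2$ have different root lengths, there is no symmetry to invoke, and the computation must be carried out explicitly.

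Writing $\Qkm_{w_0^{\prime}} = \Qkm^{[4]}\Qkm^{[3]}\Qkm^{[2]}\Qkm^{[1]}$, the definition \eqref{eq:Psi} of $\Psi$ together with Proposition \ref{prop:LongestWordPartialQkmPart} and Lemma \ref{lem:QkmRankOneAI} gives
\begin{align*}
\Qkm^{[4]} &= \Qkm_1, \qquad \Qkm^{[1]} = \Qkm_2,\\
\Qkm^{[3]} &= \Psi \circ T_2 T_1 \circ \Psi^{-1}(\Qkm_2) = \sum_{n \geq 0} \dfrac{\q^n}{\{2n\}_2!!}(q^4c_1)^n(q^2c_2)^n T_{21}(E_2)^{2n},\\
\Qkm^{[2]} &= \Psi \circ T_2 \circ \Psi^{-1}(\Qkm_1) = \sum_{n \geq 0} \dfrac{\Q{2}^n}{\{2n\}_1!!}(q^4c_1)^n(q^2c_2)^{2n} T_2(E_1)^{2n}.
\end{align*}
The identity $\ir{1}{\Qkm_{w_0^{\prime}}} = \Q{2}(q^4c_1) E_1 \Qkm_{w_0^{\prime}}$ is immediate: since $i_t = 1$ and $\tau_0 = \mathrm{id}$, the lemma preceding Theorem \ref{Thm:LongestWordPartialQkm} yields $\Qkm^{[i]} \in \widehat{U^+[\widet{s_1}\widet{w_0}]}$ for $i = 1, 2, 3$, and hence $\ir{1}{\Qkm^{[i]}} = 0$ by \cite[8.26, (4)]{b-Jantzen96}. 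The skew-derivation rule \eqref{eq:ir} together with Lemma \ref{lem:QkmRankOneAI} yields the claim.

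The substance of the proof is to verify $\ir{2}{\Qkm_{w_0^{\prime}}} = \q(q^2c_2) E_2 \Qkm_{w_0^{\prime}}$. Setting $\Qkm_{[i]} = K_2 \Qkm^{[i]} K_2^{-1}$, the property \eqref{eq:ir} splits $\ir{2}{\Qkm_{w_0^{\prime}}}$ into the three summands
\[
\Qkm_{[4]} \ir{2}{\Qkm^{[3]}} \Qkm^{[2]}\Qkm^{[1]} + \Qkm_{[4]}\Qkm_{[3]} \ir{2}{\Qkm^{[2]}} \Qkm^{[1]} + \Qkm_{[4]}\Qkm_{[3]}\Qkm_{[2]} \ir{2}{\Qkm^{[1]}}.
\]
I would then derive, by induction on $n$ in $U_q(\lie{so}_5)$, analogs of Lemmas \ref{AppLemBC: E1comms} and \ref{AppLemBC: 1r}: commutation relations for $T_{21}(E_2)^n E_2$, $T_2(E_1)^n E_2$, and $E_1^n E_2$, together with the values of ${}_2r$ on $T_{21}(E_2)^n$ and $T_2(E_1)^n$. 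The commutator $T_{21}(E_2)^n E_2$ will be clean (a single $q$-power shift), while the other two produce lower-order correction terms involving precisely the root vectors that appear in $\Qkm^{[3]}$ and $\Qkm^{[4]}$. Using the telescoping identity $\{2n+1\}_i = 1 + q_i^2\{2n\}_i$ from \eqref{AppEqBC: Trick} to rewrite shifted sums back in terms of $\Qkm^{[i]}$, the correction terms arising from commuting the leading $E_2$ (produced by $\ir{2}{\Qkm^{[1]}} = \q(q^2c_2) E_2 \Qkm^{[1]}$) past $\Qkm_{[2]}$, $\Qkm_{[3]}$, $\Qkm_{[4]}$ should cancel precisely against the first two summands, leaving exactly $\q(q^2c_2) E_2 \Qkm_{w_0^{\prime}}$.

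The main obstacle is this final matching of the correction terms: two collections of terms with intricate $q$-power, coefficient, and factorial-denominator structure must agree identically. The bookkeeping parallels that in the proof of Proposition \ref{AppProp:BC1212} but is not obtained from it, as no symmetry argument is available for $CI_2$. Subject to correctly identifying and cancelling those correction terms, the argument then closes in the same manner as for $w_0$.
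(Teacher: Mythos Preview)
Your approach is essentially the same as the paper's, which uses exactly the commutation and derivation formulas you describe (recorded there as Lemmas \ref{AppLemBC: E2comms} and \ref{AppLemBC: 2r}). One small correction to your expectations: it is $T_2(E_1)^n E_2$ that is clean (a pure $q$-power shift), while $T_{21}(E_2)^n E_2$ and $E_1^n E_2$ each produce a single correction term; as a result the skew-derivation formulas for ${}_2r$ here each have only one term on the right, and the telescoping identity \eqref{AppEqBC: Trick} is not needed for this reduced expression---the bookkeeping is actually slightly simpler than for $w_0$.
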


The following relations are needed for the proof and are obtained by induction.

\begin{lemma} \label{AppLemBC: E2comms}
For any $n \in \mathbb{N}$ the relations
\begin{align}
E_1^nE_2
	&=	q^{2n}E_2E_1^n - q^2\{n\}_1 E_1^{n-1}T_{21}(E_2), \label{AppEqBC: E1E2}\\
T_2(E_1)^nE_2
	&=	q^{-2n}E_2T_2(E_1)^n, \label{AppEqBC: T2E1E2}\\
T_{21}(E_2)^nE_2
	&=	E_2T_{21}(E_2)^n - [2]_2\{n\}_2T_{21}(E_2)^{n-1}T_2(E_1) \label{AppEqBC: T21E2E2}
\end{align}
hold in $U_q(\lie{so}_5)$.
\end{lemma}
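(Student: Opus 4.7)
The plan is to prove each of the three identities by induction on $n\ge 1$, treating the cases separately.

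For the base cases $n=1$, the idea is to unwind $T_2(E_1)$ and $T_{21}(E_2)=T_2T_1(E_2)$ via Lusztig's explicit formula
\[
T_i(E_j)=\sum_{r+s=-a_{ij}}(-1)^r q_i^{-r}E_i^{(s)}E_j E_i^{(r)}
\]
and to simplify using the quantum Serre relations. In type $CI_2$ node $1$ is long, so $q_1=q^2$, $q_2=q$, and the Cartan integers are $a_{12}=-2$ and $a_{21}=-1$. The $n=1$ case of \eqref{AppEqBC: E1E2} is then a direct rearrangement of $T_{21}(E_2)$ written as a $q$-bracket of $E_1$ and $E_2$. The $n=1$ case of \eqref{AppEqBC: T2E1E2} expresses that $E_2$ quasi-commutes with the root vector $T_2(E_1)$, and follows from $T_2(E_1) = E_2 E_1 - q^{-1}E_1 E_2$ together with the Serre relation at node $2$. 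The $n=1$ case of \eqref{AppEqBC: T21E2E2} requires a slightly more involved rank-two computation in which the doubly-bonded Serre relation produces the $[2]_2$ coefficient.

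For the inductive step the strategy is uniform. Multiply the $n$-th relation on the left by the relevant element ($E_1$, $T_2(E_1)$, or $T_{21}(E_2)$), and then apply the $n=1$ case to commute one generator past the trailing $E_2$. For \eqref{AppEqBC: T2E1E2} no correction term appears and the induction simply stacks $q^{-2n}\cdot q^{-2}=q^{-2(n+1)}$. For \eqref{AppEqBC: E1E2} one obtains after one application a cross term of the form $T_{21}(E_2)E_1^n$ which must be moved past the remaining $E_1$-power via an intermediate identity stating that $T_{21}(E_2)$ and $E_1$ quasi-commute (an easy consequence of \eqref{AppEqBC: T2E1E2} combined with the weight computation $T_{21}(E_2)\in U^+_{2\alpha_1+\alpha_2}$). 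Coefficients are then collected using $\{n+1\}_1=1+q_1^2\{n\}_1$ with $q_1=q^2$, so that $q_1^{2n}+q_1^2\{n\}_1=q_1^2\{n+1\}_1$. The induction for \eqref{AppEqBC: T21E2E2} is analogous: the cross term collapses via the commutation of $T_{21}(E_2)$ with $T_2(E_1)$ (again a weight plus short Lusztig calculation), and the coefficient is gathered using $\{n+1\}_2=q_2^{2n}+\{n\}_2$ together with the constancy of the $[2]_2$ factor produced by the base case.

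The main obstacle is the base case of \eqref{AppEqBC: T21E2E2}, which genuinely uses both Serre relations and requires one to verify that $T_{21}(E_2)E_2-E_2 T_{21}(E_2)=-[2]_2 T_2(E_1)$ inside $U_q(\mathfrak{so}_5)$. This is a finite rank-two computation, but it is the only place where the asymmetric factor $[2]_2=q+q^{-1}$ (as opposed to a plain power of $q$) enters the picture; once this identity is in hand, the inductive bookkeeping for all three formulas reduces to standard $q$-number juggling.
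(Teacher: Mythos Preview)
Your overall strategy---induction on $n$ with the base case verified from Lusztig's explicit formula for $T_i(E_j)$ and the quantum Serre relations---is exactly what the paper does (it says only ``obtained by induction''), and the bookkeeping you outline for the inductive step is correct in spirit. However, two of the concrete details you quote are wrong and would derail the computation if used as stated.

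First, your formula $T_2(E_1)=E_2E_1-q^{-1}E_1E_2$ cannot be right: since $\alpha_1$ is long and $\alpha_2$ is short here, $s_2(\alpha_1)=\alpha_1+2\alpha_2$, so $T_2(E_1)\in U^+_{\alpha_1+2\alpha_2}$ is quadratic in $E_2$, namely $T_2(E_1)=E_2^{(2)}E_1-q^{-1}E_2E_1E_2+q^{-2}E_1E_2^{(2)}$. Second, you place $T_{21}(E_2)$ in $U^+_{2\alpha_1+\alpha_2}$, but in fact $s_2s_1(\alpha_2)=\alpha_1+\alpha_2$; this is visible already from the weight balance in \eqref{AppEqBC: E1E2}. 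With the correct weights the auxiliary $q$-commutations you need do hold: $T_2(E_1)T_{21}(E_2)=q^{2}T_{21}(E_2)T_2(E_1)$ and $T_{21}(E_2)E_1=q^{2}E_1T_{21}(E_2)$ (both are instances of the Levendorski\u{\i}--Soibelman relations for adjacent root vectors in the convex order coming from $s_2s_1s_2s_1$, since $2\alpha_1+3\alpha_2$ and $2\alpha_1+\alpha_2$ are not roots). Plugging these in, the coefficients collapse via $\{n+1\}_1=\{n\}_1+q^{4n}$ and $\{n+1\}_2=\{n\}_2+q^{2n}$ exactly as you intend.
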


\begin{lemma} \label{AppLemBC: 2r}
For any $n \in \mathbb{N}$ the relations
\begin{align}
\ir{2}{T_2(E_1)^n}		&=	\q \{n\}_1T_{21}(E_2)T_2(E_1)^{n-1}, \label{AppEqBC: 2r(T2E1)}\\
\ir{2}{T_{21}(E_2)^n}	&=	q^{-2}\Q{2}\{n\}_2E_1T_{21}(E_2)^{n-1} \label{AppEqBC: 2r(T21E2)}
\end{align}
hold in $U_q(\lie{so}_5)$.
\end{lemma}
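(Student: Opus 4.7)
The plan is to prove both relations by induction on $n$, using the skew-derivation property \eqref{eq:ir} of ${}_2r$ together with commutation identities of the same flavour as Lemma \ref{AppLemBC: E2comms}. The argument mirrors the proof of the companion Lemma \ref{AppLemBC: 1r} almost verbatim.

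For the base cases $n=1$, namely
\begin{equation*}
\ir{2}{T_2(E_1)} = \q\, T_{21}(E_2), \qquad \ir{2}{T_{21}(E_2)} = q^{-2}\Q{2}\, E_1,
\end{equation*}
I would expand $T_2(E_1)$ and $T_{21}(E_2)$ as low-degree polynomials in $E_1, E_2$ via the explicit formula for $T_{i,1}''$ in \cite[37.1]{b-Lusztig94}, then apply $\ir{2}{E_j} = \delta_{2j}$ together with the Leibniz rule \eqref{eq:ir}. The resulting constants $\q$ and $q^{-2}\Q{2}$ arise after modest bookkeeping of the $q$-shifts dictated by the $B_2$-normalisation of the inner product.

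For the inductive step on the first identity, let $\mu = s_2(\alpha_1)$ denote the weight of $T_2(E_1)$. Then the Leibniz rule \eqref{eq:ir} and the inductive hypothesis give
\begin{equation*}
\ir{2}{T_2(E_1)^n} = \q\, T_{21}(E_2)\, T_2(E_1)^{n-1} + q^{(\alpha_2,\mu)} \q\, \{n-1\}_1\, T_2(E_1)\, T_{21}(E_2)\, T_2(E_1)^{n-2}.
\end{equation*}
To merge the two terms one needs the auxiliary commutation
\begin{equation*}
T_2(E_1)\, T_{21}(E_2) = q^{-(\alpha_2,\mu)} T_{21}(E_2)\, T_2(E_1),
\end{equation*}
which is obtained by applying $T_2$ to the rank-one relation \eqref{AppEqBC: T1E2E1comm} in the case $n=1$, using $T_{21}(E_2) = T_2 T_1(E_2)$. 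The resulting coefficient collapses to $\q\, \{n\}_1$ via $\{n\}_1 = 1 + q_1^2 \{n-1\}_1$. The inductive step for the second identity is entirely analogous: with $\nu = s_2 s_1(\alpha_2)$ the weight of $T_{21}(E_2)$, the Leibniz rule produces two terms which recombine once $T_{21}(E_2)$ is commuted past $E_1$ using an auxiliary identity extracted from the $n=1$ case of \eqref{AppEqBC: E1E2}, rearranged to express $T_{21}(E_2)$ in terms of $E_1 E_2$ and $E_2 E_1$.

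The main obstacle will be the careful bookkeeping of $q$-powers in the two base cases and in the auxiliary commutations, as these depend on the precise $B_2$-normalisation of $(-,-)$ which fixes the relative lengths of $\alpha_1$ and $\alpha_2$. Once the inner-product values $(\alpha_2,\mu)$ and $(\alpha_2,\nu)$ are pinned down the inductive telescoping in both cases is routine.
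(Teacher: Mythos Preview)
Your overall strategy is exactly what the paper intends: it records these relations with the terse comment ``proved by induction'' and gives no further detail, so your plan---base case by direct expansion of the Lusztig automorphisms, inductive step via the Leibniz rule \eqref{eq:ir} together with an auxiliary $q$-commutation---is precisely the route.

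There is, however, one concrete slip in your bookkeeping that you should fix before carrying it out. In the inductive step for \eqref{AppEqBC: 2r(T2E1)} you write the auxiliary commutation as
\[
T_2(E_1)\,T_{21}(E_2)=q^{-(\alpha_2,\mu)}\,T_{21}(E_2)\,T_2(E_1),\qquad \mu=s_2(\alpha_1).
\]
But applying $T_2$ to the $n=1$ case of \eqref{AppEqBC: T1E2E1comm}, as you propose, gives $T_{21}(E_2)\,T_2(E_1)=q^{-2}\,T_2(E_1)\,T_{21}(E_2)$, i.e.\ the commutation factor is $q^{+(\alpha_2,\mu)}=q^{2}$, not $q^{-(\alpha_2,\mu)}$. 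With your sign the two terms would combine to $(1+\{n-1\}_1)$ rather than $\{n\}_1$; with the correct sign $q^{+2}$ the coefficient becomes $1+q^{4}\{n-1\}_1=1+q_1^{2}\{n-1\}_1=\{n\}_1$, which is exactly the identity you quote. So the telescoping works, but only once the sign is corrected.

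For \eqref{AppEqBC: 2r(T21E2)} the weight $\nu=s_2s_1(\alpha_2)=\alpha_1+\alpha_2$ satisfies $(\alpha_2,\nu)=0$, so the Leibniz step carries no $q$-shift; the needed commutation is $T_{21}(E_2)\,E_1=q^{2}\,E_1\,T_{21}(E_2)$. Extracting $T_{21}(E_2)=E_2E_1-q^{-2}E_1E_2$ from \eqref{AppEqBC: E1E2} as you suggest is fine, but note that to close the computation you will also need the quantum Serre relation $E_1^{2}E_2-[2]_1E_1E_2E_1+E_2E_1^{2}=0$; it is worth saying so explicitly.
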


\begin{proof}[Proof of Proposition \ref{AppPropBC: 2121}]
We have 
\begin{equation*}
\Qkm_{w_0^{\prime}} = \Qkm^{[4]}\Qkm^{[3]}\Qkm^{[2]}\Qkm^{[1]}
\end{equation*}
where
\begin{align*}
\Qkm^{[4]}	&\overset{\eqref{eq:LongestWordPartialQkmPart}}{=} \Qkm_1,\\
\Qkm^{[3]}	&\overset{\phantom{\eqref{eq:LongestWordPartialQkmPart}}}{=} 
				\sum_{n \geq 0} \dfrac{\q^n}{\{2n\}_2!!} (q^4c_1)^n(q^2c_2)^nT_{21}(E_2)^{2n},\\
\Qkm^{[2]}	&\overset{\phantom{\eqref{eq:LongestWordPartialQkmPart}}}{=}	
				\sum_{n \geq 0} \dfrac{\Q{2}^n}{\{2n\}_1!!} (q^4c_1)^n(q^2c_2)^{2n}T_2(E_1)^{2n},\\
\Qkm^{[1]}	&\overset{\phantom{\eqref{eq:LongestWordPartialQkmPart}}}{=}	\Qkm_2.
\end{align*}
By Lemma \ref{lem:QkmRankOneAI} and \cite[8.26, (4)]{b-Jantzen96} we have $\ir{1}{\Qkm_{w_0^{\prime}}}=\Q{2}(q^4c_1)E_1\Qkm_{w_0^{\prime}}$.	
We want to show that
\begin{equation*}
\ir{2}{\Qkm_{w_0^{\prime}}}	=	\q (q^2c_2)E_2 \Qkm_{w_0^{\prime}}.
\end{equation*}
For each $i = 1,2,3,4$ let $\Qkm_{[i]} = K_2\Qkm^{[i]}K_2^{-1}$.
By property \eqref{eq:ir} of the skew derivative ${}_2r$ we have
\begin{equation} \label{AppEqBC: 2rXw0}
\ir{2}{\Qkm_{w_0^{\prime}}}	=	\Qkm_{[4]}\ir{2}{\Qkm^{[3]}}\Qkm^{[2]}\Qkm^{[1]} + \Qkm_{[4]}\Qkm_{[3]}\ir{2}{\Qkm^{[2]}}\Qkm^{[1]} + \Qkm_{[4]}\Qkm_{[3]}\Qkm_{[2]}\ir{2}{\Qkm^{[1]}}.
\end{equation}
Using Lemma \ref{AppLemBC: 2r} we obtain
\begin{align}
\ir{2}{\Qkm^{[3]}}	
	&\overset{\eqref{AppEqBC: 2r(T21E2)}}{=} q^{-2}\q\Q{2}(q^4c_1)(q^2c_2)E_1T_{21}(E_2)\Qkm^{[3]}, \label{AppEqBC:2rQkm3}\\
\ir{2}{\Qkm^{[2]}}
	&\overset{\eqref{AppEqBC: 2r(T2E1)}}{=} \q\Q{2}(q^4c_1)(q^2c_2)^2T_{21}(E_2)T_2(E_1)\Qkm^{[2]}, \label{AppEqBC:2rQkm2}\\
\ir{2}{\Qkm^{[1]}}
	&\overset{\eqref{eq:QkmRankOneAI}}{=} \q (q^2c_2) E_2\Qkm^{[1]}. \label{AppEqBC:2rQkm1}
\end{align}
By Lemma \ref{AppLemBC: E2comms} we have
\begin{align}
\Qkm_{[2]}E_2	&\overset{\eqref{AppEqBC: T2E1E2}}{=}	E_2\Qkm^{[2]}, \label{AppEqBC: Qkm2E2}\\
\Qkm_{[3]}E_2	&\overset{\eqref{AppEqBC: T21E2E2}}{=}	E_2\Qkm^{[3]}  
	- \Q{2}(q^4c_1)(q^2c_2)\Qkm_{[3]}T_{21}(E_2)T_2(E_1), \label{AppEqBC: Qkm3E2}\\
\Qkm_{[4]}E_2	&\overset{\eqref{AppEqBC: E1E2}}{=}		E_2\Qkm^{[4]}
	- q^{-2}\Q{2}(q^4c_1)\Qkm_{[4]}E_1T_{21}(E_2). \label{AppEqBC: Qkm4E2}
\end{align}
Using equations \eqref{AppEqBC: Qkm2E2}, \eqref{AppEqBC: Qkm3E2} and \eqref{AppEqBC: Qkm4E2}, and comparing with equations \eqref{AppEqBC:2rQkm3} and \eqref{AppEqBC:2rQkm2} we can rewrite the term $\Qkm_{[4]}\Qkm_{[3]}\Qkm_{[2]}\ir{2}{\Qkm^{[1]}}$ as
\begin{align*}
\Qkm_{[4]}\Qkm_{[3]}&\Qkm_{[2]}\ir{2}{\Qkm^{[1]}} \\ &\overset{\eqref{AppEqBC:2rQkm1}}{=} \q (q^2c_2) \Qkm_{[4]}\Qkm_{[3]}\Qkm_{[2]}E_2\Qkm^{[1]}\\	
&\overset{\phantom{\eqref{AppEqBC:2rQkm1}}}{=} \q (q^2c_2) E_2\Qkm_{w_0^{\prime}} - \Qkm_{[4]}\ir{2}{\Qkm^{[3]}}\Qkm^{[2]}\Qkm^{[1]} - \Qkm_{[4]}\Qkm_{[3]}\ir{2}{\Qkm^{[2]}}\Qkm^{[1]}.
\end{align*}
It hence follows from \eqref{AppEqBC: 2rXw0} that $\ir{2}{\Qkm_{w_0^{\prime}}} = \q (q^2c_2)E_2\Qkm_{w_0^{\prime}}$ as required.
\end{proof}
\endgroup

\providecommand{\bysame}{\leavevmode\hbox to3em{\hrulefill}\thinspace}
\providecommand{\MR}{\relax\ifhmode\unskip\space\fi MR }
\providecommand{\MRhref}[2]{%
  \href{http://www.ams.org/mathscinet-getitem?mr=#1}{#2}
}
\providecommand{\href}[2]{#2}


\end{document}